\newtheorem{theorem}{Theorem}[section]
\newtheorem{lemma}[theorem]{Lemma}
\newtheorem{proposition}[theorem]{Proposition}
\newtheorem{corollary}[theorem]{Corollary}
\newtheorem*{proposition*}{Proposition}
\theoremstyle{definition}
\newtheorem{definition}[theorem]{Definition}
\newtheorem{remark}[theorem]{Remark}
\newtheorem{example}[theorem]{Example}
\newcommand{\cB}{\ensuremath{\mathcal{B}}}
\DeclareMathOperator{\codim}{codim}
\DeclareMathOperator{\im}{im}
\DeclareMathOperator{\Jac}{Jac} 
\DeclareMathOperator{\Proj}{Proj}
\DeclareMathOperator{\Sing}{Sing}
\DeclareMathOperator{\reg}{reg}
\DeclareMathOperator{\Syz}{Syz}
\DeclareMathOperator{\Tor}{Tor}
\newcommand{\ZZ}{ \ensuremath{\mathbb{Z}}}
\newcommand{\PP}{ \ensuremath{\mathbb{P}}}
\newcommand{\RR}{ \ensuremath{\mathbb{R}}}
\newcommand{\CC}{ \ensuremath{\mathbb{C}}}
\newcommand{\fa}{\mathfrak{a}}
\newcommand{\fb}{\mathfrak{b}}
\newcommand{\fc}{\mathfrak{c}}
\newcommand{\fq}{\mathfrak{q}}
\newcommand{\fm}{\mathfrak{m}}
\newcommand{\cA}{ \ensuremath{\mathcal{A}}}
\newcommand{\cL}{ \ensuremath{\mathcal{L}}}
\newcommand{\cK}{ \ensuremath{\mathcal{K}}}
\newcommand{\ffi}{\varphi}
\numberwithin{equation}{section}
\title{Jacobian Ideals of Hyperplane Arrangements and their Graded Betti Numbers}
\author[]{Juan C. Migliore}
\address{Juan C. Migliore: Department of Mathematics, University of Notre  Dame, Notre Dame, IN 46556, USA}
\email{migliore.1@nd.edu}
\author[]{Uwe Nagel} 
\address{Uwe Nagel: Department of Mathematics, University of Kentucky, 715 Patterson Office Tower, Lexington, KY 40506-0027, USA}
\email{uwe.nagel@uky.edu}
\thanks{\hspace{-15pt} Migliore was partially supported by Simons Foundation grant \#839618,}
\thanks{\hspace{-15pt} Nagel was partially supported by Simons Foundation grant \#636513}
\theoremstyle{definition}
\subjclass[2020]{
14N20,
13C40,
32S22.}
\keywords{hyperplane arrangement,
line arrangement,
Jacobian ideal,
liaison,
basic double link,
general residual, 
liaison addition, Tjurina number}
\begin{document}

\begin{abstract}

A  hyperplane arrangement $\cA$ is said to be {\it free} if the corresponding Jacobian ideal $J_\cA$ is Cohen-Macaulay. In particular, if $\cA$ is free then $J_\cA$ is unmixed (i.e. equidimensional). Freeness is an important property, yet its presence is not well understood.  A well-known  conjecture of Terao says that freeness of $\cA$ depends only on the intersection lattice of $\cA$.  Given an arbitrary arrangement $\cA$, we define the ideal $J_\cA^{top}$ to be the intersection of the codimension 2 primary components of $J_\cA$. This ideal is again unmixed, but not necessarily Cohen-Macaulay, though if $\cA$ is free then $J_\cA = J_\cA^{top}$. In this paper, we develop a new method for studying the ideals $J_\cA$  and $J_\cA^{top}$ and use it to establish results in the spirit of Terao's conjecture, but focusing on $J_\cA^{top}$ rather than $J_\cA$.  It is based on a novel application of liaison theory, namely the notion of the {\it general residual} of $\cA$. This residual ideal defines a scheme with surprisingly simple properties. These allow us to track back to $J_\cA^{top}$. Considerably extending earlier results with Schenck, we identify mild conditions on a hyperplane arrangement which imply that the Hilbert function of $\Jac( f_\cA)^{top}$ or even all the numerical information of the resolution of $\Jac( f_\cA)^{top}$, i.e.\ its graded Betti numbers, is determined by the intersection lattice of $\cA$. As another application, we establish new bounds on the global Tjurina number of a hyperplane arrangement. For line arrangements, we have additional results. Note that in this case $\Jac( f_\cA)^{top}$ is the same as the saturation of $\Jac( f_\cA)$.  We show, for an arbitrary line arrangement $\cA$, that the graded Betti numbers of $\Jac( f_\cA)^{sat}$ determine the graded Betti numbers of $\Jac( f_\cA)$, and of the corresponding Milnor module $J_\cA^{sat}/J_\cA$. Furthermore, we obtain a new freeness criterion for line arrangements -- it highlights the fact that free line arrangements are special by proving that a related codimension two ideal has the least possible number of  generators, namely two, if and only if $\cA$ is free. 
We illustrate our results by computing the graded Betti numbers for a number of basic arrangements that were not accessible with previous methods.

\end{abstract}

\maketitle

\tableofcontents

\section{Introduction}

Hyperplane arrangements are of interest in several areas and have been intensely investigated using tools from algebra, algebraic geometry, combinatorics, topology and other areas.
In this paper, we introduce a new method to investigate properties of Jacobian ideals of hyperplane arrangements. 

We work over a field $K$ of characteristic zero and we set $S = K[x_0,\dots,x_n]$, the polynomial ring over $K$ with standard grading. A subscheme $X$ of $\PP^n$ is said to be {\it arithmetically Cohen-Macaulay (ACM)} if $S/I_X$ is a Cohen-Macaulay ring.

Let $\cA \subset \PP^n$ be a finite set of hyperplanes of $\PP^n$. Abusing notation, we sometimes do not distinguish between the set $\cA$ and the hyperplane arrangement $\bigcup_{L \in \cA} L$. The arrangement $\cA$ is defined by a product $f_\cA$ of linear forms.  Let $J_\cA = \Jac(f_\cA)$ be the Jacobian ideal of $f_\cA$, i.e. the ideal generated by the $n+1$ partial derivatives of $f_\cA$. The ideal $J_\cA$ is not necessarily saturated. The saturation $J_\cA^{sat}$ in general is not unmixed, i.e. there may be embedded components. Both $J_\cA$ and $J_\cA^{sat}$ define the same subscheme $X_\cA$ of $\PP^n$.

Given a primary decomposition of $J_\cA$, it is a standard fact in commutative algebra that the intersection of primary components of codimension 2 is uniquely determined. The resulting ideal is unmixed (i.e. no embedded or isolated components) by construction. We denote by $J_\cA^{top}$ this ideal, and by $X_\cA^{top}$ the codimension 2 equidimensional scheme defined by $J_\cA^{top}$. This is a subscheme of $X_\cA$.

When $n=2$, $\cA$ is a line arrangement and $X_\cA$ is a zero-dimensional scheme. In this case there are no embedded points, and $J^{sat}$ is unmixed so $X_\cA = X_\cA^{top}$. The Milnor module is the graded $R$-module $J_\cA^{sat}/J_\cA$. The global Tjurina number is then the degree of $X_\cA$, an invariant studied by many authors. We extend the notion of global Tjurina number to hyperplane arrangements in any projective space:   the global Tjurina number for any hyperplane arrangement in $\PP^n$ is the degree of $X_\cA$, which coincides with the degree of $X_\cA^{top}$ since components of codimension 
$\geq 3$ do not contribute to the degree  (see the paragraph following Proposition \ref{prop:jac = union}).

In earlier work \cite{MNS} (also with H. Schenck) and \cite{MN}, the authors introduced the use of liaison addition and basic double linkage to the study of the schemes $X_\cA^{top}$, with a focus on their Hilbert functions and structure. In this paper we introduce a very strong new method from liaison theory, which we call the {\it general residual}, and we apply it together with the older methods to the study of graded Betti numbers. 

The general residual is defined as follows. Let $g$ be a general linear combination of the generators of $J_\cA = \Jac (f_\cA)$. We interpret $g$ as being the derivative of $f_\cA$ with respect to a general linear form $\ell$. Then $(f_\cA,g)$ is a regular sequence (Lemma \ref{lem:decomposition of ci}), so it links $J_\cA$ to an unmixed ideal $r_\cA$ that we call the general residual of $\cA$. 

 In liaison theory, it is often advantageous to use minimal links. In contrast, a general residual is \emph{not} a minimal link of a Jacobian ideal. A benefit of our choice of link is that it allows us to describe the linked ideal, that is, the general residual rather explicitly.  In particular, we describe the geometry of the scheme defined by a general residual: $r_\cA$ is supported on the singular locus of $\cA$, and each component is a complete intersection, one of whose generators is linear (Proposition \ref{prop:decomposition of residual}). Using the input data, this describe this linear generator explicitly.  Note that the support of the scheme defined by $r_\cA$ is completely determined by the codimension two subspaces in the intersection lattice of $\cA$ (see \Cref{cor:decomposition of residual}). In fact, the ideal itself is determined by these data and the choice of the general linear form $\ell$. Of course the precise residual depends on $\ell$, but  the generality of the form implies that properties such as graded Betti numbers and so Cohen-Macaulayness do not depend on the choice of the form.  Linking back using the same complete intersection gives us $J_\cA^{top}$. 
 Under mild assumptions, these simple properties of $r_\cA$ allow us to describe its primary decomposition, regularity and graded Betti numbers using geometric and liaison-theoretic methods. We then use liaison theory to derive the analogous information about $J_\cA^{top}$. In particular, we find the graded Betti numbers of $J_\cA^{top}$ for large classes of hyperplane arrangements.

Before giving the main results of this paper (that work in any projective space), we describe three applications for line arrangements. First,  we determine new bounds for the global Tjurina number of line arrangements, extending work of du Plessis and Wall, as well as of Beorchia and Mir\'o-Roig. Second,  in the special situation of line arrangements in $\PP^2$, through $r_\cA$ (under mild assumptions) we determine the graded Betti numbers of $J_\cA^{sat}$, which in turn allows us to determine the graded Betti numbers of both the Milnor module and of the original Jacobian ideal $J_\cA$. Third, through our study of the general residual, we are also able to give a new freeness criterion for line arrangements.  It highlights the fact that free line arrangements are special by proving that a related codimension two ideal has the least possible number of  generators, namely two, if and only if $\cA$ is free. 

We now give a more detailed overview of the paper, including the main results. In Section \ref{sec:background}  we recall some of the main tools that we use in this paper. These include a geometric description of $X^{top}$ (Proposition \ref{jac = union}),   arrangement-theoretic versions of liaison addition (Theorem \ref{LAthm1})  and basic double linkage (Corollary \ref{BDL corollary}), and   a duality result that will be crucial when we study the Milnor module later in this paper (\Cref{thm:Milnor duality}). 
Section \ref{sec:reduction to plane}  gives, via some technical lemmas, a tool connecting the  ideal $J_\cA^{top}$ of an arrangement $\cA$ in $\PP^n$ with the  ideal of the line arrangement in a general plane $\Lambda$ obtained by intersecting $\cA$ with $\Lambda$, in the case that $X^{top}$ is ACM (Corollary \ref{cor:restrict cCM top part}). 

Section \ref{sec:residuals}  introduces the general residual described above. An important ingredient is the point $\ell^\vee$ dual to the linear form $\ell$ (whose coordinates are the coefficients of $\ell$), which we view as being in the same projective space. Another is the set $S(\cA)$, the {\it top support of the singular locus}, which is the set of codimension two subspaces defined by all pairs of hyperplanes in $\cA$. Also, if $P \in S(\cA)$ then $g_P$ is the product of linear forms (up to scalar multiple) vanishing along $P$, and  $t_P = \deg g_P$. We show (Lemma \ref{lem:decomposition of ci}) that
\[
(f_{\cA}, \frac{\partial f_{\cA}}{\partial \ell}) = \bigcap_{P \in S(\cA)} (g_p,  \frac{\partial g_p}{\partial \ell}) \ \ \hbox{ and } \ \ |\cA|\cdot (|\cA| -1) = \sum_{P \in S(\cA)} t_p (t_p -1).
\]
This leads to the striking primary decomposition for $r_\cA$ (Proposition \ref{prop:decomposition of residual}  and Corollary \ref{cor:decomposition of residual}):
\[
r_\cA = \bigcap_{P \in S(\cA)} (\ell_P,  \frac{\partial g_P}{\partial \ell}) = \bigcap_{P \in S(\cA)} (\ell_P, I_P^{t_P - 1}).
\]
The main result of  \Cref{sec:residuals} is Theorem \ref{thm:reg residual} giving an upper bound for the Castelnuovo-Mumford regularity of $r_\cA$, namely $\reg r_\cA \leq |\cA| -1$. We combine some of these ideas in Corollary \ref{cor:restrict  aCM residual}, where we show that if $X^{top}$ is ACM and $\Lambda$ is a general 2-plane, and if $\cB$ is the restriction of $\cA$ to $\Lambda$, then $r_\cB$ has the same graded Betti numbers as $r_\cA$, and in fact the scheme defined by $r_\cB$ is the intersection of that defined by $r_\cA$ with $\Lambda$.

The main result of  \Cref{sec:initial degree} (Theorem \ref{thm:initial degree} ) uses the theory of E-type and N-type resolutions from liaison theory to give the surprising conclusion that for any hyperplane arrangement $\cA$ in $\PP^n$, the initial degree of $J_\cA^{top}$ is precisely $|\cA|-1$.  As a consequence (Corollary \ref{cor:reg estimate sharp}), we show that the bound on $\reg r_\cA$ is sharp: $\reg r_\cA = |\cA|-1$ for any arrangement $\cA$.
Section \ref{sec:initial degree} also introduces an auxiliary ideal, $I = r_\cA \cap I_{\ell^\vee}$. Proposition \ref{prop:add general point} shows that $r_\cA = (I,\frac{\partial f_\cA}{\partial \ell})$ and that $\reg I = |\cA|$. 

The remarkable properties of general residuals suggests that it is worthwhile to study a larger class of ideals whose primary components have the same simple structure as those of general residuals. We dub these ideals residual-lie ideals and characterize which residual-like ideals are actually general residuals. The result is one of the ingredients of the mentioned new freeness criterion. 

\Cref{sect:degree bounds Jacobian} gives our new bounds for the global Tjurina number for line arrangements mentioned above. Specifically, we prove

\vspace{.2in}

\noindent {\bf Theorem.} (Theorem  \ref{thm:deg Jacobian})
{\it For any arrangement  $\cA = \cA (f_{\cA})$ of $d \ge 3 $ lines in $\PP^2$, one has: 
\begin{itemize} 
\item[(a)] 
\[
\deg (\Jac (f_{\cA})) \ge \binom{d}{2},  
\]
and equality is true if and only if no three lines of $\cA$ are concurrent. 

\item[(b)] If the $d$ lines of $\cA$ are not concurrent then 
\[
\deg (\Jac (f_{\cA})) \le d^2 - 3d + 3.  
\]
Furthermore, equality is true if and only if $d-1$ lines, but not $d$ lines of $\cA$ are concurrent.  

\item[(c)] If $\cA$ does not have a subset of $d-1$ concurrent lines  (hence in particular $d \geq 4$) then 
\[
\deg (\Jac (f_{\cA})) \le d^2 - 4d + 7.  
\]
Moreover, if $d \ge 5$ then equality is true if and only if $\cA$ consists of $d-2$ concurrent lines and the two other lines meet in a point which is on one of the $d-2$ lines. 
\end{itemize} 
}
\vspace{.2in}

\noindent Using the hyperplane section methods mentioned above, Corollary \ref{cor:deg Jacobian} extends this to hyperplane arrangements in $\PP^n$.

It should be remarked that the methods introduced in \cite{MNS} and \cite{MN} can be applied to give graded Betti numbers of $J_\cA^{top}$ for many  hyperplane arrangements. Section \ref{sec:free res certain arr} shows that the introduction of general residuals greatly expands the classes of hyperplane arrangements for which we can produce the minimal free resolution of $J_\cA^{top}$. It also illustrates why passing to $r_\cA$ is so useful. The first step is Proposition \ref{prop:add almost generic hyperplane}. It was known that adding a general hyperplane $H$ to a hyperplane arrangement $\cA$ gives an arrangement $\cA + H$ for which $X_{\cA+H}^{top}$ is a basic double link of $X_\cA$. However, this is not true if $H$ vanishes on an element of $S(\cA)$ (i.e. we add an {\it almost generic hyperplane}). Nevertheless, Proposition \ref{prop:add almost generic hyperplane} shows that $r_{\cA+H}$ {\it is} a basic double link of $r_\cA$, in a precisely determined way. This allows us to find the Hilbert function and the minimal free resolution of $J_{\cA+H}^{top}$, which was not possible with earlier methods. Section \ref{sec:free res certain arr} gives many classes of line arrangements for which we compute the graded Betti numbers, none of which were possible before now. We end the section with Proposition \ref{prop:build recursively}, which partially  summarizes what can be done with the new methods. Remark \ref{limitations} shows that even these new methods continue to have limitations.

Section \ref{sec:res Jacobians} shows how to apply the results of this paper to compute the graded Betti numbers of the Milnor modules and Jacobian ideals of many line arrangements. The most general result is Theorem \ref{thm:Betti from sat}:

\vspace{.2in}

\noindent {\bf Theorem.}
{\it Let $\cA \subset \PP^2$ be any line arrangement. Write the graded minimal free resolution of $\Jac (f_\cA)^{sat}$ as 
\[
0 \to G \stackrel{\gamma}{\longrightarrow} S^b (-d+1)  \oplus F \to \Jac (f_\cA)^{sat}  \to 0, 
\]
where $d = |\cA|$. If $\cA$ is not free then $b \ge 3$ and the Milnor module  $M$ and the Jacobian ideal have graded minimal free resolutions of the form 
\[
0 \to S^{b-3}(-2d+2)  \oplus F^* (-3d+3)  \to G^* (-3d+3) \to G \to S^{b-3}(-d+1)  \oplus F \to M \to 0 
\]
and 
\[
0 \to S^{b-3}(-2d+2)  \oplus F^* (-3d+3) \to G^* (-3d+3)  \to S^{3}(-d+1) \to \Jac(f_\cA) \to 0. 
\]
}

\vspace{.2in}

The following consequence is also of interest (Theorem \ref{thm:Betti disconnected pencils}). Under a a mild assumption, it explicitly gives the above graded Betti numbers. They depend only on the intersection lattice of the arrangement. 

\vspace{.2in}

\noindent {\bf Theorem.}
{\it Let $\cA \subset \PP^2$ be an  arrangement of $d$ lines. Denote by 
        \[
    S_0 (\cA) = \{ P \in S(\cA) \; \mid \; t_P \ge 3\}
    \]
the set of points $P \in \PP^2$ that are contained in at least three distinct lines of $\cA$. Assume that no line in $\cA$ contains two or more elements of $S_0(\cA)$. Denote by $s$  the number of lines of $\cA$ that do not contain any $P \in S_0(A)$. Then one has: 
\begin{itemize}

\item[{\rm (a)}] If $\cA$ consists of $d$ concurrent lines then it is free and the graded minimal free resolution of its Jacobian ideal has the form 
\[
0 \to S(-2d+2)  \to S^2 (-d+1) \to \Jac (f_\cA) \to 0. 
\]

\item[{\rm (b)}] If $\cA$ consists of $d-1$ lines through a point $P \in \PP^2$ and one line that does not contain $P$ then it is free and the graded minimal free resolution of $Jac (f_\cA)$ has the form 
\[
0 \to S(-2d+3) \oplus S(-d) \to S^3 (-d+1) \to \Jac (f_\cA) \to 0. 
\]

\item[{\rm (c)}] If $\cA$ does not satisfy the condition in either (a) or (b) then $\cA$ is not  free and the Milnor module  $M$ and the Jacobian ideal have graded minimal free resolutions of the form 
\begin{align*} 
\hspace{13cm}&\hspace{-13cm}
0 \to S^{2 |S_0 (\cA)| + s-3}(-2d+2)   \to 
\begin{array}{c}
S^{ |S_0 (\cA)| + s-1} (-2d + 3) \\
\ \ \ \ \ \oplus \\[3pt]
{\displaystyle \bigoplus_{P \in S_0 (\cA)} S(-2 d+t_P -1) }
\end{array} 
\to  \\
\begin{array}{c}
S^{ |S_0 (\cA)| + s-1} (-d) \\
\ \ \ \ \ \oplus \\
{\displaystyle \bigoplus_{P \in S_0 (\cA)} S(-d-t_P +2) }
\end{array} 
\to S^{2 |S_0 (\cA)| + s-3}(-d+1)   \to M \to 0 
\end{align*}
and 
\begin{align*} 
\hspace{13cm}&\hspace{-13cm}
0 \to S^{2 |S_0 (\cA)| + s-3}(-2d+2)   \to 
\begin{array}{c}
S^{ |S_0 (\cA)| + s-1} (-2d + 3) \\
\ \ \ \ \ \oplus \\[3pt]
{\displaystyle \bigoplus_{P \in S_0 (\cA)} S(-2 d+t_P + 1) }
\end{array}   
\to \\
S^{3}(-d+1) \to \Jac(f_\cA) \to 0. 
\end{align*}
\end{itemize}
}

\vspace{.2in}

Finally, in section \ref{sec: freeness criterion} we give a new criterion for freeness of a hyperplane arrangement in $\PP^n$ (Theorem \ref{thm:char freeness lines}). It is stated in terms of an auxiliary ideal similar to the one mentioned above.


\section{Background}
\label{sec:background} 

In this section, we fix notation and recall some important concepts and results. 

Let $S = K[x_0, x_1, \dots,x_n]$ be a polynomial ring, where  $K$ is a field of characteristic zero. Let $\mathcal A$ be a hyperplane arrangement in $\mathbb P^n$ defined by a product $f_\cA$ of linear forms. Denote by $\Jac (f_\cA)$ the corresponding Jacobian ideal, generated by the first partial derivatives of $f_\cA$.

\begin{remark} \label{ci}
    A standard fact, easily shown, is that if $\mathcal A$ consists of $e$ hyperplanes and with  the property that there is a codimension 2 linear variety $\Lambda$ such that every hyperplane of $\mathcal A$ contains $\Lambda$ then $\Jac (f_\cA)$ is a saturated complete intersection of type $(e-1,e-1)$. 
\end{remark}

More generally, if $\mathcal A$ is a free arrangement then $S/\Jac (f_\cA)$ is Cohen-Macaulay. Most arrangements, however, are not free and there are many possible reasons for the failure of $S/\Jac (f_\cA)$ to be Cohen-Macaulay. It could fail to be saturated. But even the saturation can fail to be Cohen-Macaulay. One reason could be that it has embedded components, so we can remove these and study what is left. But even the remaining ideal can fail to be Cohen-Macaulay for more mysterious reasons. We denote this remaining ideal by $\Jac (f_\cA)^{top}$. The paper \cite{MNS} studied conditions that force $\Jac (f_\cA)^{top}$ to be Cohen-Macaulay, and it studied this mysterious failure of $\Jac (f_\cA)^{top}$ to be Cohen-Macaulay in some cases.

More precisely, consider a primary decomposition of $J$:
\begin{equation} \label{primary decomp}
\Jac (f_\cA) = \mathfrak q_1 \cap \dots \cap \mathfrak q_r
\end{equation}
and let $\mathfrak p_i$ be the associated prime of $\mathfrak q_i$ for $1 \leq i \leq r$. Removing all associated primes of height $> 2$, the remaining intersection of primary ideals is well-defined, and we denote by $\Jac (f_\cA)^{top}$ this intersection. Thus in (\ref{primary decomp})  $\Jac (f_\cA)^{top}$ is the intersection of the $\mathfrak q_i$ of height 2. Similarly, the intersection
of all associated primes absorbs all the associated primes of height $> 2$, and the resulting ideal is the radical 
\[
\sqrt{\Jac (f_\cA)} = \mathfrak p_1 \cap \dots \cap \mathfrak p_r, 
\]
where  each component is the ideal of a linear variety of codimension~2. 

We will be interested in the geometry of the schemes corresponding to $\Jac (f_\cA)^{top}$ for hyperplane arrangements. We denote by $X^{top}$ such a scheme, and our main (but not exclusive) interest is when $X^{top}$ is arithmetically Cohen-Macaulay (ACM).

The papers \cite{MNS} (for hyperplane arrangements) and \cite{MN} (for hypersuface arrangements) introduced the use of liaison methods to study these propertis. In this paper we will introduce new liaison methods to study the minimal free resolutions of many Cohen-Macaulay ideals arising in this way. We first study these problems in $\mathbb P^n$, and then give a reduction to the case of line arrangements in $\mathbb P^2$. Our last results are in this latter setting.

First we recall the previous methods. Liaison Addition was originally described by P.~Schwartau in his Ph.D. thesis \cite{schwartau} (unpubished), and subsequently studied in \cite{GM}. Basic double linkage was originally  introduced by R. Lazarsfeld and P. Rao in \cite{LR} and studied more carefully in \cite{BM} and considerably extended in \cite{KMMNP}.  We refer to these sources for some of the machinery used below. 

 In the following result we denote by $L_i$ a hyperplane and by $\ell_i$ the linear form defining it (up to scalars). 

\begin{proposition}[\cite{MN} Proposition 4.1]
\label{jac = union}
    Let $\mathcal A = \bigcup_{i=1}^s L_i \subset \mathbb P^n$ be an arbitrary hyperplane arrangement. Then the top dimensional part of the scheme defined by $\Jac (f_\cA)$ is a union of complete intersections. Each of these complete intersections is supported at some linear space $\Lambda = L_i \cap L_j$  and equal
to the scheme defined by $\Jac (g)$, where $g$ is the product of the linear divisors of $f_\cA$ that are in $I_{\Lambda}$.
\end{proposition}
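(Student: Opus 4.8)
The plan is to reduce the global statement to a local computation at each codimension-2 associated prime of $J_\cA = \Jac(f_\cA)$ and then reassemble. First I would pin down which primes occur. Since $f_\cA$ is a product of distinct linear forms, its divisor is reduced, so its singular locus $V(\sqrt{J_\cA})$ has codimension $\ge 2$ in $\PP^n$, with codimension-2 components precisely the distinct linear spaces $\Lambda = L_i \cap L_j$ (the rank-two flats). Because $J_\cA$ has no component of codimension $\le 1$, it can have no embedded prime of codimension $2$, so the codimension-2 associated primes of $J_\cA$ are exactly the ideals $I_\Lambda$ of these flats. Consequently $J_\cA^{top} = \bigcap_\Lambda \mathfrak{q}_\Lambda$, where $\mathfrak{q}_\Lambda = (J_\cA)_{I_\Lambda} \cap S$ is the $I_\Lambda$-primary component, and it suffices to identify each $\mathfrak{q}_\Lambda$.

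Fix such a $\Lambda$ and let $g = g_\Lambda$ be the product of those linear forms $\ell_k$ with $\Lambda \subseteq L_k$ (equivalently $\ell_k \in I_\Lambda$), and write $f_\cA = g\,h$, where $h$ collects the remaining factors. The forms dividing $g$ define a subarrangement all of whose hyperplanes contain the codimension-2 space $\Lambda$, so Remark \ref{ci} applies to $g$: its Jacobian ideal $\Jac(g)$ is a saturated complete intersection of type $(t-1,t-1)$, where $t = \deg g$ is the number of hyperplanes through $\Lambda$. Choosing coordinates with $\Lambda = V(x_0,x_1)$ makes $g$ a squarefree binary form in $x_0,x_1$ and $\Jac(g) = (g_{x_0}, g_{x_1})$, a codimension-2 complete intersection supported set-theoretically on $\Lambda$; being a saturated codimension-2 complete intersection with radical $I_\Lambda$, it is $I_\Lambda$-primary. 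Thus the target ideal $\Jac(g)$ is already its own $I_\Lambda$-primary component.

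The heart of the argument is the local identity $(J_\cA)_{I_\Lambda} = (\Jac g)_{I_\Lambda}$ in $S_{I_\Lambda}$, and this is the step I expect to require the most care. For the inclusion $J_\cA \subseteq \Jac(g)$ I would argue globally: by Euler's formula (valid since $\operatorname{char} K = 0$) one has $g \in \Jac(g)$, so from $\partial f_\cA/\partial x_i = (\partial g/\partial x_i)\,h + g\,(\partial h/\partial x_i)$ every partial of $f_\cA$ lies in $\Jac(g)$. For the reverse inclusion I would localize: since $h$ involves no form through $\Lambda$, $h \notin I_\Lambda$ and $h$ is a unit in $S_{I_\Lambda}$, so $g = f_\cA\, h^{-1}$ there. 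Differentiating gives $\partial g/\partial x_i = h^{-1}\,\partial f_\cA/\partial x_i - h^{-2}(\partial h/\partial x_i)\, f_\cA$; both terms lie in $(J_\cA)_{I_\Lambda}$ because $\partial f_\cA/\partial x_i \in J_\cA$ and, again by Euler, $f_\cA \in J_\cA$. Hence $g_{x_0}, g_{x_1} \in (J_\cA)_{I_\Lambda}$, giving $(\Jac g)_{I_\Lambda} \subseteq (J_\cA)_{I_\Lambda}$. The key trick is exactly this: writing $g$ as the unit multiple $f_\cA\,h^{-1}$ of $f_\cA$ locally converts the unwanted term $g\,\partial h/\partial x_i$ into a multiple of $f_\cA \in J_\cA$.

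Finally I would assemble the conclusion. The local equality shows that the $I_\Lambda$-primary component $\mathfrak{q}_\Lambda$ of $J_\cA$ equals the $I_\Lambda$-primary component of $\Jac(g_\Lambda)$, which is $\Jac(g_\Lambda)$ itself by the second paragraph. Intersecting over all rank-two flats yields $J_\cA^{top} = \bigcap_\Lambda \Jac(g_\Lambda)$, so the top-dimensional scheme $X^{top}$ is the union of the complete intersections $V(\Jac g_\Lambda)$, each supported on $\Lambda = L_i \cap L_j$ and equal to the scheme cut out by the Jacobian ideal of the product $g_\Lambda$ of the linear divisors of $f_\cA$ lying in $I_\Lambda$, as claimed.
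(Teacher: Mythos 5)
Your proof is correct; the paper itself does not reprove this statement but quotes it from \cite{MN}, and your argument is the same localization strategy used there: the codimension-two associated primes are exactly the ideals $I_\Lambda$ of the pairwise intersections, the complementary factor $h$ becomes a unit in $S_{I_\Lambda}$ so that $(\Jac f_\cA)_{I_\Lambda}=(\Jac g_\Lambda)_{I_\Lambda}$ via the Euler relation, and $\Jac(g_\Lambda)$ is itself $I_\Lambda$-primary by \Cref{ci}. No gaps.
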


The following two tools were also used in \cite{MNS} but stated more formally (and in greater generality than is given here) in \cite{MN}.

\begin{theorem}[Liaison Addition for Arrangements,\cite{MN} Theorem 4.5] 
    \label{LAthm1} 
 Let $\mathcal A_{fg} = \mathcal A_f \cup \mathcal A_g \subset \PP^n$ be a hyperplane arrangement, where $f = f_1 \cdots f_s$ and $g = g_1 \cdots g_t$ are products of linear forms, such that 

\begin{enumerate}

\item \label{LA1} $\hbox{codim } (f_i, f_j,g) = 3$ whenever $i \neq j$.

\item \label{LA2} $\hbox{codim } (f, g_i, g_j) = 3$ whenever $i \neq j$.

\end{enumerate}
 (Note that we do not assume that $\hbox{codim } (f_i, f_j, f_k) = 3$ or that $\hbox{codim } (g_i, g_j, g_k) = 3$ for $i, j, k$ distinct.)

Then $\mathcal A$ has the following properties:

\begin{enumerate} 

\item $\displaystyle \Jac(fg)^{top} = \Jac(f)^{top} \cap \Jac(g)^{top} \cap \underbrace{\left [ \bigcap_{i,j} (f_i, g_j) \right ]}_{= \ (f,g)}.$

\item $\displaystyle \Jac(fg)^{top} = g \cdot \Jac(f)^{top} + f \cdot \Jac(g)^{top}$. We say that $\Jac(fg)^{top}$ is obtained by Liaison Addition from $\Jac(f)^{top}$ and $\Jac(g)^{top}$.

\item Statements (a) and (b) continue to hold if we replace $\Jac(fg)^{top}$, $\Jac(f)^{top}$ and $\Jac(g)^{top}$ with their radicals. 

\end{enumerate}
\end{theorem}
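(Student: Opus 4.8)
The plan is to prove (a) directly from the union-of-complete-intersections description in Proposition \ref{jac = union}, and then to deduce (b) and (c) from (a) by invoking the classical Liaison Addition theorem (Schwartau, \cite{GM}) for general codimension-two schemes.

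First I would establish (a). By Proposition \ref{jac = union}, $\Jac(fg)^{top}$ is the intersection, over all codimension-two flats $\Lambda$ cut out by two hyperplanes of $\mathcal A_{fg}$, of the complete intersections $\Jac(g_\Lambda)$, where $g_\Lambda$ is the product of the linear divisors of $fg$ vanishing on $\Lambda$. I would sort these flats into three families: the $f$-flats $L_{f_i}\cap L_{f_j}$ ($i\neq j$), the $g$-flats $L_{g_i}\cap L_{g_j}$ ($i\neq j$), and the mixed flats $L_{f_i}\cap L_{g_j}$. The point is that hypotheses (1) and (2) keep these families from contaminating one another: if an $f$-flat $\Lambda=L_{f_i}\cap L_{f_j}$ were contained in some $L_{g_k}$, then $\codim(f_i,f_j,g)=2$, contradicting (1); symmetrically (2) handles $g$-flats, and both together show that no third hyperplane of $\mathcal A_{fg}$ passes through a mixed flat. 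Hence the divisors of $fg$ through an $f$-flat are exactly the divisors of $f$ through it, so its contribution to $\Jac(fg)^{top}$ coincides with its contribution to $\Jac(f)^{top}$, and likewise for $g$-flats; while for a mixed flat $g_\Lambda=f_ig_j$, so Remark \ref{ci} (the case $e=2$) gives $\Jac(f_ig_j)=(f_i,g_j)$. Intersecting the three families yields
\[
\Jac(fg)^{top}=\Jac(f)^{top}\cap\Jac(g)^{top}\cap\Big(\textstyle\bigcap_{i,j}(f_i,g_j)\Big).
\]
It then remains to identify $\bigcap_{i,j}(f_i,g_j)$ with $(f,g)$: hypothesis (1) forces $f$ and $g$ to share no common linear factor, so $(f,g)$ is a codimension-two complete intersection, hence unmixed and saturated, with support $\bigcup_{i,j}(L_{f_i}\cap L_{g_j})$; at a general point of each component only one factor of $f$ and one of $g$ vanish, so $(f,g)$ is generically reduced there, hence radical, giving $(f,g)=\bigcap_{i,j}(f_i,g_j)$ and completing (a).

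For (b) I would apply Liaison Addition to the codimension-two unmixed ideals $I_{V_1}=\Jac(f)^{top}$ and $I_{V_2}=\Jac(g)^{top}$, which are saturated for $n\geq 2$, together with the regular sequence $f,g$. The required containments $f\in\Jac(f)^{top}$ and $g\in\Jac(g)^{top}$ follow from Euler's formula $(\deg f)\,f=\sum_k x_k\,\partial f/\partial x_k\in\Jac(f)\subseteq\Jac(f)^{top}$ in characteristic zero. Liaison Addition then shows that $g\cdot\Jac(f)^{top}+f\cdot\Jac(g)^{top}$ is saturated and equals $\Jac(f)^{top}\cap\Jac(g)^{top}\cap(f,g)$, which by (a) is $\Jac(fg)^{top}$. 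Statement (c) is the reduced analogue: the radicals $\sqrt{\Jac(f)},\sqrt{\Jac(g)},\sqrt{\Jac(fg)}$ are the reduced unions of the corresponding families of codimension-two flats, so the same flat classification applies verbatim; the containments $f\in\sqrt{\Jac(f)}$ and $g\in\sqrt{\Jac(g)}$ persist, and the identical argument (the radical form of (a) followed by Liaison Addition) yields both the intersection formula and $\sqrt{\Jac(fg)}=g\cdot\sqrt{\Jac(f)}+f\cdot\sqrt{\Jac(g)}$.

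I expect the main obstacle to be the flat-by-flat bookkeeping in (a): one must verify that hypotheses (1) and (2) simultaneously prevent any flat from lying in two of the three families, keep the local Jacobian at each $f$- or $g$-flat unchanged upon adjoining the other arrangement, and force every mixed flat to carry exactly two hyperplanes so that its local ideal is the linear complete intersection $(f_i,g_j)$. The secondary technical step is the identity $\bigcap_{i,j}(f_i,g_j)=(f,g)$, which rests on $(f,g)$ being a reduced complete intersection; once these structural facts are in place, (b) and (c) reduce to a citation of standard Liaison Addition.
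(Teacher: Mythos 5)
This theorem is imported verbatim from \cite{MN}, Theorem~4.5, and the present paper gives no proof of it, so I am comparing your proposal with the argument in that source. Your treatment of (a) is correct and is the expected one: starting from Proposition~\ref{jac = union}, you sort the codimension-two flats of $S(\mathcal A_{fg})$ into $f$-flats, $g$-flats and mixed flats, use hypotheses (1) and (2) to show that no flat lies in two families and that the local complete intersection at an $f$- or $g$-flat is unchanged by adjoining the other arrangement, and identify the mixed contribution $\bigcap_{i,j}(f_i,g_j)$ with the reduced complete intersection $(f,g)$. All of those verifications go through as you sketch them.

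The gap is in your reduction of (b) to ``a citation of standard Liaison Addition.'' The hypotheses you check --- $J_1=\Jac(f)^{top}$ and $J_2=\Jac(g)^{top}$ saturated, unmixed of codimension two, $f,g$ a regular sequence, and the crossed containments $g\in J_2$, $f\in J_1$ from Euler's relation --- do \emph{not} imply the equality $g\cdot J_1+f\cdot J_2=J_1\cap J_2\cap(f,g)$. For instance, $J_1=J_2=(x_0,x_1)\subset K[x_0,\dots,x_3]$ with $F_1=x_0\in J_2$ and $F_2=x_1\in J_1$ satisfies every one of these conditions, yet $x_0J_1+x_1J_2=(x_0,x_1)^2\neq(x_0,x_1)=J_1\cap J_2\cap(x_0,x_1)$. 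What Liaison Addition gives unconditionally is the short exact sequence $0\to S(-a-b)\to J_1(-b)\oplus J_2(-a)\to gJ_1+fJ_2\to 0$ and hence degree additivity; to upgrade the (always valid) inclusion $gJ_1+fJ_2\subseteq J_1\cap J_2\cap(f,g)$ to an equality one needs, in addition, that $g$ be a nonzerodivisor modulo $J_1$ and $f$ a nonzerodivisor modulo $J_2$ (so that from $h=ag+bf\in J_1\cap J_2$ one can conclude $a\in J_1$ and $b\in J_2$). In the present situation these nonzerodivisor conditions are exactly hypotheses (1) and (2) once more: no factor of $g$ vanishes on an $f$-flat, i.e.\ $g\notin I_\Lambda$ for any associated prime $I_\Lambda$ of $\Jac(f)^{top}$, and symmetrically for $f$. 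So your argument is repairable with one additional observation, but as written it invokes a false general statement and misses the point that (1) and (2) are indispensable in part (b), not only in the bookkeeping for part (a). The identical caveat applies to the radical version in (c).
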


\begin{corollary} [Basic Double Linkage for arrangements, \cite{MN} Corollary 4.6]
 \label{BDL corollary}
Let $\mathcal A_f$ be a hyperplane arrangement in $\mathbb P^n$, defined  by a product of linear forms $f = f_1 \cdots f_s$. Let $h$ be a linear form. Assume: $\hbox{codim } (f_i, f_j,h) = 3$ whenever $i \neq j$ (note that this implies that $(f,h)$ is a regular sequence, and also that $h$ does not vanish on a component of $\hbox{Jac}(f)^{top}$).

\noindent Then the following hold:

\begin{enumerate} 

\item $\displaystyle \Jac(fh)^{top} = \Jac(f)^{top} \cap  \underbrace{\left [ \bigcap_{i,j} (f_i, h) \right ]}_{= \ (f,h)}.$

\item $\displaystyle \Jac(fh)^{top} = h \cdot \Jac(f)^{top} + (f)$. (We say that {\em $\Jac(fh)^{top}$ is obtained from $\Jac(f)^{top}$ by Basic Double Linkage}). 

\item Statements (a) and (b) continue to hold if we replace $\Jac(fh)^{top}$ and $\Jac(f)^{top}$ by their radicals. 

\end{enumerate}
\end{corollary}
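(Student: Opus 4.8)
The plan is to derive this corollary as the special case of Liaison Addition (Theorem \ref{LAthm1}) in which the second factor is a single linear form. I would set $g = h$ in Theorem \ref{LAthm1}, so that $t = 1$ and $g_1 = h$. Condition (\ref{LA1}) of that theorem becomes exactly the hypothesis $\codim(f_i, f_j, h) = 3$ for $i \neq j$, while condition (\ref{LA2}), which ranges over \emph{distinct} pairs $g_i, g_j$, is vacuous because there is only the single factor $g_1 = h$. Thus both hypotheses of Theorem \ref{LAthm1} are satisfied and its conclusions apply verbatim, modulo interpreting the ingredients attached to the one hyperplane $V(h)$.

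The only point requiring care is that interpretation. First I would observe that for a single linear form $h$ the Jacobian ideal $\Jac(h)$ is generated by the constant partial derivatives of $h$, hence equals the unit ideal $S$; in particular it has no codimension-two primary components, so $\Jac(h)^{top} = S$ (the empty intersection). Substituting this into conclusion (a) of Theorem \ref{LAthm1} collapses the factor $\Jac(g)^{top}$ to $S$, leaving
\[
\Jac(fh)^{top} = \Jac(f)^{top} \cap \Big[ \textstyle\bigcap_i (f_i, h) \Big],
\]
while substituting it into conclusion (b) turns $f \cdot \Jac(g)^{top}$ into $f \cdot S = (f)$, giving $\Jac(fh)^{top} = h \cdot \Jac(f)^{top} + (f)$. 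The identity $\bigcap_i (f_i, h) = (f, h)$ indicated by the underbrace is the standard fact that the complete intersections $(f_i, h)$ glue to the single complete intersection $(f_1 \cdots f_s, h)$: the hypothesis $\codim(f_i, f_j, h) = 3$ forces $h, f_i, f_j$ to meet properly for all $i \neq j$, which I would exploit by induction on the number of factors, reducing modulo $h$ to the coprimality of distinct nonproportional linear forms in the UFD $S/(h)$. Part (c) follows by running the same specialization through part (c) of Theorem \ref{LAthm1}.

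As an independent check, and a route that avoids invoking the full Liaison Addition machinery, I would reprove (a) directly from Proposition \ref{jac = union}: the top-dimensional part of $\Jac(fh)$ is the union of complete intersections indexed by the codimension-two spaces $L_i \cap L_j$ of the enlarged arrangement $\mathcal A_f \cup V(h)$. These split into two families. Those $L_i \cap L_j$ with both hyperplanes among the $f_k$ satisfy $h \notin I_{L_i \cap L_j}$ by hypothesis, so they contribute exactly the components of $\Jac(f)^{top}$; those of the form $L_i \cap V(h)$ have, by properness, local product of divisors equal to just $f_i h$, whence $\Jac(f_i h) = (f_i, h)$. Intersecting the two families reproduces $\Jac(f)^{top} \cap (f, h)$. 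Then (b) follows from the algebraic basic-double-link identity $h I + (f) = I \cap (f, h)$, valid whenever $f \in I$ and $h$ is a nonzerodivisor on $S/I$: here $f \in \Jac(f) \subseteq \Jac(f)^{top} = I$ by Euler's formula (characteristic zero), and $h$ is a nonzerodivisor on $S/I$ precisely because, as the hypothesis records, $h$ vanishes on no component of $\Jac(f)^{top}$.

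I expect the main obstacle to be bookkeeping rather than a genuine difficulty: correctly accounting for the degenerate ingredients of the one-hyperplane factor (the convention $\Jac(h)^{top} = S$), and, in the self-contained route, verifying that no codimension-two component is lost or doubled when the arrangement passes from $\mathcal A_f$ to $\mathcal A_f \cup V(h)$, i.e. that the proper-intersection hypothesis truly prevents $V(h)$ from meeting any component of $\Jac(f)^{top}$ and keeps the new components $(f_i, h)$ distinct and reduced. The algebraic identity in (b) and its radical analogue in (c) are then routine once the nonzerodivisor hypothesis is in hand.
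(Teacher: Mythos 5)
Your proposal is correct. The paper states this result without proof, quoting it from \cite{MN} as an immediate consequence of the Liaison Addition theorem, and your primary derivation --- specializing Theorem \ref{LAthm1} to $g = h$ with $t = 1$, observing that $\Jac(h)^{top} = S$ so that the factor $\Jac(g)^{top}$ collapses to $S$ in (a) and $f \cdot \Jac(g)^{top}$ collapses to $(f)$ in (b) --- is exactly the intended route; your self-contained cross-check via Proposition \ref{jac = union} and the basic double link identity is also sound.
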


\begin{remark}
    One of the first results that was proved about Liaison Addition in codimension two (see for instance \cite{BM} Proposition 2.7) is that there is a certain very useful short exact sequence, which we will state in the context of hyperplane arrangemnts under the assumptions of Theorem \ref{LAthm1}. Let $a = \deg (f), \ b = \deg (g)$. Then the following sequence is exact: 
    \[
0 \rightarrow S(-a-b ) 
\stackrel{ { \tiny 
\left [  
\begin{array}{c} 
\hspace{-.1in} -g \\ \, \hspace{-.05in} f 
\end{array} \hspace{-.05in}
\right ] } }{\longrightarrow}  
\hbox{Jac}(g)^{top} (-a) \oplus \hbox{Jac}(f)^{top}(-b) 
\stackrel{[f \ g]}{\longrightarrow}
\hbox{Jac}(fg)^{top} \rightarrow 0.
    \]
    The version for Basic Double Linkage is
    \[
    0 \rightarrow S(-a-1 ) 
\stackrel{ { \tiny 
\left [  
\begin{array}{c} 
\hspace{-.1in} -h \\  \hspace{-.05in} f 
\end{array} 
\hspace{-.05in}
\right ] } }{\longrightarrow}  
 S(-a) \oplus
\hbox{Jac}(f)^{top} (-1) 
\stackrel{[f \ h]}{\longrightarrow}
\hbox{Jac}(fh)^{top} \rightarrow 0.
    \]
\end{remark}

\begin{remark} \label{BDL are ok} 
    Suppose $J$ is an ideal (for example a Jacobian or its top dimensional part) and $\ell$ is a linear form not vanishing on any  component of $J$. Let $F \in [J]_a$. Let $I = \ell \cdot J + (F)$ be the corresponding basic double link. Denote by $\nu$ the number of minimal generators of an ideal. Then $\nu(I) = \nu(J)$ if and only if $F$ is a minimal generator of $J$, and otherwise $\nu(I) = \nu(J)+1$. 
 Furthermore, this information leads to a description of the minimal free resolution of $I$. See Proposition \ref{prop:MFR under BDL} and Corollary \ref{cor:MFR res after adding generic hyperplane} for details.

\end{remark}

\begin{remark}
    In part of this paper we will apply Liaison Addition and Basic Double linkage to line arrangements in $\PP^2$. For this we need to observe that since $\Jac(f)$ defines a zero-dimensional scheme, there are no nontrivial embedded components, and so $\Jac(f)^{sat}$ and $\Jac(f)^{top}$ coincide. In particular, we will use Liaison Addition and Basic Double Linkage to study the Jacobian ideal $J$ and its saturation $J^{sat}$ for line arrangements in $\PP^2$. 
    
    We will use these results to describe the minimal free resolutions of certain arrangements, and as a consequence we will make a study of the corresponding Milnor modules (defined below). Then we will describe how to use these as building blocks for larger arrangements with certain hypotheses.
\end{remark}

\begin{definition}
    Let $\mathcal A$ be a line arrangement in $\PP^2$ with Jacobian ideal $J$. Then $\mathcal A$ is {\em free} if $J = J^{sat}$. If $\mathcal A$ is not free, the corresponding {\em Milnor module} $ M(\mathcal A)$  is the module $J^{sat}/J$.
\end{definition}

There is an important fact about Milnor modules, due to Sernesi \cite{sernesi}.  Generalizations are due to Van Straten and Warmt  as well as  Eisenbud and Ulrich. We state the result in the context that we need.

\begin{theorem}[{\cite[Theorem 3.2]{sernesi}, \cite[Theorem 4.7]{VsW},  \cite[Theorem 2.1]{EU}}]
    \label{thm:Milnor duality} 
    Let $\mathcal A$ be a line arrangement in $\PP^2$ defined by a product $f$ of $r+1$ linear forms. Let $J = (\frac{\partial f}{\partial x}, \frac{\partial f}{\partial y}, \frac{\partial f}{\partial z})$ be the associated Jacobian ideal, so each generator has degree $r$.  Then $M(\mathcal A) \cong [M(\mathcal A)(3 r)]^\vee$, where $^\vee$ represents the $K$-dual. In particular, the resolution  is self-dual up to shift.
\end{theorem}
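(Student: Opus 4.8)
The plan is to recognize the theorem as a self-duality statement for a finite-length module and to produce that self-duality from the Gorenstein duality of a complete intersection sitting inside $J$. First I would record that the Milnor module is the finite-length part of the Jacobian ring: since $J^{sat}$ is the saturation of $J$, one has $M = J^{sat}/J = H^0_{\mathfrak m}(S/J)$, where $\mathfrak m = (x,y,z)$. Because graded $K$-duality is a perfect involution on finite-length modules ($M^{\vee\vee}\cong M$), the assertion $M\cong [M(3r)]^\vee$ is equivalent to the cleaner statement $M^\vee \cong M(3r)$. Graded local duality over $S=K[x,y,z]$, whose canonical module is $\omega_S = S(-3)$, then gives $M^\vee = H^0_{\mathfrak m}(S/J)^\vee \cong \operatorname{Ext}^3_S(S/J,\omega_S)$. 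Thus the whole theorem reduces to identifying $\operatorname{Ext}^3_S(S/J,S)$, up to a twist, with $M$ itself.

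The engine for that identification is the Gorenstein duality of an ambient complete intersection. The point is that the three partials $\partial f/\partial x,\partial f/\partial y,\partial f/\partial z$ do \emph{not} form a regular sequence: the singular scheme of $\cA$ has dimension $0$, so $\operatorname{ht} J = 2 < 3$, and this is exactly why $S/J$ fails to be Cohen--Macaulay and $M$ is nonzero. I would choose two general $K$-linear combinations $q_1,q_2$ of the partials; since $\operatorname{ht} J = 2$ they form a regular sequence, so $\mathfrak c = (q_1,q_2)\subseteq J$ is a complete intersection of two forms of degree $r$. The quotient $A = S/\mathfrak c$ is a one-dimensional graded complete intersection, hence Gorenstein, with canonical module $\omega_A\cong A(2r-3)$ (read off the Koszul resolution of $A$). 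Writing $J = \mathfrak c + (h)$ for a third general combination $h$ of the partials, we have $S/J = A/\bar h A$, and the multiplication-by-$\bar h$ sequence
\[
0 \to (0:_A \bar h)(-r) \to A(-r) \xrightarrow{\ \bar h\ } A \to S/J \to 0
\]
displays the cokernel $S/J$ and the annihilator submodule $(0:_A\bar h)$ as the two sides of the Gorenstein ring $A$.

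Applying the duality $\operatorname{Ext}^\bullet_A(-,\omega_A)$ over the Gorenstein ring $A$ to this sequence, the self-duality of $A$ interchanges the cokernel $S/J$ with the kernel $(0:_A\bar h)$, up to the twists contributed by $\omega_A\cong A(2r-3)$ and the two degree-$r$ shifts of $\bar h$. Concretely, local duality over $A$ (of dimension $1$) gives $\operatorname{Ext}^1_A(S/J,\omega_A)\cong H^0_{\mathfrak m}(S/J)^\vee = M^\vee$, while reading the same $\operatorname{Ext}$ off the displayed sequence returns the finite-length part of $A/\bar hA=S/J$, that is $M$ again, shifted. Matching the two descriptions yields $M^\vee\cong M(3r)$, and hence $M\cong[M(3r)]^\vee$. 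The Euler relation $x\,\partial f/\partial x + y\,\partial f/\partial y + z\,\partial f/\partial z = (r+1)f$ places $f$ in $J$ and keeps the whole construction symmetric, which is what ultimately forces the twist to be a single global shift. Conceptually, the self-duality of $A$ used here is just the self-duality of the Koszul complex on $q_1,q_2$; one could instead run the argument directly with the Koszul complex on the three partials, whose homology is concentrated in degrees $0$ and $1$ because $\operatorname{ht}J=2$, and extract the same conclusion from the two hyper-$\operatorname{Ext}$ spectral sequences.

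The main obstacle is the failure of Cohen--Macaulayness of $S/J$. Because $S/J$ is not Cohen--Macaulay, one cannot simply invoke complete-intersection linkage in its usual form; instead one must carefully separate the finite-length part $M = H^0_{\mathfrak m}(S/J)$ from the one-dimensional part $S/J^{sat}$, and verify that the Gorenstein duality of $A$ descends to an honest isomorphism of graded $S$-modules on $M$, rather than merely an equality of Hilbert functions. Pinning down the precise twist in the statement is the delicate bookkeeping step: it requires tracking the $a$-invariant through $\omega_A\cong A(2r-3)$ together with the degree-$r$ shifts coming from $\bar h$, and checking that the double annihilator $(0:_A(0:_A\bar h))$ behaves as expected in the Gorenstein ring $A$.
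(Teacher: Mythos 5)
The paper offers no proof of \Cref{thm:Milnor duality}; it is quoted from Sernesi, van Straten--Warmt and Eisenbud--Ulrich, and the route you take --- writing $S/J=A/\bar hA$ for the one-dimensional graded Gorenstein complete intersection $A=S/(q_1,q_2)$ cut out by two general partials, and transporting the duality of $\omega_A\cong A(2r-3)$ to the finite-length part --- is essentially the argument of those references (the paper later gives an independent derivation, in the proof of \Cref{thm:Betti from sat}, from the self-duality $E^*\cong E(3d-3)$ of the rank-two reflexive syzygy module of $\Jac(f)$). Your reduction $M=H^0_{\fm}(S/J)$, the local-duality identification $M^\vee\cong\operatorname{Ext}^1_A(S/J,\omega_A)$, and the four-term sequence are all correct. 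However, the step you defer as ``delicate bookkeeping'' is in fact the heart of the proof. Computing $\operatorname{Ext}^1_A(S/J,\omega_A)$ from your presentation gives $\bigl[(0:_A(0:_A\bar h))/\bar hA\bigr](3r-3)$, and one must actually prove that $(0:_A(0:_A\bar h))$ equals the saturation $\bar hA:_A\fm^\infty$, so that this quotient is $M$ itself. This is true but needs an argument: for $0\neq x\in A$ one has $A/(0:_Ax)\cong xA\subseteq A$, so $\operatorname{Ass}\bigl(A/(0:_Ax)\bigr)\subseteq\operatorname{Ass}(A)=\operatorname{Min}(A)$; hence $(0:_A(0:_A\bar h))=\bigcap_{x\in(0:_A\bar h)}(0:_Ax)$ is saturated, and since it agrees with $\bar hA$ at every minimal prime of $A$ (by the double-annihilator property of the Artinian Gorenstein localizations $A_{\mathfrak p}$), it is the saturation of $\bar hA$.

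Once that is supplied, your own computation yields $M^\vee\cong M(3r-3)$, i.e.\ $M\cong[M(3(d-2))]^\vee$ with $d=r+1=\deg f$ --- not $M\cong[M(3r)]^\vee$ as you assert in your last step. The center of symmetry $3(d-2)$ is the classical one, and it is forced by examples: for four general lines the Milnor module is one-dimensional, concentrated in degree $3$, so the duality must be centered at $6=3(d-2)$, not $9=3r$; the self-dual shape of the resolution of $M$ in \Cref{thm:Betti from sat} gives the same value. So the twist you claim at the end does not follow from the argument you set up. The discrepancy appears to lie in the printed statement of \Cref{thm:Milnor duality} rather than in your method, but as written your proof asserts a conclusion that its own bookkeeping contradicts, and you should flag this rather than match the target blindly.
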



\section{Reducing to the case of line arrangements}
    \label{sec:reduction to plane} 
    
For any hyperplane arrangement $\cA \subset \PP^n$ with $n \ge 3$ and any hyperplane $H \subset \PP^n$ that is not in $\cA$, we 
may consider the set of codimension two subspaces of $\PP^n$ 
\[
\cA \cap H = \{ L \cap H \; \mid \; L \in \cA \}
\]
as a set of hyperplanes of $H$. Thus, they determine a hyperplane arrangement of $H$ to which we refer as the restriction of $\cA$ to $H \cong \PP^{n-1}$. Repeating this restriction step  $n - 2$ times, we obtain a line arrangement in a projective plane. The goal of this section is to show that some properties of the original hyperplane arrangement can be inferred from its restriction to a line arrangement.  Later on, this will allow us to simplify some arguments by focussing on line arrangements only. 

We begin with a general observation. 

\begin{lemma}
   \label{lem:primary decomp under restriction} 
Let $I \subset S$ be an unmixed ideal with minimal primary decomposititon
\[
I = \fq_1 \cap \cdots \cap \fq_t. 
\]
Assume $\dim S/I \ge 2$, and let $x \in [S]_1$ be a nonzerodivisor of $S/I$. For any ideal $\fa \subset S$ and any $f \in S$, denote by $\bar{\fa}$  and $\bar{f}$ the image of $\fa$ and $f$, respectively,  in the polynomial ring $\bar{S} = S/x S$. Then one has 
\[
(\bar{I})^{top} = \overline{\fq_1}^{top} \cap \cdots \cap \overline{\fq_t}^{top}. 
\]
\end{lemma}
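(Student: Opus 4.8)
The plan is to verify the identity locally, at each minimal prime of $\bar I$ of codimension $c$, where $c = \codim_S I = \codim_S \fq_i$ (a single number, since $I$ is unmixed). Throughout I would use the standard description of the top part: for an ideal $\fa \subset \bar S$ of codimension $c$ one has $\fa^{top} = \bigcap_{\mathfrak{P}} (\fa\,\bar S_{\mathfrak{P}} \cap \bar S)$, the intersection ranging over the minimal primes $\mathfrak{P}$ of $\fa$ with $\codim \mathfrak{P} = c$; these are the isolated primary components, which are uniquely determined. In particular an unmixed codimension-$c$ ideal is recovered from its localizations at its (codimension-$c$) minimal primes, so two such ideals with the same minimal primes and the same localizations there must coincide. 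Hence it suffices to check that both sides are unmixed of codimension $c$, have the same codimension-$c$ minimal primes, and agree after localizing at each of them.

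First I would record the consequences of the hypothesis. Since $x$ is a nonzerodivisor on $S/I$, it avoids every associated prime $\mathfrak{p}_i = \sqrt{\fq_i}$, hence is also a nonzerodivisor on each $S/\fq_i$. Feeding this into the hyperplane-section sequences $0 \to (S/\fq_i)(-1) \xrightarrow{x} S/\fq_i \to \bar S/\overline{\fq_i} \to 0$ (and the analogue for $I$), together with Krull's principal ideal theorem, shows that $\bar I$ and each $\overline{\fq_i}$ have codimension exactly $c$ in $\bar S$ and that every minimal prime of $\overline{\mathfrak{p}_i}$ has codimension $c$. Writing $\Sigma_i$ for the minimal primes of $\overline{\fq_i}$ (equivalently of $\overline{\mathfrak{p}_i}$), the codimension-$c$ minimal primes of $\bar I$ are exactly $\Sigma := \bigcup_i \Sigma_i$, which is also the minimal-prime set of the right-hand side $\bigcap_i \overline{\fq_i}^{top}$. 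Both sides are unmixed of codimension $c$: the left by construction, the right as a finite intersection of unmixed codimension-$c$ ideals.

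The core is the local comparison at a fixed $\mathfrak{P} \in \Sigma$, and the essential point is that $\mathfrak{P}$ should lie over a \emph{single} component, i.e.\ that there is a unique $i_0$ with $\mathfrak{P} \supseteq \overline{\mathfrak{p}_{i_0}}$. Granting this, I would lift $\mathfrak{P}$ to the prime $\mathfrak{P}' \subset S$ minimal over $\mathfrak{p}_{i_0} + (x)$ (of codimension $c+1$, containing $x$). Localizing the primary decomposition of $I$ at $\mathfrak{P}'$, every $\fq_j$ with $\mathfrak{p}_j \not\subseteq \mathfrak{P}'$ — that is, every $j \neq i_0$ — becomes the unit ideal, so the intersection collapses to $I\,S_{\mathfrak{P}'} = \fq_{i_0} S_{\mathfrak{P}'}$. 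Reducing mod $x$ then gives $\bar I\,\bar S_{\mathfrak{P}} = \overline{\fq_{i_0}}\,\bar S_{\mathfrak{P}}$ \emph{without} ever needing reduction to commute with the intersection. On the right, localizing at $\mathfrak{P}$ kills the factors with $i \neq i_0$ (since $\mathfrak{P} \notin \Sigma_i$) and turns the $i_0$ factor into $\overline{\fq_{i_0}}^{top}\bar S_{\mathfrak{P}} = \overline{\fq_{i_0}}\,\bar S_{\mathfrak{P}}$, because localizing a top part at one of its own minimal primes recovers the localization of the whole ideal. Thus both sides localize identically at every $\mathfrak{P} \in \Sigma$, and by the first paragraph they are equal.

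The delicate step — the one I expect to be the main obstacle — is precisely the uniqueness of $i_0$, i.e.\ that distinct restricted components $\overline{\mathfrak{p}_i}$, $\overline{\mathfrak{p}_j}$ share no minimal prime. This is where the choice of hyperplane matters: for $i \neq j$ the locus $V(\mathfrak{p}_i) \cap V(\mathfrak{p}_j)$ has codimension $\geq c+1$, so a sufficiently general $V(x)$ meets it in codimension $\geq c+2$ and the restricted schemes acquire no common codimension-$c$ component. This separation is not automatic from the bare nonzerodivisor hypothesis: if $V(x)$ happens to pass through $V(\mathfrak{p}_i) \cap V(\mathfrak{p}_j)$, then both components survive the localization at a common $\mathfrak{P}'$, the intersection no longer collapses, reduction mod $x$ fails to commute with it, and the two sides can genuinely differ. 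I would therefore carry out the argument under the (automatically available, in the intended applications) assumption that $x$ is general enough to separate the restricted components, and isolate the uniqueness of $i_0$ as the heart of the matter. The hypothesis $\dim S/I \geq 2$ enters to guarantee that $\bar S/\bar I$ remains of positive dimension, so that the codimension-$c$ part of $\bar I$ is its honest equidimensional part and the localization description of $(\cdot)^{top}$ applies.
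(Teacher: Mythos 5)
Your proof strategy is sound, and the ``delicate step'' you isolate --- that distinct restricted components $\overline{\mathfrak p_i}$, $\overline{\mathfrak p_j}$ must share no minimal prime --- is not a weakness of your approach but a genuine gap in the lemma as stated. The paper's own proof is a two-line containment-plus-degree argument: from $\bar I \subseteq \bigcap_i \overline{\fq_i} \subseteq \bigcap_i \overline{\fq_i}^{top}$ it deduces $(\bar I)^{top} \subseteq \bigcap_i \overline{\fq_i}^{top}$ by localizing at the codimension-$c$ primes, and then asserts that both ideals of this inclusion have the same degree as $I$. The equality $\deg (\bar I)^{top} = \deg \bar I = \deg I$ does follow from the nonzerodivisor hypothesis via Hilbert polynomials, but the equality $\deg \bigl( \bigcap_i \overline{\fq_i}^{top} \bigr) = \sum_i \deg \fq_i = \deg I$ is exactly your separation condition in disguise: the local lengths of the $\overline{\fq_i}^{top}$ add up at every minimal prime if and only if no two of them share one. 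So the paper's degree count and your collapse of the primary decomposition at $\mathfrak P'$ hinge on the same unstated hypothesis.

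And that hypothesis really is needed. In $S = K[x_0,\ldots,x_3]$ take $I = (x_0, x_1x_2) = (x_0,x_1) \cap (x_0,x_2)$, the unmixed ideal of two lines meeting at $p = (0:0:0:1)$, and $x = x_1 + x_2$, which vanishes at $p$ but lies in neither associated prime, so it is a nonzerodivisor on $S/I$. Identifying $\bar S$ with $K[x_0,x_1,x_3]$ via $x_2 \mapsto -x_1$, one finds $(\bar I)^{top} = \bar I = (x_0, x_1^2)$ of degree $2$, whereas $\overline{\fq_1}^{top} \cap \overline{\fq_2}^{top} = (x_0,x_1)$ has degree $1$. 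So the stated conclusion can fail for a mere nonzerodivisor, and the lemma needs the extra assumption you propose (that $V(x)$ contains no codimension-$(c+1)$ component of any $V(\mathfrak p_i) \cap V(\mathfrak p_j)$); this is harmless in the paper's applications, where the hyperplane, or the $2$-plane $\Lambda$, is taken to be general. With that assumption added, your localization argument is complete and correct.
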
 

\begin{proof}
Passing to $\bar{S}$, we obtain
\[
\bar{I} \subseteq \overline{\fq_1} \cap \cdots \cap \overline{\fq_t} \subseteq \overline{\fq_1}^{top} \cap \cdots \cap \overline{\fq_t}^{top}.
\] 
Localizing at any associated prime of $S/\bar{I}^{top}$ then provides 
\[
(\bar{I})^{top} \subseteq \overline{\fq_1}^{top} \cap \cdots \cap \overline{\fq_t}^{top}. 
\]
The assumption on $x$ yields that both ideals of the inclusion have the same degree as $I$, which implies the desired equality. 
\end{proof}

The next result is likely well-known. Note though that we do not assume that $x$ is general. 

\begin{lemma}
  \label{lem:liaison under restriction} 
If ideals $I$ and $J$ of $S$ are linked by a Gorenstein ideal $\fc$, and a linear form $x \in S$ satisfies $\fc : x = \fc$, then $(\bar{I})^{top}$ and $(\bar{J})^{top}$ are linked by $\bar{\fc}$. 
\end{lemma}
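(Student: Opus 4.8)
The plan is to verify directly the three requirements of an algebraic link in $\bar S$: that $\bar{\fc}$ is Gorenstein of the same codimension $c = \codim \fc$, that $\bar{\fc} \subseteq (\bar I)^{top} \cap (\bar J)^{top}$, and that $(\bar I)^{top} = \bar{\fc} : (\bar J)^{top}$, the symmetric colon relation then following from the involutivity of linkage by a Gorenstein ideal. The preliminary observation that makes everything work is that although $x$ is only assumed regular on $S/\fc$, it is automatically a nonzerodivisor on $S/I$ and on $S/J$. Indeed, $I$ is unmixed of codimension $c$ with $\fc \subseteq I$, so every associated $(=\text{minimal})$ prime of $I$ is a codimension-$c$ prime containing $\fc$, hence a minimal prime of $\fc$; since $x$ avoids all minimal primes of the Cohen--Macaulay ideal $\fc$, it avoids all associated primes of $I$, and similarly for $J$. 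Moreover, because $x$ is regular on the Gorenstein ring $S/\fc$, the quotient $\bar S/\bar{\fc} = (S/\fc)/x(S/\fc)$ is again Gorenstein of codimension $c$ in $\bar S$; and $\fc \subseteq I$ gives $\bar{\fc} \subseteq \bar I \subseteq (\bar I)^{top}$, and likewise for $J$.

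Next I would record the degree bookkeeping. Linkage gives $\deg(S/\fc) = \deg(S/I) + \deg(S/J)$. Since $x$ is a nonzerodivisor on each of $S/\fc$, $S/I$, $S/J$, cutting by $x$ preserves multiplicities, and passing to the codimension-$c$ part changes nothing about the degree; hence $\deg(\bar S/\bar{\fc}) = \deg(S/\fc)$, $\deg(\bar S/(\bar I)^{top}) = \deg(S/I)$, and $\deg(\bar S/(\bar J)^{top}) = \deg(S/J)$, so that $\deg(\bar S/(\bar I)^{top}) + \deg(\bar S/(\bar J)^{top}) = \deg(\bar S/\bar{\fc})$. Setting $K := \bar{\fc} : (\bar J)^{top}$, the theory of linkage by the Gorenstein ideal $\bar{\fc}$ shows $K$ is unmixed of codimension $c$ with $\deg(\bar S/K) = \deg(\bar S/\bar{\fc}) - \deg(\bar S/(\bar J)^{top}) = \deg(\bar S/(\bar I)^{top})$. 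It therefore suffices to prove the single containment $(\bar I)^{top} \subseteq K$, for then two unmixed codimension-$c$ ideals of equal degree, one inside the other, must coincide, exactly by the localization-at-associated-primes argument that closes the proof of Lemma \ref{lem:primary decomp under restriction}.

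To prove $(\bar I)^{top} \subseteq K$ I must show $(\bar I)^{top} \cdot (\bar J)^{top} \subseteq \bar{\fc}$. From $I = \fc : J$ one has $IJ \subseteq \fc$ and hence $\bar I\,\bar J \subseteq \bar{\fc}$; the difficulty is that the top parts are strictly larger ideals, so this global containment does not pass to them for free. I would localize at each minimal prime $\bar{\mathfrak p}$ of $\bar{\fc}$, which has codimension exactly $c$. At such a prime the top part agrees with the ideal, $\bigl((\bar I)^{top}\bigr)_{\bar{\mathfrak p}} = \bar I_{\bar{\mathfrak p}}$ and $\bigl((\bar J)^{top}\bigr)_{\bar{\mathfrak p}} = \bar J_{\bar{\mathfrak p}}$, since localizing at a codimension-$c$ prime annihilates precisely the higher-codimension primary components that the top part discards. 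Thus $\bigl((\bar I)^{top}(\bar J)^{top}\bigr)_{\bar{\mathfrak p}} = (\bar I\,\bar J)_{\bar{\mathfrak p}} \subseteq \bar{\fc}_{\bar{\mathfrak p}}$ at every minimal prime, and because $\bar{\fc}$ is unmixed it is the intersection of these localizations; this yields $(\bar I)^{top}(\bar J)^{top} \subseteq \bar{\fc}$ globally, i.e.\ $(\bar I)^{top} \subseteq K$. Combined with the degree equality this gives $(\bar I)^{top} = \bar{\fc} : (\bar J)^{top}$, and the same argument with $I$ and $J$ interchanged (or involutivity of Gorenstein linkage) gives $(\bar J)^{top} = \bar{\fc} : (\bar I)^{top}$, so the two ideals are linked by $\bar{\fc}$.

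I expect the main obstacle to be exactly this passage from the easy global containment $\bar I\,\bar J \subseteq \bar{\fc}$ to the statement for the top parts; the localization step is the technical heart, and it is where the hypothesis that $x$ is regular on $S/\fc$ is genuinely used, both to keep $\bar{\fc}$ unmixed and, via the automatic regularity on $S/I$ and $S/J$, to prevent any top-dimensional degree from being lost under restriction. The degree comparison then does the remaining work of upgrading a one-sided containment to the desired equality.
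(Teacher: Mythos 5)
Your proof is correct and follows essentially the same route as the paper's: both arguments pass the containment $I\cdot J \subseteq \fc$ down to $\bar{S}$, upgrade it to $(\bar{I})^{top} \subseteq \bar{\fc} : (\bar{J})^{top}$ by localizing at the codimension-$c$ primes, and then conclude equality by comparing degrees, using that $x$ is automatically a nonzerodivisor on $S/I$ and $S/J$ so that degrees are preserved. Your version simply spells out the localization and nonzerodivisor steps that the paper's two-line proof leaves implicit.
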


\begin{proof}
The assumption on $x$ ensures that the passage from $I, J$ and $\fc$ to their images  in $\bar{S}$ preserves  degrees. 
Furthermore, $I \cdot J \subseteq \fc$ implies $\bar{I} \cdot \bar{J} \subseteq \bar{\fc}$, and so 
\[
(\bar{I})^{top} \subseteq \bar{\fc} : \bar{J} = \bar{\fc} : (\bar{J})^{top}. 
\]
Both ideals have the same degree, and so they must be equal. 
\end{proof}

Turning to Jacobian ideals, we need further notation. 
Consider any linear form $x \neq 0$ of $S$. We want to identify $S/xS$ and a suitable subring $R$ of $S$. To this end choose
linear forms $y_1,\ldots,y_n \in S$ such that $S = K[x, y_1,\ldots,y_n]$ and $\frac{\partial x}{\partial y_i} = 0$ for any $i$. Any homogeneous $g \in S$ can be uniquely written as
\begin{equation}
   \label{eq:def restriction} 
g = \sum_{ j \ge 0} x^j g_j \quad \text{ with homogeneous $g_j \in R = K[y_1,\ldots,y_n]$}. 
\end{equation}
Then $g_0$ is the restriction of $g$ to $R$ and we write $\bar{g} = g_0$. Moreover, the Jacobian ideal of $\bar{g}$, denoted by $\Jac (\bar{g})$, is the ideal of $R$ that is generated by the partial derivatives of $\bar{g}$ with respect to $y_1,\ldots,y_n$. Note that 
$\frac{\partial x}{\partial y_i} = 0$ implies $\frac{\partial y_i}{\partial x} = 0$, and so $\frac{\partial \bar{g}}{\partial x} = 0$ for any $\bar{g} \in R$. 

For a hyperplane arrangement $\cA \subset \PP^n$ and a hyperplane $H = V(x) \notin \cA$ that does not contain any intersection of two 
distinct hyperplanes of $\cA$, denote by $\bar{\cA}$ the restriction of $\cA$ to $H$. Thus, if $f_\cA = \ell_1 \cdots \ell_d \in S$, then $\bar{\cA}$ is defined by 
\begin{equation}
  \label{eq:restricted arrangement}
f_{\bar{\cA}} =  \overline{\ell_1} \cdots \overline{\ell_d} = \overline{f_\cA} \in R. 
\end{equation}
Is is again a squarefree polynomial. 

\Cref{eq:def restriction} immediately implies:  

\begin{lemma}
  \label{lem:restricted partials}
For any $g \in S$, one has 
\[
\overline{ \frac{\partial g}{\partial y_i} } = \frac{\partial \bar{g}}{\partial y_i}. 
\]
\end{lemma}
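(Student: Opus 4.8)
The plan is to exploit the explicit expansion \eqref{eq:def restriction} together with the fact that, by our choice of the coordinate system $x, y_1, \dots, y_n$, the derivation $\frac{\partial}{\partial y_i}$ annihilates $x$. First I would write a given homogeneous $g \in S$ in its unique form $g = \sum_{j \ge 0} x^j g_j$ with $g_j \in R$, as in \eqref{eq:def restriction}, so that by definition $\bar{g} = g_0$.

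Next I would apply $\frac{\partial}{\partial y_i}$ to this expansion. Because $\frac{\partial x}{\partial y_i} = 0$, the Leibniz rule gives $\frac{\partial x^j}{\partial y_i} = j\, x^{j-1} \frac{\partial x}{\partial y_i} = 0$ for every $j$, and hence
\[
\frac{\partial g}{\partial y_i} = \sum_{j \ge 0} x^j \, \frac{\partial g_j}{\partial y_i}.
\]
Since each $g_j$ lies in $R = K[y_1, \dots, y_n]$, so does each $\frac{\partial g_j}{\partial y_i}$; thus the right-hand side is precisely the expansion of $\frac{\partial g}{\partial y_i}$ in powers of $x$ with coefficients in $R$, in the sense of \eqref{eq:def restriction}.

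By the uniqueness of that decomposition, the restriction $\overline{\frac{\partial g}{\partial y_i}}$ is read off as the $j = 0$ coefficient of this expansion, namely $\frac{\partial g_0}{\partial y_i} = \frac{\partial \bar{g}}{\partial y_i}$, which is the claimed identity.

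There is essentially no obstacle here. The only points one must be sure of are that $\frac{\partial}{\partial y_i}$ really kills $x$ (guaranteed by the construction of the coordinate system) and that taking the restriction commutes with extracting the $x^0$-coefficient, which is immediate from the uniqueness in \eqref{eq:def restriction}. Alternatively, one could argue by $K$-linearity, reducing to monomials $x^a y_1^{b_1} \cdots y_n^{b_n}$ and checking the identity term by term; but the coefficient-extraction argument above avoids any case analysis and makes the role of $\frac{\partial x}{\partial y_i} = 0$ transparent.
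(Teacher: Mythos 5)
Your argument is correct and is essentially identical to the paper's own proof: both expand $g = \sum_{j \ge 0} x^j g_j$ as in \eqref{eq:def restriction}, differentiate term by term using $\frac{\partial x}{\partial y_i} = 0$, and read off the $x^0$-coefficient to conclude. Your write-up just spells out the uniqueness of the decomposition more explicitly.
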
 

\begin{proof}
Writing $g = \sum_{ j \ge 0} x^j g_j$ as in  \Cref{eq:def restriction} and using $\frac{\partial x}{\partial y_i} = 0$, one gets $\frac{\partial g}{\partial y_i} =  \sum_{ j \ge 0} x^j \frac{\partial g_j}{\partial y_i} $, which gives $\overline{ \frac{\partial g}{\partial y_i} } = \frac{\partial g_0}{\partial y_i}.$ We conclude as $\bar{g} = g_0$. 
\end{proof}

The following observation is needed when we restrict Jacobian ideals of hyperplane arrangements. 

\begin{lemma}
   \label{lem-restrict ci} 
Let $\Lambda \subset \PP^n$ be a codimension two subspace that is not contained in $H = V(x)$. Fix generators $\ell_1,\ell_2 \in S$ of $I_{\Lambda}$. Then one has for any homogeneous $g \in K[\ell_1, \ell_2]$ an equality of ideals of $R$, 
\[
\overline{\Jac (g)} = \Jac (\bar{g}). 
\]
\end{lemma}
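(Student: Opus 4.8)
The plan is to compare the two ideals generator by generator, working in the setup $S = K[x,y_1,\ldots,y_n]$, $R = K[y_1,\ldots,y_n]$ of \Cref{eq:def restriction}. As an ideal of $R$, $\overline{\Jac(g)}$ is generated by the restrictions $\overline{\partial g/\partial x},\ \overline{\partial g/\partial y_1},\ \ldots,\ \overline{\partial g/\partial y_n}$, whereas $\Jac(\bar g)$ is generated by $\partial \bar g/\partial y_1,\ldots,\partial \bar g/\partial y_n$. By \Cref{lem:restricted partials} the two families of $y_i$-partials coincide, $\overline{\partial g/\partial y_i} = \partial \bar g/\partial y_i$. Hence the inclusion $\Jac(\bar g) \subseteq \overline{\Jac(g)}$ is immediate, and the entire statement reduces to the single assertion that the one remaining generator $\overline{\partial g/\partial x}$ already lies in $\Jac(\bar g)$.

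First I would record the consequence of the hypothesis. Since $\Lambda = V(\ell_1,\ell_2)$ and $H = V(x)$, the condition $\Lambda \not\subset H$ is equivalent to $x \notin K\ell_1 + K\ell_2$, i.e.\ to the linear independence of $x,\ell_1,\ell_2$; equivalently, the restrictions $\bar\ell_1,\bar\ell_2 \in R$ are linearly independent linear forms. Writing $\ell_j = c_j x + \bar\ell_j$ with $c_j \in K$, and using that $g$ is a polynomial in $\ell_1,\ell_2$ with constant coefficients, restriction commutes with substitution; thus $\bar g = g(\bar\ell_1,\bar\ell_2)$ and, denoting by $\partial_j g$ the derivative of $g$ in its $j$-th slot, $\overline{(\partial_j g)(\ell_1,\ell_2)} = (\partial_j g)(\bar\ell_1,\bar\ell_2)$.

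Next comes the chain rule. It gives $\partial g/\partial x = c_1 (\partial_1 g)(\ell_1,\ell_2) + c_2 (\partial_2 g)(\ell_1,\ell_2)$, so after restriction $\overline{\partial g/\partial x} = c_1 (\partial_1 g)(\bar\ell_1,\bar\ell_2) + c_2 (\partial_2 g)(\bar\ell_1,\bar\ell_2)$. Likewise $\partial \bar g/\partial y_i = \sum_{j=1,2} (\partial_j g)(\bar\ell_1,\bar\ell_2)\,(\partial \bar\ell_j/\partial y_i)$. The coefficient matrix $(\partial \bar\ell_j/\partial y_i)$ is precisely the $2\times n$ matrix of coefficients of $\bar\ell_1,\bar\ell_2$, which has rank $2$ by the independence established above. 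I can therefore solve this linear system over $K$ to express each of $(\partial_1 g)(\bar\ell_1,\bar\ell_2)$ and $(\partial_2 g)(\bar\ell_1,\bar\ell_2)$ as a constant-coefficient combination of the generators $\partial \bar g/\partial y_i$ of $\Jac(\bar g)$. Both hence lie in $\Jac(\bar g)$, and so does their combination $\overline{\partial g/\partial x}$, which yields the missing inclusion and the desired equality.

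The only genuine content is this last step, and its crux is the rank-$2$ argument, which is exactly where the hypothesis $\Lambda \not\subset H$ is used: if instead $\Lambda \subset H$, then $\bar\ell_1,\bar\ell_2$ become dependent, the coefficient matrix drops rank, and one can no longer recover both slot-derivatives, so $\overline{\partial g/\partial x}$ need not lie in $\Jac(\bar g)$. Everything else is routine bookkeeping with the chain rule and \Cref{lem:restricted partials}, where I expect no obstacle.
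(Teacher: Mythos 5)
Your proposal is correct and follows essentially the same strategy as the paper: both reduce the lemma to showing that the restriction of $\frac{\partial g}{\partial x}$ is redundant, i.e.\ already lies in the ideal generated by the restricted $y_i$-partials, and both extract this from the hypothesis $\Lambda \not\subset H$ via the resulting linear independence. The only difference is in bookkeeping: the paper normalizes $\ell_1 = x + a m_1$, $\ell_2 = x + b m_2$ and first proves the membership relation $\frac{\partial g}{\partial x} \in (\frac{\partial g}{\partial m_1}, \frac{\partial g}{\partial m_2})$ upstairs in $S$ before restricting it, whereas you carry out the equivalent chain-rule/rank-two computation directly in $R$ — a harmless and arguably cleaner variant.
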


\begin{proof}
By assumption, we know $x \notin (\ell_1, \ell_2)$. Thus, there are linear forms $m_1, m_2 \in R$ and $a, b \in K$ such that, possibly after scaling, one has
\[
\ell_1 = x + a m_1 \quad \text{ and } \quad \ell_2 = x + b m_2. 
\]
Since $g \in K[\ell_1, \ell_2]$, we get
\begin{align*}
\Jac (g) & = (\frac{\partial g}{\partial \ell_1}, \frac{\partial g}{\partial \ell_2}) 
 = (\frac{\partial g}{\partial x} + a  \frac{\partial g}{\partial m_1}, \frac{\partial g}{\partial x} + b \frac{\partial g}{\partial m_2}) \\
& \subseteq (\frac{\partial g}{\partial x} , \frac{\partial g}{\partial m_1}, \frac{\partial g}{\partial m_2} ) \subseteq \Jac (g). 
\end{align*}
It follows that 
\begin{align}
   \label{eq:containment} 
\frac{\partial g}{\partial x} \in ( \frac{\partial g}{\partial m_1}, \frac{\partial g}{\partial m_2}) 
\end{align}
and, by a straightforward computation using \Cref{eq:def restriction}, 
\[
\overline{\Jac (g)} = \overline{ (\frac{\partial g}{\partial x} , \frac{\partial g}{\partial m_1}, \frac{\partial g}{\partial m_2} ) } 
= (g_1, \frac{\partial g_0}{\partial m_1}, \frac{\partial g_0}{\partial m_2} ).  
\]

By our definition of the restriction, we have $\bar{g} = g_0 \in K[m_1, m_2]$, and so 
$\Jac (\bar{g}) = ( \frac{\partial g_0}{\partial m_1}, \frac{\partial g_0}{\partial m_2} )$. 
Passing to the restriction of Relation \eqref{eq:containment} to $R$, we conclude $g_1 \in ( \frac{\partial g_0}{\partial m_1}, \frac{\partial g_0}{\partial m_2} )$. Combining this with the last two equalities gives $\overline{\Jac (g)} = \Jac (\bar{g})$. 
\end{proof}

The analog of \Cref{lem-restrict ci} is not true for an arbitrary hyperplane arrangement.

\begin{example}
Let $\cA \subset \mathbb P^3$ be a hyperplane arrangement consisting of four general planes. Then $\Jac (f_\cA)$ is the saturated ideal of the six lines of a tetrahedral configuration, and so $\cA$ is free. Let $H$ be a general plane. The restriction of $\cA$ to $H$ is a general set of four lines in $H$, and thus the corresponding Jacobian ideal $\Jac (f_{\overline{\cA}})$ defines six  points with generic Hilbert function. The  ideal of these six points has four minimal generators, which shows that  $\Jac (f_{\overline{\cA}})$ is not saturated. 
Since $\Jac (f_\cA)$ is a Cohen-Macaulay ideal, so is its restriction $\overline{\Jac (f_\cA)}$. It follows that  
$\overline{\Jac (f_\cA)} \neq \Jac (f_{\overline{\cA}})$. However, one has $\overline{\Jac (f_\cA)} = \Jac (f_{\overline{\cA}})^{sat}$.

Notice,  if instead we take five general planes for $\cA$ then even $\overline{\Jac (f_\cA)}$ is not saturated. 
\end{example}

Nevertheless, an analog of \Cref{lem-restrict ci} is true if one compares the top-dimensional parts of the ideals. 

\begin{proposition}
   \label{lem"restriction top-dimensional} 
Consider a hyperplane arrangement  $\cA \subset \PP^n$ with $n \ge 3$, and let $x \in [S]_1$ be a linear form such that $\Jac (f_\cA)^{top} : x = \Jac (f_\cA)^{top}$. Then one has an equality of ideals of $R$, 
\[
\overline{\Jac (f_\cA)^{top}}^{top} = \Jac (f_{\overline{\cA}})^{top}.  
\]
\end{proposition}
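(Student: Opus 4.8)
The plan is to reduce both sides of the asserted identity to intersections indexed by the codimension-two flats of the intersection lattice and then to match them flat by flat. First I would use Proposition~\ref{jac = union} to write the primary decomposition
\[
\Jac(f_\cA)^{top} = \bigcap_{\Lambda} \Jac(g_\Lambda),
\]
where $\Lambda$ ranges over the elements of $S(\cA)$, i.e.\ the codimension-two subspaces cut out by pairs of hyperplanes of $\cA$, and $g_\Lambda$ denotes the product of those linear divisors of $f_\cA$ lying in $I_\Lambda$. The associated primes of $\Jac(f_\cA)^{top}$ are exactly the $I_\Lambda$, so the hypothesis $\Jac(f_\cA)^{top}:x = \Jac(f_\cA)^{top}$ says precisely that $x \notin I_\Lambda$, that is $\Lambda \not\subseteq H = V(x)$, for every $\Lambda \in S(\cA)$. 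In particular $\overline{\cA}$ is a genuine arrangement of $|\cA|$ distinct hyperplanes of $H$, and $x$ is a nonzerodivisor on each $S/\Jac(g_\Lambda)$, since $I_\Lambda$ is the only associated prime of the $I_\Lambda$-primary ideal $\Jac(g_\Lambda)$.

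Next I would apply Lemma~\ref{lem:primary decomp under restriction} to the unmixed codimension-two ideal $I = \Jac(f_\cA)^{top}$ (note $\dim S/I = n-1 \ge 2$) to obtain
\[
\overline{\Jac(f_\cA)^{top}}^{top} = \bigcap_{\Lambda} \overline{\Jac(g_\Lambda)}^{top}.
\]
Because $g_\Lambda \in K[\ell_1,\ell_2]$ for any two generators $\ell_1,\ell_2$ of $I_\Lambda$, Lemma~\ref{lem-restrict ci} yields $\overline{\Jac(g_\Lambda)} = \Jac(\overline{g_\Lambda})$. By Remark~\ref{ci} the ideal $\Jac(g_\Lambda)$ is a saturated complete intersection supported on $\Lambda$; restricting it by the nonzerodivisor $x$ keeps it a codimension-two complete intersection, hence unmixed, so $\overline{\Jac(g_\Lambda)}^{top} = \overline{\Jac(g_\Lambda)} = \Jac(\overline{g_\Lambda})$. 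Thus the left-hand side equals $\bigcap_{\Lambda} \Jac(\overline{g_\Lambda})$.

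It remains to identify $\bigcap_{\Lambda} \Jac(\overline{g_\Lambda})$ with the right-hand side. Applying Proposition~\ref{jac = union} to $\overline{\cA}$ gives
\[
\Jac(f_{\overline{\cA}})^{top} = \bigcap_{M \in S(\overline{\cA})} \Jac(h_M),
\]
where $h_M$ is the product of the hyperplanes of $\overline{\cA}$ passing through the codimension-two flat $M \subseteq H$. I would compare the two decompositions through the restriction map $\Lambda \mapsto \overline{\Lambda} = \Lambda \cap H$, which is well defined since $\Lambda \not\subseteq H$, and check that it is a bijection $S(\cA) \to S(\overline{\cA})$ under which $\overline{g_\Lambda} = h_{\overline{\Lambda}}$; the latter amounts to the incidence equivalence $L_i \supseteq \Lambda \Leftrightarrow \overline{L_i} \supseteq \overline{\Lambda}$.

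The main obstacle is precisely this final matching. The nonzerodivisor hypothesis secures the forward implication $L_i \supseteq \Lambda \Rightarrow \overline{L_i} \supseteq \overline{\Lambda}$ and the survival of every flat with its correct local complete intersection, but it does not by itself prevent two distinct flats $\Lambda \neq \Lambda'$ of $S(\cA)$ from meeting $H$ in the same subspace, nor a hyperplane not containing $\Lambda$ from restricting to one through $\overline{\Lambda}$. Both failures occur exactly when $H$ passes through a flat of the intersection lattice of codimension at least three, and then $h_{\overline{\Lambda}}$ acquires extra factors, so that the right-hand side becomes strictly larger in degree than the left. Establishing that $\Lambda \mapsto \overline{\Lambda}$ is a degree-preserving bijection respecting the defining products is therefore the heart of the argument, and it is here that one must invoke the genericity of $x$ (equivalently, that $H$ avoids the higher-codimension flats of the lattice); I would isolate this incidence-preservation statement and prove it as the key lemma supporting the proposition.
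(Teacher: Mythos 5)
Your route is the same as the paper's: decompose $\Jac(f_\cA)^{top}$ by Proposition \ref{jac = union}, restrict component by component using Lemma \ref{lem:primary decomp under restriction} and Lemma \ref{lem-restrict ci}, and reassemble using Proposition \ref{jac = union} applied to $\overline{\cA}$. The paper's proof consists of exactly these steps and passes from $\bigcap_P \Jac(\overline{g_P})$ to $\Jac(f_{\overline{\cA}})^{top}$ in a single sentence, silently assuming the flat-by-flat matching $\overline{g_P} = g_{\overline{P}}$ that you isolate as the remaining obstacle.

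That obstacle is where your write-up stops, and it is a genuine gap: you name the incidence-preservation statement as the key lemma but neither prove it nor pin down a hypothesis under which it holds, and it does not follow from the stated assumption. The condition $\Jac(f_\cA)^{top} : x = \Jac(f_\cA)^{top}$ only says that $V(x)$ contains no element of $S(\cA)$; it does not prevent $V(x)$ from containing a codimension-three flat of the intersection lattice, and then the matching fails exactly as you predict. Concretely, let $\cA$ consist of four general planes of $\PP^3$ through a point $Q$ and let $H = V(x)$ be a general plane through $Q$: then $x$ is a nonzerodivisor modulo $\Jac(f_\cA)^{top}$, which is the radical ideal of the six lines $L_i \cap L_j$ and has degree $6$, so the left-hand side has degree $6$; but $\overline{\cA}$ is a pencil of four concurrent lines, so by Remark \ref{ci} the right-hand side is a complete intersection of type $(3,3)$ of degree $9$. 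So your instinct is correct that one must additionally require $H$ to avoid the codimension-three flats of the lattice (equivalently, that $P \mapsto \overline{P}$ be injective and that $\overline{L} \supseteq \overline{P}$ only when $L \supseteq P$); this holds automatically for the general sections used in Corollary \ref{cor:restrict cCM top part}, and under it the bijection $S(\cA) \to S(\overline{\cA})$ and the equality $\overline{g_P} = g_{\overline{P}}$ are immediate and your argument closes. To turn the proposal into a proof you must state this extra genericity hypothesis and verify the bijection under it rather than deferring the verification; note, in fairness, that the paper's own proof is no more explicit on this point than yours.
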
 

\begin{proof}
Using \cite[Proposition 4.1]{MN} as stated in \Cref{prop:jac = union}  below, 
we know 
\[
\Jac (f_{\cA})^{top} = \bigcap_{P \in S(\cA)} \Jac(g_P), 
\]
where each $g_P$ is the product of linear divisors of $f_\cA$ that are contained in the ideal $I_P$. Thus, 
 \Cref{lem-restrict ci} gives 
 $\overline{\Jac (g_P)} = \Jac ({\overline{g_P}}) = \Jac (g_{\bar{P}})$, where $\bar{P} \subset \Proj (R) = V(x)$ denotes the hyperplane 
 section of $P$. 
Invoking \Cref{prop:jac = union} again we get
$\overline{\Jac (f_\cA)^{top}}^{top} = \Jac (f_{\bar{\cA}})^{top}$. 
\end{proof}

\begin{remark}
If we strengthen the assumption on $x$ in \Cref{lem"restriction top-dimensional} to $\Jac (f_\cA)^{sat} :~x = \Jac (f_\cA)^{sat}$ then we have $\overline{\Jac (f_\cA)^{top}}^{top} = \overline{\Jac (f_\cA)}^{top}$, and so the conclusion of  \Cref{lem"restriction top-dimensional} becomes 
\[
\overline{\Jac (f_\cA)}^{top} = \Jac (f_{\overline{\cA}})^{top}.  
\]
\end{remark}

Using this result repeatedly, we obtain the following consequence. 

\begin{corollary}
   \label{cor:restrict cCM top part}
Let $\cA \subset \PP^n$ be a hyperplane arrangement  such that $S/\Jac (f_\cA)^{top}$ is Cohen-Macaulay, and let 
$\Lambda$ be a general 2-plane in $\mathbb P^n$. Denote by $\mathcal B$ the line arrangement  in $\Lambda$ obtained by restricting $\mathcal A$ to $\Lambda$. 
Then the  saturation of the Jacobian ideal  $\Jac (f_\cB)$ of $\mathcal B$ (in the coordinate ring of $\Lambda$)  has the same graded Betti numbers 
as  $\Jac (f_\cA)^{top}$. The scheme defined by $\Jac (f_{\mathcal B})$ is the intersection  of the scheme defined by 
$S/\Jac (f_\cA)^{top}$  with $\Lambda$. \end{corollary}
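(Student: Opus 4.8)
The plan is to realize the general $2$-plane $\Lambda$ as an intersection of $n-2$ general hyperplanes and to restrict one hyperplane at a time, invoking \Cref{lem"restriction top-dimensional} at each stage while carrying the Cohen--Macaulay property along. Since an iterated restriction is again a restriction, it suffices to understand a single general hyperplane section $H = V(x)$ and then repeat $n-2$ times. So I would write $\Lambda = V(x_1) \cap \cdots \cap V(x_{n-2})$ with $x_1, \dots, x_{n-2}$ general linear forms and analyze the passage from $\cA$ to its restriction $\overline{\cA}$ to $V(x_1)$, then induct.

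For one step, first I would verify the hypothesis of \Cref{lem"restriction top-dimensional}. Since $S/\Jac(f_\cA)^{top}$ is Cohen--Macaulay it is unmixed of codimension $2$, and as $n \ge 3$ none of its associated primes is the irrelevant maximal ideal; hence a general linear form $x$ lies in no associated prime and is a nonzerodivisor, i.e.\ $\Jac(f_\cA)^{top} : x = \Jac(f_\cA)^{top}$. Quotienting a Cohen--Macaulay ring by a degree-one nonzerodivisor again gives a Cohen--Macaulay ring, so $\bar S / \overline{\Jac(f_\cA)^{top}}$ is Cohen--Macaulay, hence unmixed, and therefore $\overline{\Jac(f_\cA)^{top}} = \overline{\Jac(f_\cA)^{top}}^{top}$. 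Combined with \Cref{lem"restriction top-dimensional}, this yields
\[
\overline{\Jac(f_\cA)^{top}} = \overline{\Jac(f_\cA)^{top}}^{top} = \Jac(f_{\overline{\cA}})^{top},
\]
a Cohen--Macaulay ideal of codimension $2$ to which the same argument applies again. Because reduction modulo a degree-one nonzerodivisor carries a minimal free resolution to a minimal free resolution, the graded Betti numbers are unchanged at each step. Iterating $n-2$ times descends to the line arrangement $\mathcal B$ in $\Lambda \cong \PP^2$, with $\Jac(f_\cA)^{top}$ and $\Jac(f_\cB)^{top}$ sharing all graded Betti numbers; since $\mathcal B$ is a line arrangement its Jacobian defines a zero-dimensional scheme with no embedded points, so $\Jac(f_\cB)^{top} = \Jac(f_\cB)^{sat}$, which gives the Betti-number claim.

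For the scheme-theoretic assertion, I would observe that at each stage $x$ is a nonzerodivisor, so passing to $\overline{\Jac(f_\cA)^{top}}$ is precisely the scheme-theoretic section of $X^{top}$ by $V(x)$; iterating realizes the scheme cut out by $\Jac(f_\cB)^{sat}$ as $X^{top} \cap \Lambda$. The main point requiring care is the bookkeeping of genericity along the iteration: at every stage the chosen linear form must at once be a nonzerodivisor on the current Cohen--Macaulay quotient and avoid all codimension-two intersections of the current arrangement, so that the restricted product of linear forms stays squarefree and the restriction is again a genuine hyperplane arrangement in the sense of \eqref{eq:restricted arrangement}. These are all open dense conditions, so a general $\Lambda$ suffices; the one piece of real content beyond this bookkeeping is the role of Cohen--Macaulayness, which is exactly what promotes the conclusion $\overline{(\,\cdot\,)}^{top} = \Jac(f_{\overline{\cA}})^{top}$ of \Cref{lem"restriction top-dimensional} to the stronger equality $\overline{(\,\cdot\,)} = \Jac(f_{\overline{\cA}})^{top}$ needed to transport Betti numbers at each step.
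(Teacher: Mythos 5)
Your proposal is correct and follows essentially the same route as the paper: realize $\Lambda$ as $n-2$ general hyperplane sections, apply \Cref{lem"restriction top-dimensional} one hyperplane at a time, and use that reduction modulo a linear nonzerodivisor preserves graded Betti numbers. You are in fact more explicit than the paper about the key point — that Cohen--Macaulayness of the quotient is what upgrades the lemma's conclusion about $\overline{\Jac(f_\cA)^{top}}^{top}$ to a statement about $\overline{\Jac(f_\cA)^{top}}$ itself — which the paper's one-line proof leaves implicit.
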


\begin{proof}
Fix a minimal generating set $\{\ell_1,\ldots,\ell_{n-2}\}$ for  the ideal $I_{\Lambda} \subset S$ of $\Lambda$.  By the generality of $\Lambda$, we may assume for $x = \ell_1$ that $\Jac (f_\cA)^{top} : x = \Jac (f_\cA)^{top}$. Thus, \Cref{lem"restriction top-dimensional} gives $\overline{\Jac (f_\cA)^{top}} = \overline{\Jac (f_\cA)}^{top} = \Jac (f_{\overline{\cA}})^{top}$, and so $\Jac (f_\cA)^{top}$ has the same graded Betti numbers (over $S$) as $\Jac (f_{\overline{\cA}})^{top}$ (over $R \cong S/xS$). Repeating this argument suitably gives the claim. 
\end{proof}

\begin{remark}
   \label{rem:needed generality} 
The needed generality of $\Lambda$ in \Cref{cor:restrict cCM top part} can be described more precisely. We only need to make sure that \Cref{lem"restriction top-dimensional}  can be applied $(n-2)$ times, that is, using the notation of the above proof, each $\ell_i$ 
satisfies $(\Jac (f_{\mathcal B_{i-1}})^{top} : \ell_i = \Jac (f_{\mathcal B_{i-1}})^{top}$, where ${\mathcal B_{i-1}}$ denotes the restriction of $\cA$ to the subspace of $\PP^n$ defined by $(\ell_1,\ldots,\ell_{i-1})$. 
\end{remark}

\section{General residuals of arrangements} 
\label{sec:residuals}

The purpose of this section is to introduce the concept of a general residual of a hyperplane arrangement $\mathcal A$ in $\PP^n$ and to establish some of its properties.  In particular, we describe its minimal primary decomposition, establish an upper bound on its Castelnuovo-Mumford regularity and consider its behavior under when one restricts an arrangement to a hyperplane.

We denote by  $S$ a polynomial ring $K[x_0,\ldots,x_n]$ in $n+1$ variables. For a linear form $\ell = \sum a_i x_i \in S$, we write $\ell^\vee = (a_0 : \cdots : a_n)$ for the point dual to $\ell$, though we will consider $\ell^\vee$ as a point in the original $\PP^n$. 

We begin with some needed technical results.

\begin{lemma}
   \label{lem:nonvanishing at dual point} 
If $f \in S$ is any homogeneous polynomial of positive degree and $\ell \in [S]_1$ is general then 
\[
\frac{\partial f}{\partial \ell} (\ell^{\vee}) \neq 0. 
\]
\end{lemma}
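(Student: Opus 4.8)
The plan is to reduce the entire statement to a single application of Euler's identity. Write $\ell = \sum_{i=0}^n a_i x_i$, so that $\ell^\vee = (a_0 : \cdots : a_n)$ and the directional derivative is $\frac{\partial f}{\partial \ell} = \sum_{i=0}^n a_i \frac{\partial f}{\partial x_i}$, where the $a_i$ are treated as the (constant) coefficients of $\ell$. Evaluating at the dual point $\ell^\vee$ amounts to substituting $x_i = a_i$, which gives
\[
\frac{\partial f}{\partial \ell}(\ell^{\vee}) = \sum_{i=0}^n a_i \, \frac{\partial f}{\partial x_i}(a_0, \ldots, a_n).
\]
I would then invoke Euler's formula: since $f$ is homogeneous of degree $d := \deg f$, one has the polynomial identity $\sum_i x_i \frac{\partial f}{\partial x_i} = d\, f$, and evaluating it at the point $(a_0, \ldots, a_n)$ shows that the right-hand side above equals $d \cdot f(\ell^\vee)$. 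Thus
\[
\frac{\partial f}{\partial \ell}(\ell^{\vee}) = d \cdot f(\ell^{\vee}).
\]

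Next, because $\deg f > 0$ and $K$ has characteristic zero, the scalar $d$ is nonzero in $K$, so $\frac{\partial f}{\partial \ell}(\ell^\vee) \neq 0$ if and only if $f(\ell^\vee) \neq 0$. It remains to argue that $f(\ell^\vee) \neq 0$ for general $\ell$. A homogeneous polynomial of positive degree is in particular a \emph{nonzero} polynomial, so the hypersurface $V(f) \subset \PP^n$ is a proper closed subset with dense open complement. Since the assignment $\ell \mapsto \ell^\vee$ merely records the coefficient tuple $(a_0, \ldots, a_n)$, the condition ``$\ell^\vee \notin V(f)$'' is exactly the nonvanishing of the single polynomial $f$ in the coefficients of $\ell$, which cuts out a nonempty Zariski-open subset of the space of linear forms. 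For a general $\ell$ this condition holds, and then $f(\ell^\vee) \neq 0$, completing the argument.

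There is essentially no serious obstacle here: the only ingredient of substance is Euler's formula, and the single point requiring care is the characteristic-zero hypothesis, which is what guarantees that the factor $d = \deg f$ does not vanish in $K$ (in characteristic $p$ dividing $d$ the statement can fail, e.g.\ $f = x_0^p$). Accordingly, in the write-up I would make the meaning of ``general'' precise by identifying the good locus with the nonvanishing locus of $f$ viewed as a function of the coefficients $a_0, \ldots, a_n$, which is a dense open subset of the affine space of linear forms.
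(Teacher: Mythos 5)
Your argument is correct and is essentially the same as the paper's: both evaluate the Euler identity at $\ell^{\vee}$ to get $\frac{\partial f}{\partial \ell}(\ell^{\vee}) = d\cdot f(\ell^{\vee})$ and then use that a general point avoids the hypersurface $V(f)$, with characteristic zero ensuring $d \neq 0$. The only difference is cosmetic — you argue directly while the paper phrases it as a contradiction.
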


\begin{proof}
Put $d = \deg f$.  Let $\ell = \sum_{i=0}^n a_i x_i$, and so $\ell^{\vee} = (a_0 : \cdots : a_n)$. Since we assume that the characteristic of $K$ is zero, the Euler derivative of $f$ is 
\[
d \cdot f = \sum_{i = 0}^n x_i  \cdot \frac{\partial f}{\partial x_i}. 
\] 
Since 
\[
\frac{\partial f}{\partial \ell} = \sum_{i = 0}^n a_i  \cdot \frac{\partial f}{\partial x_i}
\]
the equality $\frac{\partial f}{\partial \ell} (\ell^{\vee}) = 0$ implies 
\[
d \cdot f(\ell^{\vee}) = \sum_{i = 0}^n a_i  \cdot \frac{\partial f}{\partial x_i} (\ell^{\vee})  =  \frac{\partial f}{\partial \ell} (\ell^{\vee})  = 0,  
\] 
and so $f(\ell^{\vee}) = 0$. This is a contradiction because the point $\ell^{\vee}$ is general. 
\end{proof}

\begin{lemma}
    \label{lem:divisors on line}
Let $T$ be a polynomial ring in two variables over $K$. Consider any polynomials $0 \neq \ell \in [T]_1$ and $f, g \in [T]_d$ for some $d \ge 1$. If both $f$ and $g$ are not divisible by $\ell$  then there is an equality of ideals, 
\[
(\ell, f) = (\ell, g). 
\]
\end{lemma}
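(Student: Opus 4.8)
The plan is to reduce the asserted equality of ideals to a statement inside the one-variable quotient ring $T/(\ell)$. Since $\ell$ is a nonzero linear form in the two-variable ring $T$ and $(\ell)$ is a homogeneous ideal, the quotient $T/(\ell)$ is isomorphic to a standard graded polynomial ring $K[t]$ in a single variable. Under the order-preserving correspondence between ideals of $T$ containing $\ell$ and ideals of $T/(\ell)$, the ideal $(\ell, f)$ corresponds to the principal ideal generated by the image $\bar{f}$ of $f$, and likewise $(\ell, g)$ corresponds to $(\bar{g})$. Thus it suffices to prove the equality $(\bar{f}) = (\bar{g})$ in $K[t]$.

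The key observation is that a nonzero homogeneous polynomial of degree $d$ in a single variable is forced to be a nonzero scalar multiple of $t^d$, because the degree-$d$ graded piece of $K[t]$ is one-dimensional. To make the hypothesis usable, I would pass to coordinates in which $\ell$ is a variable: after a linear change of variables one may assume $T = K[u,v]$ with $\ell = u$, so that the image of a degree-$d$ form $f = \sum_{i=0}^{d} a_i u^i v^{d-i}$ in $T/(u) \cong K[v]$ is exactly $a_0 v^d$. The assumption that $f$ is not divisible by $\ell = u$ is precisely the condition $a_0 \neq 0$, so $\bar{f} = a_0 v^d$ is a nonzero degree-$d$ form; the identical reasoning gives $\bar{g} = b_0 v^d$ with $b_0 \neq 0$. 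Consequently both $\bar{f}$ and $\bar{g}$ generate the ideal $(v^d)$ of $K[v]$, whence $(\bar{f}) = (\bar{g})$.

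Pulling this equality back through the quotient map $T \to T/(\ell)$ yields $(\ell, f) = (\ell, g)$, which is the claim. There is no genuine obstacle here; the only point deserving a moment's care is the identification of the condition ``$f$ not divisible by $\ell$'' with ``$\bar{f} \neq 0$'', which is transparent once $\ell$ is taken to be a variable. If one prefers to avoid choosing coordinates, the argument can be phrased intrinsically: $\bar{f}$ and $\bar{g}$ are nonzero homogeneous elements of degree $d$ in the one-variable graded ring $T/(\ell)$, whose degree-$d$ component is one-dimensional, so they differ by a unit scalar and therefore generate the same principal ideal.
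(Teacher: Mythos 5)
Your proof is correct, but it takes a different route from the paper's. The paper argues via Castelnuovo--Mumford regularity: since $\ell, f$ form a regular sequence in the two-variable ring $T$, the complete intersection $T/(\ell,f)$ has regularity $d-1$, hence $[T/(\ell,f)]_d = 0$, i.e.\ $[(\ell,f)]_d = [T]_d$; the same holds for $(\ell,g)$, and since both ideals agree with $(\ell)$ in degrees below $d$ and contain all of $[T]_d$, they coincide. You instead pass to the quotient $T/(\ell) \cong K[t]$ and observe that $\bar f$ and $\bar g$ are nonzero homogeneous elements of degree $d$ in a one-variable graded ring, hence unit multiples of each other, so $(\bar f) = (\bar g)$ and the equality pulls back. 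Your argument is the more elementary of the two --- it needs nothing beyond the one-dimensionality of $[K[t]]_d$ and the ideal correspondence for quotient rings --- while the paper's phrasing fits the liaison-theoretic toolkit it uses throughout (regularity of complete intersections). Both identify the same underlying fact, namely that $(\ell,f)$ is forced to be $(\ell) + [T]_{\ge d}\cdot T$ regardless of which non-multiple of $\ell$ one takes as the second generator. One small point of care that you handled correctly: the identification of ``$f$ not divisible by $\ell$'' with ``$\bar f \neq 0$'' is exactly where the hypothesis enters, and your coordinate choice makes it transparent.
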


\begin{proof}
This follows since $[(\ell, f)]_d = [T]_d$, which is equivalent to $[T/(\ell, f)]_d = 0$. The latter is true as $\reg (T/(\ell, f)]) = d-1$ since 
$\ell$ and  $f$ form a regular sequence. 
\end{proof}

Consider now a hyperplane arrangement $\cA = \cA (f_{\cA}) \subset \PP^n$ defined by a squarefree product of linear forms 
$ f_{\cA}$. We define the \emph{top support of the singular locus $\Sing (\cA$)} as the set of codimension two subspaces $H \cap H'$ formed by 
distinct hyperplanes $H, H'$ of $\cA$, 
\[
S(\cA) = \{ H \cap H' \ | \ H \neq H' \in \cA \}.
\] 
Thus, the ideal of $S(\cA)$ is 
\[
I_{S(\cA)} = \bigcap_{\ell \neq \ell', \text{ divisors of } f_{\cA}} (\ell, \ell'). 
\]

Moreover, for any $P \in S(\cA)$, we denote by $g_P$ the product of linear factors of $f_{\cA}$ that are contained in $I_P$. We set $t_P = \deg g_P$ to be the number of hyperplanes of $\cA$ that contain $P$. 
We refer to $t_P$ as the \emph{multiplicity} of $P$ in $\cA$.
Using this notation, we can state \Cref{jac = union}  more explicitly. 

\begin{proposition} 
    \label{prop:jac = union}
If $\cA = \cA (f_{\cA}) \subset \PP^n$ is any hyperplane arrangement then the top-dimensional part of $\Jac (f_{\cA})$ is 
\[
\Jac (f_{\cA})^{top} = \bigcap_{P \in S(\cA)} \Jac(g_P), 
\]
where each $\Jac (g_P)$ is a complete intersection of type $(t_P -1, t_P - 1)$. 
\end{proposition}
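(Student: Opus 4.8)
The plan is to obtain this as an explicit repackaging of \Cref{jac = union} together with \Cref{ci}. First I would match up the indexing data. By \Cref{jac = union}, the top-dimensional part of the scheme cut out by $\Jac(f_\cA)$ is a union of complete intersections, each supported on a codimension two linear space of the form $\Lambda = L_i \cap L_j$, and equal to the scheme defined by $\Jac(g)$ where $g$ is the product of the linear divisors of $f_\cA$ lying in $I_\Lambda$. The distinct codimension two spaces $\Lambda = L_i \cap L_j$ arising this way are by definition exactly the elements $P$ of $S(\cA)$ (note that if three or more hyperplanes pass through a common codimension two space, the corresponding pairs all yield the same $P$, and $g$ is then the product of \emph{all} factors in $I_P$). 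Thus the $g$ attached to $\Lambda = P$ is precisely $g_P$, and the scheme-theoretic content of \Cref{jac = union} reads $X^{top} = \bigcup_{P \in S(\cA)} V(\Jac(g_P))$.

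Next I would pin down the structure of each piece. Since $g_P$ is a product of $t_P$ linear forms all vanishing on the codimension two space $P$, \Cref{ci} applies with $e = t_P$ and $\Lambda = P$: the ideal $\Jac(g_P)$ is a saturated complete intersection of type $(t_P - 1, t_P - 1)$. This already gives the second assertion of the proposition. Moreover, $\Jac(g_P)$ is then unmixed with radical $I_P$, hence $I_P$-primary, and in particular it is saturated. Consequently $\bigcap_{P \in S(\cA)} \Jac(g_P)$ is an intersection of pairwise distinct codimension two primary ideals, i.e. an unmixed, saturated codimension two ideal.

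Finally I would identify this intersection with $\Jac(f_\cA)^{top}$. By definition $\Jac(f_\cA)^{top}$ is the intersection of the codimension two primary components of $\Jac(f_\cA)$; being unmixed of codimension two, it is the (unique) saturated ideal of $X^{top}$. Since the saturated ideal of a union $\bigcup_P V(\Jac(g_P))$ is the intersection of the saturated ideals of its pieces, and each $\Jac(g_P)$ is already saturated, we conclude $\Jac(f_\cA)^{top} = \bigcap_{P \in S(\cA)} \Jac(g_P)$. The step needing the most care is exactly this passage from the scheme-theoretic union of \Cref{jac = union} to the ideal-theoretic intersection: one must verify that each $\Jac(g_P)$ is already saturated, so that no saturation is lost in forming the intersection, and that the supports $P$ are pairwise distinct codimension two primes, so that $\bigcap_P \Jac(g_P)$ is a genuine minimal primary decomposition of the unmixed ideal $\Jac(f_\cA)^{top}$ rather than merely an ideal defining the same scheme. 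Both points are furnished by the complete intersection structure coming from \Cref{ci}.
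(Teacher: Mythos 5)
Your proposal is correct and matches the paper's approach: the paper offers no separate proof, presenting \Cref{prop:jac = union} simply as a restatement of \Cref{jac = union} (quoted from \cite{MN}) in the notation $S(\cA)$, $g_P$, $t_P$, with the complete intersection type supplied by \Cref{ci}. Your careful verification that each $\Jac(g_P)$ is $I_P$-primary and saturated, so that the scheme-theoretic union translates into the stated ideal-theoretic intersection, correctly fills in the details the paper leaves implicit.
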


Observe that this result implies
\[
I_{S(\cA)} = \sqrt{\Jac (f_{\cA})^{top}} = \sqrt{\Jac (f_{\cA}}) . 
\]
The local and global Tjurina number are measures of singularities of any plane curve. We can extend the definition to hyperplane arrangements of arbitrary dimension as follows. 
The \emph{local Tjurina number} of the hyperplane arrangement defined by $f_\cA$ at $P \in S (\cA)$ is the length of $(S/\Jac (f_\cA))_{I_P}$. The \emph{global Tjurina number} $\tau (\cA)$ is the sum over the local Tjurina numbers. It follows that the global Tjurina number is equal to 
$\deg \Jac (f_\cA) = \deg (\Jac (f_\cA)^{top}) = \sum_{P \in S(\cA)} (t_P -1)^2$.

For the remainder of this section we denote by $\ell$  a general linear form in $S$. We write $|\cA|$ for the cardinality of $\cA$.

We want to link the Jacobian ideal of $\cA$ using a suitable complete intersection. The following result describes in particular the support of the scheme defined by this link.

\begin{lemma}
    \label{lem:decomposition of ci} 
If $\cA = \cA (f_{\cA}) \subset \PP^n$ is any hyperplane arrangement and $\ell \in [S]_1$ is a general linear form then $(f_{\cA}, \frac{\partial f_{\cA}}{\partial \ell})$ is a complete intersection with a minimal primary decomposition 
\[
(f_{\cA}, \frac{\partial f_{\cA}}{\partial \ell}) = \bigcap_{P \in S(\cA)} (g_p,  \frac{\partial g_p}{\partial \ell}). 
\]
In particular, one has 
\begin{equation}
    \label{eq:degree ci} 
|\cA|\cdot (|\cA| -1) = \sum_{P \in S(\cA)} t_p (t_p -1).
\end{equation}
\end{lemma}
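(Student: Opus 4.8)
The plan is to prove the three assertions in order: that the two generators form a regular sequence, that the displayed intersection is an irredundant primary decomposition, and then to read off the numerical identity from a degree count. Write $f_{\cA} = \ell_1 \cdots \ell_d$ with $d = |\cA|$, so that $\frac{\partial f_{\cA}}{\partial \ell} = \sum_{i=1}^d c_i \prod_{j \neq i}\ell_j$ with $c_i = \frac{\partial \ell_i}{\partial \ell} \in K$. First I would check that $(f_{\cA}, \frac{\partial f_{\cA}}{\partial \ell})$ is a complete intersection, i.e. that the two forms share no irreducible factor. Since $f_{\cA}$ is squarefree, any common factor would be some $\ell_i$; reducing modulo $\ell_i$ kills every summand of $\frac{\partial f_{\cA}}{\partial \ell}$ except $c_i\prod_{j\neq i}\ell_j$. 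For general $\ell$ one has $c_i \neq 0$, and $\prod_{j \neq i}\ell_j \not\equiv 0 \pmod{\ell_i}$ since the remaining $\ell_j$ are not multiples of $\ell_i$; hence $\ell_i \nmid \frac{\partial f_{\cA}}{\partial \ell}$, and the two forms form a regular sequence. (This shows where generality is needed: if some $c_i$ vanished then $\ell_i$ would divide both forms.)

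Next I would analyze each candidate component. For $P \in S(\cA)$ factor $f_{\cA} = g_P \, h_P$, where $h_P$ is the product of the linear divisors of $f_{\cA}$ not lying in $I_P$. The same argument shows $(g_P, \frac{\partial g_P}{\partial \ell})$ is a complete intersection of type $(t_P, t_P-1)$. To see it is $I_P$-primary, note that the $t_P$ hyperplanes dividing $g_P$ all contain the codimension-two space $P$ and so meet pairwise exactly in $P$; hence $V(g_P)$ is smooth away from $P$, and at a smooth point the directional derivative $\frac{\partial g_P}{\partial \ell}$ does not vanish (general $\ell$). Thus the complete intersection is set-theoretically supported on the irreducible $V(I_P)$, and being Cohen-Macaulay of codimension two it is $I_P$-primary.

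The heart of the argument is the equality $(f_{\cA}, \frac{\partial f_{\cA}}{\partial \ell}) = \bigcap_{P} (g_P, \frac{\partial g_P}{\partial \ell})$, which I would establish by localization. Set $A = (f_{\cA}, \frac{\partial f_{\cA}}{\partial \ell})$ and $\fq_P = (g_P, \frac{\partial g_P}{\partial \ell})$. The product rule $\frac{\partial f_{\cA}}{\partial \ell} = h_P \frac{\partial g_P}{\partial \ell} + g_P \frac{\partial h_P}{\partial \ell}$ gives $A \subseteq \fq_P$ for every $P$, hence $A \subseteq B := \bigcap_P \fq_P$. Localizing at $I_P$, each factor of $h_P$ lies outside $I_P$, so $h_P$ is a unit in $S_{I_P}$ and therefore $A_{I_P} = (\fq_P)_{I_P}$; meanwhile $(\fq_Q)_{I_P} = S_{I_P}$ for $Q \neq P$, since $\fq_Q$ is $I_Q$-primary with $I_Q \not\subseteq I_P$ (distinct codimension-two primes). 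Thus $A_{I_P} = B_{I_P}$ for every $P \in S(\cA)$. Because $S/A$ is a complete intersection it is Cohen-Macaulay, so every associated prime of $S/A$, and hence of the submodule $B/A \hookrightarrow S/A$, has codimension two; as the $I_P$ are the only codimension-two primes containing $A$ and $(B/A)_{I_P} = 0$ there, the module $B/A$ has no associated primes and so $A = B$. The decomposition is irredundant since the $I_P$ are pairwise distinct and each is associated to $A$ (indeed $A_{I_P} = (\fq_P)_{I_P} \neq S_{I_P}$).

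Finally the numerical identity follows from a degree comparison: as a complete intersection of type $(d, d-1)$ the ideal $A$ has $\deg A = d(d-1) = |\cA|(|\cA|-1)$, while degree is additive over the irredundant primary decomposition, giving $\deg B = \sum_{P} \deg \fq_P = \sum_P t_P(t_P - 1)$; equating the two yields \eqref{eq:degree ci}. (Equivalently, $\sum_P t_P(t_P-1)$ counts the ordered pairs of distinct hyperplanes grouped by their common codimension-two intersection, which is again $d(d-1)$.) I expect the main obstacle to be the primary-decomposition step: verifying that each $\fq_P$ is genuinely $I_P$-primary, and then upgrading the local equalities $A_{I_P} = B_{I_P}$ to a global identity, for which the Cohen-Macaulayness of $S/A$ (ruling out embedded primes) is the essential input.
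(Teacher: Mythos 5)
Your proof is correct, but it inverts the logical order of the paper's argument and is worth comparing. The paper first proves the identity $|\cA|(|\cA|-1)=\sum_P t_P(t_P-1)$ by induction on $|\cA|$ (deleting a hyperplane $L$ and tracking how each $t_P$ changes), and then, after the same local computation you give (that $h_P$ is a unit in $S_{I_P}$, so $(f_\cA,\frac{\partial f_\cA}{\partial \ell})_{I_P}=(g_P,\frac{\partial g_P}{\partial \ell})_{I_P}$), it rules out further associated primes by comparing the degree of the complete intersection with the degree of the intersection of the candidate components, using that identity as input. You instead rule out extra components intrinsically — $B/A$ is a submodule of the Cohen--Macaulay module $S/A$, so its associated primes are minimal primes of $A$, and these all equal some $I_P$ — and then obtain \eqref{eq:degree ci} as a corollary by additivity of degree (or by the double count of ordered pairs of hyperplanes, which is the cleanest proof of the identity). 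Your route is shorter and also supplies details the paper leaves implicit: that $\ell_i\nmid \frac{\partial f_\cA}{\partial \ell}$ (so the ideal really is a complete intersection) and that each $(g_P,\frac{\partial g_P}{\partial \ell})$ is genuinely $I_P$-primary; the paper's route makes the combinatorial identity available independently of the decomposition, which it reuses elsewhere. Two small points to tighten: (i) the assertion that the $I_P$ are the only codimension-two primes containing $A$ deserves its one-line proof — any prime $\fq\supseteq A$ contains some $\ell_i$, and then $\frac{\partial f_\cA}{\partial \ell}\equiv c_i\prod_{j\neq i}\ell_j \pmod{\ell_i}$ with $c_i\neq 0$ forces $\ell_j\in\fq$ for some $j\neq i$, so $\fq\supseteq(\ell_i,\ell_j)=I_P$; (ii) the phrase \textquotedblleft at a smooth point the directional derivative does not vanish for general $\ell$\textquotedblright{} is false for general hypersurfaces (a general polar meets the smooth locus); it holds here only because $V(g_P)$ is a union of hyperplanes through $P$, so the gradient direction on the smooth locus takes finitely many values — equivalently, $g_P$ and $\frac{\partial g_P}{\partial \ell}$ are binary forms in the two generators of $I_P$ with no common factor, whence their common zero locus is exactly $P$.
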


\begin{proof} 
We begin with establishing \Cref{eq:degree ci}. Choose any $L \in \cA$ and set $\cA' = \cA - L = \cA \setminus \{L\}$.   Consider any $P \in S(\cA)$. Temporarily, denote the number of hyperplanes of $\cA$ and $\cA'$ that contain $P$ by $t_{P, \cA}$ and $t_{P, \cA'}$, respectively. 
We now consider three cases. 

First, assume $P \notin S(\cA')$.  Then there is some $H \in \cA'$ such that $P = H \cap L$ and $t_{P, \cA} = 2$ whereas $t_{P, \cA'} = 1$. 

Second, assume $P \in S(\cA)$ and $P \subset L$. Then, one has $t_{P, \cA} = t_{P, \cA'} +1$. 

Third, assume $P \in S(\cA)$ and $P \not \subset L$. In this case, one has $t_{P, \cA} = t_{P, \cA'}$. 

Setting
\[
\cA' \cap L = \{ H \cap L \; \mid \; H \in \cA' \}, 
\]
the above observations can be summarized as follows: 
\begin{align}
\label{eq:change of local mult}
t_{P, \cA'}  & = \begin{cases}
t_{P, \cA}  & \text{ if } P \in S(\cA) \setminus (\cA' \cap L) \\
t_{P, \cA} - 1  & \text{ if } P \in \cA' \cap L. 
\end{cases}
\end{align}

Now we use induction on $d = |\cA| \ge 2$. If $d = 2$ then $S(\cA)$ consists of one subspace and \Cref{eq:degree ci} is true. 
Assume $d \ge 3$. Using Formula \eqref{eq:change of local mult}, we get
\begin{align*}
\sum_{P \in S(A)} t_{P, \cA} (t_{P, \cA} - 1) 
& = 
\sum_{P \in S(A) \setminus (\cA' \cap L)} t_{P, \cA} (t_{P, \cA} - 1) + \sum_{P \in \cA' \cap L} t_{P, \cA} (t_{P, \cA} - 1) \\
& = 
\sum_{P \in S(A) \setminus (\cA' \cap L)} t_{P, \cA'} (t_{P, \cA'} - 1) + \sum_{P \in \cA' \cap L} ( t_{P,  \cA'} + 1) t_{P, \cA'}  \\
& = 
\sum_{P \in S(A) \setminus (\cA' \cap L)} t_{P, \cA'} (t_{P, \cA'} - 1) +  \sum_{P \in \cA' \cap L} t_{P, \cA'} (t_{P, \cA'} - 1) 
+ 2 \sum_{P \in \cA' \cap L}  t_{P, \cA'} \\
& = 
(d-1)(d-2) + 2 \sum_{P \in \cA' \cap L}  t_{P, \cA'}, 
\end{align*}
where the last equality is due to the induction hypothesis.  Since $L$ intersects each $H \in \cA'$ in a codimension two subspace we get 
\[
\sum_{P \in \cA' \cap L}  t_{P, \cA'} = |\cA'| = d  - 1. 
\]
Combining the last two equations, we obtain $\sum_{P \in S(A)} t_{P, \cA} (t_{P, \cA} - 1) = d (d-1)$, as claimed. 

Now we establish the claimed primary decomposition. 
Consider any $P \in S(\cA)$. 
To simplify notation write $f = f_{\cA} = g \cdot h$ with $g = g_P$. By definition of $g$, we know $h \notin I_P$, and so $h$ is a unit in the localization $S_{I_P}$. Thus, we obtain 
\begin{align*}
(f, \frac{\partial f}{\partial \ell})_{I_P} & = (g h, \frac{\partial g}{\partial \ell} \cdot h + g \frac{\partial h}{\partial \ell})_{I_P} \\
& = (g, \frac{\partial g}{\partial \ell})_{I_P}, 
\end{align*}
which shows that the primary component of $(f, \frac{\partial f}{\partial \ell})$ associated to $I_P$ is $(g, \frac{\partial g}{\partial \ell})$. It remains to prove that $(f, \frac{\partial f}{\partial \ell})$ has no associated prime ideals other than $I_P$ with $P \in S(\cA)$. Comparing degrees of $(f_{\cA}, \frac{\partial f_{\cA}}{\partial \ell})$ and 
$I = \bigcup_{P \in S(\cA)} (g_p,  \frac{\partial g_p}{\partial \ell})$, this follows from Equation \eqref{eq:degree ci}.  
\end{proof} 

We need another preparatory result. We continue to use the above notation. 

\begin{lemma}
   \label{lem:local residual} 
If $\ell \in [S]_1$ is a general linear form then one has, for any $P \in S(\cA)$, 
\begin{align*}
(g_P,  \frac{\partial g_P}{\partial \ell}) : \Jac (g_P) & = (\ell_P,  \frac{g_P}{\widetilde{\ell_P}}) \\
& = (\ell_P,  \frac{\partial g_P}{\partial \ell}), 
\end{align*}
where $\ell_P$ is a linear form defining the linear span of $P$ and $\ell^{\vee}$ in $\PP^n$ and $\widetilde{\ell_P}$ is any linear divisor of $g_P$. 
\end{lemma}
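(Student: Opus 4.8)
The plan is to carry out the whole computation inside the two-variable polynomial ring $T = K[u,v]$, where $u,v$ is a basis of the two-dimensional space $[I_P]_1$, and only afterwards to transport the conclusion to $S$. Since every linear factor of $g_P$ lies in $I_P$, we have $g_P \in T$, and $g_P$ is a squarefree product of $t_P$ distinct linear forms; consequently, using the chain rule, $\Jac (g_P)$ is generated in $S$ by $\frac{\partial g_P}{\partial u}, \frac{\partial g_P}{\partial v} \in T$, and as an ideal of $T$ it is $\fm$-primary of colength $(t_P-1)^2$, the absence of repeated factors forcing its two generators to share no zero in $\Proj T = \PP^1$. Writing $p = u(\ell^\vee) = \frac{\partial u}{\partial \ell}$ and $q = v(\ell^\vee) = \frac{\partial v}{\partial \ell}$, the chain rule gives $\frac{\partial g_P}{\partial \ell} = p\,\frac{\partial g_P}{\partial u} + q\,\frac{\partial g_P}{\partial v} \in T$, while the hyperplane spanned by $P$ and $\ell^\vee$ is cut out (up to scalar) by $\ell_P = qu - pv$; note that $(p,q) \neq (0,0)$ because the general point $\ell^\vee$ does not lie on $P$. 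As $T \hookrightarrow S$ is a polynomial, hence flat, extension and every ideal occurring in the statement is generated by elements of $T$, it suffices to prove both equalities inside $T$.

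The heart of the argument is the containment $(\ell_P, \frac{\partial g_P}{\partial \ell}) \subseteq (g_P, \frac{\partial g_P}{\partial \ell}) : \Jac (g_P)$. I would read the Euler identity $u\,\frac{\partial g_P}{\partial u} + v\,\frac{\partial g_P}{\partial v} = t_P\, g_P$ together with the expression for $\frac{\partial g_P}{\partial \ell}$ as a linear system for the pair $(\frac{\partial g_P}{\partial u}, \frac{\partial g_P}{\partial v})$ with coefficient matrix $\left(\begin{smallmatrix} u & v \\ p & q\end{smallmatrix}\right)$, whose determinant is precisely $qu - pv = \ell_P$. Cramer's rule then gives $\ell_P \cdot \frac{\partial g_P}{\partial u} = t_P q\, g_P - v\,\frac{\partial g_P}{\partial \ell}$ and $\ell_P \cdot \frac{\partial g_P}{\partial v} = u\,\frac{\partial g_P}{\partial \ell} - t_P p\, g_P$, both visibly in $(g_P, \frac{\partial g_P}{\partial \ell})$. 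Hence $\ell_P \cdot \Jac (g_P) \subseteq (g_P, \frac{\partial g_P}{\partial \ell})$, and since $\frac{\partial g_P}{\partial \ell}$ trivially lies in $(g_P, \frac{\partial g_P}{\partial \ell})$, the claimed containment follows.

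To upgrade this to an equality I would invoke the generality of $\ell$ and a length count. The zero of $\ell_P$ in $\PP^1 = \Proj T$ is the point $(u:v) = (p:q)$, and evaluating the binary form $\frac{\partial g_P}{\partial \ell}$ there yields $\frac{\partial g_P}{\partial \ell}(\ell^\vee)$, which is nonzero by \Cref{lem:nonvanishing at dual point}; thus $\ell_P \nmid \frac{\partial g_P}{\partial \ell}$ and $(\ell_P, \frac{\partial g_P}{\partial \ell})$ is $\fm$-primary of colength $t_P - 1$. The same generality shows that no factor of $g_P$ divides $\frac{\partial g_P}{\partial \ell}$ (as $\ell^\vee$ lies on none of the hyperplanes of $\cA$), so $(g_P, \frac{\partial g_P}{\partial \ell})$ is a complete intersection, $\fm$-primary of colength $t_P(t_P-1)$, and it is contained in $\Jac (g_P)$ by Euler's identity and the chain rule. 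The length formula for liaison in the Artinian Gorenstein ring $T/(g_P, \frac{\partial g_P}{\partial \ell})$ then gives $\operatorname{colength}\big((g_P, \frac{\partial g_P}{\partial \ell}) : \Jac (g_P)\big) = t_P(t_P-1) - (t_P-1)^2 = t_P - 1$. A containment of $\fm$-primary ideals of equal finite colength is an equality, which establishes the first asserted equality in $T$, hence in $S$.

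For the second equality I would apply \Cref{lem:divisors on line} in $T$: the forms $\frac{g_P}{\widetilde{\ell_P}}$ and $\frac{\partial g_P}{\partial \ell}$ both have degree $t_P - 1$ and neither is divisible by $\ell_P$ — for $\frac{g_P}{\widetilde{\ell_P}}$ this is because $\ell_P$, the span of $P$ and the general point $\ell^\vee$, is not one of the finitely many hyperplanes of $\cA$ through $P$ — so $(\ell_P, \frac{g_P}{\widetilde{\ell_P}}) = (\ell_P, \frac{\partial g_P}{\partial \ell})$. I expect the main obstacle to be the geometric identification of $\ell_P$ as the determinant $qu - pv$ of the Euler/$\partial_\ell$ system: it is exactly this coincidence that makes the Cramer computation land $\ell_P \cdot \Jac (g_P)$ inside the linking complete intersection. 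Once that identity is secured, the appeal to \Cref{lem:nonvanishing at dual point} and the liaison length bookkeeping are routine.
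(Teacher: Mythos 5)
Your proposal is correct and follows essentially the same route as the paper: reduce to a two-variable subring (the paper does this by a coordinate change with $I_P=(x_0,x_1)$), establish the key containment $\ell_P\cdot\Jac(g_P)\subseteq(g_P,\frac{\partial g_P}{\partial \ell})$ via the Euler relation — your Cramer's-rule identities are exactly the two explicit relations displayed in the paper's proof — and conclude by the liaison degree count, with \Cref{lem:divisors on line} handling the second equality. The only cosmetic differences are that you make the flat extension $T\hookrightarrow S$ and the colength arithmetic explicit, where the paper phrases the latter as "the colon is a complete intersection of type $(1,d)$."
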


\begin{proof}
Possibly changing coordinates, we may assume that $I_P = (x_0, x_1)$ and $\widetilde{\ell_P} = x_0$. 
Note that the second claimed equality follows by \Cref{lem:divisors on line}. It remains to justify the first equality. 

To simplify notation  write $g_P = x_0 \cdot h$ with $h \in K[x_0, x_1]$. Thus, we have to show that 
\[
(x_0 h, \frac{\partial x_0 h}{\partial \ell}) : \Jac (x_0 h) = (\ell_P, h). 
\]

Put $d = \deg h$.  Since $(x_0 h, \frac{\partial x_0 h}{\partial \ell})$ and $\Jac (x_0 h)$ are complete intersections of type $(d, d+1)$ and $(d, d)$, respectively, liaison theory gives that $(x_0 h, \frac{\partial x_0 h}{\partial \ell}) : \Jac (x_0 h)$ is a complete intersection of type $(1, d)$. Hence, it suffices to show that 
\[
(\ell_P, h) \cdot \Jac (x_0 h) \subseteq (x_0 h, \frac{\partial x_0 h}{\partial \ell}). 
\]
Since $(\ell_P, h) = (\ell_P, \frac{\partial x_0 h}{\partial \ell})$ we are done if we know that 
\[
\ell_P  \cdot \Jac (x_0 h) \subseteq (x_0 h, \frac{\partial x_0 h}{\partial \ell}). 
\]

To verify this, we simplify notation. Let $\ell = \sum_{i=0}^n a_i x_i$ with $a_i \in K$ and set $p = \frac{\partial x_0 h}{\partial x_0}$ and 
$q = \frac{\partial x_0 h}{\partial x_1}$. Note that the hyperplane spanned by $P$ and $\ell^{\vee}$ is defined by 
$\ell_P = a_1 x_0 - a_0 x_1$. Thus, using also $(d+1) \cdot x_0 h = x_0 p + x_1 q$,  the last inclusion becomes
\[
(a_1 x_0 - a_0 x_1)  \cdot (p, q) \subseteq (x_0 p + x_1 q, a_0 p + a_1 q). 
\]
It is true because 
\[
(a_1 x_0 - a_0 x_1)  \cdot p = a_1 \cdot (x_0 p + x_1 q) - x_1 (a_0 p + a_1 q)
\]
and 
\[
(a_1 x_0 - a_0 x_1)  \cdot q = - a_1 \cdot (x_0 p + x_1 q) + x_0 (a_0 p + a_1 q). 
\]
\end{proof}

We are ready to link the Jacobian ideal and to describe the residual.

\begin{proposition}
   \label{prop:decomposition of residual} 
For any general linear form $\ell \in S$,  the residual 
\[
r_\cA = (f_{\cA}, \frac{\partial f_{\cA}}{\partial \ell}) : \Jac (f_\cA)
\]
has minimal primary decompositions
\[
r_\cA = \bigcap_{P \in S(\cA)} (\ell_P,  \frac{\partial g_P}{\partial \ell}) =  \bigcap_{P \in S(\cA)}   (\ell_P,  \frac{g_P}{\widetilde{\ell_P}}). 
\]
\end{proposition}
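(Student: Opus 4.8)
The plan is to compute the colon ideal $r_\cA = \mathfrak c : \Jac(f_\cA)$, where $\mathfrak c = (f_{\cA}, \frac{\partial f_{\cA}}{\partial \ell})$, by reducing the global computation to the purely local statements already proved in \Cref{lem:local residual}. First observe that the second asserted equality is nothing but the termwise identity $(\ell_P, \frac{\partial g_P}{\partial \ell}) = (\ell_P, \frac{g_P}{\widetilde{\ell_P}})$ supplied by \Cref{lem:local residual}, so it suffices to establish the first equality.

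By the Euler relation (valid since $\operatorname{char} K = 0$) one has $f_\cA \in \Jac(f_\cA)$, and $\frac{\partial f_\cA}{\partial \ell}$ is a linear combination of the partials, whence $\mathfrak c \subseteq \Jac(f_\cA)$; by \Cref{lem:decomposition of ci}, $\mathfrak c$ is a complete intersection, hence unmixed of codimension two with associated primes exactly the $I_P$, $P \in S(\cA)$. I would first argue that the colon sees only the codimension-two part, i.e. $\mathfrak c : \Jac(f_\cA) = \mathfrak c : \Jac(f_\cA)^{top}$: since $\Jac(f_\cA) \subseteq \Jac(f_\cA)^{top}$ one inclusion is clear, and for the other it is enough to verify $s\, \Jac(f_\cA)^{top} \subseteq \mathfrak c$ after localizing at each $I_P$, where $\Jac(f_\cA)$ and $\Jac(f_\cA)^{top}$ agree because they differ only in components of codimension $\ge 3$. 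As $\mathfrak c$ is unmixed, this local check suffices, and standard liaison theory then gives that $r_\cA$ is unmixed of codimension two with $\operatorname{Ass}(S/r_\cA) \subseteq \{I_P \mid P \in S(\cA)\}$.

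To identify the $I_P$-primary component I would localize at $I_P$. Using the primary decompositions of \Cref{lem:decomposition of ci} and \Cref{prop:jac = union}, the localizations are $\mathfrak c_{I_P} = (g_P, \frac{\partial g_P}{\partial \ell})_{I_P}$ and $\Jac(f_\cA)^{top}_{I_P} = \Jac(g_P)_{I_P}$ (the factors attached to $Q \ne P$ become units at $I_P$). Hence $(r_\cA)_{I_P} = \bigl[(g_P, \frac{\partial g_P}{\partial \ell}) : \Jac(g_P)\bigr]_{I_P} = (\ell_P, \frac{\partial g_P}{\partial \ell})_{I_P}$ by \Cref{lem:local residual}. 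The ideal $(\ell_P, \frac{\partial g_P}{\partial \ell})$ is a codimension-two complete intersection whose radical is $I_P$ — here I would invoke the generality of $\ell$, which forces $\ell_P$ (the span of $P$ and $\ell^\vee$) to avoid all hyperplanes of $\cA$ through $P$, so that $\frac{\partial g_P}{\partial \ell}$ restricts on $V(\ell_P)$ to a unit times a power of a local coordinate — and is therefore $I_P$-primary. Gluing the local data over all $P$ yields the claimed minimal primary decomposition; irredundancy holds because each $(r_\cA)_{I_P}$ is a proper ideal, so every $I_P$ is genuinely associated.

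As a cross-check, and as an alternative to invoking unmixedness directly, I would note that colon distributes over intersections in the first argument, giving $r_\cA = \bigcap_P \bigl[(g_P, \frac{\partial g_P}{\partial \ell}) : \Jac(f_\cA)\bigr] \supseteq \bigcap_P (\ell_P, \frac{\partial g_P}{\partial \ell})$ (using $\Jac(f_\cA) \subseteq \Jac(g_P)$), and then compare degrees: liaison together with \Cref{eq:degree ci} gives $\deg r_\cA = |\cA|(|\cA|-1) - \sum_P (t_P-1)^2 = \sum_P (t_P - 1)$, which matches $\deg \bigcap_P (\ell_P, \frac{\partial g_P}{\partial \ell}) = \sum_P(t_P-1)$, forcing equality. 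The main obstacle is the bookkeeping of the second step: $\Jac(f_\cA)$ itself is not unmixed, so one must take care that the residual depends only on its top-dimensional part and that the local-to-global passage is legitimate. Once that is in place, the heart of the computation has already been carried out in \Cref{lem:local residual}.
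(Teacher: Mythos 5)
Your proposal is correct and follows essentially the same route as the paper: reduce $\mathfrak c : \Jac(f_\cA)$ to $\mathfrak c : \Jac(f_\cA)^{top}$, observe via Lemma \ref{lem:decomposition of ci} and Proposition \ref{prop:jac = union} that $\mathfrak c$ and $\Jac(f_\cA)^{top}$ share the associated primes $I_P$, $P \in S(\cA)$, and then compute locally at each $I_P$ using Lemma \ref{lem:local residual}. The extra degree-count cross-check is a harmless (and correct) addition the paper does not spell out.
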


\begin{proof}
Note that $(f_{\cA}, \frac{\partial f_{\cA}}{\partial \ell}) : \Jac (f_\cA) = (f_{\cA}, \frac{\partial f_{\cA}}{\partial \ell}) : \Jac (f_\cA)^{top}$. 
Since $(f_{\cA}, \frac{\partial f_{\cA}}{\partial \ell})$ and $\Jac (f_\cA)^{top}$ have the same associated prime ideals by \Cref{lem:decomposition of ci} and \Cref{prop:jac = union}, the result follows by computing locally at $I_P$ for any $P \in S(\cA)$. Hence we conclude by applying \Cref{lem:local residual}. 
\end{proof} 

Obviously, the residual $(f_{\cA}, \frac{\partial f_{\cA}}{\partial \ell}) : \Jac (f_\cA)$ depends on $\ell$. However, by the generality of $\ell$, its structure and, in particular, its graded Betti numbers are independent of $\ell$. Thus, abusing notation slightly, we introduce: 

\begin{definition}
   \label{def:residual} 
For any general linear form $\ell \in S$,  the ideal 
$(f_{\cA}, \frac{\partial f_{\cA}}{\partial \ell}) : \Jac (f_\cA)$ is said to be a \emph{general residual of $\cA$} (with respect to $\ell$) and denoted by $r_{\cA}$. 
\end{definition}

Using \Cref{lem:divisors on line}, one can describe the primary decomposition of the general residual without making any choices (besides $\ell$).  

\begin{corollary}
   \label{cor:decomposition of residual}
For any general linear form $\ell \in S$,  the minimal primary decomposition of $r_\cA$ is 
\[
r_\cA = \bigcap_{P \in S(\cA)} (\ell_P, I_P^{t_P - 1}), 
\]
where $t_P$ denotes the number of hyperplanes of $\cA$ that contain $P \in S(\cA)$. In particular, the radical of $r_\cA$ is determined by the intersection lattice of $\cA$ as 
\[
\sqrt{r_\cA} = I_{S(\cA)}. 
\]
\end{corollary}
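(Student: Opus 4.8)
The plan is to start from the minimal primary decomposition already recorded in \Cref{prop:decomposition of residual}, namely
$r_\cA = \bigcap_{P \in S(\cA)} (\ell_P,  \frac{g_P}{\widetilde{\ell_P}})$,
and to prove that each primary component coincides with $(\ell_P, I_P^{t_P-1})$. Since the displayed intersection is \emph{literally} a minimal primary decomposition, establishing the ideal equality $(\ell_P, \frac{g_P}{\widetilde{\ell_P}}) = (\ell_P, I_P^{t_P - 1})$ for every $P \in S(\cA)$ will immediately yield the claimed decomposition (and its minimality is inherited, as we only replace each component by an equal ideal). Everything is local to a single $P$, so I would fix $P$ and, after the linear change of coordinates used in the proof of \Cref{lem:local residual}, assume $I_P = (x_0, x_1)$ and $\widetilde{\ell_P} = x_0$, so that all relevant forms lie in the two-variable ring $T = K[x_0, x_1]$ and $\ell_P = a_1 x_0 - a_0 x_1 \in I_P$.

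The crux is to show that $\frac{g_P}{\widetilde{\ell_P}}$ is not divisible by $\ell_P$, and here I would exploit the generality of $\ell$. Since $\ell^\vee$ is a general point, it avoids the finitely many hyperplanes cut out by the linear factors of $f_\cA$, whereas $\ell^\vee$ lies on $V(\ell_P)$ by the very definition of $\ell_P$ as the span of $P$ and $\ell^\vee$. Consequently $\ell_P$ is a scalar multiple of none of the linear factors of $g_P$, so it divides neither $\frac{g_P}{\widetilde{\ell_P}}$ (a product of such factors) nor $x_0^{t_P - 1} = \widetilde{\ell_P}^{\,t_P-1}$; in particular $a_0 \neq 0$. (Alternatively, \Cref{lem:nonvanishing at dual point} gives $\frac{\partial g_P}{\partial \ell}(\ell^\vee) \neq 0$ while $\ell^\vee \in V(\ell_P)$, which again forces $\ell_P \nmid \frac{\partial g_P}{\partial \ell}$, and one may work with that generator instead.)

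With non-divisibility in hand, I would apply \Cref{lem:divisors on line} in $T$ to the two degree-$(t_P - 1)$ forms $\frac{g_P}{\widetilde{\ell_P}}$ and $x_0^{t_P - 1}$, both prime to $\ell_P$, obtaining $(\ell_P, \frac{g_P}{\widetilde{\ell_P}}) = (\ell_P, x_0^{t_P - 1})$. It then remains to check $(\ell_P, x_0^{t_P - 1}) = (\ell_P, I_P^{t_P - 1})$: the inclusion $\subseteq$ is clear as $x_0^{t_P-1} \in I_P^{t_P-1}$, and for the reverse one substitutes $x_1 \equiv (a_1/a_0) x_0 \pmod{\ell_P}$ to see that every monomial generator $x_0^i x_1^{t_P - 1 - i}$ of $I_P^{t_P-1}$ reduces to a scalar multiple of $x_0^{t_P-1}$, hence lies in $(\ell_P, x_0^{t_P-1})$. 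These are identities of ideals of $T$, and since $S$ is a polynomial extension of $T$ they extend to the corresponding ideals of $S$, completing the primary decomposition.

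For the radical statement I would use that radical commutes with finite intersections, so $\sqrt{r_\cA} = \bigcap_{P \in S(\cA)} \sqrt{(\ell_P, I_P^{t_P-1})}$. Because $\ell_P \in I_P$ one has $(\ell_P, I_P^{t_P-1}) \subseteq I_P$, whence $\sqrt{(\ell_P, I_P^{t_P-1})} \subseteq I_P$; conversely $I_P = \sqrt{I_P^{t_P-1}} \subseteq \sqrt{(\ell_P, I_P^{t_P-1})}$, so the radical of each component is exactly $I_P$. Thus $\sqrt{r_\cA} = \bigcap_{P \in S(\cA)} I_P = I_{S(\cA)}$, which is visibly read off from the intersection lattice. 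The main obstacle is the non-divisibility step of the second paragraph: both the identification of the components with $(\ell_P, I_P^{t_P-1})$ and the validity of the minimal primary decomposition hinge on it, and it is precisely there that the generality of $\ell^\vee$ must be used; once that is secured, the remainder is the elementary two-variable computation underlying \Cref{lem:divisors on line}.
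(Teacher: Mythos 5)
Your proposal is correct and follows essentially the same route as the paper, which derives the corollary from \Cref{prop:decomposition of residual} by invoking \Cref{lem:divisors on line} componentwise; you have simply written out the details (the non-divisibility of $\frac{g_P}{\widetilde{\ell_P}}$ and of $\widetilde{\ell_P}^{\,t_P-1}$ by $\ell_P$ via the generality of $\ell^\vee$, and the reduction of $I_P^{t_P-1}$ modulo $\ell_P$) that the paper leaves implicit. The radical computation is likewise the intended one.
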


Our next goal is to estimate the Castelnuovo-Mumford regularity of $r_\cA$. One ingredient is the following observation. 

\begin{lemma}
    \label{lem:residual with added hyperplane} 
Consider any hyperplane $H \subset \PP^n$ that is not in $\cA$. If $H$ is defined by a linear form $x \in S$, then one has
\[
r_{\cA + H} : x = r_\cA,  
\]
where $\cA + H$ is short for the hyperplane arrangement $\cA \cup \{H\}$.
\end{lemma}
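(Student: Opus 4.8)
The plan is to compute both residuals via the explicit primary decompositions of \Cref{cor:decomposition of residual} and to exploit that colon distributes over intersection. Write $r_{\cA+H} = \bigcap_{P \in S(\cA+H)} (\ell_P, I_P^{t'_P-1})$, where $t'_P$ denotes the multiplicity of $P$ in $\cA+H$, and recall $r_\cA = \bigcap_{P \in S(\cA)} (\ell_P, I_P^{t_P-1})$. Since the same general $\ell$ (hence the same $\ell^\vee$ and the same linear forms $\ell_P$, which depend only on $P$ and $\ell^\vee$) is used for both arrangements, the two decompositions share the form $\ell_P$ at every common $P$. Then
\[
r_{\cA+H} : x = \bigcap_{P \in S(\cA+H)} \big[ (\ell_P, I_P^{t'_P-1}) : x \big],
\]
and it suffices to analyze each colon factor.

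First I would partition $S(\cA+H)$ according to how $P$ meets $\cA$ and $H$: each $P$ is either (i) an element of $S(\cA)$ not contained in $H$, (ii) an element of $S(\cA)$ contained in $H$, or (iii) a new subspace $P = H \cap L$ with $L \in \cA$ and $P \notin S(\cA)$. In case (iii) one has $t'_P = 2$, because if two distinct hyperplanes of $\cA$ contained $P$ then $P$ would equal their (codimension two) intersection, forcing $P \in S(\cA)$; in case (ii) one has $t'_P = t_P+1$, and in case (i) $t'_P = t_P$.

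The heart of the argument is then a local, two-variable computation at each $P$. In case (i), $x \notin I_P$, so $x$ is a nonzerodivisor on $S/(\ell_P, I_P^{t_P-1})$ and the factor is unchanged. In case (iii), the factor is $(\ell_P, I_P) = I_P$ (as $\ell_P \in I_P$), and since $x \in I_P$ we get $I_P : x = S$, so this factor drops out of the intersection---exactly as needed, since such $P$ do not occur in $r_\cA$. The crucial case is (ii): here $x \in I_P$, and the generality of $\ell$ (which forces $\ell^\vee \notin H$, so that $\ell_P$ and $x$ are nonproportional linear forms) lets me pass to $K[x_0,x_1]$ with $I_P = (x_0,x_1)$, change coordinates so that $\ell_P, x$ become a coordinate pair $u,v$, and compute
\[
(\ell_P, I_P^{t_P}) : x = (u, v^{t_P}) : v = (u, v^{t_P-1}) = (\ell_P, I_P^{t_P-1});
\]
flatness of $S$ over $K[x_0,x_1]$ carries this identity back to $S$. (This is where \Cref{lem:divisors on line} type reasoning is used to rewrite the relevant ideal in the convenient form.)

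Assembling the three cases, every factor of $r_{\cA+H}:x$ indexed by a $P \in S(\cA)$ equals the corresponding factor $(\ell_P, I_P^{t_P-1})$ of $r_\cA$, while every factor coming from a new point collapses to $S$; hence $r_{\cA+H}:x = r_\cA$. I expect the main obstacle to be making the generality hypotheses precise: one must check that a single general $\ell$ simultaneously produces the clean decompositions of \Cref{cor:decomposition of residual} for both $\cA$ and $\cA+H$ and satisfies $\ell^\vee \notin H$, so that $\ell_P \neq \lambda\, x$ in case (ii). Each of these is a nonempty open condition, so a general $\ell$ meets them all, but it should be stated explicitly. Once the nonproportionality of $\ell_P$ and $x$ is secured, the only genuinely computational step, the identity in case (ii), is elementary.
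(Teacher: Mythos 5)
Your proposal is correct and follows essentially the same route as the paper: both write $r_{\cA+H}$ via its explicit primary decomposition, distribute the colon by $x$ over the intersection, and treat the three types of components (old points off $H$, old points on $H$, new points $H\cap L$) exactly as you do, with the new points' components $I_P$ coloning to $S$ and the components at $S(\cA)\cap H$ dropping one power. The only cosmetic difference is that the paper writes the degree-$(t_P)$ generator at $P\in S(\cA)\cap H$ as $\frac{g_P}{\widetilde{\ell_P}}\cdot x$, making the colon computation immediate, whereas you use the equivalent form $(\ell_P, I_P^{t_P})$ from \Cref{cor:decomposition of residual} and verify the identity $(\ell_P, I_P^{t_P}):x=(\ell_P,I_P^{t_P-1})$ in local coordinates.
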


\begin{proof}  
By definition of the top support of the singular locus of $\cA + H$, one has $S (\cA + H) = S (\cA) \cup (\cA \cap H)$, where, slightly abusing notation, we set  
\[
\cA \cap H  = \{ H' \cap H \; \mid \; H' \in \cA \}. 
\] 
Thus, applying \Cref{prop:decomposition of residual}  to the arrangement $A + H$,  we get
\[
r_{\cA + H} = \bigcap_{P \in S(\cA) - (\cA \cap H)}  (\ell_P,  \frac{g_P}{\widetilde{\ell_P}}) \cap \bigcap_{P \in \cA \cap H} (\ell_P,  \frac{g_P}{\widetilde{\ell_P}} \cdot x), 
\]
where,  for $P \in (\cA \cap H) - S(\cA)$, the polynomials $g_P$ and $\widetilde{\ell_P}$  both define the unique hyperplane in $\cA$ containing $P$, and so $(\ell_P,  \frac{g_P}{\widetilde{\ell_P}} \cdot x) = (\ell_P, x) = I_P$. 

If $P$ is in $(\cA \cap H) \cap S(\cA) = S(\cA) \cap H$ then $t_P \ge 2$ and $(\ell_P,  \frac{g_P}{\widetilde{\ell_P}}) \neq S$ is a complete intersection of type $(1, t_P -1)$. It follows that 
\begin{align*}
r_{\cA + H} : x & = \bigcap_{P \in S(\cA) - (\cA \cap H)}  (\ell_P,  \frac{g_P}{\widetilde{\ell_P}}) \cap \bigcap_{P \in S(\cA) \cap H} (\ell_P,  \frac{g_P}{\widetilde{\ell_P}}) \\
& = \bigcap_{P \in S(\cA)}  (\ell_P,  \frac{g_P}{\widetilde{\ell_P}}) \\
& = r_\cA, 
\end{align*}
where the last equality is true by \Cref{prop:decomposition of residual}. 
\end{proof}

The above proof also gives the following observation. 

\begin{corollary}
   \label{cor:degree change adding} 
If a hyperplane $H = V(x)$ is not in $\cA$, then 
\[
\deg r_{\cA + H} = \deg r_\cA + |\cA \cap H|. 
\]
\end{corollary}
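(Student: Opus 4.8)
The plan is to read off both degrees from explicit primary decompositions and compare them term by term, exactly as the decomposition produced in the proof of \Cref{lem:residual with added hyperplane} already arranges. By \Cref{prop:decomposition of residual}, each primary component of $r_\cA$ is the complete intersection $(\ell_P, \frac{g_P}{\widetilde{\ell_P}})$ of type $(1, t_P - 1)$, hence has degree $t_P - 1$; since $r_\cA$ is unmixed of codimension two, its degree is the sum of the degrees of its components, so $\deg r_\cA = \sum_{P \in S(\cA)} (t_P - 1)$.

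First I would record the decomposition of $r_{\cA + H}$ obtained inside the proof of \Cref{lem:residual with added hyperplane}, namely
\[
r_{\cA + H} = \bigcap_{P \in S(\cA) \setminus (\cA \cap H)} (\ell_P, \tfrac{g_P}{\widetilde{\ell_P}}) \cap \bigcap_{P \in \cA \cap H} (\ell_P, \tfrac{g_P}{\widetilde{\ell_P}} \cdot x),
\]
together with the identity $S(\cA + H) = S(\cA) \cup (\cA \cap H)$. Then I would split $\cA \cap H$ into the two sets $A = (\cA \cap H) \cap S(\cA)$ and $B = (\cA \cap H) \setminus S(\cA)$. For $P \in A$ the multiplier $\frac{g_P}{\widetilde{\ell_P}} \cdot x$ has degree $t_P$, so the component is a complete intersection of type $(1, t_P)$ and contributes degree $t_P$; for $P \in B$ only a single hyperplane of $\cA$ passes through $P$, so $\frac{g_P}{\widetilde{\ell_P}} = 1$ and the component is $(\ell_P, x) = I_P$, of degree $1$.

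Summing gives $\deg r_{\cA+H} = \sum_{P \in S(\cA) \setminus (\cA \cap H)} (t_P - 1) + \sum_{P \in A} t_P + |B|$. Since $S(\cA) = (S(\cA) \setminus (\cA \cap H)) \sqcup A$, subtracting the formula for $\deg r_\cA$ makes everything collapse: the contributions from points outside $\cA \cap H$ cancel, each $P \in A$ contributes $t_P - (t_P - 1) = 1$, and each $P \in B$ contributes $1$. Hence $\deg r_{\cA+H} - \deg r_\cA = |A| + |B| = |\cA \cap H|$, as claimed.

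The only real obstacle I anticipate is the bookkeeping rather than any geometry. One must verify that multiplying by $x$ raises the degree of the component at a point $P \in A$ by exactly one (so $t_P - 1 \mapsto t_P$), and that the \emph{new} points $P \in B$ genuinely carry only a single hyperplane of $\cA$, so that their components are honest degree-one complete intersections and not something heavier. Both facts are already built into the decomposition in \Cref{lem:residual with added hyperplane} and into $S(\cA + H) = S(\cA) \cup (\cA \cap H)$, so the task reduces to keeping the three types of points straight and invoking the degree of a codimension-two complete intersection.
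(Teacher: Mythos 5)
Your proof is correct and follows essentially the same route as the paper: the paper derives this corollary directly from the primary decomposition of $r_{\cA+H}$ displayed in the proof of \Cref{lem:residual with added hyperplane}, which is exactly the decomposition you use, with the same trichotomy of points and the same degree count. No gaps.
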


We want to apply the following special case of \cite[Corollary 3.7]{DS}. We denote by $\fm = (x_0,\ldots,x_n)$ the homogeneous maximal ideal of $S$. 

\begin{proposition}
    \label{prop:DS result}
Consider proper homogeneous ideals $J_1,\ldots,J_t$ and $J$ of $S$. If there are homogeneous ideals $I_1,\ldots,I_t$ such that 
\[
I_k \cdot J_k \subseteq J \subseteq J_k
\]
for each $k$ and $I_1 + \cdots + I_t = \fm$, then 
\[
\reg J  \le 1 + \max \{\reg J_k \; | \; 1 \le k \le t\}. 
\]
\end{proposition}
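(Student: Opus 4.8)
The plan is to reduce the statement to the assertion $\reg(S/J) \le r$, where $r := \max_k \reg J_k$. Since $\reg J = \reg(S/J) + 1$ and $\reg(S/J_k) = \reg J_k - 1 \le r - 1$, this is equivalent to the claimed bound. The containments $I_k J_k \subseteq J \subseteq J_k$ translate, for each $k$, into a short exact sequence
\[
0 \to J_k/J \to S/J \to S/J_k \to 0,
\]
in which $N_k := J_k/J$ is annihilated by $I_k$ (because $I_k J_k \subseteq J$). The hypothesis $I_1 + \cdots + I_t = \fm$ is precisely what couples these $t$ sequences together.

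First I would treat the case where $S/J$ is Artinian (dimension zero); it is clean and contains the whole idea. For $j \ge r$ one has $(S/J_k)_j = 0$ for every $k$ (as $j \ge r \ge \reg J_k$), so the displayed sequence gives graded isomorphisms $(N_k)_j \cong (S/J)_j$ for all $k$ and all $j \ge r$. Because $N_k$ is killed by $I_k$, chasing these isomorphisms through the multiplication maps (which are compatible, as the isomorphisms persist in degree $j+1$) shows that $[I_k]_1$ annihilates $(S/J)_j$ for every $k$; summing and using $I_1 + \cdots + I_t = \fm$ yields $\fm_1 \cdot (S/J)_j = 0$ for all $j \ge r$. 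Since $S/J$ is a cyclic module generated in degree $0$, one has $(S/J)_{j+1} = \fm_1 \cdot (S/J)_j$, so $(S/J)_j = 0$ for $j \ge r+1$; thus the top nonzero degree of the finite length module $S/J$, which equals $\reg(S/J)$, is at most $r$.

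To pass to the general case I would induct on $\dim(S/J)$ by cutting with a general linear form $z$. Writing $\bar{(\cdot)}$ for images in $\bar S = S/zS$, the three hypotheses descend verbatim, namely $\overline{I_k}\,\overline{J_k} \subseteq \bar J \subseteq \overline{J_k}$ and $\sum_k \overline{I_k} = \fm_{\bar S}$, while $\dim(\bar S/\bar J) = \dim(S/J) - 1$. The inductive hypothesis then bounds $\reg(\bar S/\bar J)$ by $\max_k \reg \overline{J_k}$, with the Artinian computation above as the base case.

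The main obstacle is the comparison of regularities across the general hyperplane section: one must know both that $\reg \overline{J_k} \le \reg J_k$ and that $\reg(S/J)$ is governed by $\reg(\bar S/\bar J)$. The second point is delicate precisely because $J$ need not be saturated, so $S/J$ may have a nonzero $\fm$-torsion submodule $H^0_{\fm}(S/J)$. For sufficiently general $z$ this $z$ is filter-regular, and the exact sequence
\[
0 \to (0 :_{S/J} z)(-1) \to (S/J)(-1) \xrightarrow{\ z\ } S/J \to \bar S/\bar J \to 0
\]
lets one control $\reg(S/J)$ in terms of $\reg(\bar S/\bar J)$ together with the top degree of the finite length module $(0 :_{S/J} z)$. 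Managing this finite length correction is the technical heart, and it is exactly the content that realizes the result as a special case of \cite[Corollary 3.7]{DS}.
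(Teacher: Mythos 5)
The paper offers no proof of this proposition at all: it is quoted as a special case of \cite[Corollary 3.7]{DS}, so there is no internal argument to compare yours against. Your Artinian case is correct and complete: for $j \ge r := \max_k \reg J_k$ one has $(S/J_k)_j = 0$, hence $(S/J)_j = (J_k/J)_j$ is annihilated by $[I_k]_1$, and summing over $k$ gives $S_1\cdot (S/J)_j = 0$; cyclicity of $S/J$ then kills every degree $\ge r+1$. That fragment is already more than the paper supplies.

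The general case, however, has a genuine gap, which you half-acknowledge and then hand back to \cite{DS}. The crux is the finite-length correction you name but do not control: for a filter-regular $z$ one has $\reg (S/J) \le \max\{\,e,\ \reg(\bar S/\bar J)\,\}$, where $e$ is the top nonzero degree of $H^0_{\fm}(S/J) = J^{sat}/J$ and $\bar S = S/zS$, so you must prove $H^0_{\fm}(S/J)$ vanishes in degrees $\ge r+1$. The natural extension of your Artinian computation only shows that $\fm$ annihilates $H^0_{\fm}(S/J)_j$ for $j \ge r$ (because in those degrees $H^0_{\fm}(S/J)_j = H^0_{\fm}(J_k/J)_j$, as $H^0_{\fm}(S/J_k)$ ends in degree $\le \reg J_k - 1$); since $H^0_{\fm}(S/J)$ is not cyclic, ``everything in degree $\ge r$ lies in the socle'' does not imply vanishing in degrees $\ge r+1$. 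The missing idea is to interleave this with the inductive bound: $\reg \bar J \le r+1$ forces $(\bar J)^{sat}_j = \bar J_j$ for $j \ge r+1$, whence $J^{sat}_j \subseteq J_j + z\, J^{sat}_{j-1}$, i.e.\ multiplication by $z$ maps $(J^{sat}/J)_{j-1}$ \emph{onto} $(J^{sat}/J)_j$ in those degrees; only then does the socle computation finish, since $z\,(J^{sat}/J)_{j-1} \subseteq \fm\,(J^{sat}/J)_{j-1} = 0$ for $j-1 \ge r$. Without this step (or an equivalent device such as the Bayer--Stillman criterion) your induction does not close, so as written the proposal is an outline whose hardest step is still outsourced to the very reference the paper cites.
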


The main result of this section is: 

\begin{theorem}
    \label{thm:reg residual} 
For any hyperplane arrangement $\cA \subset \PP^n$, one has
\[
\reg r_\cA \le |\cA| - 1. 
\]
\end{theorem}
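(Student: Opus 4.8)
The plan is to induct on $d = |\cA|$ and to apply the regularity estimate of \Cref{prop:DS result}. The crucial observation is that the linear forms $\ell_L$ defining the hyperplanes $L \in \cA$ can serve as the multiplier ideals, while the corresponding colon ideals $r_\cA : \ell_L$ are again residuals of one-smaller arrangements by \Cref{lem:residual with added hyperplane}. This turns \Cref{prop:DS result} into an exact inductive engine.

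First I would dispose of two degenerate situations. If $\cA$ is a \emph{pencil}, i.e.\ all its hyperplanes contain a fixed codimension-two subspace, then $S(\cA) = \{P\}$ with $t_P = d$, and \Cref{cor:decomposition of residual} gives $r_\cA = (\ell_P, I_P^{d-1})$, a complete intersection of type $(1,d-1)$, so that $\reg r_\cA = d-1$ directly; this also settles $d=2$. If $\cA$ is not a pencil, I would reduce to the case that $\cA$ is \emph{essential}, i.e.\ $\bigcap_{L\in\cA} L = \emptyset$. If it is not, then after a change of coordinates all $\ell_L$ lie in a proper polynomial subring $S' = K[x_0,\dots,x_m]$; since $\cA$ is not a pencil the span of the $\ell_L$ has dimension $\ge 3$, so $m \ge 2$ and we stay within the range where residuals are defined. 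Then $f_\cA \in S'$, the partials of $f_\cA$ in the remaining variables vanish, and both $(f_\cA, \frac{\partial f_\cA}{\partial \ell})$ and $\Jac(f_\cA)$ are extended from $S'$. As forming colons commutes with the flat extension $S' \subseteq S$ and appending variables leaves graded Betti numbers unchanged, $r_\cA$ is the extension of the residual of the essentialized arrangement in $\PP^m$, which has the same number of hyperplanes and the same regularity. Thus we may assume $\cA$ essential, whence the $\ell_L$ span $[S]_1$ and generate $\fm$.

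For the inductive step, enumerate $\cA = \{L_1,\dots,L_d\}$ and set, for each $k$,
\[
I_k = (\ell_{L_k}), \qquad J_k = r_\cA : \ell_{L_k}.
\]
Applying \Cref{lem:residual with added hyperplane} with base arrangement $\cA - L_k$ and added hyperplane $L_k$ (legitimate since $L_k \notin \cA - L_k$) identifies $J_k = r_{\cA - L_k}$. These data satisfy the hypotheses of \Cref{prop:DS result} with $J = r_\cA$: one has $J \subseteq J_k$ for free, $I_k J_k = \ell_{L_k}\,(r_\cA : \ell_{L_k}) \subseteq r_\cA = J$ by the definition of the colon, and $I_1 + \cdots + I_d = (\ell_{L_1},\dots,\ell_{L_d}) = \fm$ by essentiality. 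The induction hypothesis gives $\reg J_k = \reg r_{\cA - L_k} \le |\cA - L_k| - 1 = d-2$, and \Cref{prop:DS result} then yields
\[
\reg r_\cA \le 1 + \max_k \reg r_{\cA - L_k} \le 1 + (d-2) = d-1,
\]
as desired.

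The step I expect to require the most care is the sum condition $\sum_k I_k = \fm$ of \Cref{prop:DS result}: this is exactly where essentiality is indispensable, since if the hyperplanes shared a common point the forms $\ell_L$ would all vanish there and could never generate $\fm$, and it is what forces the preliminary essentialization. One must be careful not to essentialize a pencil, as that would lower the ambient dimension to $\PP^1$, outside the scope of the residual machinery; this is why pencils are treated separately at the outset. The remaining points—that a single general $\ell$ can be chosen simultaneously for $\cA$ and all $\cA - L_k$ (automatic, since regularity is independent of the general choice of $\ell$), and that each $J_k$ is a proper ideal (true because $\cA - L_k$ still has $d-1 \ge 2$ hyperplanes and hence a nonempty singular locus)—are routine.
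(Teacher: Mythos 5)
Your proof is correct and follows essentially the same route as the paper's: reduce to the essential case, induct on $|\cA|$, identify $r_\cA : \ell_{L_k} = r_{\cA - L_k}$ via \Cref{lem:residual with added hyperplane}, and feed the resulting containments $\ell_{L_k} \cdot r_{\cA - L_k} \subseteq r_\cA \subseteq r_{\cA - L_k}$ into \Cref{prop:DS result}. The only cosmetic differences are your separate treatment of pencils (the paper instead takes the star configuration $|\cA| = n+1$ as its base case after essentializing) and your more detailed justification of the essentialization step.
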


\begin{proof}
It there is a point that is contained in every hyperplane of $\cA$, passing to the projection from this point, 
we may consider $\cA$ in a lower-dimensional space in order to establish the claim. Thus, we may assume that the ideal $I \subset S$ that is generated by the linear divisors of $f_\cA$ equals $\fm$. This implies $|\cA| \ge n+1$. 

We use induction on  $|\cA|$. If $|\cA| = n+1$, then we may assume $f_\cA = x_0 \cdots x_n$, and so 
\[
r_\cA = \bigcap_{i \neq j} (x_i, x_j). 
\]
Thus, $r_\cA$ defines a star configuration and $\reg (r_\cA) = n = |\cA| - 1$, as desired. 

Assume $|\cA| > n+1$. By induction, we know for any linear divisor $x$ of $f_\cA$ that 
\[
\reg r_{\cA - V(x)} \le |\cA| - 2. 
\]
Using the definition of a colon ideal and \Cref{lem:residual with added hyperplane}, we get
\[
x \cdot r_{\cA - V(x)} \subseteq r_\cA \subseteq r_{\cA - V(x)}, 
\]
where $\cA - V(x)$ is short for the arrangement $\cA \setminus \{V(x)\}$.
Since we assumed that $I = (x \; | \; \text{ $x$ a linear divisor of $f_\cA$})$ equals $\fm$, applying \Cref{prop:DS result} with $J = r_\cA$ and $J_x = r_{\cA - V(x)}$ gives $\reg r_\cA \le |\cA| - 1$. 
\end{proof}

We conclude this section with a result in the spirit of \Cref{sec:reduction to plane}. 
As for Jacobian ideals of hyperplane arrangements, it is  useful to consider restrictions of general residuals. Recall that the scheme defined by a general residual $r_\cA$ is supported on $S(\cA)$. We use the notation introduced in \Cref{sec:reduction to plane}, where we identified $\bar{S} = S/\ell S$ and a polynomial subring $R$ of $S$  (see Equation (\ref{eq:def restriction})). In particular, $\bar{g}$ denotes the image of $g \in S$ in $R$. 

\begin{proposition}
     \label{lem:restrict residual}
Consider a hyperplane arrangement  $\cA \subset \PP^n$ with $n \ge 3$, and let $\ell \in [S]_1$ be a general linear form. Let $x \in I_{\ell^\vee}$ be a linear form such that $r_\cA : x = r_\cA$. (Such $x$ exists because $\codim I_{\ell^\vee} = n > 2 = \codim r_\cA$.) Then one has: 

\begin{itemize}

\item[(a)] $\overline{(f_\cA, \frac{\partial f_{\cA}}{\partial \ell})} = (f_{\bar{\cA}}, \frac{\partial f_{\bar{\cA}}}{\partial \bar{\ell}} )$. 

\item[(b)] $\overline{r_\cA}^{top} = r_{\bar{\cA}}$, where $r_{\bar{\cA}}$ is the general residual of $\bar{\cA}$ with respect to $\bar{\ell}$. 

\end{itemize}
 
\end{proposition}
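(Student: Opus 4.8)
The plan is to prove the two statements in order, using part (a) as a stepping stone toward part (b). For part (a), I would start from the definition $r_\cA = (f_\cA, \frac{\partial f_\cA}{\partial \ell}) : \Jac(f_\cA)$ and focus on understanding how the complete intersection $(f_\cA, \frac{\partial f_\cA}{\partial \ell})$ behaves under restriction to $R = S/xS$. The key point is that $\overline{f_\cA} = f_{\bar\cA}$ by Equation \eqref{eq:restricted arrangement}, since $x$ does not contain any element of $S(\cA)$ (being chosen in $I_{\ell^\vee}$, a subspace transverse to the singular support). For the partial derivative, I would invoke \Cref{lem:restricted partials} in the form adapted to the directional derivative $\frac{\partial}{\partial \ell}$: writing $\ell$ in terms of the chosen coordinates $y_1,\ldots,y_n$ (noting that $x \in I_{\ell^\vee}$ should force $\ell$ to lie in $R$, i.e.\ $\bar\ell = \ell$ has no $x$-component, which is exactly the geometric meaning of $x$ passing through the dual point), the operator $\frac{\partial}{\partial\ell}$ commutes with restriction. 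Thus $\overline{\frac{\partial f_\cA}{\partial \ell}} = \frac{\partial f_{\bar\cA}}{\partial \bar\ell}$, giving statement (a). The main technical care here is verifying that the condition $x \in I_{\ell^\vee}$ genuinely forces $\ell$ to involve no $x$-term, so that $\frac{\partial f_{\bar\cA}}{\partial \bar\ell}$ makes sense as a derivative purely in the $y$-variables.

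For part (b), I would work from the primary decomposition in \Cref{prop:decomposition of residual}, namely $r_\cA = \bigcap_{P \in S(\cA)} (\ell_P, \frac{\partial g_P}{\partial \ell})$, and analyze its restriction componentwise. Because $x \in I_{\ell^\vee}$ avoids the singular support, restriction to $R$ preserves codimension and sends each complete intersection component $(\ell_P, \frac{\partial g_P}{\partial\ell})$ to a component of the form $(\ell_{\bar P}, \frac{\partial g_{\bar P}}{\partial \bar\ell})$ supported on $\bar P = P \cap V(x) \in S(\bar\cA)$, using \Cref{lem-restrict ci} to handle the individual Jacobians $\Jac(g_P)$ and the associated colon computation of \Cref{lem:local residual}. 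Alternatively, and more cleanly, I would deduce (b) directly from (a) by applying \Cref{lem:liaison under restriction}: the ideals $\Jac(f_\cA)$ and $r_\cA$ are linked by the Gorenstein (indeed complete-intersection) ideal $\fc = (f_\cA, \frac{\partial f_\cA}{\partial\ell})$, and the hypothesis $r_\cA : x = r_\cA$ together with $\fc : x = \fc$ (which follows since $x$ is a nonzerodivisor on the complete intersection of codimension $2 < n$) puts us in the setting of that lemma. Then $\overline{r_\cA}^{top}$ is linked to $\overline{\Jac(f_\cA)}^{top} = \Jac(f_{\bar\cA})^{top}$ (the latter equality being \Cref{lem"restriction top-dimensional}) by $\bar\fc = (f_{\bar\cA}, \frac{\partial f_{\bar\cA}}{\partial\bar\ell})$, using part (a) to identify $\bar\fc$. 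Since the residual of $\bar\cA$ is by definition $r_{\bar\cA} = (f_{\bar\cA}, \frac{\partial f_{\bar\cA}}{\partial\bar\ell}) : \Jac(f_{\bar\cA})$, and liaison by a fixed complete intersection is an involution on top-dimensional parts, I would conclude $\overline{r_\cA}^{top} = r_{\bar\cA}$.

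The step I expect to be the main obstacle is ensuring that all the generality and transversality hypotheses line up to justify the simultaneous application of \Cref{lem:liaison under restriction} and \Cref{lem"restriction top-dimensional}: specifically, one must confirm that the single linear form $x \in I_{\ell^\vee}$ can be chosen to satisfy \emph{both} $r_\cA : x = r_\cA$ and $\Jac(f_\cA)^{top} : x = \Jac(f_\cA)^{top}$ simultaneously, and that $\bar\ell = \ell$ remains a \emph{general} linear form in $R$ so that $r_{\bar\cA}$ computed with respect to $\bar\ell$ is genuinely a general residual. The existence of such $x$ follows from a dimension count, since the union of the finitely many associated primes to avoid is a proper closed condition inside the codimension-$(n-2)$ family $I_{\ell^\vee}$, but I would want to state this carefully. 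A secondary subtlety is the identification $\bar\ell = \ell$: I would verify that choosing $x$ through the dual point $\ell^\vee$ is precisely what guarantees $\ell \in R$, so that restricting does not alter the direction of differentiation.
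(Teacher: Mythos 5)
Your argument for part (a) is the paper's: since $x\in I_{\ell^\vee}$ the dual point has vanishing $x$-coordinate, so $\ell=\bar\ell$ lies in $R$ and \Cref{lem:restricted partials} makes the directional derivative commute with restriction. For part (b), your primary route (restrict the decomposition $r_\cA=\bigcap_P(\ell_P,I_P^{t_P-1})$ componentwise) is exactly what the paper does, except that the paper closes the argument with \Cref{lem:primary decomp under restriction} applied to the complete intersections $(\ell_P,I_P^{t_P-1})$ rather than re-running \Cref{lem-restrict ci} and \Cref{lem:local residual}; either works. Your alternative liaison route via \Cref{lem:liaison under restriction} is a genuinely different and valid argument, but it needs one correction: the ideal linked to $r_\cA$ by $\fc=(f_\cA,\frac{\partial f_\cA}{\partial\ell})$ is $\Jac(f_\cA)^{top}$, not $\Jac(f_\cA)$ (which is generally not unmixed), so after restriction you should invoke \Cref{lem"restriction top-dimensional} in the form $\overline{\Jac(f_\cA)^{top}}^{top}=\Jac(f_{\bar\cA})^{top}$ as literally stated, rather than the stronger identity $\overline{\Jac(f_\cA)}^{top}=\Jac(f_{\bar\cA})^{top}$, which per the remark following that proposition requires the extra hypothesis $\Jac(f_\cA)^{sat}:x=\Jac(f_\cA)^{sat}$. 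Finally, the compatibility of hypotheses you flag as the main obstacle is automatic: by \Cref{lem:decomposition of ci}, \Cref{prop:jac = union} and \Cref{cor:decomposition of residual}, the ideals $\fc$, $\Jac(f_\cA)^{top}$ and $r_\cA$ all have the same associated primes $\{I_P : P\in S(\cA)\}$, so the single condition $r_\cA:x=r_\cA$ already gives $\fc:x=\fc$ and $\Jac(f_\cA)^{top}:x=\Jac(f_\cA)^{top}$; no separate dimension count is needed. The liaison route buys a proof that avoids recomputing the primary decomposition downstairs, at the cost of quoting the uniqueness of links; the paper's route is more self-contained and also identifies the components of $\overline{r_\cA}^{top}$ explicitly.
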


\begin{proof}
Changing coordinates, we may assume $x = x_n$, and so we identify $\overline{S}$ and $R = K[x_0,\ldots,x_{n-1}]$. If follows for $\ell^\vee = (a_0 : \cdots : a_n)$ that $a_n = 0$, that is, $\ell = \sum_{i=0}^{n-1} a_i x_i$ is in $R$. Hence, \Cref{lem:restricted partials} gives 
\[
\overline{\frac{\partial f_{\cA}}{\partial \ell}} = {\frac{\partial \overline{f_{\cA}}}{\partial \bar{\ell}}} = \frac{\partial f_{\bar{\cA}}}{\partial \bar{\ell}} . 
\]
Combined with Equality \eqref{eq:restricted arrangement}, Claim (a) follows. 

For (b), recall that the general residual of $\cA$ with respect to $\ell$ is 
\[
r_\cA = \bigcap_{P \in S(\cA)} (\ell_P, I_P^{t_P - 1}), 
\]
where $\ell_P$ is a linear form defining the linear span of $P$ and $\ell^\vee$ in $\PP^n$. Consider $\bar{P} = P \cap V(x)$ as a subscheme of $V(x)$. Note that $x \in I_{\ell^\vee}$ gives $\overline{I_P} = I_{\bar{P}}$. Furthermore, each of the ideals $(\ell_P, I_P^{t_P - 1})$ is a complete intersection of codimension two. Since $x$ is a nonzerodivisor, the same is true for $\overline{(\ell_P, I_P^{t_P - 1})}$. 
Hence,  \Cref{lem:primary decomp under restriction} implies
\[
\overline{r_\cA}^{top} = \bigcap_{P \in S(\cA)} \overline{(\ell_P, I_P^{t_P - 1})}  = \bigcap_{\bar{P} \in S(\bar{\cA)}} (\ell_{\bar{P}}, I_{\bar{P}}^{t_{\bar{P} - 1}})  = r_{\bar{\cA}}, 
\]
where the last equality follows from the description of the general residual in \Cref{prop:decomposition of residual}. 
\end{proof}

Using \Cref{lem:restrict residual}(b) instead of \Cref{lem"restriction top-dimensional} , the arguments for establishing \Cref{cor:restrict cCM top part} give the following analogous consequence for general residuals. 

\begin{corollary}
   \label{cor:restrict  aCM residual}
Let $\cA \subset \PP^n$ be a hyperplane arrangement  such that $S/\Jac (f_\cA)^{top}$ is Cohen-Macaulay, and let 
$\Lambda$ be a general 2-plane in $\mathbb P^n$. Denote by $\mathcal B$ the line arrangement  in $\Lambda$ obtained by restricting $\mathcal A$ to $\Lambda$.    Then the general residual $r_{\mathcal B}$ of $\mathcal B$ (in the coordinate ring of $\Lambda$)  has the same graded Betti numbers 
as  $r_\cA$. The scheme defined by $r_{\mathcal B}$ is the intersection  of the scheme defined by $r_\cA$ with $\Lambda$. 
\end{corollary}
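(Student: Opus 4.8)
The plan is to reduce \Cref{cor:restrict aCM residual} to the corresponding statement for Jacobian top-parts (namely \Cref{cor:restrict cCM top part}) by repeating the restriction argument, but now using \Cref{lem:restrict residual}(b) as the inductive engine in place of \Cref{lem"restriction top-dimensional}. The key structural observation that makes this work is that general residuals are stable under general restriction: if $x$ is a linear form with $r_\cA : x = r_\cA$, then passing to $\bar{S} = S/xS$ and taking the top part recovers $r_{\bar\cA}$. So the whole argument is an iteration of this single step, $n-2$ times, down to a plane.

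First I would fix a minimal generating set $\{\ell_1,\ldots,\ell_{n-2}\}$ of the ideal $I_\Lambda$ of the general $2$-plane $\Lambda$, and choose the general linear form $\ell$ defining the residual. The generality of $\Lambda$ lets me arrange, for $x = \ell_1$, that $r_\cA : x = r_\cA$ (this is a codimension condition on the components of $r_\cA$, all of which have codimension two, so a general hyperplane through $\ell^\vee$ avoids them — as noted in \Cref{lem:restrict residual}, such $x$ exists since $\codim I_{\ell^\vee} = n > 2$). Then \Cref{lem:restrict residual}(b) gives $\overline{r_\cA}^{top} = r_{\bar\cA}$. The point now is that restricting by a nonzerodivisor preserves graded Betti numbers when the ideal is Cohen-Macaulay: here the hypothesis $S/\Jac(f_\cA)^{top}$ Cohen-Macaulay is what guarantees, via liaison (the residual $r_\cA$ is linked to $\Jac(f_\cA)^{top}$ by the complete intersection $(f_\cA, \frac{\partial f_\cA}{\partial \ell})$, cf.\ \Cref{prop:decomposition of residual}), that $S/r_\cA$ is also Cohen-Macaulay, so $r_\cA$ itself needs no passage to $\overline{\phantom{x}}^{top}$ to stay saturated and its resolution behaves well under cutting by the regular element $x$. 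Concretely, the mapping-cone/tensoring argument shows $r_\cA$ and $r_{\bar\cA}$ have the same graded Betti numbers, and the scheme defined by $r_{\bar\cA}$ is the scheme-theoretic intersection of the scheme of $r_\cA$ with $V(x)$.

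I would then iterate: replace $\cA$ by $\bar\cA$, which lives in $\PP^{n-1}$, and note that $S/\Jac(f_{\bar\cA})^{top}$ is again Cohen-Macaulay (this is exactly the content of \Cref{cor:restrict cCM top part} applied at each stage, or follows from the same liaison correspondence between $r_{\bar\cA}$ and $\Jac(f_{\bar\cA})^{top}$). Applying \Cref{lem:restrict residual}(b) with $x = \bar{\ell_2}$, and so on through $\ell_{n-2}$, produces after $n-2$ steps the line arrangement $\cB$ in $\Lambda$ with $r_\cB$ having the same graded Betti numbers as $r_\cA$, and with the scheme of $r_\cB$ equal to the scheme of $r_\cA$ intersected with $\Lambda$. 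The generality needed is precisely the analogue of \Cref{rem:needed generality}: at each stage $\ell_i$ must be a nonzerodivisor on the residual of the arrangement restricted to $V(\ell_1,\ldots,\ell_{i-1})$, which holds generically.

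The main obstacle I anticipate is bookkeeping the Cohen-Macaulay hypothesis across the iteration and verifying that the chosen $\ell_i$ can simultaneously be taken to lie in $I_{\ell^\vee}$ (as \Cref{lem:restrict residual} requires, so that $\overline{I_P} = I_{\bar P}$ and the $\ell_P$ restrict correctly to $\ell_{\bar P}$) while also being a nonzerodivisor on the relevant residual. Both are generic conditions, but one must check they are compatible: the condition $\ell_i \in I_{\ell^\vee}$ is a single linear constraint on the choice of $\ell_i$, and since the components of $r_{\mathcal B_{i-1}}$ form a proper subvariety of codimension two while $I_{\ell^\vee}$ cuts out a codimension-one family of admissible forms through $\ell^\vee$, a general such $\ell_i$ avoids all those components. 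Once this compatibility is recorded, the conclusion follows by the identical mapping-cone reasoning used to prove \Cref{cor:restrict cCM top part}, so I would simply invoke that the arguments there apply verbatim with \Cref{lem:restrict residual}(b) in place of \Cref{lem"restriction top-dimensional}.
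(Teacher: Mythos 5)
Your proposal is correct and follows essentially the same route as the paper: the paper's proof is precisely the observation that the argument for \Cref{cor:restrict cCM top part} goes through verbatim once \Cref{lem:restrict residual}(b) is substituted for \Cref{lem"restriction top-dimensional}, iterated $n-2$ times. Your additional remarks --- that Cohen-Macaulayness of $S/r_\cA$ follows from the linkage to $\Jac(f_\cA)^{top}$ so that no passage to the top-dimensional part is needed, and that the conditions $\ell_i \in I_{\ell^\vee}$ and $r_{\mathcal B_{i-1}} : \ell_i = r_{\mathcal B_{i-1}}$ are compatible generic conditions --- correctly fill in details the paper leaves implicit.
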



\section{A lower degree bound for generators of the top-dimensional part} 
\label{sec:initial degree} 

The first goal of this section is to show that, for any hyperplane arrangement $\cA$, the initial degree of $\Jac(f_\cA)^{top}$ is exactly $|\cA| - 1$. This result has two applications. First, we use it to determine the Castelnuovo-Mumford regularity of any general residual. 
Second,  in \Cref{sec:res Jacobians} we will use it to infer the graded Betti numbers of $\Jac(f_\cA)$ for any line arrangement in $\mathbb P^2$ from the graded Betti numbers of $\Jac(f_\cA)^{top}$. 
Our second goal, for use in subsequent sections, is to  derive properties of an ideal that is closely related to a general residual (see \Cref{prop:add general point}). 

We begin by introducing some useful notation.  For any graded $S$-module $M$, the \emph{$S$-dual} of $M$ is the graded module $M^* = Hom_S (M, S)$. The \emph{$K$-dual} of $M$ is the graded $S$-module $M^{\vee} = Hom_K (M, K)$, where $K \cong S/\fm$ is considered as a graded module concentrated in degree zero. The \emph{initial degree} of $M$ is 
\[
a(M) = \inf \{j \in \ZZ \; | \; [M]_j \neq 0 \}. 
\]
Its \emph{end} is 
\[
e(M) = \sup \{j \in \ZZ \; | \; [M]_j \neq 0 \}. 
\]
In particular, in the case where $M = 0$ one has $e(M) = - \infty$ and $a(M) = \infty$. If $M \neq 0$ then $a(M)$ and $e(M)$ are integers. 
Moreover, we set 
\[
e^+ (M) = e (M/\fm M). 
\]
It denotes the maximum degree of a minimal generator of $M$.  
Thus, for a finitely generated graded  free $S$-module $F$,  one has $a (F^*) = - e^+ (F)$. 

We need some concepts and results from liaison theory and use \cite{N-liais} as our main reference. Since we consider only ideals of codimension two we specialize the general liaison result to this situation. 
Let $I \subset S$ be a homogeneous ideal of $S$ having codimension two. 
Following \cite[Definition 3.1]{N-liais}, an \emph{$E$-type resolution of $I$}  is an exact sequence of finitely generated graded $S$-modules
\[
0 \to E \to F \to I \to 0, 
\]
where $F$ is a free $S$-module. An \emph{$N$-type resolution of $I$}  is an exact sequence of finitely generated graded $S$-modules
\[
0 \to G \to N \to I \to 0, 
\]
where $G$ is a free $S$-module and $H^n_{\fm} (N) = 0$. 
We use  $H^i_\fm (M)$ to denote the $i$-th local cohomology module of $M$ with support in $\fm$.
The above resolutions are said to be \emph{minimal} if it is not possible to split
off free direct summands. 

If $S/I$ is Cohen-Macaulay then its resolutions of $E$- or $N$-type are free resolutions. 

\begin{lemma}
    \label{lem:E-type res} 
Any homogenous codimension two ideal $I$ of $S$ admits  minimal resolutions of $E$-type and $N$-type. These are uniquely determined up to isomorphisms of exact sequences of graded $S$-modules. 

Moreover, an $N$-type resolution $0 \to G \to N \to I \to 0$ is minimal if and only if there is a graded isomorphism $G^* \otimes_K S/\fm \cong Ext^1_S (I, R) \otimes_K S/\fm$. 
\end{lemma}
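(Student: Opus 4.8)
The plan is to handle the three assertions — existence, uniqueness, and the minimality criterion — in turn, the criterion being where the real work lies. I note at the outset that the module in the criterion should be the $S$-dual $\operatorname{Ext}^1_S(I,S)$, so that is what I will produce.

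First I would dispatch the $E$-type statement, which is essentially formal. Choosing a minimal homogeneous generating set of $I$ gives a minimal free cover $F \twoheadrightarrow I$, and $E = \ker(F \to I)$ yields an $E$-type resolution $0 \to E \to F \to I \to 0$; it is minimal precisely because $\operatorname{im}(E \to F) \subseteq \fm F$, which holds as the generators were chosen minimally, and uniqueness up to isomorphism of exact sequences is the usual uniqueness of projective covers via Nakayama's lemma. For the $N$-type resolution I would recall the liaison-theoretic construction of \cite{N-liais}, which produces a module $N$ with $H^n_\fm(N) = 0$ and a surjection $N \twoheadrightarrow I$ with free kernel $G$; the same Nakayama-type argument gives uniqueness of the minimal such resolution. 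These two points may also simply be cited from \cite{N-liais}.

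The core is the minimality criterion, whose engine is graded local duality over the Gorenstein ring $S$ (with $\omega_S = S(-n-1)$): for any finitely generated graded module one has $H^n_\fm(N)^\vee \cong \operatorname{Ext}^1_S(N,S)$ up to twist, so the defining condition $H^n_\fm(N) = 0$ of an $N$-type resolution is \emph{equivalent} to $\operatorname{Ext}^1_S(N,S) = 0$. I would then dualize the sequence $0 \to G \xrightarrow{\iota} N \xrightarrow{\pi} I \to 0$ by applying $\operatorname{Hom}_S(-,S)$. Using $\operatorname{Hom}_S(I,S) \cong S$ (as $I$ contains a nonzerodivisor), $\operatorname{Ext}^1_S(G,S) = 0$ (as $G$ is free), and the vanishing $\operatorname{Ext}^1_S(N,S) = 0$ just obtained, the long exact sequence collapses to
\[
0 \to S \to N^* \xrightarrow{\iota^*} G^* \to \operatorname{Ext}^1_S(I,S) \to 0.
\]
Thus $G^* \twoheadrightarrow \operatorname{Ext}^1_S(I,S)$ with kernel $\operatorname{im}(\iota^*)$, and reducing this surjection modulo $\fm$ shows that $G^* \otimes_K S/\fm \to \operatorname{Ext}^1_S(I,S) \otimes_K S/\fm$ is surjective, and an isomorphism if and only if $\operatorname{im}(\iota^*) \subseteq \fm\, G^*$.

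It then remains to prove that the $N$-type resolution is minimal if and only if $\operatorname{im}(\iota^*) \subseteq \fm\, G^*$, i.e.\ $\iota^*$ has no unit matrix entry. This is the main obstacle: because $N$ need not be free, one cannot merely transpose a matrix, so I would argue both implications by an explicit splitting. If $\iota^*$ has a unit entry, there are $\psi \in N^*$ and a basis vector $e$ of $G$ with $\psi(\iota(e))$ a unit; then $S \to N \to S$, $1 \mapsto \iota(e) \mapsto \psi(\iota(e))$, is an isomorphism, so $S \cdot \iota(e)$ splits off as a free summand of $N$, and since $\pi(\iota(e)) = 0$ this exhibits the trivial subcomplex $S\cdot e \xrightarrow{\ \cong\ } S\cdot \iota(e)$ as a direct summand of the whole sequence, contradicting minimality. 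Conversely, a non-minimal $N$-type resolution is the direct sum of the minimal one and a trivial complex $L \xrightarrow{\cong} L$ with $L$ free, and dualizing the summand $L^* \xrightarrow{\cong} L^*$ forces a unit entry in $\iota^*$. Combined with the previous paragraph this yields the stated equivalence $G^* \otimes_K S/\fm \cong \operatorname{Ext}^1_S(I,S) \otimes_K S/\fm$, and the same decomposition bookkeeping recovers the uniqueness of the minimal $N$-type resolution should one wish to prove it here rather than cite it.
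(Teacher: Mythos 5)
Your argument is correct, but it is worth noting that the paper does not actually prove this lemma: its ``proof'' is the single line ``Combine [N-hab, Lemma II.1.4] and [N-liais, Remark 3.2],'' so all of the content is outsourced to those references. What you have written is a self-contained reconstruction, and it is essentially the standard argument one would expect those references to contain: graded local duality over $S$ converts the defining condition $H^n_\fm(N)=0$ into $\operatorname{Ext}^1_S(N,S)=0$, dualizing $0\to G\to N\to I\to 0$ then presents $\operatorname{Ext}^1_S(I,S)$ as $G^*/\im(\iota^*)$, and the splitting analysis identifies minimality with $\im(\iota^*)\subseteq \fm G^*$. You are also right that the $R$ in the statement is a typo for $S$. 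Two small remarks. First, your parenthetical justification of $\operatorname{Hom}_S(I,S)\cong S$ (``as $I$ contains a nonzerodivisor'') is not quite sufficient --- a grade-one ideal can have $\operatorname{Hom}_S(I,S)$ strictly larger than $S$; what you need is that $I$ has grade two in the Cohen--Macaulay ring $S$, so that $\operatorname{Ext}^0_S(S/I,S)=\operatorname{Ext}^1_S(S/I,S)=0$. This does not affect the criterion, since only the right-hand exactness $N^*\to G^*\to \operatorname{Ext}^1_S(I,S)\to 0$ is used there. Second, in the splitting step it is worth making explicit that with $\phi=u^{-1}\psi\circ\iota$ one has $G=Se\oplus\ker\phi$, $N=\iota(Se)\oplus\ker\psi$, and $\iota(\ker\phi)\subseteq\ker\psi$, so the decomposition really is a decomposition of the complex and the complementary summand is again an $N$-type resolution; your sketch asserts this but the compatibility of the two splittings is the one point a reader might want spelled out.
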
 

\begin{proof}
Combine \cite[Lemma II.1.4]{N-hab} and \cite[Remark 3.2]{N-liais}. 
\end{proof}

If $I$ and $J$ are two ideals linked by a complete intersection then an $N$-type resolution of $I$ determines an $E$-type resolution of $J$, see \cite[Proposition 3.8]{N-liais}. This is crucial for the proof of the main result of this section. 

\begin{theorem}
    \label{thm:initial degree} 
If $\cA = \cA (f_\cA) \subset \PP^n$ is any (finite) hyperplane arrangement then the initial degree of $\Jac(f_\cA)^{top}$ is at least $|\cA| - 1$, that is, 
\[
a (\Jac(f_\cA)^{top}) = |\cA| - 1. 
\]
Moreover, $\Jac(f_\cA)^{top}$ has as at least as many minimal generators of degree $|\cA| - 1$ as $\Jac(f_\cA)$. 
\end{theorem}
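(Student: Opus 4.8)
The plan is to establish the upper bound $a(\Jac(f_\cA)^{top}) \le |\cA| - 1$ cheaply and to obtain the matching lower bound through liaison, feeding in the regularity estimate $\reg r_\cA \le |\cA| - 1$ of Theorem \ref{thm:reg residual}. Write $d = |\cA|$. For the upper bound, I would simply observe that $\Jac(f_\cA) \subseteq \Jac(f_\cA)^{top}$ and that $\Jac(f_\cA)$ is generated by the $n+1$ partials of $f_\cA$, each of degree $d-1$; an inclusion of ideals can only lower the initial degree, so $a(\Jac(f_\cA)^{top}) \le a(\Jac(f_\cA)) \le d-1$. The real content is therefore the lower bound $a(\Jac(f_\cA)^{top}) \ge d - 1$.

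For this I would exploit the linkage set up in Section \ref{sec:residuals}. By Lemma \ref{lem:decomposition of ci}, $\fc = (f_{\cA}, \frac{\partial f_{\cA}}{\partial \ell})$ is a complete intersection of type $(d, d-1)$, and by Proposition \ref{prop:decomposition of residual} it links $r_\cA$ to $\Jac(f_\cA)^{top}$; write $\sigma = d + (d-1) = 2d-1$ for the sum of the degrees of the two generators of $\fc$. I would take a minimal $N$-type resolution $0 \to G \to N \to r_\cA \to 0$ of $r_\cA$ as provided by Lemma \ref{lem:E-type res}. The liaison duality \cite[Proposition 3.8]{N-liais} then converts this, by dualizing and twisting by $-\sigma$, into an $E$-type resolution $0 \to N^*(-\sigma) \to G^*(-\sigma) \to \Jac(f_\cA)^{top} \to 0$ of the linked ideal. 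Since the free module $G^*(-\sigma)$ surjects onto $\Jac(f_\cA)^{top}$ and has all its generators in degrees $\ge \sigma - e^+(G)$, this immediately gives $a(\Jac(f_\cA)^{top}) \ge \sigma - e^+(G)$.

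It remains to bound the top generator degree $e^+(G)$ of the free module in the $N$-type resolution, and this is the crux of the argument since $r_\cA$ need not be Cohen-Macaulay. The minimality criterion in Lemma \ref{lem:E-type res} identifies the generator degrees of $G^*$ with those of $\operatorname{Ext}^1_S(r_\cA, S) \cong \operatorname{Ext}^2_S(S/r_\cA, S)$, so $a(G^*) = -e^+(G) = a(\operatorname{Ext}^2_S(S/r_\cA, S))$. Graded local duality rewrites $\operatorname{Ext}^2_S(S/r_\cA, S) \cong H^{n-1}_{\fm}(S/r_\cA)^\vee(n+1)$, and keeping track of the twist yields $e^+(G) = e(H^{n-1}_{\fm}(S/r_\cA)) + n + 1$. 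Because $\reg(S/r_\cA) \ge e(H^{n-1}_{\fm}(S/r_\cA)) + (n-1)$, I obtain $e^+(G) \le \reg(S/r_\cA) + 2 = \reg r_\cA + 1 \le d$, the last step being Theorem \ref{thm:reg residual}. Combining with the previous paragraph, $a(\Jac(f_\cA)^{top}) \ge \sigma - e^+(G) \ge (2d-1) - d = d-1$, and together with the upper bound this forces $a(\Jac(f_\cA)^{top}) = d-1$.

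The ``moreover'' statement is then a formal consequence: since both $\Jac(f_\cA)$ and $\Jac(f_\cA)^{top}$ have initial degree $d-1$, every element of degree $d-1$ in either ideal is automatically a minimal generator, so the number of degree-$(d-1)$ minimal generators of each equals the dimension of its degree-$(d-1)$ graded piece; the inclusion $\Jac(f_\cA) \subseteq \Jac(f_\cA)^{top}$ then gives $\dim_K [\Jac(f_\cA)^{top}]_{d-1} \ge \dim_K [\Jac(f_\cA)]_{d-1}$. I expect the main obstacle to be exactly the control of $e^+(G)$: translating the regularity bound on the (possibly non-Cohen-Macaulay) ideal $r_\cA$ into a bound on the top degree of the free module in its $N$-type resolution, equivalently on the initial degree of $\operatorname{Ext}^2_S(S/r_\cA, S)$, through local duality. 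Everything else is bookkeeping with degrees in the liaison correspondence.
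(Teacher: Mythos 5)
Your proposal is correct and follows essentially the same route as the paper: link $\Jac(f_\cA)^{top}$ to $r_\cA$ via the complete intersection $(f_\cA,\frac{\partial f_\cA}{\partial \ell})$, pass from a minimal $N$-type resolution $0\to G\to N\to r_\cA\to 0$ to an $E$-type resolution of the linked ideal, and control $e^+(G)$ through $\operatorname{Ext}^1_S(r_\cA,S)\cong H^{n-1}_{\fm}(S/r_\cA)^\vee(n+1)$ together with $\reg r_\cA\le |\cA|-1$; the concluding argument for the ``moreover'' part is also the paper's. The only slip is that the mapping-cone $E$-type resolution actually has middle term $G^*(-2d+1)\oplus S(-d)\oplus S(-d+1)$ rather than just $G^*(-2d+1)$, but since the extra summands sit in degrees $\ge d-1$ the lower bound is unaffected.
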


In particular, this result says that the minimal generators of $\Jac(f_\cA)^{top}$ do not have degrees less than the degree of the minimal generators of $\Jac(f_\cA)$.

\begin{proof}[Proof of \Cref{thm:initial degree}] 
First, we show the estimate
\[
a (\Jac(f_\cA)^{top}) \ge |\cA| - 1. 
\]

Let $\ell \in S$ be a general linear form and put $d = |\cA|$. 
Since $(f_{\cA}, \frac{\partial f_{\cA}}{\partial \ell})$ is contained in $r_\cA$, we obtain a commutative diagram 
\begin{align*}
     \label{commutative diagram_multi factor}
\minCDarrowwidth20pt
\begin{CD}
0 @>>>  S(- 2d+1)  @>>>  S(-d+1) \oplus S(-d)  @>>>  (f_{\cA}, \frac{\partial f_{\cA}}{\partial \ell})  @>>>  0 \\
 @. @.  @.    @VVV \\
0 @>>>  G @>>>  N  @>>>   r_\cA @>>>  0,  \\[3pt]
\end{CD}
\end{align*} 
where the vertical map is injective and the horizontal sequences are minimal resolutions of $N$-type of $(f_{\cA}, \frac{\partial f_{\cA}}{\partial \ell})$ and $r_\cA$, respectively. Since
\[
\Jac(f_\cA)^{top} = (f_{\cA}, \frac{\partial f_{\cA}}{\partial \ell}) : r_\cA, 
\] 
lifting the vertical map to a morphism of complexes, one obtains the following $E$-type resolution of $\Jac(f_\cA)^{top}$, 
\[
0 \to N^* (-2d+1)  \to G^* (-2d+1) \oplus S(-d) \oplus S(-d+1) \to \Jac(f_\cA)^{top}  \to 0. 
\]
It follows that 
\begin{align} 
    \label{eq:minimum} 
a(\Jac(f_\cA)^{top}) & \ge \min \{d-1,  a (G^* (-2d+1)) \} \nonumber \\
&  =  \min \{d- 1,  a(G^*) + 2d-1 \}.   
\end{align}
where equality is true if the $E$-type resolution is minimal. By duality (see, e.g., \cite[Proposition 2.1]{N-liais}), there is a graded isomorphism 
\[
Ext_S^1 (r_\cA, S) \cong H^{n-1}_\fm (S/r_\cA)^\vee(n+1). 
\]
Hence, using also  \Cref{lem:E-type res}, the minimality of the $N$-type resolution of $r_\cA$ implies
\[
a (G^*) = a (Ext_S^1 (r_\cA, S)) = - e(H^{n-1}_\fm (S/r_\cA)) - n -1.   
\]
By \Cref{thm:reg residual} and the cohomological characterization of the Castelnuovo-Mumford regularity, one has 
\[
e(H^{n-1}_\fm (S/r_\cA)) + n-1 \le \reg (S/r_\cA) \le d-2,  
\]
and so we get
\begin{align*}
a(G^*) + 2d-1 & =  - e(H^{n-1}_\fm (S/r_\cA)) - n -2 + 2d \\
& \ge n-1 - d+2 - n-2+2d = d-1. 
\end{align*}
Taking into account Inequality \eqref{eq:minimum}, this yields
\[
a(\Jac(f_\cA)^{top}) \ge d-1, 
\]
as claimed. 

Second, to complete the argument recall that the minimal generators of $\Jac(f_\cA)$ have degree $d-1$. Since $\Jac(f_\cA)$ is contained in $\Jac(f_\cA)^{top}$ the above degree estimate implies that the minimal generators of $\Jac(f_\cA)$ are also minimal generators of $\Jac(f_\cA)^{top}$. 
\end{proof}

As a consequence, we show that the estimate in \Cref{thm:reg residual} is in fact an equality. 

\begin{corollary}
    \label{cor:reg estimate sharp}
If $\cA = \cA (f_\cA) \subset \PP^n$ is any (finite) hyperplane arrangement then the general residual has a minimal generator of degree $|\cA| - 1$ and Castelnuovo-Mumford regularity 
\[
\reg r_\cA = |\cA| - 1. 
\]

Moreover, if $\Jac(f_\cA)^{top}$  is Cohen-Macaulay and $t \ge 2$ denotes the number of minimal generators of $\Jac(f_\cA)^{top}$ with degree $|\cA| - 1$ then the general residual $r_\cA$ has exactly $t-1$ syzygies of degree $|\cA|$. 
\end{corollary}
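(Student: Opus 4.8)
The plan is to read everything off the liaison relationship between $A := \Jac(f_\cA)^{top}$ and $r_\cA$, which are linked by the complete intersection $\fc = (f_\cA, \frac{\partial f_\cA}{\partial \ell})$ of type $(d-1,d)$ with $d = |\cA|$, so that the relevant liaison shift is $s := 2d-1$. I would combine this with the two ingredients already available: $\reg r_\cA \le d-1$ from \Cref{thm:reg residual}, and from \Cref{thm:initial degree} the facts that $a(A) = d-1$ and that the minimal generators of $\Jac(f_\cA)$ (all of degree $d-1$) are minimal generators of $A$.

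For the first assertion I would exhibit a minimal generator of $r_\cA$ of degree $d-1$ and then use that the regularity is bounded below by the top degree of a minimal generator. The key point is that $f_\cA$, viewed inside $A$, is a \emph{non-minimal} generator: by the Euler relation (char $0$) $d\cdot f_\cA = \sum_i x_i \frac{\partial f_\cA}{\partial x_i} \in \fm \cdot \Jac(f_\cA) \subseteq \fm \cdot A$. Dualizing the minimal $N$-type resolution of $A$ as in the proof of \Cref{thm:initial degree} (via \cite[Proposition 3.8]{N-liais}) produces an $E$-type resolution of $r_\cA$ whose right-hand free module contains the complete-intersection summand $K^*(-s) = S(-d)\oplus S(-d+1)$, where $S(-d+1)$ is dual to the degree-$d$ generator $f_\cA$ of $\fc$. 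Since $f_\cA$ is a non-minimal generator of $A$, the comparison-map entry feeding $S(-d+1)$ lies in $\fm$, and the entries coming from the (minimal) $N$-type differential of $A$ land in the other summand $G^*(-s)$; hence no unit hits $S(-d+1)$ and it survives as a genuine minimal generator of $r_\cA$ of degree $d-1$. This gives $\reg r_\cA \ge d-1$, and with \Cref{thm:reg residual} we conclude $\reg r_\cA = d-1$.

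For the ``moreover'' statement I would assume $A$ Cohen--Macaulay, so that $r_\cA$ is Cohen--Macaulay as well (linkage preserves this) and both have free resolutions of length two. Writing the minimal free resolution $0 \to F_2 \to F_1 \to A \to 0$, exactly $t$ summands of $F_1$ have degree $d-1$ by \Cref{thm:initial degree}, one of them corresponding to $p := \frac{\partial f_\cA}{\partial \ell}$, and minimality together with $a(A)=d-1$ forces every summand of $F_2$ to have degree $\ge d$ (a minimal first syzygy has degree strictly above the least generator degree). Linking by $\fc$ via the mapping cone of the dual comparison map yields $0 \to F_1^*(-s) \to F_2^*(-s)\oplus S(-d)\oplus S(-d+1) \to r_\cA \to 0$. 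The degree-$d$ first syzygies of $r_\cA$ live in $F_1^*(-s)$ and arise precisely from the $t$ degree-$(d-1)$ generators of $A$, giving $t$ copies of $S(-d)$ before minimalization. I would then track the single cancellation: because $p$ is a minimal generator of $A$, the comparison map has a unit entry pairing the $p$-summand of $F_1$ with the complete-intersection summand $S(-d)$ (dual to $p$), so exactly one of these $t$ syzygies cancels. No further cancellation is possible, since a degree-$d$ first syzygy could only cancel a degree-$d$ \emph{generator} of $r_\cA$ coming from $F_2^*(-s)$, i.e.\ a degree-$(d-1)$ summand of $F_2$, and $F_2$ has none. Hence $r_\cA$ has exactly $t-1$ first syzygies of degree $d$.

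I expect the main obstacle to be the cancellation bookkeeping in the dualized mapping cone: the argument rests entirely on cleanly separating the one unit entry (from $p$ being a minimal generator of $A$) from the non-unit entries (from $f_\cA$ \emph{not} being a minimal generator, together with the vanishing $\beta_{1,d-1}(A)=0$ coming from minimality of $A$'s resolution). In the non-Cohen--Macaulay case required for the first assertion this bookkeeping must be done at the level of $E$- and $N$-type resolutions rather than free resolutions, so that ``splitting off a free summand'' has to be justified directly from the minimality characterization of \Cref{lem:E-type res} rather than from vanishing of graded Betti numbers.
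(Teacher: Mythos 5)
Your proposal is correct and follows essentially the same route as the paper: both link $\Jac(f_\cA)^{top}$ and $r_\cA$ by the complete intersection $(f_\cA,\frac{\partial f_\cA}{\partial \ell})$, pass through the dualized mapping cone, and track exactly the same two cancellation facts --- $\frac{\partial f_\cA}{\partial \ell}$ is a minimal generator of $\Jac(f_\cA)^{top}$ while $f_\cA$ lies in $\fm\cdot\Jac(f_\cA)^{top}$ (your explicit Euler-relation justification is the right one), and the absence of generators of $F$ (equivalently, of first syzygies of $\Jac(f_\cA)^{top}$) in degree $d-1$ rules out any further cancellation in degree $d$. The only, harmless, difference is that for the first assertion you dualize the $N$-type resolution of $\Jac(f_\cA)^{top}$ to obtain an $E$-type resolution of $r_\cA$, whereas the paper dualizes the $E$-type resolution to obtain an $N$-type resolution of $r_\cA$; in the Cohen--Macaulay case the two computations coincide.
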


\begin{proof}
We reverse the roles of $r_\cA$ and $\Jac(f_\cA)^{top}$  in the proof of \Cref{thm:initial degree}. 

Let $\ell \in S$ be a general linear form and put $d = |\cA|$. 
Since $(f_{\cA}, \frac{\partial f_{\cA}}{\partial \ell})$ is contained in $\Jac(f_\cA)^{top}$, we obtain a commutative diagram 
\begin{align*}
     \label{commutative diagram_multi factor}
\minCDarrowwidth20pt
\begin{CD}
0 @>>>  S(- 2d+1)  @>>>  S(-d+1) \oplus S(-d)  @>>>  (f_{\cA}, \frac{\partial f_{\cA}}{\partial \ell})  @>>>  0 \\
 @. @.  @.    @VVV \\
0 @>>>  E @>>> S^t(-d +1) \oplus  F  @>>>   \Jac(f_\cA)^{top} @>>>  0,  \\[3pt]
\end{CD}
\end{align*} 
where the vertical map is injective and the horizontal sequences are minimal resolutions of $E$-type of  $(f_{\cA}, \frac{\partial f_{\cA}}{\partial \ell})$ and $\Jac(f_\cA)^{top}$, respectively. By definition of $t$, we have $a (F) \ge d$. 
Since
\[
 r_\cA= (f_{\cA}, \frac{\partial f_{\cA}}{\partial \ell}) :  \Jac(f_\cA)^{top}, 
\] 
lifting the vertical map to a morphism of complexes, one obtains the following $N$-type resolution of $r_\cA$, 
\[
0 \to F^* (-2d+1) \oplus S^{t-1}(-d)  \to E^* (-2d+1) \oplus S(-d+1) \to  r_\cA \to 0,  
\]
where we have taken into account the cancellation caused by the fact that $\frac{\partial f_{\cA}}{\partial \ell}$ is a minimal generator of 
$\Jac(f_\cA)^{top}$. Since $f_\cA$ is not a minimal generator of $\Jac(f_\cA)$ it also is not a minimal generator of 
$ \Jac(f_\cA)^{top}$. Hence, there is no further cancellation in the mapping cone, and so the stated $N$-type resolution of $r_\cA$ is minimal. It shows that $r_\cA$ has a minimal generator of degree $d-1$. It also follows that $\reg (r_\cA) \ge d-1$. Together with \Cref{thm:reg residual}, this gives $\reg (r_\cA) = d-1$.

For the additional claim, note that $E$ is a free $S$-module if $ \Jac(f_\cA)^{top}$ is Cohen-Macaulay. Thus, the above $N$-type resolution is a minimal free resolution of $r_\cA$.  
Using that $a (F) \ge d$, it follows that 
\[
e^+ (F^* (-2d+1)) = e^+ (F^*) + 2d-1 = -a(F) + 2d-1 \le d-1. 
\]
Hence,  $r_\cA$ has exactly $t-1$ first syzygies of degree $d$,  and $d$ is the maximum degree of a first syzygy. 
\end{proof}

By the definition of the general residual $r_\cA$, it is clear that it contains $\frac{\partial f_{\cA}}{\partial \ell}$. The following result shows that this polynomial is an element of a minimal generating set of  $r_\cA$. Thus, it identifies a minimal generator of degree $|\cA| -1 $ of $r_\cA$ whose existence is guaranteed by \Cref{cor:reg estimate sharp}. 

\begin{proposition}
     \label{prop:add general point}
Let $r_\cA$ be the general residual of a hyperplane arrangement $\cA \subset \PP^n$ with respect to a general linear form 
$\ell$. Set 
\[
I = r_\cA \cap I_{\ell^{\vee}}. 
\]
Then one has: 
\begin{itemize}

\item[(a)] There is a minimal generating set of $r_\cA$ that contains $\frac{\partial f_{\cA}}{\partial \ell}$ and 
\[
r_\cA = (I, \frac{\partial f_{\cA}}{\partial \ell} ). 
\]

\item[(b)] The Hilbert function of $S/I$ is 
\begin{align*}
h_{S/I} (j ) & = \begin{cases}
h_{S/r_\cA} (j) & \text{ if } j \le |\cA|-2; \\
h_{S/r_\cA} (j) + 1 & \text{ if } j \ge |\cA| -1. 
\end{cases}
\end{align*}

\item[(c)]  The Castelnuovo-Mumford regularity of $I$ is 
\[
\reg I = |\cA|. 
\]
\end{itemize}

\end{proposition}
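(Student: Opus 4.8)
The plan is to reduce all three parts to one statement about the point $\ell^\vee$, namely that
\[
m:=\min\{j\;|\;[r_\cA]_j\not\subseteq [I_{\ell^\vee}]_j\}
\]
equals $d-1$, where $d=|\cA|$. First I would record the geometry. By \Cref{lem:nonvanishing at dual point} applied to each $g_P$ one has $\frac{\partial g_P}{\partial \ell}(\ell^\vee)\neq 0$, while $\ell_P(\ell^\vee)=0$ since $\ell_P$ spans $P$ and $\ell^\vee$; hence $\ell^\vee$ lies on no component $(\ell_P,\frac{\partial g_P}{\partial \ell})$ of the decomposition in \Cref{prop:decomposition of residual}, so $\ell^\vee\notin V(r_\cA)$. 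The same lemma gives $\frac{\partial f_\cA}{\partial \ell}(\ell^\vee)\neq 0$, so $\frac{\partial f_\cA}{\partial \ell}\in r_\cA\setminus I_{\ell^\vee}$, and by generality $\ell^\vee\notin P$ for every $P\in S(\cA)$, so $V(I)=V(r_\cA)\sqcup\{\ell^\vee\}$ is a disjoint union. Placing $\ell^\vee=(1:0:\cdots:0)$, so that $\ell=x_0$ and $I_{\ell^\vee}=(x_1,\dots,x_n)$, the image of $r_\cA$ in $S/I_{\ell^\vee}=K[x_0]$ is the principal ideal $(x_0^m)$; equivalently $r_\cA/I\cong (S/I_{\ell^\vee})(-m)$ and $S/(r_\cA+I_{\ell^\vee})\cong K[x_0]/(x_0^m)$.

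\textbf{The key claim $m=d-1$.} The inequality $m\le d-1$ is immediate from $\frac{\partial f_\cA}{\partial\ell}\in[r_\cA]_{d-1}$ with $\frac{\partial f_\cA}{\partial\ell}(\ell^\vee)\neq 0$. For $m\ge d-1$ I must show that every $g\in[r_\cA]_j$ with $j\le d-2$ has $g(\ell^\vee)=0$. A purely local observation handles small $j$: from $g\in(\ell_P,I_P^{t_P-1})$ and $\ell_P(\ell^\vee)=0$ one gets $g\equiv c_P\,\frac{\partial g_P}{\partial\ell}\ (\bmod\ \ell_P)$, forcing $g(\ell^\vee)=0$ whenever $j<t_P-1$; the point of highest multiplicity then yields $g(\ell^\vee)=0$ for $j<\max_P t_P-1$. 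This is not enough, and supplementing it is \emph{the main obstacle}. My plan is to exploit the linkage $r_\cA=(f_\cA,\frac{\partial f_\cA}{\partial\ell}):\Jac(f_\cA)^{top}$ together with \Cref{thm:initial degree}, which gives $a(\Jac(f_\cA)^{top})=d-1$. Concretely I would use the minimal $N$-type resolution of $r_\cA$ from the proof of \Cref{cor:reg estimate sharp}, in which the free summand $S(-d+1)$ maps onto $\frac{\partial f_\cA}{\partial\ell}$ while the remaining minimal generators arise from $E^*(-2d+1)$, $E$ being the syzygy module of $\Jac(f_\cA)^{top}$; since $\reg r_\cA=d-1$ all generators sit in degree $\le d-1$. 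I claim the generators coming from $E^*(-2d+1)$ all lie in $I_{\ell^\vee}$, and tracking the point $\ell^\vee$ through the comparison of resolutions that produces this $N$-type resolution is the delicate step. Granting it, any $g\in[r_\cA]_j$ with $j<d-1$ is an $S$-combination of generators in which the coefficient of $\frac{\partial f_\cA}{\partial\ell}$ vanishes for degree reasons; hence $g\in I_{\ell^\vee}$, giving $m\ge d-1$.

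\textbf{Deducing (a) and (b).} Once $m=d-1$ is known, $r_\cA/I\cong(S/I_{\ell^\vee})(1-d)$ is cyclic, generated by the image of $\frac{\partial f_\cA}{\partial\ell}$, so $r_\cA=(I,\frac{\partial f_\cA}{\partial\ell})$. Moreover $\frac{\partial f_\cA}{\partial\ell}$ is a minimal generator, for a relation $\frac{\partial f_\cA}{\partial\ell}=\sum_k h_k r_k$ with $h_k\in\fm$ and $\deg r_k\le d-2$ would, on evaluation at $\ell^\vee$, give $\frac{\partial f_\cA}{\partial\ell}(\ell^\vee)=0$ by the key claim, a contradiction; this proves (a). For (b) I would use the Mayer--Vietoris sequence $0\to S/I\to S/r_\cA\oplus S/I_{\ell^\vee}\to S/(r_\cA+I_{\ell^\vee})\to 0$. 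Since $S/I_{\ell^\vee}$ has constant Hilbert function $1$ and $S/(r_\cA+I_{\ell^\vee})\cong K[x_0]/(x_0^{d-1})$ has Hilbert function $1$ in degrees $0,\dots,d-2$ and $0$ afterwards, the stated formula drops out.

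\textbf{Deducing (c).} I would feed the short exact sequence $0\to I\to r_\cA\to (S/I_{\ell^\vee})(1-d)\to 0$ into the long exact sequence of local cohomology. Here $r_\cA$ is unmixed of codimension two, hence saturated, so $H^0_\fm(S/r_\cA)=0$ and therefore $H^1_\fm(r_\cA)=0$; also $C:=(S/I_{\ell^\vee})(1-d)$ is Cohen--Macaulay of dimension one with $e(H^1_\fm(C))=d-2$, and $\reg r_\cA=d-1$ (\Cref{cor:reg estimate sharp}) forces $e(H^2_\fm(r_\cA))\le d-3$. The resulting exact sequence $0\to H^1_\fm(C)\to H^2_\fm(I)\to H^2_\fm(r_\cA)\to 0$ then gives $e(H^2_\fm(I))=d-2$, contributing $d$ to the regularity, while $H^i_\fm(I)\cong H^i_\fm(r_\cA)$ for $i\ge 3$ and $H^1_\fm(I)=0$ contribute at most $d-1$. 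Hence $\reg I=d$, completing (c). The whole proof thus rests on the lower bound $m\ge d-1$, which I expect to be the hardest point precisely because it is a statement about the value of elements of $r_\cA$ at $\ell^\vee$ and so cannot be settled by the per-component analysis alone.
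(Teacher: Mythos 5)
Your reduction of all three parts to the single claim that every element of $[r_\cA]_j$ with $j\le |\cA|-2$ vanishes at $\ell^\vee$ is exactly the structure of the paper's proof, and your deductions of (a), (b), (c) from that claim (the cyclic quotient $r_\cA/I\cong (S/I_{\ell^\vee})(1-|\cA|)$, the resulting short exact sequence for the Hilbert function, and the local cohomology computation for the regularity) are correct and match the paper. You have also correctly identified that the per-component analysis only controls degrees below $\max_P t_P-1$ and that the lower bound $m\ge |\cA|-1$ is the real content.

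However, the proof of that lower bound is not actually supplied, and the route you sketch does not close the gap. Your plan is to take the minimal $N$-type resolution of $r_\cA$ from the proof of \Cref{cor:reg estimate sharp} and argue that all minimal generators other than $\frac{\partial f_\cA}{\partial \ell}$ lie in $I_{\ell^\vee}$. But that assertion is \emph{equivalent} to the claim you are trying to prove (a degree-$j$ element of $r_\cA$ with $j\le |\cA|-2$ is an $S$-combination of minimal generators of degree $<|\cA|-1$, so the claim holds iff those generators lie in $I_{\ell^\vee}$), and nothing in the liaison machinery forces the lift of the comparison map between the Koszul resolution of $(f_\cA,\frac{\partial f_\cA}{\partial\ell})$ and the resolution of $\Jac(f_\cA)^{top}$ to land in $I_{\ell^\vee}$: generators can always be altered by adding elements of lower-degree pieces of $r_\cA$, which is precisely what is not yet controlled. (Indeed, the paper proves the statement that the other generators can be chosen in $I_{\ell^\vee}$ only \emph{afterwards}, as \Cref{cor:gens of residual}, using \Cref{prop:add general point} itself.) The paper's actual argument for the key claim is of a different nature: it first reduces to $n=2$ by cutting with a general hyperplane through $\ell^\vee$, then constructs an injective map of syzygy modules $\ffi\colon \Syz(\Jac(f))(-1)\to\Syz(\frac{\partial f}{\partial x_0},\frac{\partial f}{\partial x_1},x_2\frac{\partial f}{\partial x_2})$, shows via a $\Tor$ computation on the sequence \eqref{eq:ses} that $\ffi$ is an isomorphism in degrees $d<j<2d-2$, and finally uses the Euler relation to convert a hypothetical $g\in[r_\cA]_{d-2}\setminus I_{\ell^\vee}$ into a degree-$(2d-3)$ syzygy that must be in the image of $\ffi$, forcing $g\in I_{\ell^\vee}$ after all. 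Some argument of this kind (or another genuinely new idea) is needed; as written, your proof assumes the hard step.
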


\begin{proof}
For ease of notation, set $d = |\cA|$, $f = f_\cA$ and $Q = \ell^\vee$. 

We begin by establishing the following claim: 
\begin{align}
    \label{eq:low degrees eq}
[r_\cA]_j = [I]_j \quad \text{ if } j \le d-2,
\end{align}
which is the heart of the proof of the Proposition.

First, we reduce to the case where $n =2$. We use the notation introduced in \Cref{sec:reduction to plane}.  
Assume $n \ge 3$ and fix a general linear form $y$  in the ideal $I_Q$.  Denote by 
$\bar{g}$  the image of $g \in S$ in the subring $R \cong S/yS$  of $S$ and similarly for ideals of $S$. We may assume $y = x_n$. 
Suppose on the contrary there is a polynomial 
$g \in [r_\cA]_j$ with $ j \le d-2$ that is not in $I$. The latter is true if and only if $g$ is not in $I_Q$. 
By the definition of the residual, the first part means that 
\[
g \cdot \Jac (f) \subseteq (f, \frac{\partial f}{\partial \ell} ). 
\]
Using \Cref{lem:restricted partials}, 
 this implies
\[
\bar{g} \cdot \Jac (\bar{f}) \subseteq (\bar{f}, \frac{\partial \bar{f}}{\partial \bar{\ell}} ). 
\]
It remains to show that $\bar{g}$ is not in the ideal $I_{\bar{\ell}^\vee}$ of $R$. Since $x_n$ is in $I_Q$ one has in $R$ for the point $\bar{\ell}^\vee \in \Proj (R)$ that $I_{\bar{\ell}^\vee} = \overline{I_Q}$. Hence $g \notin I_Q$ implies $\bar{g} \notin I_{\bar{\ell}^\vee}$, as desired. Using induction on $n$, it follows that it suffices to show Claim \eqref{eq:low degrees eq} if $n =2$.

Assume $n = 2$. We begin by considering the cases where $d \in \{2, 3\}$. If $S(\cA)$ consists of one point, $P$, then \Cref{prop:decomposition of residual} gives that $r_\cA$ is a complete intersection of type $(1, d-1)$. In fact, taking also \Cref{lem:divisors on line} into account we get $r_\cA = (\ell_P, \frac{\partial f}{\partial \ell} )$ with $\ell_P \in I_Q$. Since $\ell_P \in I_Q$, we get that $I = (\ell_P,g)$ for some multiple $g$ of $\frac{\partial f}{\partial \ell}$ with degree $d$, and so $r_\cA = (I, \frac{\partial f}{\partial \ell} )$ as desired. The only other option is that $d = 3$ and $r_\cA$ is the ideal of three reduced points. Again, the computation is straightforward. So now we assume $d \ge 4$. We proceed in several steps. \\[2pt]

(I) Note that any syzygy $(h_0, h_1, h_2)^T$ of $\Jac (f)$ gives rise to a syzygy of the ideal $(\frac{\partial f}{\partial x_0}, \frac{\partial f}{\partial x_1}, x_2 \frac{\partial f}{\partial x_2})$ because 
\[
h_0 \cdot \frac{\partial f}{\partial x_0} + h_1 \cdot \frac{\partial f}{\partial x_1} + h_2 \cdot \frac{\partial f}{\partial x_2} = 0
\]
implies
\[
(h_0 x_2) \cdot \frac{\partial f}{\partial x_0} + (h_1 x_2) \cdot \frac{\partial f}{\partial x_1} + h_2 (x_2 \cdot \frac{\partial f}{\partial x_2}) = 0. 
\]
Thus, we get a graded homomorphism of $S$-modules 
\[
\ffi \colon \Syz (\Jac (f))(-1) \to \Syz \left (\frac{\partial f}{\partial x_0}, \frac{\partial f}{\partial x_1}, x_2 \frac{\partial f}{\partial x_2} \right ), \;
 (h_0, h_1, h_2)^T \mapsto (x_2 h_0, x_2 h_1, h_2)^T. 
\]
As $S$ is an integral domain, $\ffi$ is injective. \\[2pt]

(II) Choose two further general linear forms $\ell_1, \ell_2 \in S$. Since $\Jac (f)$ has codimension two, the derivatives 
$\frac{\partial f}{\partial \ell}$ and $\frac{\partial f}{\partial \ell_1}$ form a regular sequence in $S$. Furthermore, the fact that $(\ell, \ell_1, \ell_2) = (x_0, x_1, x_2)$ implies that 
\[
( \frac{\partial f}{\partial \ell}, \frac{\partial f}{\partial \ell_1}) : \ell_2 = ( \frac{\partial f}{\partial \ell}, \frac{\partial f}{\partial \ell_1}). 
\]
It follows that 
\begin{align*}
\left (\frac{\partial f}{\partial \ell}, \frac{\partial f}{\partial \ell_1}, \ell_2 \frac{\partial f}{\partial \ell_2} \right ) : \ell_2 
& = \left  (\frac{\partial f}{\partial \ell}, \frac{\partial f}{\partial \ell_1} \right  ) : \ell_2  + \left (\frac{\partial f}{\partial \ell_2} \right ) \\
& = \left (\frac{\partial f}{\partial \ell}, \frac{\partial f}{\partial \ell_1}, \frac{\partial f}{\partial \ell_2} \right ) \\
& = \Jac (f). 
\end{align*}
Changing coordinates, we assume $\ell = x_0, \ \ell_1 = x_1$ and $\ell_2 = x_2$. Note that then $I_Q = (x_1,x_2)$.
Thus, we get a short exact sequence of graded $S$-modules 
\begin{equation}
   \label{eq:ses}
0 \to (S/\Jac (f)) (-1) \to S/\left (\frac{\partial f}{\partial x_0}, \frac{\partial f}{\partial x_1}, x_2 \frac{\partial f}{\partial x_2} \right ) \to 
S/\left (\frac{\partial f}{\partial x_0}, \frac{\partial f}{\partial x_1}, x_2  \right ) \to 0. 
\end{equation}
A graded minimal free resolution of the module on the right-hand side is given by the Koszul complex: 
\[
0 \to S(-2d+1) \to S^2(-d) \oplus S(-2d + 2) \to S^2 (-d + 1) \oplus S (-1) \to S \to 
S/\left (\frac{\partial f}{\partial x_0}, \frac{\partial f}{\partial x_1}, x_2  \right ) \to 0.
\]
Hence, the long exact $\Tor$ sequence induced by Sequence \eqref{eq:ses} induces a homomorphism 
$
\Tor_1^S (S/\Jac (f), K ) (-1)  \to  \Tor_1^S (S/(\frac{\partial f}{\partial x_0}, \frac{\partial f}{\partial x_1}, x_2 \frac{\partial f}{\partial x_2}), K )$, 
which gives an  isomorphism 
\[
[\Tor_1^S (S/\Jac (f), K )]_{j-1} \to  [\Tor_1^S (S/ \left (\frac{\partial f}{\partial x_0}, \frac{\partial f}{\partial x_1}, x_2 \frac{\partial f}{\partial x_2} \right ), K )]_{j}
\]
in every degree $j$ with $d < j < 2d-2$.  Combined with the injectivity of $\ffi$, we conclude that 
\begin{equation}
   \label{eq:iso of syz}
\ffi \colon [\Syz (\Jac (f))(-1)]_{j-1}  \to \big [\Syz \big (\frac{\partial f}{\partial x_0}, \frac{\partial f}{\partial x_1}, x_2 \frac{\partial f}{\partial x_2} \big ) \big]_j \quad \text{ is an isomorphism if } d < j < 2d-2. 
\end{equation}

(III) Assume on the contrary that there is a polynomial $g \in [r_\cA]_j$ with $ j \le d-2$ that is not in $I$, that is, $g$ is not in $I_Q$. If $j < d-2$, then, for a general form $h \in S$ of degree $d-2-j$, one also has $gh \in [r_\cA]_{d-2} \setminus I_Q$. Thus, we may assume there is some 
\[
g \in [r_\cA]_{d-2} \setminus I_Q. 
\]
Since $r_\cA = (f, \frac{\partial f}{\partial \ell} ) : \Jac (f) = (f, \frac{\partial f}{\partial x_0} ) : \Jac (f)$ (because we have assumed $\ell = x_0$), there are homogeneous polynomials $s_1, s_2 \in S$ such that
\[
g \cdot \frac{\partial f}{\partial x_1} = s_1 \cdot (d f) + s_2 \cdot \frac{\partial f}{\partial x_0}. 
\]
 Note that this implies  $\deg s_1 = d-3$ and $\deg s_2 = d-2$.  Invoking the Euler relation, this yields
\[
0 = (s_1 x_0 + s_2) \cdot  \frac{\partial f}{\partial x_0} + (s_1 x_1 - g ) \cdot  \frac{\partial f}{\partial x_1} + s_1 \cdot  x_2 \frac{\partial f}{\partial x_2}, 
\]
which corresponds to a syzygy of $\Syz \big (\frac{\partial f}{\partial x_0}, \frac{\partial f}{\partial x_1}, x_2 \frac{\partial f}{\partial x_2} \big )$ of degree $2d-3$. Using Isomorphism \eqref{eq:iso of syz}, it follows that $(s_1 x_0 + s_2, s_1 x_1 - g, s_1)$ is in the image of $\ffi$. In particular, this gives that  $s_1 x_1 - g$ is a multiple of $x_2$, and so $g \in (x_1, x_2) = I_Q$,  which is a contradiction to our assumption 
$g \in [r_\cA]_{d-2} \setminus I_Q$. This concludes our argument for establishing Claim \eqref{eq:low degrees eq}. \\[2pt]

We now turn to the assertions of the statement and consider an arbitrary hyperplane arrangement $\cA \subset \PP^n$. By \Cref{lem:nonvanishing at dual point}, we know that $ \frac{\partial f}{\partial \ell}$ is not in $I_Q$. Combined with Fact \eqref{eq:low degrees eq}, we conclude that the least degree of a form in $r_\cA$ that is not in $I$ is $d-1$. Since $I_Q$ is the ideal of a point we get  that $[(I_Q, \frac{\partial f}{\partial \ell})]_j = [S]_j$ if $j \ge d-1$. 
It follows that
\[
r_\cA/I \cong (r_\cA + I_Q)/I_Q \cong (S/I_Q) (-d+1).
\]
This isomorphism shows that $r_\cA/I$ is a cyclic module that is generated by the image of $\frac{\partial f}{\partial \ell}$, which implies $r_\cA = (I, \frac{\partial f_{\cA}}{\partial \ell} )$, as claimed in Part (a). 
Since $\frac{\partial f_{\cA}}{\partial \ell} $ is not in $I$, it can be chosen as part of a minimal generating set of $r_\cA$. 
 Moreover, we get an exact sequence of graded $S$-modules 
\begin{equation}
    \label{eq:SES}
0 \to (S/I_Q)(-d+1) \to S/I \to S/r_\cA \to 0. 
\end{equation}
It yields Claim (b). Furthermore, it induces a long exact local cohomology sequence, which provides 
isomorphisms
\[
H^i_{\fm} (S/I) \cong H^i_{\fm} (S/r_\cA) \quad \text{ if } i \ge 2
\]
and an exact sequence 
\[
0 \to H^1_{\fm} (S/I_Q) (-d+1) \to H^1_{\fm} (S/I) \to H^1_{\fm} (S/r_\cA) \to 0. 
\]
The largest degree $j$ such that $[H^1_{\fm} (S/I_Q)(-d+1)]_j$ is not zero is $d-2$.  Since $\reg S/r_\cA = d-2$ by  \Cref{cor:reg estimate sharp}, we obtain $\reg S/I = d-1$, as claimed in Part (c). 
\end{proof}
 
We record a useful consequence of \Cref{prop:add general point}(a). 

\begin{corollary}
    \label{cor:gens of residual} 
For any hyperplane arrangement $\cA$ one has: 
\begin{itemize}

\item[(a)] The general residual $r_\cA$ with respect to $\ell \in [S]_1$ has a minimal generating set 
$G = \{g_1,\ldots,g_s, \frac{\partial f_{\cA}}{\partial \ell} \}$ with homogeneous polynomials $g_i \in I_{\ell^\vee}$ whose degree is at most $|\cA| - 1$. 

\item[(b)] If $G$ is a minimal generating set of $r_\cA$  as in (a), then 
\[
r_\cA \cap I_{\ell^\vee} = (g_1,\ldots,g_s, \frac{\partial f_{\cA}}{\partial \ell} \cdot I_{\ell^\vee} ). 
\]
\end{itemize}

\end{corollary}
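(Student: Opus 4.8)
The plan is to deduce both parts directly from \Cref{prop:add general point}(a), which already supplies the decomposition $r_\cA = (I, \frac{\partial f_{\cA}}{\partial \ell})$ with $I = r_\cA \cap I_{\ell^{\vee}}$ and guarantees that $\frac{\partial f_{\cA}}{\partial \ell}$ belongs to some minimal generating set of $r_\cA$. Throughout I set $d = |\cA|$.

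For part (a), I would fix a homogeneous minimal generating set $\{h_1,\ldots,h_s,\frac{\partial f_{\cA}}{\partial \ell}\}$ of $r_\cA$ as furnished by \Cref{prop:add general point}(a). Since $\reg r_\cA = d-1$ by \Cref{cor:reg estimate sharp}, each minimal generator, and in particular each $h_i$, has degree at most $d-1$. The key structural input is the isomorphism $r_\cA/I \cong (S/I_{\ell^{\vee}})(-d+1)$ established inside the proof of \Cref{prop:add general point}(a): this quotient is \emph{cyclic}, generated by the image of $\frac{\partial f_{\cA}}{\partial \ell}$. Consequently the image of each $h_i$ equals $c_i \cdot [\frac{\partial f_{\cA}}{\partial \ell}]$ for some homogeneous $c_i \in S$, whence $g_i := h_i - c_i \frac{\partial f_{\cA}}{\partial \ell}$ lies in $I \subseteq I_{\ell^{\vee}}$. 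Leaving $\frac{\partial f_{\cA}}{\partial \ell}$ fixed and replacing each $h_i$ by $g_i$ is a unitriangular, hence invertible, change of the generating set, so $\{g_1,\ldots,g_s,\frac{\partial f_{\cA}}{\partial \ell}\}$ is again a minimal generating set; by construction $g_i \in I_{\ell^{\vee}}$ and $\deg g_i = \deg h_i \le d-1$, as required.

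For part (b), the inclusion $\supseteq$ is immediate: each $g_i$ lies in $I = r_\cA \cap I_{\ell^{\vee}}$, while $\frac{\partial f_{\cA}}{\partial \ell} \cdot I_{\ell^{\vee}} \subseteq r_\cA \cap I_{\ell^{\vee}}$ because $\frac{\partial f_{\cA}}{\partial \ell} \in r_\cA$. For the reverse inclusion I would take $p \in r_\cA \cap I_{\ell^{\vee}}$ and write $p = \sum_i a_i g_i + b \frac{\partial f_{\cA}}{\partial \ell}$ using the minimal generating set from~(a). Since $p$ and all the $g_i$ lie in $I_{\ell^{\vee}}$, so does $b \frac{\partial f_{\cA}}{\partial \ell}$. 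Here I would invoke that $I_{\ell^{\vee}}$ is a prime ideal together with $\frac{\partial f_{\cA}}{\partial \ell}(\ell^{\vee}) \neq 0$ from \Cref{lem:nonvanishing at dual point}, which forces $b \in I_{\ell^{\vee}}$ and hence $p \in (g_1,\ldots,g_s) + \frac{\partial f_{\cA}}{\partial \ell} \cdot I_{\ell^{\vee}}$.

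The only genuine subtlety, and thus the step I expect to require the most care, is part (a): one must simultaneously arrange the auxiliary generators to lie in $I_{\ell^{\vee}}$ and keep their degrees bounded by $d-1$. Both are delivered by pairing the cyclicity of $r_\cA/I$ (which controls membership in $I_{\ell^{\vee}}$ via the scalar or form $c_i$) with the regularity bound $\reg r_\cA = d-1$ (which controls degrees), so the substance of the argument is merely checking that the elementary substitution $h_i \mapsto h_i - c_i \frac{\partial f_{\cA}}{\partial \ell}$ preserves minimality. Part (b) is then a short primeness argument with no further obstacles.
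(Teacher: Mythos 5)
Your proof is correct. Part (a) is essentially the paper's argument in different clothing: the paper deduces from $[(I_{\ell^\vee},\frac{\partial f_\cA}{\partial \ell})]_{d-1}=[S]_{d-1}$ that any degree-$(d-1)$ generator can be corrected by a scalar multiple of $\frac{\partial f_\cA}{\partial \ell}$ to land in $I_{\ell^\vee}$, while you extract the same scalar $c_i$ from the cyclicity of $r_\cA/I\cong (S/I_{\ell^\vee})(-d+1)$; both rest on \Cref{prop:add general point}(a) and \Cref{cor:reg estimate sharp}, and your observation that the degree bound forces $c_i\in K$ (so the substitution is an invertible linear change) is exactly the point that keeps the set minimal. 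In part (b) you genuinely diverge from the paper, and to your advantage: the paper establishes the reverse inclusion by bounding the degrees of the minimal generators of $I$ via $\reg I = |\cA|$ from \Cref{prop:add general point}(c), comparing $\dim_K[I]_{d-1}$ with $\dim_K[r_\cA]_{d-1}$ through the exact sequence \eqref{eq:SES}, and only then invoking primeness of $I_{\ell^\vee}$ to pin down the remaining degree-$d$ generators. You instead expand an arbitrary $p\in r_\cA\cap I_{\ell^\vee}$ against the generating set $G$ of $r_\cA$ and apply primeness of $I_{\ell^\vee}$ together with \Cref{lem:nonvanishing at dual point} directly to the coefficient $b$ of $\frac{\partial f_\cA}{\partial \ell}$. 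This is shorter, needs no regularity or Hilbert-function input, and proves the full ideal equality in one stroke rather than generator by generator; the paper's route buys the extra information about where the minimal generators of $I$ sit degree-wise, which your argument does not record but which the statement does not require.
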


\begin{proof}
Set $d = |\cA|$ and $I = r_\cA \cap I_{\ell^\vee}$. 

(a) Using that $\reg r_\cA = d-1$ by \Cref{cor:reg estimate sharp} and that $\frac{\partial f_{\cA}}{\partial \ell}$ can be chosen as  minimal generator of $r_\cA$ by \Cref{prop:add general point}(a), it follows that $r_\cA$ has a minimal generating set 
$G = \{g_1,\ldots,g_s, \frac{\partial f_{\cA}}{\partial \ell} \}$ with homogeneous polynomials $g_i$ whose degree is at most $|\cA| - 1$. We have to show that we may assume that the polynomials $g_i$ are in $I_{\ell^\vee}$. Since $\frac{\partial f_{\cA}}{\partial \ell}$ has degree $d-1$ this is automatically true if $\deg g_i < d-1$  for any $i$, because $r_\cA = (I, \frac{\partial f_{\cA}}{\partial \ell})$ by \Cref{prop:add general point}(a). Assume  that for some $i$, $\deg g_i = d-1$ and $g_i$ is not in  $I_{\ell^\vee}$. Using that $I_{\ell^\vee}$ is the 
ideal of a point as we did above, we get $[(I_{\ell^\vee}, \frac{\partial f_{\cA}}{\partial \ell})]_{d-1} = [S]_{d-1}$, which implies 
\[
g_i = \tilde{g}_i + \lambda \frac{\partial f_{\cA}}{\partial \ell} 
\]
for some $\tilde{g}_i \in I_{\ell^\vee}$ and $\lambda \in K$. Exchanging $\tilde{g_i}$ for $g_i$ in $G$ successively, Claim (a) follows. 

(b) Using $g_i \in I_{\ell^\vee}$, one gets
\[
(g_1,\ldots,g_s, \frac{\partial f_{\cA}}{\partial \ell} \cdot I_{\ell^\vee} ) \subseteq (g_1,\ldots,g_s, \frac{\partial f_{\cA}}{\partial \ell}) \cap  I_{\ell^\vee} = I.
\]
\Cref{prop:add general point}(c) yields that the degrees of  the minimal generators of $I$ are at most $d-1$. Taking into account \Cref{prop:add general point}(a) and 
\[
\dim_K [r_\cA]_{d-1} = \dim_K [I]_{d-1}   -1 
\]
by the exact Sequence \eqref{eq:SES}, it follows that if there is any minimal generator of $I$ that is not in $(g_1,\ldots,g_s)$ then it is of the form $m \cdot \frac{\partial f_{\cA}}{\partial \ell}$ for some $m \in [S]_1$. However, since $I_{\ell^\vee}$ is a prime ideal and $\frac{\partial f_{\cA}}{\partial \ell}$ is not in $I_{\ell^\vee}$, we get  $m \in I_{\ell^\vee}$, as claimed. 
\end{proof}


\section{Residual-like ideals} 
\label{sect:residuals}

In this section, we introduce  a class of ideals, called residual-like ideals, that contains all general residuals. Residual-like ideals define particularly structured non-reduced schemes. It would be interesting to explore to what extend results on general residuals can be extended to any residual-like ideal. We leave this for future investigations. Here, we characterize for later use which residual-like ideals are actually general residuals. 

Let $\cA$ be a finite set of hyperplanes in $\PP^n$ defining a hyperplane arrangement $\bigcup_{H \in \cA} H$. 
Recall that for any $P \in S (A)$, we denote by $t_P \ge 2$ 
 the number of hyperplanes in $\cA$ that contain $P$. 
Let $\ell \in [S]_1$ be a general linear form.  By \Cref{cor:decomposition of residual}, 
the general residual of the hyperplane arrangement $\cA$ with respect to $\ell$ is 
\[
r_\cA = \bigcap_{P \in S(\cA)} (\ell_P, I_P^{t_P - 1}), 
\]
where $\ell_P$ is a linear form defining the linear span of $P$ and $\ell^\vee$ in $\PP^n$. 

Below, we will consider ideals with a similar structure, which we codify now.

\begin{definition}
    \label{def:residual-like} 
Let $Y \subset \PP^n$ be a non-empty finite set of codimension two linear subspaces and let $E = (e_P)_{P \in Y}$ be a sequence of positive integers. Choose a general point $Q \in \PP^n$ and denote by $\ell_P$ a linear form defining the linear span of $P \in Y$ and $Q$ in $\PP^n$. Consider the ideal 
\begin{equation}
    \label{eq:res-like} 
I(Y, E)  = \bigcap_{P \in Y} (\ell_P, I_P^{e_P}). 
\end{equation}
We refer to it as the \emph{residual-like ideal determined by $Y$ and $E$ (with respect to $Q$)}. 
\end{definition} 

\begin{remark}
Note that each of the ideals $(\ell_P, I_P^{e_P})$ is a complete intersection generated by two polynomials, of degree one and  $e_P$, respectively. Thus, the right-hand side of Equation \eqref{def:residual-like} is the unique minimal primary decomposition of $I(Y, E)$. 
\end{remark}

If $n = 2$ then $Y$ is a finite set of points and the scheme defined by $I(Y,E)$ is contained in the fat point scheme $\sum_{P \in Y} e_P P$. Algebraically, this means
\[
\bigcap_{P \in Y} I_P^{e_P} \subseteq I(Y, E) \subseteq I_Y = \bigcap_{P \in Y} I_P. 
\]

Considering again any $n \ge 2$, 
observe that $I(Y,E)$ defines a locally complete intersection scheme, whereas $\sum_{P \in Y} e_P P$ does not have this property in general. 

If $|Y| = 1$, then $I(Y, E)$ is the general residual of any hyperplane arrangement consisting of $e_P + 1$ hyperplanes of $\PP^n$ containing the unique subspace $P \in Y$.  If $Y$ has at least two elements it is no longer true that every ideal $I(Y, E)$ is a general residual.  

\begin{proposition}
    \label{prop:char residuals} 
Consider $Y, E$ and $Q$ as in \Cref{def:residual-like}. Assume  $|Y| \ge 2$ and denote by $\cL$ the set of hyperplanes of 
$\PP^n$ that are spanned by two distinct subspaces of $Y$. (Note that $\cL$ can be the empty set.) Set 
\[
d - 1 = \max \Big \{ \sum_{P \in Y \cap H}  e_P \; \mid \; H \in \cL \Big \},  
\]
where $Y \cap H$ denotes the subset of elements of $Y$ that are contained in $H$ (and not the scheme-theoretic intersection of $Y$ and $H$)  and $d = 1$ if $\cL = \emptyset$. 
Then the following two conditions are equivalent: 
\begin{itemize}

\item[(i)] The ideal $I(Y, E)$ is the general residual of a hyperplane arrangement with respect to the linear form $Q^\vee$. 

\item[(ii)] The set 
\[
\cK = \Big \{ H \in \cL \; \mid \; \sum_{P \in Y \cap H}  e_P = d-1 \Big \}
\]
has cardinality $d$ and, for each $P \in Y$, there are exactly $e_P +1$ hyperplanes in $\cK$ that contain $P$.  In this case, 
$I(Y, E)$ is the general residual of the hyperplane arrangement determined by $\cK$ and $Y = S(\cK)$,  and so $t_P = e_P+1$.  
\end{itemize}

\end{proposition}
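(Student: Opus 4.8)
The plan is to prove the equivalence by translating both conditions into pure incidence data, using the dictionary furnished by \Cref{cor:decomposition of residual}. Since each $(\ell_P, I_P^{e_P})$ is a height-two complete intersection, the displayed intersection is the unique minimal primary decomposition of $I(Y,E)$, and likewise $r_\cA = \bigcap_{P \in S(\cA)}(\ell_P, I_P^{t_P-1})$ is the unique such decomposition of a general residual. Comparing associated primes and their $I_P$-primary components, $I(Y,E)$ is the general residual of an arrangement $\cA$ with respect to $\ell = Q^\vee$ if and only if $S(\cA) = Y$ and $t_P = e_P + 1$ for every $P \in Y$. So the entire statement reduces to the question: for which pairs $(Y,E)$ does there exist an arrangement $\cA$ with $S(\cA) = Y$ and $t_P = e_P+1$, and how is $\cA$ recovered combinatorially (namely, as $\cK$)?

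For the implication (i)$\Rightarrow$(ii) I would start from an arrangement $\cA$ with $S(\cA) = Y$ and $t_P = e_P+1$ and establish two incidence facts. First, classifying the $|\cA|-1$ hyperplanes $H' \neq H$ of $\cA$ according to the element of $Y$ in which they meet a fixed $H \in \cA$ gives $\sum_{P \in Y \cap H}(t_P-1) = \sum_{P \in Y \cap H} e_P = |\cA|-1$. Second, since $|Y| \ge 2$, no $H \in \cA$ can meet every other hyperplane in a single $P$ (that would force $Y = \{P\}$), so $H$ contains two distinct elements of $Y$; as two distinct codimension-two subspaces lying in a common hyperplane must span it, $H \in \cL$. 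Thus $\cA \subseteq \cL$ and every $H \in \cA$ attains the value $|\cA|-1$. To show this value is the maximum and is attained \emph{only} on $\cA$, I take $H \in \cL \setminus \cA$: any $H' \in \cA$ through some $P \in Y \cap H$ meets $H$ exactly in $P$ and passes through at most one element of $Y \cap H$ (two would force $H' = H \notin \cA$), so these families are disjoint and $\sum_{P \in Y \cap H}(e_P+1) \le |\cA|$; combined with $|Y \cap H| \ge 2$ this yields $\sum_{P \in Y \cap H} e_P \le |\cA|-2$. Hence the maximum defining $d$ equals $|\cA|-1$, so $d = |\cA|$, the set $\cK$ is exactly $\cA$, and each $P$ lies in precisely $t_P = e_P+1$ members of $\cK$ — which is (ii).

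For the converse I would set $\cA = \cK$ and verify $S(\cK) = Y$ with $t_P = e_P+1$. Each $P \in Y$ lies in $e_P + 1 \ge 2$ hyperplanes of $\cK$, giving $Y \subseteq S(\cK)$. Summing the defining relation $\sum_{P \in Y \cap H} e_P = d-1$ over $H \in \cK$ and interchanging the order of summation (using that $P$ lies in exactly $e_P+1$ hyperplanes of $\cK$) produces the identity $\sum_{P \in Y} e_P(e_P+1) = d(d-1)$, that is, $\sum_{P \in Y}\binom{t_P}{2} = \binom{d}{2}$, which is the degree identity of \Cref{lem:decomposition of ci}. Because two distinct elements of $Y$ contained in two common hyperplanes would span them and hence coincide, distinct $P$ account for disjoint pairs of hyperplanes of $\cK$; the identity then forces every one of the $\binom{d}{2}$ pairs to meet in an element of $Y$, so $S(\cK) \subseteq Y$ and therefore $S(\cK) = Y$. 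The general residual of $\cK$ with respect to $Q^\vee$ is then $\bigcap_{P \in Y}(\ell_P, I_P^{t_P-1}) = I(Y,E)$, establishing (i).

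The hard part will be the two tightness arguments: that the incidence bound $\sum_{P \in Y \cap H} e_P = d-1$ is achieved exactly on the hyperplanes of $\cA$ (step (i)$\Rightarrow$(ii)) and, dually, that the degree identity forces $S(\cK) = Y$ (the converse). Both hinge on the single geometric fact that two distinct codimension-two subspaces inside a common hyperplane span it, which is precisely what makes all the incidence counts exact; once it is in place the remainder is bookkeeping. I would also record the degenerate case $\cL = \emptyset$ (which can occur for $n \ge 3$ when no two members of $Y$ span a hyperplane): then no arrangement can have $S(\cA) = Y$, so (i) fails, while $\cK = \emptyset$ has cardinality $0 \neq 1 = d$, so (ii) fails, and the equivalence holds vacuously.
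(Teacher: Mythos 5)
Your proof is correct, but it follows a genuinely different route from the paper's. For (i)$\Rightarrow$(ii) the paper argues by induction on $|\cA|$, adding one hyperplane at a time: the identity $\sum_{P \in S(\cA)\cap H}(t_P-1)=|\cA|-1$ is verified for the newly added hyperplane and transferred to the others by symmetry, while the strict inequality for hyperplanes outside $\cA$ is extracted from the induction hypothesis. You instead prove the identity directly for every $H\in\cA$ by classifying the other $|\cA|-1$ hyperplanes according to where they meet $H$, and you get the (stronger) bound $\sum_{P\in Y\cap H}e_P\le|\cA|-2$ for $H\in\cL\setminus\cA$ by a disjointness count — no induction needed. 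For (ii)$\Rightarrow$(i) the paper derives a local contradiction at a hypothetical $P'\in S(\cK)\setminus Y$, cleverly reusing the already-established forward implication applied to $\cK$; you instead use a global double count producing $\sum_{P\in Y}\binom{t_P}{2}=\binom{d}{2}$ and observe that disjointness of the pair-families forces every pair of hyperplanes of $\cK$ to meet inside $Y$ (this is, pleasingly, the same degree identity as in Lemma \ref{lem:decomposition of ci}, now obtained combinatorially). Both arguments ultimately rest on the single geometric fact that two distinct codimension-two subspaces contained in a common hyperplane span it, which you correctly isolate as the crux; your version is more self-contained and elementary, while the paper's inductive formulation fits its recurring ``add one hyperplane'' framework. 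Your handling of the initial reduction (uniqueness of the minimal primary decomposition forces $S(\cA)=Y$ and $t_P=e_P+1$) and of the degenerate case $\cL=\emptyset$ is also sound.
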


\begin{proof} 
In the argument, we will consider various sets of codimension two subspaces. Thus,  for any such set $Y$, we write $\cL (Y)$ for the set of hyperplanes that are spanned by two distinct subspaces of $Y$. 

We begin by showing that (i) implies (ii) using induction on the cardinality of $\cA$. 
Assume 
\[
I(Y,E) = r_\cA = \bigcap_{P \in S(\cA)} (\ell_P, I_P^{t_P - 1}), 
\]
for some hyperplane arrangement  $\cA$. Set $d = |\cA|$. 
Since we assume that $Y = S(\cA)$ has at least two elements we get $d \ge 3$. Moreover, if $d = 3$ then the three hyperplanes in $\cA$ must be in general position, i.e., $|S(\cA)| = 3$ and $t_P = 2$ for each $P \in S(\cA)$. It follows that $I(Y, E)$ satisfies the conditions in (ii).

Now assume $ d = |\cA|  \ge 3$. Let $H \subset \PP^n$ be any hyperplane that is not in $\cA$. We have to show that the residual 
$r_{\cA + H}$ also satisfies the conditions in (ii). To distinguish multiplicities for $\cA$ and for $\cA + H = \cA \cup \{H\}$ write 
\[
r_{\cA + H}  = \bigcap_{P \in S(\cA + H)} (\ell_{P}, I_P^{t_{P,  \cA +H} - 1}).
\]
Notice that $H$ meets each of the $d$  hyperplanes of $\cA$ in a codimension two subspace $P \in S(\cA + H)$. Moreover, by the definition of $t_{P,  \cA +H}$, the hyperplane $H$ intersects precisely $t_{P,  \cA +H} - 1$ hyperplanes of $\cA$ in $P$. It follows that 
\[
\sum_{P \in S(\cA + H) \cap H} (t_{P,  \cA +H} - 1) = d.  
\]

Consider now any hyperplane $L \neq H$ of $\PP^n$. If $L$ is in $\cA$, then we apply the above argument to $(\cA +H) - L$ and obtain
\[
\sum_{P \in S(\cA + H) \cap L} (t_{P,  \cA +H} - 1) = d.  
\]
Setting $e_P = t_{P, \cA +H} - 1$, this shows that the set
\[
\cK = \Big \{ L \in \cL(S(\cA + H)) \; \mid \; \sum_{P \in S(\cA + H) \cap L}  e_P = d \Big \}
\]
contains $\cA + H$. To show equality 
it remains to check that 
\[
\sum_{P \in S(\cA + H) \cap L} (t_{P,  \cA +H} - 1) < d  
\]
if $L \subset \PP^n$ is a hyperplane that is not in  $\cA+H$. Consider the codimension two subspace $P' = L  \cap H$. If $P'$ is not in $S (\cA + H)$ then one has, for any $P \in S (\cA + H) \cap L$ that $P \in S(\cA)$ and  $t_P = t_{P,  \cA +H}$. Thus, the induction hypothesis gives 
\[
\sum_{P \in S(\cA + H) \cap L} (t_{P,  \cA +H} - 1)  = \sum_{P \in S(\cA) \cap L} (t_{P} - 1)  < d-1. 
\]
If $P'$ belongs $S (\cA + H)$ then $t_{P,  \cA +H} = 2$ if $P$ is not in $S(\cA)$ and $t_{P,  \cA +H} = t_P + 1$ otherwise. In both cases we get 
\[
\sum_{P \in S(\cA + H) \cap L} (t_{P,  \cA +H} - 1) \le  1 +  \sum_{P \in S(\cA) \cap L} (t_{P} - 1)  < 1 + d-1 = d, 
\]
as desired. 
\smallskip

We now turn to the reverse implication  (ii) implies (i). Since each $e_P$ is positive, Condition (ii) gives that  every $P \in Y$ is contained in 
$e_P + 1 \ge 2$ hyperplanes of $\cK$, and so $P$ is in $S(\cK)$ and $e_P + 1 = t_{P, \cK}$. It also follows that $Y \subseteq S(\cK)$. We need to show equality.  
To this end, assume $S (\cK) - Y$ is not empty and set $e_{P'} = t_{P', \cK} - 1 \ge 1$  for  $P' \in S (\cK) - Y$. 
Let $H \in K$ be a hyperplane containing $P'$.
Then we obtain 
\begin{align*}
1 \le e_{P'} \le \sum_{P \in (S (\cK) - Y) \cap H} e_P & = \sum_{P \in S (\cK) \cap H} e_P  - \sum_{P \in  Y \cap H} e_P \\
& =  \sum_{P \in  S(\cK) \cap H} (t_{P, \cK} - 1)  - (d-1)\\
& = 0,  
\end{align*}
where we used (ii) to get the equality in the second line and applied the first implication to $\cK$ to see the last equality. This contradiction proves that $Y = S(\cK)$. 
Applying now \Cref{cor:decomposition of residual} to the hyperplane arrangement determined by $\cK$, we see that the general residual $r_\cK$ of $\cK$ with respect to $Q^\vee$ equals $I(Y,E)$. 
\end{proof}

Recall that in \Cref{cor:reg estimate sharp} we determined the Castelnuovo-Mumford regularity of any general residual. It would be interesting to establish an upper bound on the regularity of an arbitrary residual-like ideal, which is sharp for any general residual.


\section{Bounding the degree of the Jacobian ideal} 
\label{sect:degree bounds Jacobian}


 For a curve in $\PP^2$, the global Tjurina number $\tau(C)$  is a measure of its singularities. In particular, one might ask what is the maximum possible Tjurina number given the degree of the curve, and in the case of line arrangements one can also ask for the minimum. This problem was studied by Du Plessis and Wall  in \cite{PW} in general, and by Beorchia and Mir\'o-Roig  \cite{BMR} for conic-line arrangements. In both cases, it was observed that if $\cA = \cA (f_{\cA})$ consists of $d$ concurrent lines then $\Jac (f_{\cA})$ is a complete intersection and $\tau (\cA) = (d-1)^2$ is the maximum possible. After this case, \cite{BMR} showed that for conic-line arrangements, there is a gap: $ \tau(C) \leq (d-1)(d-2)+1$, and they classify the conic-line arrangements for which equality holds. In particular, one possibility is a line arrangement (no conics) with $d-1$ of the lines concurrent. In this section we continue the study of possible Tjurina numbers for line arrangements. We establish that there is another gap in the range of possible $\tau (\cA)$. We also provide new arguments for the quoted results of du Plessis and Wall and of Beorchia and Mir\'o-Roig, and we  establish a lower bound.

For any hyperplane arrangement $\cA \subset \PP^n$, the global Tjurina number $\tau (\cA)$ is equal to the degree of the Jacobian ideal $\Jac (f_{\cA})$. Thus, combined with our results on restrictions of Jacobian ideals  in Section \ref{sec:reduction to plane}, the result on line arrangements implies an analogous statement for arbitrary hyperplane arrangements in any projective space, which we state as Corollary  \ref{cor:deg Jacobian}.

The following estimates comprise the main result of this section: 

\begin{theorem}
   \label{thm:deg Jacobian} 
For any arrangement  $\cA = \cA (f_{\cA})$ of $d \ge 3 $ lines in $\PP^2$, one has: 
\begin{itemize} 
\item[(a)] 
\[
\deg (\Jac (f_{\cA})) \ge \binom{d}{2},  
\]
and equality is true if and only if no three lines of $\cA$ are concurrent. 

\item[(b)] If the $d$ lines of $\cA$ are not concurrent then 
\[
\deg (\Jac (f_{\cA})) \le d^2 - 3d + 3.  
\]
Furthermore, equality is true if and only if $d-1$ lines, but not $d$ lines of $\cA$ are concurrent.  

\item[(c)] If $\cA$ does not have a subset of $d-1$ concurrent lines  (hence in particular $d \geq 4$) then 
\[
\deg (\Jac (f_{\cA})) \le d^2 - 4d + 7.  
\]
Moreover, if $d \ge 5$ then equality is true if and only if $\cA$ consists of $d-2$ concurrent lines and the two other lines meet in a point which is on one of the $d-2$ lines. 
\end{itemize} 
\end{theorem}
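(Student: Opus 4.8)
The plan is to reduce the entire theorem to a combinatorial estimate on the multiplicities $t_P$ and then to run a line-deletion induction. Recall from the paragraph following \Cref{prop:jac = union} that $\deg(\Jac(f_\cA)) = \sum_{P \in S(\cA)} (t_P-1)^2$, while Equation \eqref{eq:degree ci} reads $\sum_{P \in S(\cA)} t_P(t_P-1) = d(d-1)$. Using $(t_P-1)^2 = t_P(t_P-1) - (t_P-1)$ and writing $m(\cA) = \sum_{P \in S(\cA)} (t_P-1)$, subtraction yields the basic identity
\[
\deg(\Jac(f_\cA)) = d(d-1) - m(\cA).
\]
Halving \eqref{eq:degree ci} also records the pair count $\sum_{P} \binom{t_P}{2} = \binom{d}{2}$. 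Thus every estimate on $\deg\Jac$ is equivalent to an estimate on $m(\cA)$: part (a) becomes $m \le \binom{d}{2}$, part (b) becomes $m \ge 2d-3$, and part (c) becomes $m \ge 3d-7$. For part (a) I would argue termwise: since $t_P \ge 2$ one has $t_P-1 \le \binom{t_P}{2}$, with equality exactly when $t_P=2$; summing against the pair count gives $m \le \binom{d}{2}$, with equality iff every $t_P=2$, i.e.\ no three lines concurrent.

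For parts (b) and (c) the key tool is a deletion formula. Passing from $\cA$ to $\cA' = \cA \setminus\{L\}$ lowers by one the multiplicity of each singular point lying on $L$: a point with $t_P=2$ ceases to be singular, while one with $t_P\ge 3$ survives with multiplicity $t_P-1$. In either case its contribution to $m$ drops by exactly one, and points off $L$ are untouched, so
\[
m(\cA) = m(\cA') + s_L, \qquad s_L := |\{P \in S(\cA) : P \in L\}|.
\]
Since the $d-1$ lines other than $L$ each meet $L$, one has $\sum_{P\in L}(t_P-1)=d-1$, and $s_L=1$ holds precisely when those lines share one point of $L$, i.e.\ when $\cA$ is a pencil; hence $s_L\ge 2$ for every line whenever $\cA$ is not concurrent. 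For part (b) I run induction on $d$, with base case $d=3$ (a triangle, $m=3=2d-3$). In the inductive step I must pick $L$ so that $\cA'$ is again non-concurrent, which is possible for $d\ge 4$: if deleting any single line left a pencil, then comparing the deletions of two distinct lines would force all of $\cA$ to be concurrent. Then $m(\cA)=m(\cA')+s_L \ge (2(d-1)-3)+2 = 2d-3$, giving $\deg\Jac \le d^2-3d+3$. The equality case I would extract by forcing $s_L=2$ at the last step together with equality in the inductive hypothesis, which should pin down the near-pencil: $d-1$ concurrent lines plus one transversal.

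Part (c) is the main obstacle. The hypothesis ``no $d-1$ lines concurrent'' is exactly $t^\ast := \max_P t_P \le d-2$, and I want $m \ge 3d-7$. If $t^\ast = 2$ then $m=\binom{d}{2}$, and $\binom{d}{2} \ge 3d-7$ holds for all $d\ge 4$ since $d^2-7d+14$ has negative discriminant, so I may assume $t^\ast \ge 3$. I would again delete a line, but now two demands collide: to preserve the inductive hypothesis I must delete a line through a point of maximal multiplicity so that $t^\ast$ drops to $\le d-3$ in $\cA'$, yet I also need a good lower bound on $s_L$ for precisely that $L$ — and simple examples show a line can carry only $s_L=2$ singular points even when no $d-1$ lines are concurrent. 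Reconciling these is the crux: I expect to show that a line with $s_L=2$ can occur only when the remaining structure already forces enough concurrences elsewhere, so that the global count still yields $m \ge 3d-7$; concretely, I would track the $d-t^\ast$ lines $M_j$ avoiding a maximal point, each contributing $d-1$ to $\sum_{P\in M_j}(t_P-1)$, and bound the overlaps among their intersections with the pencil through that point.

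Finally, for the equality classification in part (c) (valid for $d\ge 5$) I would first verify that the stated configuration — $d-2$ concurrent lines together with two further lines meeting at a point on one of them — realizes $m=3d-7$; a direct count gives multiplicities $t_{P_0}=d-2$ at the center, $t_Q=3$ at the triple point, and $2(d-2)-2$ simple points, summing to $m=(d-3)+2+(2d-6)=3d-7$. Then I would show any deviation strictly increases $m$: moving the triple point $Q$ off the pencil turns it into two ordinary intersections (cost $+1$), and letting the two extra lines create a second non-simple point likewise raises the count, so the extremal configuration is forced. I anticipate the bookkeeping in the inductive step of (c), rather than any conceptual difficulty, to be the hardest part to make airtight.
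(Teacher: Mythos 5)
Your reformulation is in substance identical to the paper's: the quantity $m(\cA)=\sum_{P}(t_P-1)$ is exactly $\deg r_\cA$ (each primary component of the general residual has degree $t_P-1$), your deletion formula $m(\cA)=m(\cA')+s_L$ is \Cref{cor:degree change adding} read backwards, and $s_L=|\cA'\cap L|$. Within that framework your part (a) is complete and correct, and in fact more elementary than the paper's: the termwise comparison $t_P-1\le\binom{t_P}{2}$ against the pair count $\sum_P\binom{t_P}{2}=\binom{d}{2}$ settles both the bound and the equality case at once, whereas the paper derives the lower bound from the initial-degree theorem (\Cref{thm:initial degree}) and then treats the equality case separately by the addition recursion. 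Your part (b) is essentially the paper's argument (the key fact $s_L\ge 2$ for non-concurrent $\cA$ is the paper's Claim (1)); the equality classification needs the near-pencil deletion case and a small $d=4$ check spelled out, but these are routine.

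Part (c), however, has a genuine gap, and you have located it yourself without closing it. The induction breaks exactly when the line you must delete carries only $s_L=2$ singular points, and ``I expect to show\dots'' is not an argument. The missing ingredient is a structural classification: $s_L=2$ forces $\cA\setminus\{L\}$ to split as two pencils centered at two points of $L$, say of sizes $a$ and $b$ with $a+b=d-1$, and the hypothesis of (c) forces $a,b\ge 2$. For such a configuration one computes directly
\[
m(\cA)=a+b+ab=d-1+ab\ \ge\ d-1+2(d-3)=3d-7,
\]
with equality iff $\{a,b\}=\{2,d-3\}$, which is precisely the extremal configuration in the statement. This is how the paper disposes of the problematic case (its Claim (2) identifies $|\cA\cap H|=2$ with the disconnected 2-pencils, and Case 2.2 of its proof carries out the computation $\deg r_\cA=d-2+s(d-s)$ and minimizes over $s$). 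Without this computation your inductive step does not go through, and your equality classification for (c) is likewise only a plausibility argument (``any deviation strictly increases $m$''), since the competing near-extremal configurations are exactly these 2-pencil arrangements and must be compared quantitatively, not qualitatively. A secondary, fixable issue: your requirement that the deleted line both pass through a point of maximal multiplicity and leave $t^\ast(\cA')\le d-3$ can fail (e.g.\ two points each of multiplicity $d-2$ occur for $d=5$), so the case analysis governing which line to delete also needs to be made explicit.
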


\begin{remark}
(i)
Part (b) corresponds to the mentioned results by Du Plessis and Wall (see \cite[Theorem 3.2]{PW} and Beorchia and Mir\'o-Roig (see \cite[Theorem 3.5]{BM} and also \cite[Proposition 4.7]{DIM}, \cite[Proposition 1.1]{PW2}). 

(ii)
There is not another immediate gap in the range of degrees of $\Jac (f_{\cA})$ below the bound given in (c). Indeed, if $\cA$ consists of $d-2$ concurrent lines and two more generic lines then 
\[
\deg (\Jac (f_{\cA})) =  d^2 - 4d + 6. 
\]
\end{remark}

\begin{proof}[Proof of \Cref{thm:deg Jacobian}] 
We argue by using the general residual $r_{\cA}$ of $\cA$. Since 
\[
\deg r_\cA = d (d-1) - \deg (\Jac (f_{\cA}))
\]
it is enough to show the corresponding bounds for $r_\cA$.  Recall that by \Cref{cor:degree change adding}, we know for any 
 hyperplane $H = V(x)$ that is not in $\cA$ that 
\begin{align}
  \label{eq:degree change} 
\deg r_{\cA + H} = \deg r_\cA + |\cA \cap H| \le  \deg r_\cA + |\cA|,  
\end{align} 
where 
\[
\cA \cap H  = \{ H' \cap H \; \mid \; H' \in \cA \}. 
\] 
In particular, $|\cA \cap H|$ counts the points in the support of the scheme $(\bigcup_{H' \in \cA} H' ) \cap H$ (i.e. it ignores multiplicity). 
Moreover, every primary component of $r_\cA$ is supported at a point and its degree is one less than the number of lines in $\cA$ passing through this point (see \Cref{prop:decomposition of residual}).  
\smallskip

We begin by establishing the lower bound on the degree of $\Jac (f_{\cA})$. 
 \Cref{thm:initial degree} gives that $\Jac(f_\cA)^{top}$ does not contain any nonzero polynomial of degree $d - 2$. 
 Since the Hilbert function of a zerodimensional scheme $X$ is strictly increasing until it is equal to the degree of $X$ it
  follows that 
 \begin{align*}
 \deg (\Jac (f_{\cA})) = \deg (\Jac (f_{\cA})^{sat}) & \ge \dim_K [S/\Jac (f_{\cA})^{sat}]_{d-2}   \\
 & = \dim_K [S]_{d-2} = \binom{d}{2},  
 \end{align*}
 as claimed. 
 To discuss sharpness of this bound assume $\cA$ contains three concurrent lines. The generic residual to the arrangement of these three lines alone has degree two. Thus, adding one line at a time, repeated application of Inequality \ref{eq:degree change} gives 
 \[
 \deg r_\cA \le 2 + 3 + 4 + \cdots + d-2 + d-1 = \binom{d}{2} - 1, 
 \]
 which yields $\deg (\Jac (f_{\cA})) \ge \binom{d}{2} + 1$ and so completes the proof of Claim (a) 
 \smallskip

 We now consider the upper bounds for $\deg (\Jac (f_{\cA}))$. We use induction on the cardinality $d$ of $\cA$ and denote by $H \subset \PP^2$ a line that is not in $\cA$. The following statements play a key role in our arguments. 
\smallskip  
 
\noindent
\textbf{Claim}: 
\begin{enumerate}
    \item \label{parta} If $|S(\cA)| \ge 2$ and $H \notin \cA$ then $|\cA \cap H| \ge 2$.

    \item Equality is true in (\ref{parta}) if and only if the following holds: $\cA$ is a disjoint union $\cA' \cup \cA''$ of nonempty subsets $\cA', \cA''$ of concurrent lines,  with at least two lines in one of the subsets, say $\cA'$, and if $\cA''| = 1$ then $H$ does not contain the center of the pencil $\cA'$. 
(We refer to this configuration of $\cA$ as a \emph{disconnected 2-pencil}.) 
\end{enumerate}

\smallskip

Indeed, notice that $|S(\cA)| = 1$ if and only if the $d$ lines of $\cA$ are concurrent. Moreover, $|S(\cA)| \ge 2$ forces $d \ge 3$.  Hence, $H$ meets $\cA$ in at least two distinct points, say, $P'$ and $P''$. If there is a line of $\cA$ that is not passing through either $P'$ or $P''$ then $H$ meets this line, and so $|\cA \cap H| \ge 3$. Therefore, the condition $|\cA \cap H| = 2$ forces that every line of $\cA$ passes through one of the points $P'$ and $P''$. 
It follows that  $\cA$ is a disconnected 2-pencil, which shows that (1) implies (2). The reverse implication is clear.
\smallskip

Note that Assertion (b) in the statement of the theorem is equivalent to the following statement:

\textbf{Claim (b'):} If the lines in $\cA$ are not concurrent then 
\begin{equation}
    \label{eq:degree residual weak}
\deg r_\cA \ge 2 |\cA| -3,  
\end{equation}
 and equality if true if and only if $|\cA| - 1$ lines of $\cA$, but not all lines of $\cA$ are concurrent. 
 \smallskip
 
We use induction on $d = |\cA| \ge 3$ to show Claim (b'). If  $d = 3$ then, by assumption, $S(\cA)$ consists of three distinct points and $\deg r_\cA = 3$, and so Inequality \eqref{eq:degree residual weak} is an equality. 

Let $d \ge 3$ and denote by $H$ any line that is not in $\cA$. As above, we write $\cA + H$ for $\cA \cup \{H\}$. We want to show Claim (b') for $\cA + H$ assuming it is true of $\cA$.
We consider two cases. 
\smallskip

\emph{Case 1}: Assume the lines in $\cA$ are concurrent.  It follows that $\deg r_\cA = d-1$. By assumption, the lines in $\cA + H$ are not concurrent, that is, $H$ meets $\cA$ in $d$ distinct points. Hence, we get from Equation \eqref{eq:degree change} 
\[
\deg r_{\cA + H} = d-1 + d = 2 | \cA + H | - 3, 
\]
as claimed. 
\smallskip

\emph{Case 2}:The lines in $\cA$ are not concurrent. Thus, the induction hypothesis gives 
\[
\deg r_\cA \ge 2d -3. 
\]
Using the above Claim and Equation \eqref{eq:degree change}, we get
\[
\deg r_{\cA + H} \ge 2d -3 + 2 = 2 | \cA + H | - 3, 
\]
establishing the asserted lower bound. Moreover, equality is true if and only if $\deg r_\cA = 2d -3$ and $| \cA \cap H | = 2$. The former condition forces that $\cA$ contains a pencil of $d-1$ lines, and now the latter condition implies that $H$  must pass through the center of this pencil. Hence $\cA +H$ contains a pencil of $ | \cA + H | -1 $ lines, which completes the argument for Claim (b'). 
 \smallskip
 
 We proceed similarly to establish Claim (c). Note that it is equivalent to the following statement:

\textbf{Claim (c'):} If $\cA$ does not have a subset of $|\cA| -1$ concurrent lines then 
\begin{equation}
\deg r_\cA \ge 3 |\cA| -7.  
\end{equation}
Moreover, if $|\cA| \ge 5$ then equality is true if and only if $\cA$ is the union of a pencil of $|\cA| - 2 \ge 3$ concurrent lines and a pencil of three concurrent lines. (Observe that this forces that the line that is spanned by the centers of the pencils belongs to $\cA$. Thus, we refer to this line arrangement as a \emph{connected 2-pencil}.) 
 \smallskip
 
 Again, we use induction of $d = |\cA|$. Notice that the assumption forces $d \ge 4$. Assume $d = 4$. Thus, one has $\deg r_\cA = 6$ if $\cA$ does not contain three concurrent lines and $\deg r_\cA = 5$ otherwise. 
 
 Let $H$ be a line not in $\cA$.  We now consider two cases.

\emph{Case 1}: If $\deg r_\cA = 6$ then precisely two lines of $\cA$ pass through any point of $S (\cA)$. It follows that $\cA \cap H$ 
consists of two points if and only if $H$ passes through two points of $S(\cA)$. In this case, one has $\deg r_{\cA+H} = 8$, $\cA + H$ is a connected 2-pencil and Claim (c') is sharp, as claimed. 
In any other case, $\cA \cap H$
contains at least three points.  Hence, Equation \eqref{eq:degree change} gives 
\[
\deg r_{\cA + H} \ge 6 + 3 >  8 = 3 | \cA + H | - 7.
\]
Thus, the lower bound in Claim (c') is satisfied but never sharp.

\emph{Case 2}: Assume $\deg r_\cA = 5$. Thus, $\cA$ has a subset $\cA'$ of three concurrent lines meeting in a point $P'$.  It follows that $\cA \cap H$ consists of two points if and only if $H$ passes through $P'$. But then $\cA' + H$ is a set of $4 = | \cA +H | -1$ concurrent lines, which is forbidden by our assumption on $\cA +H$ in Claim (c'). Hence, we obtain $| \cA \cap H | \ge 3$, and Equation \eqref{eq:degree change} yields
\[
\deg r_{\cA + H} \ge 5 + 3 = 3 | \cA + H | - 7, 
\]
establishing the lower bound in this case. Moreover, equality is true if and only if $| \cA \cap H | = 3$, which means that $H$ passes through a point of $S (\cA) - P'$. In this case, $\cA + H$ is the desired connected 2-pencil. 
This completes the argument for Claim (c') if $| \cA +H | = 5$. 

Assume now $d \ge 5$.  Again, we consider two cases. 
 
\emph{Case 1}: Assume $\cA$ contains a subset of $d-1$ concurrent lines meeting in a point $P'$. Then we know by Claim (b'),
\[
\deg r_\cA = 2 d-3. 
\]
By assumption on $\cA + H$, the line $H$ is not allowed to pass through $P'$. There are exactly two lines of $\cA$ that pass through any point 
of $S (\cA)$ other than $P'$. Hence,  either $H$ meets $\cA$ in such a point and $d-2$ intersection points with the other lines of $\cA$ or $| \cA \cap H| = d$. In any case, $| \cA \cap H| \ge d -1$, and Equation \eqref{eq:degree change} gives 
\[
\deg r_{\cA + H} \ge 2 d - 3 + d - 1 =  3 | \cA + H | - 7.
\]
Furthermore, equality is true if and only if $| \cA \cap H| = d -1$, which means that $\cA + H$ is the desired connected 2-pencil.

\emph{Case 2}: Assume $\cA$ has no subset of $d-1$ concurrent lines. Then the induction hypothesis gives that 
\[
\deg r_\cA \ge  3 d-7,  
\]
and equality is true if and only if $\cA$ is a connected 2-pencil with pencils of $d-2$ and $3$ lines.  We want to prove  Claim (c') for $\cA +H$. We consider two subcases. 

\emph{Case 2.1}: If $\cA$ is a connected 2-pencil with pencils of $d-2$ and $3$ lines, respectively, then the above Claim provides  $| \cA \cap H| \ge 3$, and so Equation \eqref{eq:degree change} yields 
\[
\deg r_{\cA + H} = \deg r_\cA + |\cA \cap H|  \ge 3 d - 7 + 3 =  3 | \cA + H | - 7.
\]
Moreover, $ |\cA \cap H| = 3$ if and only if $H$ passes through the intersection of the $d-2$ concurrent lines and meets the other two lines in two distinct points. In this case, $\cA + H$ is a connected 2-pencil with pencils of $d-1$ and $3$ lines, as desired.

\emph{Case 2.2}: Assume $\cA$ is not a connected 2-pencil with pencils of $d-2$ and $3$ lines, respectively. Then the induction hypothesis gives 
\[
\deg r_\cA \ge  3 d-6.  
\]
Thus, using Equation \eqref{eq:degree change} and the above Claim we obtain
\[
\deg r_{\cA + H} = \deg r_\cA + |\cA \cap H|  \ge 3 d - 6 + 2 =  3 | \cA + H | - 7.
\]
Furthermore, equality is true if and only $\deg r_\cA =  3 d-6$ and $ |\cA \cap H| = 2$. By the Claim, the latter condition means that $\cA$ is a disconnected 2-pencil with pencils of $s$ and $t$ lines, respectively, satisfying $s+t = d$ and $a, b \ge 2$. However, then we get 
\[
\deg r_\cA = s - 1 + t-1 + st = d-2 + s (d-s).    
\]
where we may assume $s \le \frac{d}{2}$. The function $g \colon \RR \to \RR$ with $g (s) = d-2 + s (d-s)$ is strictly increasing on the interval $[0, \frac{d}{2}]$. It follows that 
\[
\deg r_\cA \ge g(2) =  3d - 6,    
\]
and equality is true if and only if $s= 2$. In this case, $\cA + H$ is a connected 2-pencil with pencils of $d-1$ and $3$ lines, respectively. 
This completes  the argument. 
\end{proof}

As promised, \Cref{thm:deg Jacobian}  gives a general result for hyperplane arrangements.

\begin{corollary}
   \label{cor:deg Jacobian} 
Let  $\cA = \cA (f_{\cA})$ be an arrangement of $d \ge 3 $ hyperplanes in $\PP^n$ and let $S(\cA)$ be the top support of the singular locus of $\cA$. One has: 
\begin{itemize} 
\item[(a)] 
\[
\deg (\Jac (f_{\cA})) \ge \binom{d}{2},  
\]
and equality is true if and only if no three hyperplanes of $\cA$ share a common element of $S(\cA)$. 

\item[(b)] If the $d$ hyperplanes of $\cA$ do not all share a common element of  $S(\cA)$ then
\[
\deg (\Jac (f_{\cA})) \le d^2 - 3d + 3.  
\]
Furthermore, equality is true if and only if $d-1$ hyperplanes, but not $d$ hyperplanes, of $\cA$ share an element of $S(\cA)$.  

\item[(c)] If $\cA$ does not have a subset of $d-1$ hyperplanes sharing an element of $S(\cA)$ then 
\[
\deg (\Jac (f_{\cA})) \le d^2 - 4d + 7.  
\]
Moreover, if $d \ge 5$ then equality is true if and only if $\cA$ consists of $d-2$ hyperplanes sharing an element of $S(\cA)$ 
and two other hyperplanes, which share an element of $S(\cA)$ with one of the first $d-2$ hyperplanes.
\end{itemize} 
\end{corollary}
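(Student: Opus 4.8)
The plan is to reduce everything to the line-arrangement case already settled in \Cref{thm:deg Jacobian}. Recall from the discussion of the global Tjurina number following \Cref{prop:jac = union} that for any arrangement one has the purely combinatorial formula
\[
\deg (\Jac (f_{\cA})) = \deg (\Jac (f_{\cA})^{top}) = \sum_{P \in S(\cA)} (t_P - 1)^2 .
\]
Thus both the degree and the three numerical conditions in the statement are governed entirely by the data $\big(S(\cA), (t_P)\big)$, i.e.\ by the codimension two part of the intersection lattice.

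First I would fix a general $2$-plane $\Lambda \subset \PP^n$ and let $\cB = \cA \cap \Lambda$ be the restricted line arrangement, so that $|\cB| = |\cA| = d$. The key geometric step is to produce a multiplicity-preserving bijection
\[
S(\cA) \longrightarrow S(\cB), \qquad P \longmapsto \bar P := P \cap \Lambda .
\]
Each $P \in S(\cA)$ is a codimension two subspace, hence meets the general plane $\Lambda$ in a single point $\bar P$; distinct elements of $S(\cA)$ give distinct points because $P \cap P'$ has codimension $\ge 3$ and a general $2$-plane misses it; and conversely every singular point of $\cB$ equals $(H \cap \Lambda) \cap (H' \cap \Lambda) = (H \cap H') \cap \Lambda$, so it is some $\bar P$. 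Moreover a hyperplane $H \in \cA$ contains $P$ if and only if the line $H \cap \Lambda$ contains $\bar P$: the forward implication is immediate, while if $H \not\supseteq P$ then $P \cap H$ has codimension three and, by generality of $\Lambda$, the point $\bar P$ avoids it. Hence $t_{\bar P} = t_P$ for every $P$, and the displayed Tjurina formula gives
\[
\deg (\Jac (f_{\cA})) = \sum_{P \in S(\cA)} (t_P - 1)^2 = \sum_{\bar P \in S(\cB)} (t_{\bar P} - 1)^2 = \deg (\Jac (f_{\cB})) .
\]
Alternatively this equality follows from the hyperplane-section machinery of \Cref{sec:reduction to plane}, since a general plane section preserves the degree of the codimension two scheme $X^{top}$ and, by \Cref{lem"restriction top-dimensional} applied $n-2$ times, its top part restricts to $\Jac (f_{\cB})^{top} = \Jac (f_{\cB})^{sat}$.

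With the bijection in hand, the three hypotheses and equality cases translate verbatim: \enquote{$k$ hyperplanes of $\cA$ share an element $P \in S(\cA)$} corresponds exactly to \enquote{$k$ lines of $\cB$ are concurrent at $\bar P$}. Thus the hypothesis of (a) says every $t_P = 2$, i.e.\ no three lines of $\cB$ are concurrent; the hypothesis of (b) says $|S(\cA)| \ge 2$, i.e.\ the lines of $\cB$ are not concurrent, with equality characterised by a pencil of $d-1$ lines; and the hypothesis and equality case of (c) correspond precisely to the absence of a $(d-1)$-fold concurrence and to the connected $2$-pencil of $\cB$ with pencils of sizes $d-2$ and $3$. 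Applying \Cref{thm:deg Jacobian} to $\cB$ and reading the conclusions back through the bijection yields all three statements for $\cA$.

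The only real work is the second paragraph: verifying that $P \mapsto P \cap \Lambda$ is well defined, bijective, and multiplicity-preserving, and pinning down how general $\Lambda$ must be (it must keep the images $\bar P$ distinct and avoid the finitely many codimension three loci $P \cap H$ and $P \cap P'$). Once this is established the degree equality is automatic from the Tjurina formula, and the remaining translation of the combinatorial conditions is routine bookkeeping; the subtlest point there is matching the equality description in part (c), where one must check that \enquote{$d-2$ hyperplanes sharing $P'$ together with two hyperplanes sharing $Q$ with one of them} restricts to exactly the connected $2$-pencil appearing in \Cref{thm:deg Jacobian}(c).
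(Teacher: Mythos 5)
Your proposal is correct and follows essentially the same route as the paper: restrict to a general $2$-plane, identify $S(\cA)$ with $S(\cB)$ preserving the multiplicities $t_P$, and invoke Theorem \ref{thm:deg Jacobian} for the resulting line arrangement. The only (harmless) variation is that you justify the degree equality primarily via the combinatorial formula $\deg(\Jac(f_\cA)) = \sum_{P}(t_P-1)^2$ rather than via degree preservation under general hyperplane sections of $X^{top}$, which is the paper's phrasing; you note the latter as an alternative, and both are valid.
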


\begin{proof}
Intersecting $\cA$ with a general 2-dimensional linear space $\Lambda$ in $\mathbb P^n$ gives a line arrangement $\overline{\cA}$ in $\Lambda$ and a bijection between $S(\cA)$ and $S(\overline{\cA})$. Applying 
\Cref{lem"restriction top-dimensional} 
$n-2$ times, we obtain the top-dimensional part of the  Jacobian ideal of $\overline{\cA}$. The result follows since the degree of a subscheme is preserved under general hyperplane sections.
\end{proof}

We record the following consequence. 

\begin{corollary}
   \label{cor:char ci}
The Jacobian $\Jac (f_\cA)$ of any hyperplane arrangement determined by $\cA \subset \PP^n$ is a complete intersection if and only if all hyperplanes of $\cA$ contain a subspace $\Lambda \subset \PP^n$ of dimension $n-2$. 
\end{corollary}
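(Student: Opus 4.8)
The plan is to prove the two implications separately, extracting the easy direction from \Cref{ci} and the substantive direction from the sharp degree bounds of \Cref{cor:deg Jacobian}. Throughout write $d = |\cA|$.

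For the backward implication, suppose all hyperplanes of $\cA$ contain a common codimension two linear subspace $\Lambda$. Then \Cref{ci} applies verbatim and shows that $\Jac (f_\cA)$ is a (saturated) complete intersection of type $(d-1, d-1)$. So this direction requires no further argument. The forward implication is where the work lies. Assume that $\Jac (f_\cA)$ is a complete intersection. First I would dispose of the degenerate cases $d \le 2$: at most two distinct hyperplanes automatically meet in a codimension two subspace, so the asserted conclusion holds trivially, and we may assume $d \ge 3$. Since $f_\cA$ is squarefree, its singular locus has codimension exactly two, so $\Jac (f_\cA)$ has codimension two; being a complete intersection, it is therefore minimally generated by precisely two elements forming a regular sequence. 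The crucial observation is that $\Jac (f_\cA)$ is generated by the $n+1$ partial derivatives of $f_\cA$, each of degree $d-1$, hence every minimal generator has degree $d-1$. Consequently $\Jac (f_\cA)$ is a complete intersection of type $(d-1, d-1)$, and its degree is $(d-1)^2$.

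To finish I would invoke the degree gap. A direct computation gives $(d-1)^2 = d^2 - 2d + 1 > d^2 - 3d + 3$ exactly when $d > 2$, so for $d \ge 3$ we have $\deg (\Jac (f_\cA)) = (d-1)^2 > d^2 - 3d + 3$. By the contrapositive of \Cref{cor:deg Jacobian}(b), the $d$ hyperplanes of $\cA$ cannot fail to share a common element of $S(\cA)$; that is, there is some $P \in S(\cA)$ contained in every hyperplane of $\cA$. By the definition of $S(\cA)$ this $P$ is a codimension two linear subspace, and it is the required $\Lambda$, completing the proof.

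The hard part will be pinning down that the complete intersection is forced to be of type $(d-1,d-1)$, and hence has degree $(d-1)^2$. This rests on two facts that I would make explicit: that $\Jac (f_\cA)$ is generated entirely in degree $d-1$ (so no generator of smaller degree can appear), and that its codimension equals two (so exactly two minimal generators occur). Once the type, and therefore the maximal degree $(d-1)^2$, is established, the conclusion is an immediate consequence of the sharpness built into \Cref{cor:deg Jacobian}, which provides precisely the gap between the complete-intersection degree and the largest Tjurina number attainable by an arrangement whose hyperplanes do not share a common codimension two subspace.
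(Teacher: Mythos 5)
Your proposal is correct and follows essentially the same route as the paper: the easy direction via the standard fact that a common codimension two subspace forces a two-generated (complete intersection) Jacobian, and the substantive direction by noting that a complete intersection Jacobian must be of type $(d-1,d-1)$ and hence of degree $(d-1)^2 > d^2-3d+3$, contradicting the upper bound of \Cref{cor:deg Jacobian}(b) unless all hyperplanes share an element of $S(\cA)$. Your write-up is in fact slightly more explicit than the paper's about why the complete intersection is forced to have type $(d-1,d-1)$ (codimension two plus generation in the single degree $d-1$), which is a point the paper leaves implicit.
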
 

\begin{proof}
If all hyperplanes of $\cA$ contain a subspace $\Lambda \subset \PP^n$ of dimension $n-2$ then the minimal generating set of $\Jac (f_\cA)$ consists of two polynomials, and so $\Jac (f_\cA)$ is a complete intersection. 

For the reverse implication, assume there is no such subspace $\Lambda$. Note that this forces $d = |\cA| \ge 3$. Hence, \Cref{thm:deg Jacobian}  gives
\[
\deg (\Jac (f_\cA) ) \le d^2 - 3d + 3 < (d-1)^2, 
\] 
which shows that $\Jac (f_\cA)$ cannot be a complete intersection generated by two forms of degree $d-1$. 
\end{proof}


\section{Minimal free resolutions for certain arrangements} 
\label{sec:free res certain arr}

The goal of this section is to show that there are large classes of hyperplane arrangements for which the graded Betti numbers of the general residual and of the top-dimen\-sional part of the Jacobian ideal are combinatorially determined. Our methods use liaison-theoretic techniques. 

Given a hyperplane arrangement $\cA \subset \PP^n$ of $d$ hyperplanes, in \cite{MN},  we used a basic double link to describe the change of the top-dimensional parts of the Jacobian ideals when passing from $\cA$ to $\cA + H$, where $H$ is a generic hyperplane, that is, $H$ intersects the hyperplanes of $\cA$ in $d$ distinct codimension two subspaces. Here, we show that the change of general residuals can be described similarly. A key new insight is that this latter description is in fact true in some greater generality, namely for any {\it almost generic} hypeplane. It allows us to determine the graded Betti numbers of general residuals in many cases. 
We then show that the graded Betti numbers of the top-dimensional part of the Jacobian ideal are determined by those of the general residual if the latter is Cohen-Macaulay. We illustrate our results by establishing explicit results for arrangements satisfying a mild condition, introduced in \cite{MNS}. Then we consider some arrangements which do not satisfy this condition. 

\subsection{Adding an almost generic hyperplane} 
\label{subsec:adding almost generic} 
\mbox{ } \\
We use the notation introduced in \Cref{sec:residuals}. The following result is a central tool for this section.

\begin{proposition}
    \label{prop:add almost generic hyperplane} 
Let $\cA \subset \PP^n$ be any set of $d$ hyperplanes. Consider a hyperplane $H$ of $\PP^n$ that is defined by a linear form $x \in S$ and not in $\cA$.  If  $H$ contains at most one subspace $P \in S(\cA)$, then the general residual of $\cA + H$ is a basic double link of $r_\cA$, that is, 
\[
r_{\cA+H} = x \cdot r_\cA + (p), 
\]
where $p \in r_\cA$ is a form of degree $|\cA \cap H|$, $\cA \cap H$ denotes the set of codimension two subspaces $H' \cap H$ with $H' \in \cA$ and $x, p$ is a regular sequence. 
\end{proposition}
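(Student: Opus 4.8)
The plan is to produce the form $p$ explicitly, to verify $p \in r_{\cA+H}$ so that the basic double link $I' := x\cdot r_\cA + (p)$ is contained in $r_{\cA+H}$, and then to prove the reverse inclusion by localizing at the height-two primes $I_P$, $P \in S(\cA+H)$. Throughout I would use three facts already available: the colon formula $r_{\cA+H} : x = r_\cA$ from \Cref{lem:residual with added hyperplane} (which also gives $x\, r_\cA \subseteq r_{\cA+H} \subseteq r_\cA$ and $r_{\cA+H}\cap (x) = x\, r_\cA$), the degree relation $\deg r_{\cA+H} = \deg r_\cA + |\cA\cap H|$ from \Cref{cor:degree change adding}, and the explicit primary decompositions of $r_\cA$ and $r_{\cA+H}$ from \Cref{cor:decomposition of residual}. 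Note that $S(\cA+H) = S(\cA)\cup(\cA\cap H)$, and the hypothesis that $H$ contains at most one $P \in S(\cA)$ means these two sets overlap in at most one point $P_0$; this is exactly what makes the change a \emph{single} basic double link.

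First I would define, writing $b = |\cA\cap H|$ and $\ell_{H'}$ for the form defining $H'\in\cA$,
\[
p = \Big(\prod_{H'\in\cA,\ P_0\not\subset H'}\ell_{H'}\Big)\cdot \ell_{P_0},
\]
where the factor $\ell_{P_0}$ (the linear form through $P_0$ and $\ell^\vee$ appearing in \Cref{prop:decomposition of residual}) is present only when the exceptional point $P_0$ exists, the product running over all of $\cA$ otherwise. A short count shows $\deg p = b$ in both cases ($b = d$ generically and $b = d - t_{P_0} + 1$ when $P_0$ exists, where $d=|\cA|$), and since no factor is a scalar multiple of $x$ (as $H'\neq H$, while $\ell^\vee\notin H$ forces $V(\ell_{P_0})\neq H$), the pair $x,p$ is a regular sequence.

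Next I would check $p \in r_\cA$ and $p \in r_{\cA+H}$ by testing membership in each primary component $(\ell_P, I_P^{t_P-1})$: for a new point $P = H'\cap H$ the factor $\ell_{H'}$ lies in $I_P$; for $P_0$ the factor $\ell_{P_0}$ gives membership; and for the remaining $P\in S(\cA)$ a vanishing-order count shows $p\in I_P^{t_P-1}$, the key point being that two distinct hyperplanes containing both $P$ and $P_0$ would force $P=P_0$, so at most one line through $P$ also passes through $P_0$. This yields $I' = x\,r_\cA + (p)\subseteq r_{\cA+H}$. The heart of the proof is then the local comparison at each $I_P$ with $P\in S(\cA+H)$, carried out in the two-dimensional regular local ring $S_{I_P}$ (so the computation is uniform in $n$). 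Away from $H$, $x$ is a unit and $(I')_{I_P} = (r_\cA)_{I_P} = (r_{\cA+H})_{I_P}$; at a new point $(r_\cA)_{I_P}$ is the unit ideal and $(I')_{I_P} = (x,\ell_{H'}) = (I_P)_{I_P}$ (using generality of $\ell$ to ensure $\ell_{P_0}$ is a unit there); and at $P_0$, using $(r_\cA)_{I_{P_0}} = (\ell_{P_0}, x^{t_{P_0}-1})$ and that $\prod \ell_{H'}$ is a unit there, one gets $(I')_{I_{P_0}} = (\ell_{P_0}, x^{t_{P_0}}) = (r_{\cA+H})_{I_{P_0}}$.

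Finally I would assemble these into equality. Since $I'$ is a basic double link of the unmixed ideal $r_\cA$, it is again unmixed of codimension two and satisfies $\deg V(I') = \deg V(r_\cA) + b$ via the basic double linkage sequence (cf.\ \cite{BM}, \cite{KMMNP} and the remark following \Cref{BDL corollary}); combined with \Cref{cor:degree change adding} this gives $\deg V(I') = \deg V(r_{\cA+H})$. Together with $I'\subseteq r_{\cA+H}$ this forces the two ideals to have the same minimal primes, and the local computations show their primary components agree; as both are unmixed, $I' = r_{\cA+H}$. The main obstacle, and the genuinely new point compared with the generic case in \cite{MN}, is the component at $P_0$: there $x$ vanishes on a component of $r_\cA$, so the usual basic-double-link hypothesis (that the linear form avoids every component) fails, and one must verify by hand that the single extra factor $\ell_{P_0}$ in $p$ is exactly what thickens $(\ell_{P_0}, I_{P_0}^{t_{P_0}-1})$ to $(\ell_{P_0}, I_{P_0}^{t_{P_0}})$. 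I would also note that, because $\overline{r_{\cA+H}}$ need not be principal once $n\ge 3$, the naive approach of restricting to $H$ fails, and the localization argument above is the correct substitute.
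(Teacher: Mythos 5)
Your proposal is correct and follows essentially the same route as the paper: you choose the same form $p$ (the product of $\ell_{P_0}$ with the linear divisors of $f_\cA$ defining hyperplanes that avoid $P_0$, exactly as in \Cref{rem:adding alomost gen hyperplane}), establish the containment $x \cdot r_\cA + (p) \subseteq r_{\cA + H}$, and conclude by comparing degrees of two unmixed codimension-two ideals via \Cref{cor:degree change adding}. The only cosmetic difference is that you verify the containment by localizing at each associated prime, whereas the paper manipulates the primary decomposition globally through the intermediate identity $(\ell_Q, x g_Q) \cap (x,h) = x \cdot \fb + (\ell_Q h)$; both hinge on the same observation that at most one hyperplane through a given $P \in S(\cA) \setminus \{P_0\}$ can also contain $P_0$.
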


If a hyperplane $H$ satisfies the condition in the above statement, we say that $H$ is \emph{almost generic} (with respect to $\cA$). If $H$ does not contain any subspace of $S(A)$ then $H$ is a called \emph{generic} hyperplane (with respect to $\cA$).

\begin{proof}[Proof of \Cref{prop:add almost generic hyperplane}]
According to \Cref{cor:decomposition of residual}, we have
\[
r_\cA = \bigcap_{P \in S(\cA)} (\ell_P, I_P^{t_P - 1}), 
\]
where $\ell_P$ is a linear form defining the linear span of $P$ and $\ell^\vee$ in $\PP^n$. We consider two cases. 

First assume that $H$ is a generic hyperplane for $\cA$. Then, we get $|\cA \cap H| = |\cA| = d$ and 
\[
r_{\cA+H} = r_\cA \cap \bigcap_{P \in \cA \cap H} I_P.   
\]
Since $S(\cA) \cap H = \emptyset$ by the genericity of $H$, for each $P \in \cA \cap H$ there is a unique hyperplane in $\cA$ containing $P$. This hyperplane is defined by a linear divisor of $f_\cA$. It follows that 
\[
\bigcap_{P \in \cA \cap H} I_P = (x, f_\cA).  
\]
Since $x, f_\cA$ is a regular sequence and $f_\cA$ is in $r_\cA$, the basic double link $x \cdot r_\cA + (f_\cA)$ is unmixed and has degree $d + \deg r_\cA$. It is contained in 
\[
r_{\cA+H} = r_\cA \cap  (x, f_\cA).   
\]
Comparing degrees we conclude that $r_{\cA+H} = x \cdot r_\cA + (f_\cA)$, as desired. 

Second, assume there is unique subspace of $S(\cA)$ that is contained in $H$. Denote it by $Q$. 
 We begin with the special case where $|S(\cA)| = 1$. Thus, by  \Cref{cor:decomposition of residual},
\[
 r_\cA = (\ell_Q, I_Q^{d-1}),  
\]
and since $x \in I_Q$ as $Q$ is in $H$ by assumption, we get 
\[
(\ell_Q, I_Q^{d} ) = r_{\cA + H} =  x \cdot r_\cA + (\ell_Q)
\]
as desired. 

 Now assume $|S (\cA)| \ge 2$. 
Define ideals 
\[
\fa = \bigcap_{P \in S(\cA) - Q} (\ell_P, I_P^{t_P - 1})
\]
and 
\[
\fb = (\ell_Q , I_Q^{t_Q -1}) = (\ell_Q, g_Q), 
\]
where $g_Q$ is the product of $t_Q - 1$ distinct linear divisors of $f_\cA$ that each define a hyperplane containing $Q$.  Thus, $r_\cA = \fa \cap \fb$. 

Using \Cref{cor:decomposition of residual} again, we get 
\[
r_{\cA + H} = \fa \cap (\ell_Q, x g_Q) \cap \bigcap_{P \in (\cA \cap H) - Q} I_P. 
\]
For each $P \in (\cA \cap H) - Q$, there is exactly one linear divisor of $f_\cA$ that defines a hyperplane containing $P$. Hence $I_P$ is generated by this divisor and $x$. Denote by $h$ the product of these divisors. Then it follows that 
\[
\bigcap_{P \in (\cA \cap H) - Q} I_P = (x, h), 
\]
where the degree of $h$ is $|\cA \cap H| - 1$. Notice the following containment
\[
(\ell_Q, x g_Q) \cap  (x, h) \supseteq  (x \ell_Q, x g_Q, \ell_Q h) = x \cdot \fb + (\ell_Q h). 
\]
Since $x, \ell_Q h$ is a regular sequence and $\ell_Q h \in \fb$, the ideal on the right-hand side is a basic double link.  So it is unmixed of codimension two with degree $\deg \fb + \deg \ell_Q h = |\cA \cap H| + t_Q -1$, which is equal to the degree of the unmixed ideal on the left-hand side. Hence, the two ideals are equal, and we obtain
\[
r_{\cA + H} = \fa \cap \big (x \cdot \fb + (\ell_Q h) \big ).  
\]
By definition of $h$, we have $f_\cA = h \cdot q$, where $q$ is the product of linear divisors of $f_\cA$ that define hyperplanes containing $Q$. Consider any $P \in S(\cA) - Q$. There are $t_P$ linear divisors of  $f_\cA$ that define hyperplanes containing $P$. At most one of them also contains $Q$. Hence, there are at least $t_P - 1$ linear divisors of  $f_\cA$ that do not have the latter property, and so these divisors must divide $h$. It follows that $h$ is in $(\ell_P, I_P^{t_P-1})$ for every $P \in S(\cA) - Q$, and hence $h \in \fa$. Since $\ell_Q$ is in $\fb$, we get  
\[
r_{\cA + H} = \fa \cap \big (x \cdot \fb + (\ell_Q h) \big ) \supseteq x \cdot (\fa \cap \fb) + (\ell_Q h) = x \cdot r_\cA + (\ell_Q h),  
\]
where $\ell_Q h$ is in $r_\cA$ and $x, \ell_Q h$ are a regular sequence. Hence the ideal on the right-hand side is a basic double link of $r_\cA$, and so it is unmixed and its degree is 
\[
\deg \big ( x \cdot r_\cA + (\ell_Q h) \big ) = \deg r_\cA + \deg h  + 1. 
\]
By \Cref{cor:degree change adding}, the unmixed ideal on the left-hand side has degree
\[
\deg r_{\cA+ H} = \deg r_\cA + |\cA \cap H| = \deg r_\cA + \deg h + 1. 
\]
Comparing degrees, we conclude that the two ideals are equal, that is, $r_{\cA + H} = x \cdot r_\cA + (\ell_Q h)$, as desired. 
\end{proof}

\begin{remark}
   \label{rem:adding alomost gen hyperplane}
The proof of \Cref{prop:add almost generic hyperplane} gives a description of the form $p$ mentioned in its statement. In fact, if $S(\cA) \cap H = \emptyset$ then we may use $p = f_\cA$. If $S(\cA) \cap H = Q$ then $p$ may be chosen as the product of $\ell_Q$ and the linear divisors of $f_\cA$  that define a hyperplane not containing $Q$. 
\end{remark}

\begin{remark}
It is natural to wonder if  \Cref{prop:add almost generic hyperplane}  holds beyond the assumption that $H$ contains at most one element of $S(\cA)$. The first natural example is already a counterexample. Let $P_1,P_2$ be distinct points in $\mathbb P^2$ and let $\ell_1,\ell_2,\ell_2 \in I_{P_1}$, $m_1,m_2,m_3 \in I_{P_2}$ be linear forms such that none is in $I_{P_1} \cap I_{P_2}$. Let $x$ define the line joining $P_1$ and $P_2$. One can check (e.g. using a computer or using the methods below) that $r_\cA$ has minimal generators of degrees $(4,4,5,5)$ while $r_{\cA+H}$ has minimal generators of degrees $(4,4,6)$ (it is a free arrangement -- see \Cref{exa:two connected pencils}  below). Clearly it is impossible that $r_{\cA+H}$ is of the form $\ell \cdot r_\cA + (p)$ where $\deg \ell = 1$, no matter what the degree of $p$ is.
\end{remark}

Adding more than one almost general hyperplane, we get the following consequence. 

\begin{corollary}    
     \label{cor:add pencil} 
Let $\cA \subset \PP^n$ be any set of $d$ hyperplanes. Let $Q \in S(\cA)$ and let $\ell_1,\dots,\ell_r \in I_Q$ be linear forms defining $r$ distinct hyperplanes $H_1,\ldots,H_r$, respectively, so that no $H_i$ contains any element of $S(\cA +H_1 + \cdots +H_{i-1})$ other than possibly $Q$. 
Let $\mathcal H = H_1 + \cdots + H_r$. Then the general residual $r_{\cA + \mathcal H}$ of $\cA + \mathcal H$ is a basic double link of $r_\cA$; that is,
\[
r_{\cA+\mathcal H} = (\ell_1 \cdots \ell_r) \cdot r_\cA + (p), 
\]
where $p \in r_\cA$ is a form of degree $|\cA \cap H_1|$, $\cA \cap H_1$ denotes the set of codimension two subspaces $H' \cap H_1$ with $H' \in \cA$, and $(\ell_1\cdots \ell_r), p$ is a regular sequence. This is also true if $|S(\cA)| = 1$ and $S(\cA) \cap \mathcal H = \emptyset$. 
\end{corollary}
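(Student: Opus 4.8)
The plan is to induct on $r$, adding the hyperplanes $H_1,\dots,H_r$ one at a time and applying \Cref{prop:add almost generic hyperplane} at each step. Write $\cA_0 = \cA$ and $\cA_i = \cA_{i-1} + H_i$ for $1 \le i \le r$, so that $\cA_r = \cA + \mathcal H$. The hypothesis guarantees that each $H_i$ contains at most the single subspace $Q$ from $S(\cA_{i-1})$, so $H_i$ is almost generic with respect to $\cA_{i-1}$ and \Cref{prop:add almost generic hyperplane} applies: $r_{\cA_i} = \ell_i \cdot r_{\cA_{i-1}} + (p_i)$ for some form $p_i \in r_{\cA_{i-1}}$, with $\ell_i, p_i$ a regular sequence.

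The crux is to show that the forms $p_i$ can all be taken to be one and the same form $p$, independent of $i$. In the main case $Q \in S(\cA)$ we have $t_{Q,\cA_{i-1}} = t_Q + (i-1) \ge 2$, so $Q \in S(\cA_{i-1})$ and $S(\cA_{i-1}) \cap H_i = \{Q\}$. By \Cref{rem:adding alomost gen hyperplane} we may then take $p_i = \ell_Q \cdot h_i$, where $\ell_Q$ is the linear form spanning $Q$ and $\ell^\vee$ and $h_i$ is the product of those linear divisors of $f_{\cA_{i-1}}$ whose hyperplanes do not contain $Q$. Since all of $H_1,\dots,H_{i-1}$ pass through $Q$, these divisors are exactly the divisors of $f_\cA$ corresponding to the hyperplanes of $\cA$ missing $Q$; hence $h_i = h$ is independent of $i$ and $p_i = \ell_Q\, h =: p$ is a single form, of degree $1 + (|\cA| - t_Q) = |\cA \cap H_1|$. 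With one $p$ in hand, the induction closes immediately: assuming $r_{\cA_{i-1}} = (\ell_1\cdots\ell_{i-1})\, r_\cA + (p)$, we get
\[
r_{\cA_i} = \ell_i\bigl[(\ell_1\cdots\ell_{i-1})\, r_\cA + (p)\bigr] + (p) = (\ell_1\cdots\ell_i)\, r_\cA + \ell_i (p) + (p) = (\ell_1\cdots\ell_i)\, r_\cA + (p),
\]
where the last equality uses the absorption $\ell_i (p) \subseteq (p)$. The base case $i = 1$ is exactly \Cref{prop:add almost generic hyperplane}, which also records $p \in r_\cA$.

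It remains to confirm the regular sequence condition and to treat the degenerate case. Since $\ell_i, p$ is a regular sequence for each $i$, no $\ell_i$ divides $p$; as the $\ell_i$ are distinct linear forms, their product $\ell_1\cdots\ell_r$ is coprime to $p$, so $\ell_1\cdots\ell_r, p$ is again a regular sequence. For the degenerate case $|S(\cA)| = 1$ with $S(\cA) \cap \mathcal H = \emptyset$, the point $Q$ is not initially singular, so $H_1$ is added through the generic branch of \Cref{prop:add almost generic hyperplane}; the same telescoping is then run, with the degree of each intermediate residual controlled by \Cref{cor:degree change adding}, so that the iterated links collapse to the asserted single basic double link. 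I expect the main obstacle to be precisely this identification of one constant form $p$: it relies on the explicit description in \Cref{rem:adding alomost gen hyperplane} together with the observation that routing the new hyperplanes through $Q$ never alters the factor $h$ assembled from the hyperplanes of $\cA$ that avoid $Q$. The degenerate case is the most delicate point, since there the naive forms $p_i$ are no longer equal, and confirming that the composition still simplifies to a single link requires the careful degree bookkeeping supplied by \Cref{cor:degree change adding} and the unmixedness of each $r_{\cA_i}$.
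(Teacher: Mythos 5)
Your argument is correct and follows essentially the same route as the paper's own (very terse) proof: the paper's entire argument is the telescoping identity $\ell_2\bigl[\ell_1\cdot r_\cA + (p)\bigr] + (p) = \ell_1\ell_2\cdot r_\cA + (p)$ applied inductively, and you additionally supply the key justification, via \Cref{rem:adding alomost gen hyperplane}, that one and the same form $p=\ell_Q h$ works at every step because the added hyperplanes all pass through $Q$ and hence never enter the factor $h$ --- a point the paper leaves implicit. The only piece you leave as a sketch is the degenerate case $|S(\cA)|=1$, $S(\cA)\cap\mathcal H=\emptyset$ (on which the paper's proof is equally silent); it can be closed by noting that a hyperplane $m\in\cA$ containing $Q$ is necessarily unique, that the form $p=\ell_Q\, f_\cA/m$ has degree $|\cA\cap H_1|$ and lies in both $r_\cA$ and $r_{\cA+H_1}$, so that by the usual degree comparison it may replace $f_\cA$ as the form added in the first (generic) step, after which every later step produces this same $p$ and the telescoping proceeds exactly as in the main case.
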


\begin{proof}
    This follows inductively from the easy fact that \[
    \ell_2 [\ell_1 \cdot r_\cA + (p)] + (p) = \ell_1 \ell_2 \cdot r_\cA + (p), 
    \]
    and so the sequence of two basic double links to obtain $r_{\cA+H_1 + H_2}$ from $r_\cA$ can be done in one step.
\end{proof}

\begin{proposition}[{\cite[Corollary 4.5]{MN-95}}]
   \label{prop:MFR under BDL}
Let $\fa \subset S$    be an unmixed homogeneous ideal of codimension two and let $\fb = q \cdot \fa + (p)$, where $p \in \fa$ and $q \in S$ are homogeneous and form a regular sequence (so $\fb$ is a basic double link of $\fa$). Put $h = \deg q$ and $e =  \deg p$.    
Assume that $\fa$ has a minimal free resolution
\[
0 \rightarrow \dots \rightarrow F_3 \rightarrow  F_2 \rightarrow 
 F_1 \rightarrow \fa \rightarrow 0. 
\]

\begin{itemize}
    \item[(a)] If $\fa$ has a minimal generating set which includes $p$ then $F_1 \cong G \oplus S(-e)$ for some free $S$-module $G$ and 
    $\fb$ has a minimal free resolution of the form
    \[
    0 \rightarrow \dots \rightarrow  F_3 (-h)  \rightarrow  F_2 (-h) \rightarrow  G(-h)  \oplus S(-e) \rightarrow \fb \rightarrow 0.
    \]
%

    \item[(b)] If $p$ is not part of any minimal generating set of $\fa$ then $\fb$ has a minimal free resolution of the form
\[
0 \rightarrow \dots \rightarrow  F_3 (-h)  \rightarrow 
\begin{array}{c}
 F_2(-h) \\
\oplus \\
S(-e-h) 
\end{array}
\rightarrow 
\begin{array}{c}
 F_1 (-h) \\
\oplus \\
S(-e)
\end{array}
\rightarrow \fb \rightarrow 0.
\]
    
\end{itemize}

\end{proposition}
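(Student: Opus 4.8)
The plan is to obtain both resolutions from the short exact sequence underlying a basic double link and then to extract minimality from a mapping cone. First I would record the basic double linkage sequence
\[
0 \to S(-e-h) \to S(-e) \oplus \fa(-h) \to \fb \to 0,
\]
where the left-hand map sends the generator to $(q,-p)$ and the right-hand map is $(s,a) \mapsto sp + qa$. Surjectivity onto $\fb = q\fa + (p)$ holds by definition, the entry $-p$ lands in $\fa(-h)$ because $p \in \fa$, and the twists are forced by $\deg p = e$ and $\deg q = h$. For exactness in the middle, a relation $sp + qa = 0$ with $a \in \fa$ gives $sp \in (q)$; since $q,p$ is a regular sequence, $p$ is a nonzerodivisor modulo $(q)$, hence $s = qt$ and then $a = -tp$, so $(s,a) = t\,(q,-p)$. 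Thus $\ker$ is freely generated by $(q,-p)$, as displayed.

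Next I would form the mapping cone. Twisting the given minimal resolution $F_\bullet \to \fa$ by $-h$ resolves $\fa(-h)$, and together with the trivial resolution of $S(-e)$ it resolves the middle term above. As $S(-e-h)$ is free, the comparison map need only be lifted in homological degree zero: its $S(-e)$-component is multiplication by $q$, while $-p \in \fa$ has a preimage $v \in F_1(-h)$. The mapping cone then gives the a priori nonminimal resolution
\[
\cdots \to F_3(-h) \to F_2(-h) \oplus S(-e-h) \to F_1(-h) \oplus S(-e) \to \fb \to 0,
\]
whose new differential sends the generator of $S(-e-h)$ to $(v,q)$.

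Minimality is the decisive point, and it is exactly here that the two cases separate. All maps internal to $F_\bullet(-h)$ already have entries in $\fm$, and $q \in \fm$ since a regular sequence contains no units; the only possible unit lies in $v$. In case (b), $p \in \fm\fa$, so $v$ can be chosen in $\fm F_1(-h)$, every cone entry lies in $\fm$, and the displayed resolution is already minimal --- this is (b). In case (a), $p$ is a minimal generator, so $F_1 \cong G \oplus S(-e)$ with the summand $S(-e)$ mapping isomorphically onto $Kp$; taking $v$ to be that basis vector of $F_1(-h) = G(-h) \oplus S(-e-h)$ puts a unit in the $S(-e-h)$-entry of $v$.

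Finally I would cancel this unit. Gaussian elimination deletes the degree-one $S(-e-h)$ together with the $S(-e-h)$ summand of $F_1(-h)$, correcting the map $F_2(-h) \to G(-h) \oplus S(-e)$ by adding $q$ times the $S(-e-h)$-component of the minimal differential $F_2 \to F_1$. Since both factors lie in $\fm$, the correction lies in $\fm^2$, no new unit appears, and the deeper maps are untouched; hence
\[
\cdots \to F_3(-h) \to F_2(-h) \to G(-h) \oplus S(-e) \to \fb \to 0
\]
is minimal, which is (a). I expect the $\fm^2$-bookkeeping in this single elimination step to be the main thing to verify carefully; the rest is the standard mapping-cone construction.
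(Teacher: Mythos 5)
Your proof is correct, and it follows the standard route that the paper itself relies on (the paper gives no proof, only a citation to \cite[Corollary 4.5]{MN-95}, but its surrounding remarks sketch exactly this basic-double-link exact sequence plus mapping cone). The kernel computation using that $p$ is a nonzerodivisor mod $(q)$, the choice of the lift $v$ according to whether $p\in\fm\fa$, and the Gaussian elimination producing an $\fm^2$ correction in case (a) are all sound, so nothing further is needed.
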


Denoting by $\fm$ the maximal ideal $(x_0,\ldots,x_n)$, the condition on $p$ in Part (a) can be expressed as $p \notin \fm \cdot \fa$, whereas the condition in Part (b) is $p \in \fm \cdot \fa$.

\begin{corollary}
    \label{cor:MFR res after adding generic hyperplane}
Consider any set $\cA \subset \PP^n$ of $d$ hyperplanes and a hyperplane $H$ that is not in $\cA$. Assume $r_\cA$ has a graded minimal free resolution of the form 
\[
0 \to F_s \to \cdots \to F_1 \to r_\cA \to 0. 
\]
Then one has: 

\begin{itemize}

\item[(a)] If $H$ is a generic hyperplane with respect to $\cA$ then $r_{\cA + H}$ has a graded minimal free resolution of the form 
\[
0 \to F_s (-1)  \to \cdots \to F_3 (-1) \to 
\begin{array}{c}
 F_2(-1) \\
\oplus \\
S(-d-1) 
\end{array}
\to 
\begin{array}{c}
 F_1(-1) \\
\oplus \\
S(-d) 
\end{array}
\to r_{\cA+H} \to 0.  
\]

\item[(b)] Assume $H$ is an almost generic hyperplane with respect to $\cA$ so that, using the notation of \Cref{prop:add almost generic hyperplane}, $r_{\cA+H} = x \cdot r_\cA + (p)$. 

\begin{itemize}

\item[(i)] If $p \in \fm \cdot r_\cA$ then  $r_{\cA + H}$ has a graded minimal free resolution of the form 
\[
0 \to F_s (-1)  \to \cdots \to F_3 (-1) \to 
\begin{array}{c}
 F_2(-1) \\
\oplus \\
S(-|\cA \cap H|-1) 
\end{array}
\to 
\begin{array}{c}
 F_1(-1) \\
\oplus \\
S(-|\cA \cap H|) 
\end{array}
\to r_{\cA+H} \to 0.  
\]

\item[(ii)] If $p \notin \fm \cdot r_\cA$  then  $F_1 \cong G \oplus S(-|\cA \cap H|)$ for some free $S$-module $G$, and 
$r_{\cA + H}$ has a graded minimal free resolution of the form 
\[
0 \to F_s (-1)  \to \cdots \to F_3 (-1) \to 
 F_2(-1) 
 \to 
\begin{array}{c}
 G(-1) \\
\oplus \\
S(-|\cA \cap H|) 
\end{array}
\to r_{\cA+H} \to 0.  
\]

\end{itemize}

\end{itemize}

\end{corollary}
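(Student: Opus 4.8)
The plan is to recognize $r_{\cA+H}$ as a basic double link of $r_\cA$ and then transfer the minimal free resolution of $r_\cA$ to one of $r_{\cA+H}$ by means of \Cref{prop:MFR under BDL}. First I would apply \Cref{prop:add almost generic hyperplane}: because $H$ is almost generic (generic being the subcase in which $H$ avoids every subspace of $S(\cA)$), one has
\[
r_{\cA+H} = x \cdot r_\cA + (p),
\]
where $x$ defines $H$, the form $p \in r_\cA$ has degree $|\cA \cap H|$, and $x, p$ form a regular sequence. In the notation of \Cref{prop:MFR under BDL} this exhibits $r_{\cA+H}$ as a basic double link with $q = x$, hence $h = \deg q = 1$, and $e = \deg p = |\cA \cap H|$.

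The decisive step is then to decide, in each case, whether $p$ belongs to a minimal generating set of $r_\cA$; equivalently, whether $p \notin \fm \cdot r_\cA$ or $p \in \fm \cdot r_\cA$. This is exactly the dichotomy separating Parts (a) and (b) of \Cref{prop:MFR under BDL}. For Parts (b)(i) and (b)(ii) of the present corollary the hypothesis fixes which alternative holds, so the two cases of \Cref{prop:MFR under BDL} apply directly with $h = 1$ and $e = |\cA \cap H|$, yielding the asserted resolutions; in (b)(ii) the condition $p \notin \fm \cdot r_\cA$ is what forces the splitting $F_1 \cong G \oplus S(-|\cA \cap H|)$.

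For Part (a) the additional input is that $p$ can never be a minimal generator. Since $H$ is generic, $H$ meets the $d$ hyperplanes of $\cA$ in $d$ distinct codimension two subspaces, so $|\cA \cap H| = d$, and by \Cref{rem:adding alomost gen hyperplane} we may take $p = f_\cA$, of degree $d$. Invoking \Cref{cor:reg estimate sharp}, which gives $\reg r_\cA = d - 1$, every minimal generator of $r_\cA$ has degree at most $d-1$; hence $p = f_\cA$, of degree $d$, lies in $\fm \cdot r_\cA$. We are thus in Part (b) of \Cref{prop:MFR under BDL} with $h = 1$ and $e = d$, which produces precisely the claimed resolution with the extra free summands $S(-d-1)$ and $S(-d)$.

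I expect no genuine difficulty here, as the statement is a bookkeeping consequence of \Cref{prop:add almost generic hyperplane} and \Cref{prop:MFR under BDL}. The one point demanding care is the regularity argument in Part (a): one must exclude the possibility that $f_\cA$ is a minimal generator of $r_\cA$, since otherwise we would land in Part (a) of \Cref{prop:MFR under BDL} and a free summand would split off. This is settled cleanly by the sharp bound $\reg r_\cA = |\cA| - 1$.
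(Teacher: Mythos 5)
Your proposal is correct and follows essentially the same route as the paper: identify $r_{\cA+H}$ as the basic double link $x\cdot r_\cA + (p)$ via \Cref{prop:add almost generic hyperplane}, feed it into \Cref{prop:MFR under BDL} with $h=1$ and $e=|\cA\cap H|$, and for part (a) rule out $f_\cA$ being a minimal generator of $r_\cA$ by the degree bound $\reg r_\cA = |\cA|-1$ from \Cref{cor:reg estimate sharp}. Nothing is missing.
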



\begin{proof}
(a) By \Cref{prop:add almost generic hyperplane}, we have a basic double link $r_{\cA+H} = x \cdot r_\cA + (f_\cA)$. Moreover, the degrees of the minimal generators of $r_\cA$ are at most $d-1$ by \Cref{cor:reg estimate sharp}. Hence, $f_{\cA}$ is not a minimal generator of $r_\cA$, and we conclude using \Cref{prop:MFR under BDL}. 

(b) follows immediately by combining \Cref{prop:add almost generic hyperplane} and \Cref{prop:MFR under BDL}. 
\end{proof}

Both cases in Part (b) of \Cref{cor:MFR res after adding generic hyperplane} do occur, as shown in the following example. 

\begin{example}
   \label{exaMFR when adding almost generic} 
(i) Consider the line arrangement $\cB \subset \PP^2$ defined by $f_\cB = x_0 x_1 x_2$. Then its general residual is the ideal of the three coordinate points, and so $r_\cB = (x_0 x_1, x_0 x_2, x_1 x_2)$. 

The line $H$ defined by $x_0 + x_1$ is not generic, but almost generic for $\cB$ because it contains the coordinate point $Q = (0 : 0 : 1)$.  Thus, \Cref{prop:add almost generic hyperplane} gives
\begin{align*}
r_{\cB + H} & = (x_0 + x_1) \cdot r_\cB + (\ell_Q x_2). 
\end{align*}
Since $\ell_Q  x_2$ is part of a minimal generating set of $r_\cB$ \Cref{cor:MFR res after adding generic hyperplane} shows that both $r_\cB$ and $r_{\cB + H}$ have three minimal generators.

(ii) Consider the line arrangement $\cA \subset \PP^2$ defined by $f_\cA = (x_0^2 - x_1^2) ( x_0^2 -  x_2^2) (x_1^2 - x_2^2)$. 
Its general residual has three minimal generators.

The line $H$ defined by $x_0 + 2 x_1$ is not generic, but almost generic for $\cA$ because it contains the coordinate point $Q = (0 : 0 : 1)$.  Thus, \Cref{prop:add almost generic hyperplane} gives
\begin{align*}
r_{\cA + H} & = (x_0 + 2 x_1) \cdot r_\cA + (\ell_Q ( x_0^2 -  x_2^2) (x_1^2 - x_2^2)). 
\end{align*}
However, one checks that $\ell_Q ( x_0^2 -  x_2^2) (x_1^2 - x_2^2)$ is not a minimal generator of  $r_\cA$, and so the general residual $r_{\cA + H}$ has four minimal generators. More precisely, $r_\cA$ has three minimal generators of degrees $3, 4$ and $5$, and thus $r_{\cA + H}$ has four minimal generators of degrees $4, 5, 5$ and $6$ by \Cref{cor:MFR res after adding generic hyperplane}(b)(i). 
\end{example}

If $S/r_\cA$ is Cohen-Macaulay then its graded Betti numbers determine those of $\Jac (f_\cA)^{top}$. 
    
\begin{proposition}
    \label{prop:Betti top from residual} 
Consider a hyperplane arrangement $\cA \subset \PP^n$  with $|\cA| = d$, such that $S/r_\cA$ is Cohen-Macaulay. Then
the minimal graded free resolution of $r_\cA$ has the form
\[
0 \to G \to S(-d+1) \oplus F \to r_\cA \to 0,  
\]
and the minimal free resolution of $\Jac (f_\cA)^{top}$ is 
\[
0 \to F^* (-2 d+1) \to S(-d+1) \oplus G^*(-2d+1) \to  \Jac (f_\cA)^{top} \to 0. 
\]
\end{proposition}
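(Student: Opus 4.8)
The plan is to obtain both resolutions from the single linkage relation $\Jac(f_\cA)^{top} = \fc : r_\cA$, where $\fc = (f_\cA, \frac{\partial f_\cA}{\partial \ell})$ is the complete intersection of type $(d, d-1)$ produced in \Cref{lem:decomposition of ci} (here $d = |\cA|$), by running the mapping-cone construction of \cite[Proposition 3.8]{N-liais} exactly as in the proofs of \Cref{thm:initial degree} and \Cref{cor:reg estimate sharp}, but now exploiting the extra hypothesis that $S/r_\cA$ is Cohen--Macaulay.

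First I would fix the shape of the resolution of $r_\cA$. Since $r_\cA$ has codimension two and $S/r_\cA$ is Cohen--Macaulay, the Hilbert--Burch theorem yields a minimal free resolution of length two, $0 \to G \to \Phi \to r_\cA \to 0$ with $G$ and $\Phi$ free. By \Cref{prop:add general point}(a) the form $\frac{\partial f_\cA}{\partial \ell}$, of degree $d-1 = |\cA|-1$, belongs to a minimal generating set of $r_\cA$, so $\Phi$ has $S(-d+1)$ as a direct summand; writing $\Phi = S(-d+1) \oplus F$ gives the first assertion. I record for later that, since $\reg r_\cA = d-1$ by \Cref{cor:reg estimate sharp}, every generator of $F$ has degree at most $d-1$, hence every free summand of $F^*(-2d+1)$ sits in degree at least $d$.

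Next, because linkage by a complete intersection preserves the Cohen--Macaulay property, $\Jac(f_\cA)^{top} = \fc : r_\cA$ is Cohen--Macaulay as well, so its $E$-type resolution in the sense of \Cref{lem:E-type res} is an honest free resolution of length two. Feeding the minimal free resolution $0 \to G \to S(-d+1)\oplus F \to r_\cA \to 0$ (which is simultaneously the $N$-type resolution, as $r_\cA$ is Cohen--Macaulay) together with the Koszul resolution $0 \to S(-2d+1) \to S(-d)\oplus S(-d+1) \to \fc \to 0$ into the mapping cone, exactly as in \Cref{thm:initial degree}, produces the free resolution
\[
0 \to S(-d) \oplus F^*(-2d+1) \to G^*(-2d+1) \oplus S(-d) \oplus S(-d+1) \to \Jac(f_\cA)^{top} \to 0,
\]
where the summand $S(-d)$ on the left is $(S(-d+1))^*(-2d+1)$, the dual of the distinguished generator $\frac{\partial f_\cA}{\partial \ell}$, and the summand $S(-d)$ on the right is the Koszul generator $f_\cA$.

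Finally I would prune this to a minimal resolution. The Koszul generator $f_\cA$ is not a minimal generator of $\Jac(f_\cA)^{top}$, since by the Euler relation $f_\cA \in \fm \cdot \Jac(f_\cA) \subseteq \fm \cdot \Jac(f_\cA)^{top}$; therefore the $S(-d) \to S(-d)$ component of the mapping cone is an isomorphism and the two copies of $S(-d)$ cancel, leaving
\[
0 \to F^*(-2d+1) \to S(-d+1) \oplus G^*(-2d+1) \to \Jac(f_\cA)^{top} \to 0,
\]
which is the claimed resolution. It remains to check that this is minimal, i.e.\ that no further cancellation occurs. The surviving generator $S(-d+1)$ cannot cancel against $F^*(-2d+1)$ because the latter lives in degrees $\ge d > d-1$, and the differential $F^*(-2d+1) \to G^*(-2d+1)$ is the twisted dual of the component $G \to F$ of the minimal differential $G \to S(-d+1)\oplus F$, hence has all entries in $\fm$. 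This verification is the one delicate point, since one must confirm that the Gaussian elimination removing the two copies of $S(-d)$ does not introduce a unit entry elsewhere in the differential; I expect it to be the main obstacle. It is handled exactly as the clause ``there is no further cancellation in the mapping cone'' in the proof of \Cref{cor:reg estimate sharp}, with the degree bound on $F^*(-2d+1)$ ruling out any interaction with the generator $S(-d+1)$ and the minimality of the original resolution ruling out interaction between $F^*(-2d+1)$ and $G^*(-2d+1)$.
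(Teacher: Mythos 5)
Your proof is correct and takes essentially the same route as the paper's: \Cref{prop:add general point}(a) forces the summand $S(-d+1)$ in the Hilbert--Burch resolution of $r_\cA$, and the linkage mapping cone with a single cancellation of the two copies of $S(-d)$ yields the stated resolution of $\Jac (f_\cA)^{top}$. The only (cosmetic) difference is that the paper attributes the cancellation to $\frac{\partial f_\cA}{\partial \ell}$ being a minimal generator of both the complete intersection and $r_\cA$, whereas you deduce it from $f_\cA \in \fm \cdot \Jac(f_\cA)^{top}$ via the Euler relation --- two descriptions of the same unit entry --- and you spell out the degree checks for minimality that the paper leaves implicit.
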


\begin{proof}
According to \Cref{prop:add general point}(a), the general residual $r_\cA$ has a minimal generating set that includes $\frac{\partial f_\cA}{\partial \ell}$, where $\ell \in S$ is a general linear form. It follows that $r_\cA$ has a minimal free resolution as claimed. 
By definition, the ideals $ \Jac (f_\cA)^{top}$ and $r_\cA$ are linked by   $(f_{\cA}, \frac{\partial f_{\cA}}{\partial \ell})$. 
Consider now the mapping cone induced by the diagram
\begin{align*}
     \label{commutative diagram_multi factor}
\minCDarrowwidth20pt
\begin{CD}
0 @>>>  S(- 2d+1)  @>>>  S(-d+1) \oplus S(-d)  @>>>  (f_{\cA}, \frac{\partial f_{\cA}}{\partial \ell})  @>>>  0 \\
 @. @.  @.    @VVV \\
0 @>>>  G @>>>  S(-d+1) \oplus F   @>>>   r_\cA @>>>  0,   \\[3pt]
\end{CD}
\end{align*} 
where the vertical map is given by inclusion. 
The fact that $\frac{\partial f_\cA}{\partial \ell}$ is a minimal generator of both $(f_{\cA}, \frac{\partial f_{\cA}}{\partial \ell})$ and $r_\cA$ produces a cancellation. Taking it into account, the claimed minimal free resolution of $ \Jac (f_\cA)^{top}$ follows. 
\end{proof}

We illustrate the above results by a simple example. 

\begin{example}
    \label{exa:two connected pencils} 
Given two distinct codimension two subspaces $P_1, P_2$ of $\PP^n$  that meet in codimension 3 if $n \geq 3$, consider the hyperplane arrangement $\cA \subset \PP^n$ with $f_\cA = f g m$, where $m$ defines the (unique) hyperplane containing $P_1$ and $P_2$, $f$ is the product of linear divisors of $f_\cA$ other than $m$ that define hyperplanes containing $P_1$ and $g$ is the product of linear divisors of $f_\cA$ other than $m$ that define hyperplanes containing $P_2$. 
We refer to $\cA$ as two connected pencils. It is illustrated in Figure \ref{fig: two pencils, with line} if $n = 2$. %


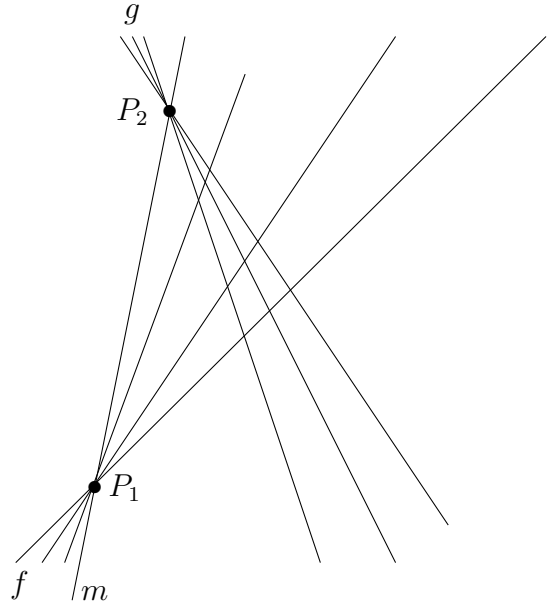
\begin{figure}[h]
\begin{center}
\begin{tikzpicture}

 \draw (-.7,-1) -- (4,6); 
 \draw (-1.05,-1) -- (6,6);
 \draw (-.4,-1) -- (2,5.5);

 \draw (.65,6) -- (3,-1);
 \draw (.5,6) -- (4,-1);
 \draw (.34,6) -- (4.7,-.5);


\draw (1.2,6) -- (-.3,-1.5);

\node at (0,0) {$\bullet$};  
\node at (1,5) {$\bullet$};  

\node at (.4,0) {$P_1$};
\node at (.5,5) {$P_2$};

\node at (0,-1.4) {$m$};
\node at (-1,-1.3) {$f$};
\node at (.5,6.3) {$g$};

\end{tikzpicture}
\end{center}
\caption{Two pencils plus the common line $m$}
\label{fig: two pencils, with line}
\end{figure}

This arrangement is supersolvable, hence free, and so the desired resolution of $\Jac(f_\cA)$ can be obtained via the addition-deletion theorem. However, we use our liaison approach because it showcases our methods in the simplest situation. 

Without loss of generality assume $a =  \deg f  \le b = \deg g$. By  \Cref{cor:add pencil},  we get
\begin{align*}
r_{\cA} & = g \cdot r_{\cA(mf)}  + (f \ell_{P_2}),  
\end{align*}
where the minimal generators of $r_{\cA(m f)}$ have degrees 1 and $a$.  Since $f \ell_{P_2}$ has degree $b+1 > a$, it  is not a minimal generator of  $r_{\cA(mf)}$ for degree reasons.  Thus,  using $h=b$ and $e = a+1$, \Cref{prop:MFR under BDL} gives the minimal free resolution 
\[
0 \rightarrow S(-a-b-1)^2 \rightarrow 
\begin{array}{c}
S(-a-1) \\
\oplus \\
S(-b-1) \\
\oplus \\
S(-a-b)
\end{array}
\rightarrow r_{\cA} \rightarrow 0.
\]

Applying \Cref{prop:Betti top from residual}, we see that $\Jac(f_\cA)^{top}$ has three minimal generators of degree $d-1$, where $d = |\cA| = a + b +1$. Hence,  $\cA$ is free and the minimal graded free resolution of  $\Jac(f_\cA) = \Jac(f_\cA)^{top}$ is 
\[
0 \rightarrow 
\begin{array}{c}
S(-d-a+1) \\
\oplus \\
S(-d-b+1)
\end{array}
\rightarrow
S(-d+1)^3 \rightarrow \Jac(f_\cA) \rightarrow 0.
\]
\end{example}

We apply the above results to specific classes of arrangements in the following subsections.


\subsection{Unions of disconnected pencils} 
\label{subsec:disconnected pencils} 
\mbox{ } \\
We recall the following Cohen-Macaulayness result.  

\begin{theorem}[{\cite[Theorem 3.7]{MNS}}] 
    \label{MNS result}
Let $\mathcal A$ be a hyperplane arrangement in $\mathbb P^n$ defined by a product $f_\cA$ of linear forms, and assume that no factor of $f_\cA$ is contained in the associated prime of any two nonreduced components of $\Jac (f_\cA)^{top}$. Then $S/Jac (f_\cA)^{top}$ is Cohen-Macaulay. 
\end{theorem}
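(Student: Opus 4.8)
The plan is to reformulate the hypothesis combinatorially and then induct, peeling off one ``heavy'' pencil at a time by liaison addition. First I would recall from \Cref{prop:jac = union} that
\[
\Jac(f_\cA)^{top} = \bigcap_{P \in S(\cA)} \Jac(g_P),
\]
where $\Jac(g_P)$ is a complete intersection of type $(t_P-1,t_P-1)$ with associated prime $I_P$. Such a component is nonreduced exactly when $t_P \ge 3$; call these $P$ the \emph{heavy} points and write $m$ for their number. The hypothesis then says precisely that no hyperplane of $\cA$ passes through two distinct heavy points. I would reduce the assertion to showing $\operatorname{depth} S/\Jac(f_\cA)^{top} = n-1$; note that when $n = 2$ the scheme is zero-dimensional and the claim is automatic, so the content lies in $n \ge 3$. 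The induction is on $m$.

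For the inductive step ($m \ge 1$) I would fix a heavy point $P_1$, set $f = g_{P_1}$ equal to the product of all hyperplanes of $\cA$ through $P_1$, and set $g = f_\cA/f$, so that $\cA$ is the disjoint union $\mathcal A_f \sqcup \mathcal A_g$. I would verify the two hypotheses of \Cref{LAthm1}. Condition (1) is immediate: since $f$ collects \emph{all} hyperplanes through $P_1$, no factor of $g$ vanishes on $P_1 = V(f_i,f_j)$. Condition (2) is the heart of the proof and the only place the hypothesis is used: if some factor $\ell$ of $f$ vanished on $Q = V(g_i,g_j)$, then $\ell, g_i, g_j$ would be three distinct hyperplanes through $Q$, making $Q$ heavy, and $\ell$ would then pass through the two distinct heavy points $P_1$ and $Q$, contradicting the hypothesis. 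With both conditions verified, the short exact sequence accompanying \Cref{LAthm1} reads
\[
0 \to S(-a-b) \to \Jac(g)^{top}(-a) \oplus \Jac(f)^{top}(-b) \to \Jac(f_\cA)^{top} \to 0,
\]
with $a = \deg f$ and $b = \deg g$. Here $S/\Jac(f)^{top}$ is Cohen-Macaulay because $f$ is a pencil, so $\Jac(f)^{top} = \Jac(f)$ is a complete intersection by \Cref{ci}; and $S/\Jac(g)^{top}$ is Cohen-Macaulay by induction, since $\mathcal A_g$ has strictly fewer heavy points (removing the $P_1$-pencil destroys $P_1$ as a singular point) and inherits the hypothesis (every heavy point of $\mathcal A_g$ is heavy in $\cA$). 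A depth count along the displayed sequence finishes the step: both middle summands have depth at least $n$ and the left term has depth $n+1$, so the right-hand term has depth at least $n$, whence $\operatorname{depth} S/\Jac(f_\cA)^{top} \ge n-1$, giving equality.

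For the base case $m = 0$ I would switch to basic double linkage and induct on $|\cA|$: with no heavy points present, deleting any hyperplane $H$ leaves a sub-arrangement $\mathcal A'$ for which $H$ is generic (a singular point of $\mathcal A'$ lying on $H$ would be a triple point of $\cA$, i.e.\ heavy), so \Cref{BDL corollary} gives $\Jac(f_\cA)^{top} = H \cdot \Jac(f_{\mathcal A'})^{top} + (f_{\mathcal A'})$, a basic double link, which preserves Cohen-Macaulayness; the base $|\cA| = 2$ is a complete intersection. I expect the main obstacle to be the verification of liaison-addition condition (2): getting the bookkeeping of multiplicities right so that its failure genuinely forces a second heavy point on a hyperplane already meeting $P_1$, and confirming that the components involved are exactly the nonreduced ones named in the hypothesis. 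The remaining ingredients — inheritance of the hypothesis under deletion and the depth count — should be routine once this translation is in place.
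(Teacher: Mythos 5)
The paper does not actually prove this statement; it is quoted from [MNS, Theorem 3.7], and the argument there is essentially the liaison-addition induction you describe: split off the pencil through one nonreduced (heavy) point and induct on the number of such points, with the generic case handled by basic double linkage. Your verification of the two hypotheses of \Cref{LAthm1} is correct — in particular the key point that a failure of condition (2) would force a factor of $f$ to contain a second heavy point besides $P_1$, contradicting the hypothesis — and the depth count along the liaison-addition sequence is the standard one. The only gaps are degenerate cases you should dispose of explicitly: if $g=1$ then $\cA$ is a single pencil and \Cref{ci} applies directly, and if $\deg g=1$ then $\Jac(g)^{top}=S$ and liaison addition degenerates to a basic double link; both are immediate.
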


The goal of this subsection is to determine the graded Betti numbers of $\Jac (f_\cA)^{top}$. Using the notation introduced in this article, the assumption of the above statement can be rephrased as saying that there is no hyperplane of $\cA$ that contains two subspaces of the top-dimensional part of the singular locus $S(\cA)$  that both have  multiplicities greater than two in $\cA$.

\begin{theorem}
\label{thm:MFR residual with special assumption}
Let $\cA \subset \PP^n$ be an  arrangement of $d$ hyperplanes. Denote by 
        \[
    S_0 (\cA) = \{ P \in S(\cA) \; \mid \; t_P \ge 3\}
    \]
the set of codimension two subspaces $P \in \PP^n$ that are contained in at least three distinct hyperplanes of $\cA$. Assume that no hyperplane in $\cA$ contains two or more elements of $S_0(\cA)$. Denote by $s$ the number of hyperplanes of $\cA$ that do not contain any $P \in S_0(\cA)$. Then the general residual of $\cA$ has a minimal free resolution of the form
\begin{align*}
0 \to S^{2 |S_0 (\cA)| + s - 1} (-d) \to  
\begin{array}{c}
S^{ |S_0 (\cA)| + s} (-d+ 1) \\
\ \ \ \ \ \oplus \\
{\displaystyle \bigoplus_{P \in S_0 (\cA)} S(-d+t_P -1) }
\end{array}
\to r_\cA \to 0. 
\end{align*}
In particular, $S/r_\cA$ is Cohen-Macaulay and even the graded Betti numbers are combinatorially determined. 
\end{theorem}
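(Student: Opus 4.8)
The plan is to induct on $d=|\cA|$, building $\cA$ one hyperplane at a time and transporting the minimal free resolution of the general residual through each addition via basic double linkage. The combinatorial engine, used at every step, is that the hypothesis forces \emph{every} hyperplane $H\in\cA$ to be almost generic with respect to every sub-arrangement $\cB\subseteq\cA\setminus\{H\}$: any element of $S(\cB)$ lying on $H$ would be contained in three hyperplanes of $\cA$, hence lie in $S_0(\cA)$, and $H$ meets $S_0(\cA)$ in at most one subspace by assumption. Thus \Cref{prop:add almost generic hyperplane} and \Cref{cor:MFR res after adding generic hyperplane} apply at each stage. Since the base cases are Cohen--Macaulay (a single pencil gives, by \Cref{cor:decomposition of residual}, a complete intersection of type $(1,d-1)$, and two hyperplanes give the ideal of a codimension two linear space), and basic double linkage preserves the length of the resolution, the Cohen--Macaulayness of $S/r_\cA$ will fall out of the construction (consistent with \Cref{MNS result}); no separate argument is needed. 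Every deletion only lowers multiplicities, so each intermediate arrangement again satisfies the hypothesis and the induction is well founded.

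There are two reduction moves. If $s\ge 1$, I would delete a hyperplane $F$ through no element of $S_0(\cA)$; then $F$ is \emph{generic} for $\cA'=\cA-F$ (it meets no element of $S(\cA')$), the triple $(d-1,|S_0|,s-1)$ satisfies the hypothesis, and $r_\cA$ is the generic basic double link of $r_{\cA'}$ from \Cref{cor:MFR res after adding generic hyperplane}(a). Shifting by $S(-1)$ and adjoining $S(-d)$ and $S(-d+1)$ reproduces the claimed resolution verbatim. If $s=0$ (so necessarily $k=|S_0(\cA)|\ge 2$, the case $k=1$ being a base case), I would instead pick an element $P_i\in S_0(\cA)$ of \emph{maximal} multiplicity $t_i$ and delete one hyperplane through it. By \Cref{prop:add almost generic hyperplane} one has $r_\cA=x\cdot r_{\cA'}+(p)$, and by \Cref{rem:adding alomost gen hyperplane} the cofactor is $p=\ell_{P_i}\cdot h$, where $h$ is the product of the hyperplanes of $\cA'$ not through $P_i$, so $\deg p=|\cA'\cap H|=d-t_i+1$.

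The main obstacle is to show that this $p$ belongs to a minimal generating set of $r_{\cA'}$, so that \Cref{prop:MFR under BDL}(a) applies and yields exactly the asserted resolution; the alternative \Cref{prop:MFR under BDL}(b) would introduce a spurious first syzygy in degree $d-t_i+2<d$, contradicting the target in which all syzygies have degree $d$. Equivalently, I must prove $p\notin\fm\cdot r_{\cA'}$. When $P_i$ was the \emph{unique} maximal element this is immediate, because then the initial degree of $r_{\cA'}$ is $\deg p$ and $[\fm\,r_{\cA'}]_{\deg p}=0$. The delicate case is a tie for the maximum. Here I would first describe the initial-degree piece explicitly: for each maximal element $P_j$ of $\cA'$, the form $g^{(j)}=\ell_{P_j}h_j$ (with $h_j$ the product of the hyperplanes of $\cA'$ not through $P_j$) lies in $r_{\cA'}$ — checked component by component, using that no hyperplane passes through two elements of $S_0$, whence the hyperplanes through any $P\neq P_j$ all divide $h_j$ — and these $g^{(j)}$ are linearly independent of degree $a(r_{\cA'})=\deg p-1$, so by the inductive Betti numbers they span $[r_{\cA'}]_{a}$. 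The decisive point is then a divisibility observation: fix one arrangement hyperplane $\lambda$ through $P_i$ (there are $t_i-1\ge 2$ of them in $\cA'$). Since $\lambda$ cannot pass through any other element of $S_0$, it divides every $h_j$, hence every $g^{(j)}$, hence every element of $[\fm\,r_{\cA'}]_{\deg p}=[S]_1\cdot[r_{\cA'}]_{a}$; but $\lambda$ divides neither $h$ nor (for general $\ell$) $\ell_{P_i}$, so $\lambda\nmid p$ by unique factorization. Therefore $p\notin\fm\,r_{\cA'}$ and $p$ is a minimal generator.

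Finally I would record the bookkeeping confirming that \Cref{prop:MFR under BDL}(a) — shift by $S(-1)$ and split off the copy $S(-d+t_i-1)$ corresponding to $p$ — converts the inductive resolution of $r_{\cA'}$ into the claimed resolution of $r_\cA$, treating separately the case $t_i\ge 4$ (where $P_i$ survives in $S_0(\cA')$ with multiplicity $t_i-1$) and $t_i=3$ (where $P_i$ becomes a double point, so $k$ drops by one and $s$ rises by two while $2|S_0|+s$ is unchanged); in both the degrees match on the nose, using $\reg r_\cA=|\cA|-1$ from \Cref{cor:reg estimate sharp} and $\deg r_{\cA+H}=\deg r_\cA+|\cA\cap H|$ from \Cref{cor:degree change adding} as consistency checks. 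The Cohen--Macaulayness and the combinatorial determination of the graded Betti numbers then follow at once, since $d$, $s$, the multiplicities $t_P$, and $|S_0(\cA)|$ are all read off from the intersection lattice.
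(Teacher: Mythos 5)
Your proposal is correct and follows the same overall strategy as the paper's proof: induct on $d$, build $\cA$ one hyperplane at a time, and transport the resolution of the general residual through each step via \Cref{prop:add almost generic hyperplane} and \Cref{cor:MFR res after adding generic hyperplane}. The orderings differ slightly (the paper completes each pencil of multiplicity at least three before adding the remaining hyperplanes; you peel off a hyperplane through a point of maximal multiplicity, or a hyperplane missing $S_0(\cA)$ when $s\ge 1$), but both orderings keep every intermediate arrangement within the hypothesis and make every added hyperplane generic or almost generic. Where you genuinely add value is in confronting the dichotomy in \Cref{cor:MFR res after adding generic hyperplane}(b): the paper's proof says only that one concludes ``in either case,'' yet sub-cases (b)(i) and (b)(ii) yield different graded Betti numbers, and the stated resolution (all first syzygies in degree $d$) is obtained only if the cofactor $p=\ell_{P_i}h$ is a minimal generator of $r_{\cA'}$ at every almost-generic step. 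Your argument for this --- when the maximal multiplicity is attained uniquely, $\deg p$ equals the initial degree of $r_{\cA'}$, so $[\fm\, r_{\cA'}]_{\deg p}=0$; in case of a tie, a fixed arrangement hyperplane $\lambda$ through $P_i$ divides every form of degree $\deg p-1$ in $r_{\cA'}$, hence every element of $[\fm\, r_{\cA'}]_{\deg p}$, while $\lambda\nmid p$ --- is correct and fills a step the published proof leaves implicit. The only detail worth writing out in full is the linear independence of the forms $g^{(j)}=\ell_{P_j}h_j$ spanning the initial-degree piece, which follows by the same divisibility trick applied to a hyperplane through $P_{j_0}$ for any index $j_0$ with nonzero coefficient.
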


\begin{proof}
This follows by repeatedly applying \Cref{cor:MFR res after adding generic hyperplane} and induction on $d = |\cA|$. If $d = 2$ then $r_\cA$ defines a codimension two linear subspace, and the result is clear. If $d \ge 3$ choose any $H \in \cA$. By the induction hypothesis, we know the graded Betti numbers of $r_{\cA - H}$. If $H$ contains some $P \in S_0 (\cA - H)$ then $H$ is an almost generic line with respect to $\cA - H$, otherwise it is a generic line. In either case we conclude by applying \Cref{cor:MFR res after adding generic hyperplane}. 
We construct $\cA$ by adding one hyperplane at a time to produce first the pencils with multiplicities grater than two and second, if needed,  the pencils of multiplicitiy two. 
\end{proof}

If $\cA \subset \PP^2$ is a line arrangement then the lines through any point $P \in S_0 (A)$ form a pencil consisting of $t_P \ge 3$ lines. We refer to $P$ as the center of this pencil of lines in $\cA$. 
The assumptions of \Cref{thm:MFR residual with special assumption} on $\cA$ require that no two points of $S_0 (\cA)$ are contained in any line of $\cA$. With this picture in mind, we refer to the hyperplane arrangements in $\PP^n$ satisfying the assumption of \Cref{thm:MFR residual with special assumption}  as a \emph{union of disconnected pencils}.  

Using \Cref{prop:Betti top from residual}, the above result has the following immediate consequence. 

\begin{corollary}
     \label{cor:MFR with special assumption}
Let $\mathcal A \subset \PP^n$ be a union of disconnected pencils. Using the notation of  \Cref{thm:MFR residual with special assumption}, the graded minimal free resolution of $\Jac (f_\cA)^{top}$ has the form
 \begin{align*}
 0 \rightarrow
\begin{array}{c}
S^{ |S_0 (\cA)| + s-1} (-d) \\
\ \ \ \ \ \oplus \\
{\displaystyle \bigoplus_{P \in S_0 (\cA)} S(-d-t_P +2) }
\end{array}
\rightarrow S^{2 |S_0 (\cA)| + s} (-d+1) \rightarrow \Jac (f_\cA)^{top} \rightarrow 0.
 \end{align*} 
\end{corollary}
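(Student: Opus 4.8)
The plan is to apply the dualization machinery of \Cref{prop:Betti top from residual} directly to the resolution of $r_\cA$ furnished by \Cref{thm:MFR residual with special assumption}, so that essentially all of the work is bookkeeping of twists. First I would record that \Cref{thm:MFR residual with special assumption} applies, since by hypothesis $\cA$ is a union of disconnected pencils: that theorem yields both that $S/r_\cA$ is Cohen-Macaulay and the explicit minimal free resolution
\[
0 \to S^{2 |S_0 (\cA)| + s-1}(-d) \to S^{ |S_0 (\cA)| + s}(-d+1) \oplus \bigoplus_{P \in S_0 (\cA)} S(-d+t_P -1) \to r_\cA \to 0.
\]
The Cohen-Macaulay conclusion is precisely the hypothesis needed to invoke \Cref{prop:Betti top from residual}.

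Next I would rewrite this resolution in the normal form $0 \to G \to S(-d+1) \oplus F \to r_\cA \to 0$ demanded by \Cref{prop:Betti top from residual}. By \Cref{prop:add general point}(a) the derivative $\frac{\partial f_\cA}{\partial \ell}$ is a minimal generator of $r_\cA$ of degree $d-1$, so (at least) one copy of $S(-d+1)$ legitimately splits off from the free module in homological degree one. Accordingly I set $G = S^{2 |S_0 (\cA)| + s-1}(-d)$ and $F = S^{ |S_0 (\cA)| + s-1}(-d+1) \oplus \bigoplus_{P \in S_0 (\cA)} S(-d+t_P -1)$, so that $S(-d+1)\oplus F$ recovers the middle term above.

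Finally I would substitute into the dual resolution
\[
0 \to F^* (-2 d+1) \to S(-d+1) \oplus G^*(-2d+1) \to  \Jac (f_\cA)^{top} \to 0
\]
of \Cref{prop:Betti top from residual} and simplify the shifts. Dualizing negates each twist, so $G^*(-2d+1) = S^{2 |S_0 (\cA)| + s-1}(-d+1)$, which combines with the free summand $S(-d+1)$ to give $S^{2 |S_0 (\cA)| + s}(-d+1)$ in homological degree one; and $F^*(-2d+1) = S^{ |S_0 (\cA)| + s-1}(-d) \oplus \bigoplus_{P \in S_0 (\cA)} S(-d-t_P +2)$ in homological degree two. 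These are exactly the two free modules appearing in the claimed resolution of $\Jac (f_\cA)^{top}$.

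There is no genuine obstacle here, and indeed the statement is flagged as an immediate consequence; the only two points requiring a moment's care are verifying that a single $S(-d+1)$ may be split off to match the shape required by \Cref{prop:Betti top from residual} (which is exactly what \Cref{prop:add general point}(a) supplies) and keeping the signs of the twists straight under dualization. Minimality of the output resolution is asserted by \Cref{prop:Betti top from residual} itself, so no separate cancellation argument is needed.
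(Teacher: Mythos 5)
Your proposal is correct and follows exactly the paper's route: the paper derives this corollary as an immediate consequence of Theorem \ref{thm:MFR residual with special assumption} via Proposition \ref{prop:Betti top from residual}, and your identification of $G$ and $F$ together with the twist computation $G^*(-2d+1)=S^{2|S_0(\cA)|+s-1}(-d+1)$ and $F^*(-2d+1)=S^{|S_0(\cA)|+s-1}(-d)\oplus\bigoplus_{P\in S_0(\cA)}S(-d-t_P+2)$ is exactly the required bookkeeping. Nothing further is needed.
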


We illustrate this result in the simplest cases, namely for line arrangements with few pencils.

\begin{example}
   \label{exa:few pencils} 
Let $\cA \subset \PP^2$ be an arrangement of $d$ lines forming a union of disconnected pencils. 

(i) If there is exactly one pencil, that is, $\cA$ consists of $d$ concurrent lines then $\Jac (f_\cA)^{top}$ is a complete intersection (see \Cref{cor:char ci}), and its minimal free resolutions has the form
\[
0 \to S(-2d+2) \to S^2(-d+1) \to \Jac (f_\cA)^{top} \to 0. 
\]

(ii) Assume $\cA$ consists of two disconnected pencils with centers $P_1$ and $P_2$ (see Figure~\ref{two pencils, no line}). 


\begin{figure}[h] 
\begin{center}
\begin{tikzpicture}

 \draw (-.7,-1) -- (4,6); 
 \draw (-1.05,-1) -- (6,6);
 \draw (-.4,-1) -- (2,5.5);

 \draw (.65,6) -- (3,-1);
 \draw (.5,6) -- (4,-1);
 \draw (.34,6) -- (4.7,-.5);


\node at (0,0) {$\bullet$};  
\node at (1,5) {$\bullet$};  

\node at (.4,0) {$P_1$};
\node at (.5,5) {$P_2$};

\end{tikzpicture}
\end{center}
\caption{Two pencils without a common line.}
\label{two pencils, no line}
\end{figure}
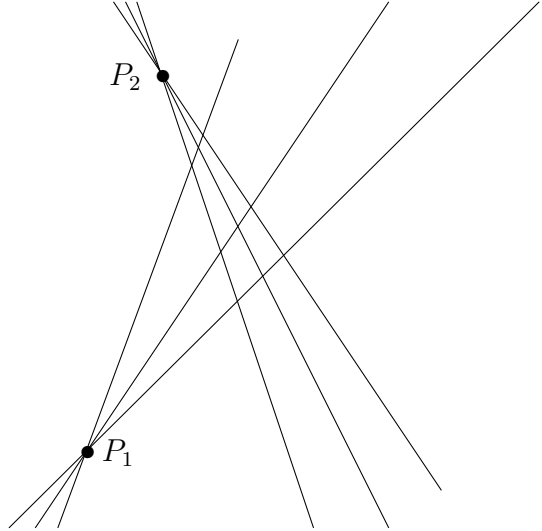

Then $\Jac (f_\cA)^{top}$ has a minimal free resolutions of the form
\[
0 \to 
\begin{array}{c}
S (-d) \\
\ \ \ \ \ \oplus \\
S(-d-t_{P_2} +2) \\
\oplus \\
S(-d-t_{P_1} +2)
\end{array} 
\to S^4(-d+1) \to \Jac (f_\cA)^{top} \to 0.
\]

In particular, $S/\Jac (f_\cA)^{top}$ is Cohen-Macaulay, but $\cA$ is not a free arrangement. 

\vspace{.2in}

(iii) Assume $\cA$ consists of three disconnected pencils with centers $P_1, P_2$ and $P_3$ 
(see Figure \ref{three pencils, no line}). 


\begin{figure}[h]
\begin{center}
\begin{tikzpicture}[scale=0.90]

 \draw (-.7,-1) -- (4,6); 
 \draw (-1.05,-1) -- (6,6);
 \draw (-.4,-1) -- (2,5.5);

 \draw (.65,6) -- (3,-1);
 \draw (.5,6) -- (4,-1);
 \draw (.34,6) -- (4.7,-.5);

 \draw (7,1) -- (-1,1);
 \draw (7,1.2) -- (-1,.5);
 \draw (7,.85) -- (-1,1.5);

\node at (0,0) {$\bullet$};  
\node at (1,5) {$\bullet$};  
\node at (5,1) {$\bullet$};  

\node at (.4,0) {$P_1$};
\node at (.5,5) {$P_2$};
\node at (5,1.5) {$P_3$};

\end{tikzpicture}
\end{center}
\caption{Three disconnected pencils.}
\label{three pencils, no line}
\end{figure}
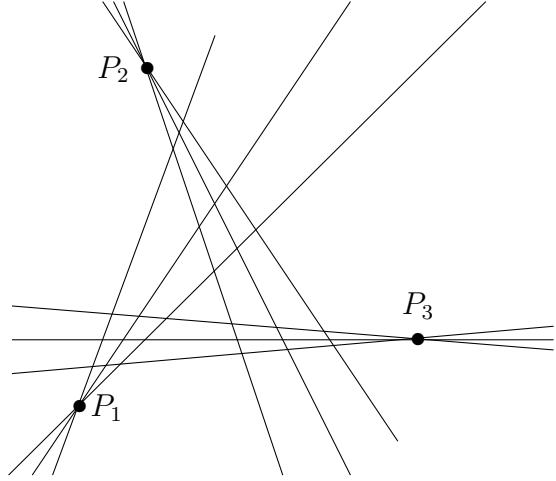

In this case,  $\Jac (f_\cA)^{top}$ has a minimal free resolution of the form
\[
0 \to 
\begin{array}{c}
S^2 (-d) \\
\ \ \ \ \ \oplus \\
S(-d-t_{P_3} +2) \\
\oplus \\
S(-d-t_{P_2} +2) \\
\oplus \\
S(-d-t_{P_1} +2)
\end{array} 
\to S^6(-d+1) \to \Jac (f_\cA)^{top} \to 0.
\]

\end{example}

\subsection{Liaison addition for residuals} 
\label{subsec:liaison addition residuals} 
\mbox{ } \\
Liaison addition for the top-dimensional parts of Jacobian ideals played a key role for obtaining the main results of \cite{MN}. In this subsection, we establish liaison addition for general residuals. The result is analogous to \Cref{LAthm1}.  Recall that we write $\cA (f)$ for the hyperplane arrangement defined by a product of linear forms $f$.

\begin{proposition}[Liaison Addition for General Residuals] 
    \label{prop:liaison add residuals} 
Let $\mathcal A ({fg}) = \mathcal A (f) \cup \mathcal A (g) \subset \PP^n$ be a hyperplane arrangement, where $f = f_1 \cdots f_s$ and $g = g_1 \cdots g_t$ are products of linear forms, such that 

\begin{itemize}

\item[(i)] \label{LA1} $\hbox{codim } (f_i, f_j,g) = 3$ whenever $i \neq j$.

\item[(ii)] \label{LA2} $\hbox{codim } (f, g_i, g_j) = 3$ whenever $i \neq j$.

\end{itemize}
 (Note that we do not assume that $\hbox{codim } (f_i, f_j, f_k) = 3$ or that $\hbox{codim } (g_i, g_j, g_k) = 3$ for $i, j, k$ distinct.)

Then $\mathcal A (f g)$ has the following properties:

\begin{itemize} 

\item[(\rm{a})] $\displaystyle r_{\cA (f g)} = r_{\cA (f)} \cap r_{\cA (g)} \cap \underbrace{\left [ \bigcap_{i,j} (f_i, g_j) \right ]}_{= \ (f,g)}.$

\item[(\rm{b})] $\displaystyle r_{\cA (f g)}  = g \cdot r_{\cA (f)} + f \cdot  r_{\cA (g)}$. In particular, $r_{\cA (f g)}$ is obtained by Liaison Addition from $r_{\cA (f)} $ and $r_{\cA (g)} $. 

\item[(\rm{c})] If $r_{\cA (f)}$ and $r_{\cA (g)}$ have graded minimal free resolutions of the form 
\[
0 \to F_u \to \cdots \to F_1 \to r_{\cA (f)} \to 0 
\]
and 
\[
0 \to G_v \to \cdots \to G_1 \to r_{\cA (g)} \to 0,  
\]
respectively, then 
\[
0 \to 
\begin{array}{c}
F_w (-d') \\
 \oplus \\
 G_w (-d) 
\end{array}
\to \cdots \to 
\begin{array}{c}
F_3 (-d') \\
 \oplus \\
 G_3 (-d) 
\end{array} 
\to 
\begin{array}{c}
F_2 (-d') \\
 \oplus \\
 G_2 (-d) \\
 \oplus \\
 S(-d-d')
\end{array}
  \to 
\begin{array}{c}
F_1 (-d') \\
 \oplus \\
 G_1 (-d) 
\end{array} \to r_{\cA (f g)} \to 0, 
\]
is a graded minimal free resolution of $r_{\cA (f g)}$, where $d = \deg f$, $d' = \deg g$,  $w = \max \{u, v \}$ and we define $F_i = 0$ if $u < i \le w$ as well as $G_j = 0$ if $v< j \le w$.  


\end{itemize}
\end{proposition}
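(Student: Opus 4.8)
The plan is to transport the proof of \Cref{LAthm1} from the top-dimensional Jacobian ideals to general residuals, using the explicit primary decomposition of \Cref{cor:decomposition of residual} as the bridge. For Part~(a) I would begin with
\[
r_{\cA(fg)} = \bigcap_{P \in S(\cA(fg))} (\ell_P, I_P^{t_P-1})
\]
and sort the subspaces $P \in S(\cA(fg))$ into three classes: those cut out by two factors of $f$, those cut out by two factors of $g$, and the mixed subspaces $P = V(f_i)\cap V(g_j)$. Hypotheses (i) and (ii) are exactly what forces no factor of $g$ to pass through a subspace of $S(\cA(f))$ and vice versa; hence the multiplicity $t_P$ of a pure-$f$ (resp.\ pure-$g$) subspace is the same for $\cA(f)$ (resp.\ $\cA(g)$) and for $\cA(fg)$, while every mixed $P$ has $t_P = 2$ and so contributes $(\ell_P, I_P) = I_P = (f_i, g_j)$. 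Regrouping the three classes recovers $r_{\cA(f)}$, $r_{\cA(g)}$, and $\bigcap_{i,j}(f_i,g_j)$, and the last intersection equals $(f,g)$ exactly as in \Cref{LAthm1}.

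For Part~(b) the crucial preliminary fact is that $f \in r_{\cA(f)}$ and $g \in r_{\cA(g)}$: for each $P$ the product $g_P$ of its $t_P$ linear factors lies in $I_P^{t_P} \subseteq I_P^{t_P-1}$, so $f_{\cA} \in r_{\cA}$ for every arrangement by \Cref{cor:decomposition of residual}. Since $f$ and $g$ share no linear factor they form a regular sequence, so $(g,-f)$ is an honest Koszul syzygy sitting inside $r_{\cA(g)}(-d) \oplus r_{\cA(f)}(-d')$. I would then check that the sequence
\[
0 \to S(-d-d') \to r_{\cA(g)}(-d) \oplus r_{\cA(f)}(-d') \to g\cdot r_{\cA(f)} + f\cdot r_{\cA(g)} \to 0,
\]
with left map $1 \mapsto (g,-f)$ and right map $(u,v)\mapsto fu+gv$, is exact; both injectivity and exactness in the middle follow from $f,g$ being a regular sequence together with the memberships $f\in r_{\cA(f)}$, $g\in r_{\cA(g)}$. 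By Part~(a) the image lies in $r_{\cA(fg)}$, and equality follows either from a Hilbert-function comparison fed by this very sequence or by quoting the general Liaison Addition theorem, which identifies $g\,r_{\cA(f)} + f\,r_{\cA(g)}$ with the saturated ideal $r_{\cA(f)} \cap r_{\cA(g)} \cap (f,g)$.

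For Part~(c) I would take the mapping cone of the short exact sequence of Part~(b). Resolving the middle term by $G_\bullet(-d) \oplus F_\bullet(-d')$ and lifting the inclusion of the free module $S(-d-d')$ produces precisely the asserted complex, with the summand $S(-d-d')$ appearing in homological degree two and the natural zero-padding accounting for the convention $w=\max\{u,v\}$. The only nonzero comparison map is the lift of $1 \mapsto (g,-f)$ into $G_1(-d)\oplus F_1(-d')$, and the cone is a \emph{minimal} resolution exactly when this lift has all entries in $\fm$, i.e.\ when neither $g$ is a minimal generator of $r_{\cA(g)}$ nor $f$ is a minimal generator of $r_{\cA(f)}$. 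This holds by \Cref{cor:reg estimate sharp}: the minimal generators of $r_{\cA(f)}$ have degree at most $\reg r_{\cA(f)} = |\cA(f)|-1 = d-1 < \deg f$, and symmetrically for $g$.

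I expect the main obstacle to be the two places where a containment must be promoted to an equality: that the liaison-addition sum in Part~(b) is already saturated (so that (a) yields equality of ideals, not just of saturations), and the minimality assertion in Part~(c). Both are governed by the same numerical input, the sharp value $\reg r_{\cA} = |\cA|-1$ of \Cref{cor:reg estimate sharp}, which places $f$ in $\fm\, r_{\cA(f)}$ and $g$ in $\fm\, r_{\cA(g)}$; granting this, the remaining work is the bookkeeping of multiplicities in (a) and the verification that no cancellation occurs in the mapping cone.
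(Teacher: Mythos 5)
Your proposal is correct and follows essentially the same route as the paper: Part (a) from the primary decomposition of \Cref{cor:decomposition of residual} together with the observation that hypotheses (i) and (ii) preserve the multiplicities of the pure subspaces and force the mixed ones to have $t_P=2$; Part (b) from $f \in r_{\cA(f)}$, $g \in r_{\cA(g)}$ and general Liaison Addition; Part (c) from the mapping cone, with minimality because the regularity bound $\reg r_{\cA(f)} \le d-1 < \deg f$ (and symmetrically for $g$) rules out $f$ and $g$ being minimal generators. No gaps.
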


\begin{proof} Observe that Condition (i) means that no subspace $P \in S(\cA (f))$ is contained in any hyperplane of $\cA (g)$. Similarly,  Condition (ii) gives that no subspace $P \in S(\cA (g))$ is contained in any hyperplane of $\cA (f)$. Hence the multiplicities of any $P \in 
S(\cA (f))$ are the same in $\cA (f)$ and $\cA (f g)$ and the analogous statement is true for any $P \in S(\cA (g))$. Thus, the description of the minimal primary decomposition of a general residual given in \Cref{cor:decomposition of residual} proves Claim (a). 

We know that $f$ is in $r_{\cA (f)}$ and that $g$ is in $r_{\cA (g)}$. Moreover, $f$ and $g$ are relatively prime by Conditions (i) and (ii).  Thus, Claim (b) is a consequence of Liaison Addition. It also follows that there is a short exact sequence
\[
0 \to S (-d-d')  \to r_{\cA (f)} (-d') \oplus  r_{\cA (g} (-d)   \to  r_{\cA (f g)} \to 0. 
\]
Thus, the mapping cone procedure shows that the resolution given in Claim (c) is a graded free resolution of $r_{\cA (f g)}$. It is in fact 
a minimal free resolution because $f$ is not a minimal generator of $r_{\cA (f)}$ and $g$ is not a minimal generator of $r_{\cA (g)}$ by \Cref{thm:reg residual} (see \cite[Proposition 4.4]{MN-95}). 
\end{proof}

We illustrate the above result by a simple example. 

\begin{example}
   \label{exa:liai add}
Consider the plane arrangement $\cA (f) \subset \PP^3$ defined by 
\[
f = x_0 x_1 x_2 x_3(x_0 + x_1)(x_1 + x_2)(x_2 + x_3)(x_0 + x_3)(x_0 + x_1 + x_2+x_3) 
\]
(see \cite[Example 4.3(4)]{MNS}). 
Its general residual is not Cohen-Macaulay. It has a graded minimal free resolution of the form 
\[
0 \to S (-9) \to 
\begin{array}{c}
S^3 (-9) \\
\oplus \\
S^4(-8) 
\end{array} 
\to 
\begin{array}{c}
S (-8) \\
 \oplus \\
S^6(-7) 
\end{array} 
\to r_{\cA (f)} \to 0. 
\] 
Let $\cA (g) \subset \PP^3$ be an arrangement defined by three general planes containing a line $P$ that is not in $S (\cA (f))$. Thus, 
$r_{\cA (g)}$ is a complete intersection of type $(1,2)$. Hence, \Cref{prop:liaison add residuals}  shows that $r_{\cA (f g)}$ has 
a graded minimal free resolution of the form 
\[
0 \to S (-12) \to 
\begin{array}{c}
S^5 (-12) \\
\oplus \\
S^4(-11) 
\end{array} 
\to 
\begin{array}{c}
S^2 (-11) \\
 \oplus \\
S^7(-10) 
\end{array} 
\to r_{\cA (f g)} \to 0. 
\] 
\end{example}


\subsection{Further arrangements} 
\label{subsec:further arr} 
\mbox{ } \\
The goal of this section is to illustrate how the results of Subsections~\ref{subsec:adding almost generic} and \ref{subsec:liaison addition residuals} can be used to determine the graded Betti numbers of hyperplane arrangements that do not satisfy the assumption made in 
\Cref{MNS result}. 
The resulting ideals   $\Jac (f_\cA)^{top}$ turn out to be Cohen-Macaulay. 

We begin with a complete analysis of hyperplane arrangements $\cA \subset \PP^n$ with the property that $S (\cA)$ contains three subspaces $P_1, P_2$ and $P_3$ such that every other  subspace of $S (\cA)$  has multiplicity two and any two of $P_1, P_2, P_3$ span a hyperplane. We refer to $\cA$ as an arrangement with three pencils with centers $P_1, P_2$ and $P_3$. There are several cases, depending on  whether two centers $P_i$ and $P_j$ are connected, that is, the hyperplane spanned by $P_i$ and $P_j$, belongs to $\cA$ or not.

\begin{example}
   \label{exa:three pencils} 
In each case, we write $f_\cA$ as a product  of $ f g h$  and some linear forms connecting centers, where $f, g, h$ have degrees $a, b$ and $c$, respectively. 

(i) If no two centers are connected, the graded Betti numbers have been determined in \Cref{thm:MFR residual with special assumption} and \Cref{cor:MFR with special assumption}; see also \Cref{exa:few pencils}(iii).

(ii) Assume that exactly two of the centers, say $P_1$ and $P_2$, are connected. See Figure~\ref{three pencils, one line} if $n = 2$. 
 
 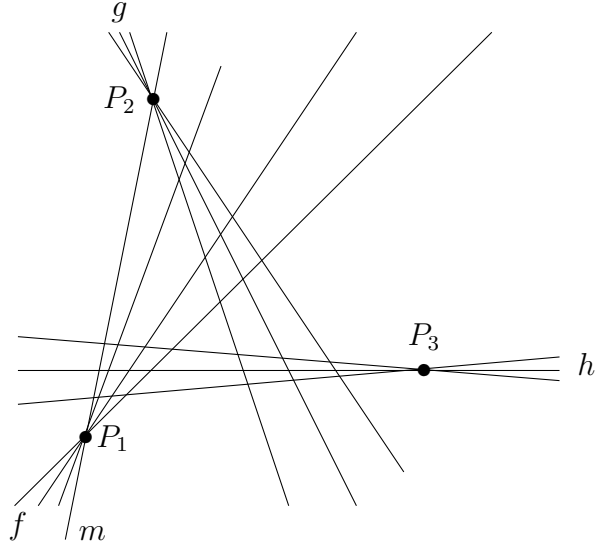
\begin{figure}[h]
\begin{center}
\begin{tikzpicture}[scale=0.90]

 \draw (-.7,-1) -- (4,6); 
 \draw (-1.05,-1) -- (6,6);
 \draw (-.4,-1) -- (2,5.5);

 \draw (.65,6) -- (3,-1);
 \draw (.5,6) -- (4,-1);
 \draw (.34,6) -- (4.7,-.5);

 \draw (7,1) -- (-1,1);
 \draw (7,1.2) -- (-1,.5);
 \draw (7,.85) -- (-1,1.5);

\node at (0,0) {$\bullet$};  
\node at (1,5) {$\bullet$};  
\node at (5,1) {$\bullet$};  

\node at (.4,0) {$P_1$};
\node at (.5,5) {$P_2$};
\node at (5,1.5) {$P_3$};

\draw (1.2,6) -- (-.3,-1.5);

\node at (0.1,-1.4) {$m$};
\node at (-1,-1.3) {$f$};
\node at (.5,6.3) {$g$};
\node at (7.4,1.1) {$h$};

\end{tikzpicture}
\end{center}
\caption{Three pencils with one common line}
\label{three pencils, one line}
\end{figure}

Thus, $f_\cA = fgh m$. Note that $\cA$ is the disjoint union of the arrangements $\cA (f g m)$ and $\cA (h)$. The former has been considered in \Cref{exa:two connected pencils}. Thus, setting $d = a + b + c +1$, \Cref{prop:liaison add residuals} and \Cref{prop:Betti top from residual} give minimal free resolutions of the form: 
\[
0 \to S^4(-d) \to S^2(1-d) 
    \oplus \\
\begin{array}{c}
    S(-d+a) \\
    \oplus \\
    S(-d+b) \\
    \oplus \\
    S(-d-1 +c)
\end{array}  \to  r_\cA \to 0
\]
and  
\[
0 \rightarrow S(-d) 
    \oplus \\
\begin{array}{c}
    S(1-d-a) \\
    \oplus \\
    S(1-d-b) \\
    \oplus \\
    S(2-d-c)
\end{array}
\rightarrow S^5 (1-d) \rightarrow \Jac (f_\cA)^{top} \rightarrow 0.
\]

(iii) Assume the three pencils are connected by two distinct hyperplanes defined by linear forms $m$ and $v$ connecting $P_1$ and $P_2$ and $P_2$ and $P_3$, respectively, and so $f_\cA  =  m v f g h $. See Figure~\ref{3 pencils, noncollin centers} if $n = 2$. 

\begin{figure}[h]
\begin{center}
\begin{tikzpicture}[scale=0.90]

 \draw (-.7,-1) -- (4,6); 
 \draw (-1.05,-1) -- (6,6);
 \draw (-.4,-1) -- (2,5.5);

 \draw (.65,6) -- (3,-1);
 \draw (.5,6) -- (4,-1);
 \draw (.34,6) -- (4.7,-.5);

 \draw (7,1) -- (-1,1);
 \draw (7,1.2) -- (-1,.5);
 \draw (7,.85) -- (-1,1.5);

\node at (0,0) {$\bullet$};  
\node at (1,5) {$\bullet$};  
\node at (5,1) {$\bullet$};  

\node at (.4,0) {$P_1$};
\node at (.5,5) {$P_2$};
\node at (5,1.5) {$P_3$};

\draw (1.2,6) -- (-.3,-1.5);

\draw (-.24,6.3) -- (5.8,.2);

\node at (.13,-1.45) {$m$};
\node at (-.5,6.2) {$v$};
\node at (-1,-1.3) {$f$};
\node at (.5,6.3) {$g$};
\node at (7.3,1.1) {$h$};

\end{tikzpicture}
\end{center}
\caption{Three pencils, two common lines, $m \neq v$}
\label{3 pencils, noncollin centers}
\end{figure}
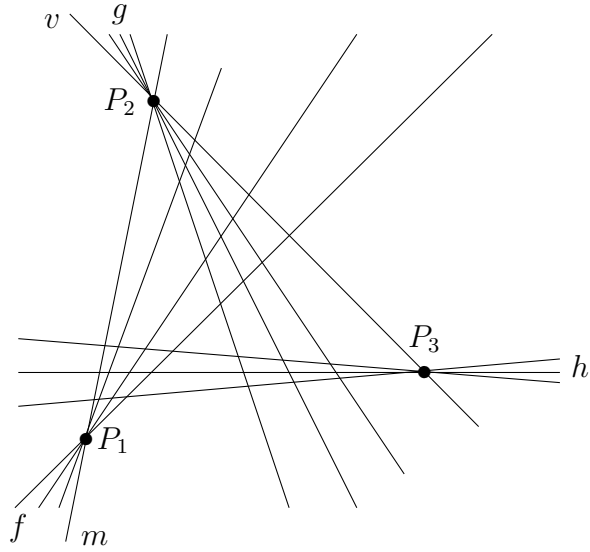

In this case, \Cref{cor:add pencil} shows
\[
r_\cA =h \cdot r_{\cA(m v  f g)} + (\ell_{P_3} m f g). 
\]
The minimal generators of $r_{\cA(m  v f g}$ have degrees $a+1, b+  2$ and $a+b+1$, which are smaller than $a+ b+2 = \deg (\ell_{P_3} m f g)$. Hence $\ell_{P_3} m f g$ is not a minimal generator of $r_{\cA(m f g)}$ and \Cref{cor:MFR res after adding generic hyperplane} gives a minimal free resolution of the form 
\[
0 \to S^3(-d) \to S(-d+1) \oplus 
\begin{array}{c}
    S(-d+a) \\
    \oplus \\
    S(-d+b+1) \\
    \oplus \\
    S(-d+c)
\end{array}  \to  r_\cA \to 0.
\]
Hence, \Cref{prop:Betti top from residual} provides a minimal free resolution
\[
0 \rightarrow 
\begin{array}{c}
    S(1-d-a) \\
    \oplus \\
    S(-d-b) \\
    \oplus \\
    S(1-d-c)
\end{array}
\rightarrow S^4(1-d) \rightarrow \Jac (f_\cA)^{top} \rightarrow 0.
\]

(iv) Assume the centers of the three pencils are contained in a hyperplane  defined by linear forms $m$ that connects the pencils, and so $f_\cA = f g h m$. See Figure~\ref{three lines, collin centers} for an illustration of $n=2$. 

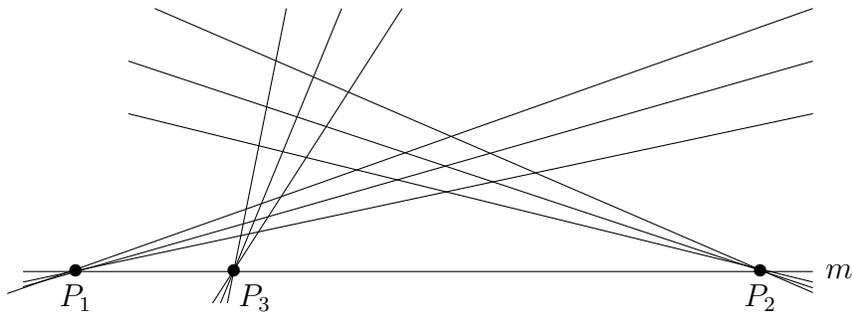
\begin{figure}[h]
\begin{center}
\begin{tikzpicture}[scale=0.70]

\draw (-5,-.29) -- (10,4);
\draw (-5,-.2) -- (10,3);
\draw (-5.3,-.42) -- (10,5);

\draw (10,-.31) -- (-3,4);
\draw (10,-.4) -- (-2.5,5);
\draw (10,-.2) -- (-3,3);

\draw (-1.25,-.6) -- (1.05,5);
\draw (-1.4,-.6) -- (2.2,5);
\draw (-1.12,-.6) -- (0,5);

\draw (-5,0) -- (10,0);

\node at (-4,-.5) {$P_1$};
\node at (-4,0) {$\bullet$};

\node at (9,-.5) {$P_2$};
\node at (9,0) {$\bullet$};

\node at (-1,0) {$\bullet$};
\node at (-.6,-.5) {$P_3$};

\node at (10.5,0) {$m$};

\end{tikzpicture}
\end{center}
\caption{Three pencils with collinear centers and the common line}
\label{three lines, collin centers}
\end{figure}

Arguments very similar to the ones for Case (iii) give the following minimal free resolutions: 
\[
0 \to S^3(-d) \to S(-d+1) \oplus 
\begin{array}{c}
    S(-d+a) \\
    \oplus \\
    S(-d+b) \\
    \oplus \\
    S(-d+c)
\end{array}  \to  r_\cA \to 0
\]
and 
\[
0 \rightarrow 
\begin{array}{c}
    S(1-d-a) \\
    \oplus \\
    S(1-d-b) \\
    \oplus \\
    S(1-d-c)
\end{array}
\rightarrow S^4(1-d) \rightarrow \Jac (f_\cA)^{top} \rightarrow 0.
\]

(v) Finally, assume that the three centers are connected by three distinct hyperplanes defined by linear forms $m, v, w$, and so $f_\cA = f g h m v w$. See Figure~\ref{all three lines} for an illustration of $n=2$. 

\begin{figure}[h]
\begin{center}
\begin{tikzpicture}[scale=0.90]

 \draw (-.7,-1) -- (4,6); 
 \draw (-1.05,-1) -- (6,6);
 \draw (-.4,-1) -- (2,5.5);

 \draw (.65,6) -- (3,-1);
 \draw (.5,6) -- (4,-1);
 \draw (.34,6) -- (4.7,-.5);

 \draw (7,1) -- (-1,1);
 \draw (7,1.2) -- (-1,.5);
 \draw (7,.85) -- (-1,1.5);

\node at (0,0) {$\bullet$};  
\node at (1,5) {$\bullet$};  
\node at (5,1) {$\bullet$};  

\node at (.4,0) {$P_1$};
\node at (.5,5) {$P_2$};
\node at (5,1.5) {$P_3$};

\draw (1.2,6) -- (-.3,-1.5);

\draw (-.24,6.3) -- (5.8,.2);

\draw (-1.3,-.25) -- (8,1.6);

\node at (.13,-1.45) {$m$};
\node at (-.5,6.3) {$v$};
\node at (-1,-1.3) {$f$};
\node at (.5,6.3) {$g$};
\node at (7.3,1.1) {$h$};
\node at (-1.6,-.2) {$w$};

\end{tikzpicture}
\end{center}
\caption{Three pencils, three common lines}
\label{all three lines}
\end{figure}
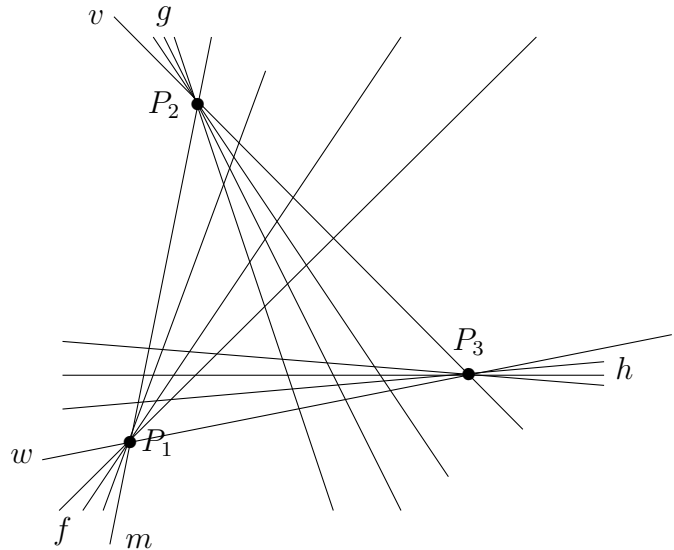

If one of the forms $f, g, h$ equals the identity, then $\cA$ consists of two connected pencils $\cA (f w)$ and $\cA (g v)$, which we considered in \Cref{exa:two connected pencils}. Thus, we may assume that $a, b, c$ are positive.  
We fix a general linear form $\ell$ that we use to obtain the general residuals of $\cA$ and any of its subarrangements. Applying \Cref{cor:add pencil}, we obtain 
\[
r_\cA = h \cdot r_{\cA(fgmvw)} + (\ell_{P_3} m f g). 
\]
Note that $\cA(fgmvw)$ is an arrangement of two connected pencils $\cA (f w)$ and $\cA (g v)$. Hence, \Cref{exa:two connected pencils}  gives  the minimal generating set
\[
 r_{\cA(fgmvw)} =  (\ell_{P_1} v g, \ell_{P_2} w f, v w f g). 
\]
We want to show that $\ell_{P_3} m f g$ is not a minimal generator of $r_{\cA(fgmvw)}$. Assume the contrary. Note that the minimal generators of $r_{\cA(fgmvw)}$ have degrees $a+2, b+2$ and $a + b + 2$. Since $\ell_{P_3} m f g$ also has degree $a+b +2$ it follows that the ideal 
\[
\fa = (\ell_{P_1} v g, \ell_{P_2} w f, \ell_{P_3} m f g) 
\]
must be equal to $ r_{\cA(fgmvw)}$. However, the two lowest degree generators of $\fa$ form a regular sequence, and so we get
\begin{align*}
(\ell_{P_1} v g, \ell_{P_2} w f) & = (\ell_{P_1}, wf) \cap (\ell_{P_2}, vg) \cap (vg, wf) \cap (\ell_{P_1}, \ell_{P_2}) \\
& = r_{\cA(fgmvw)} \cap (\ell_{P_1}, \ell_{P_2}),  
\end{align*}
where the second equality follows from the primary decomposition of general residuals given in \Cref{cor:decomposition of residual}. 
Since $\ell_{P_3} m f g$ is contained in $ r_{\cA(fgmvw)}$, this implies 
\[
\fa = r_{\cA(fgmvw)} \cap (\ell_{P_1}, \ell_{P_2}, \ell_{P_3} m f g). 
\]
By the generality of the point $\ell^{\vee}$, its ideal  does not contain any of the associated prime ideals of $r_{\cA(fgmvw)}$. As 
$(\ell_{P_1}, \ell_{P_2}, \ell_{P_3} m f g)$ is contained in $I_{\ell^\vee}$ the same is true for the ideal $(\ell_{P_1}, \ell_{P_2}, \ell_{P_3} m f g)$, which proves $\fa \neq r_{\cA(fgmvw)}$. This contradiction shows that $\ell_{P_3} m f g$ is not a minimal generator of 
$r_{\cA(fgmvw)}$. Applying \Cref{cor:MFR res after adding generic hyperplane}, we obtain for the minimal free resolution of $r_\cA$: 
\[
0 \to 
\begin{array}{c}
S^2(-d) \\
\oplus \\
S(-d+1) 
\end{array}
\to S(-d+1) \oplus 
\begin{array}{c}
    S(-d+a+1) \\
    \oplus \\
    S(-d+b+1) \\
    \oplus \\
    S(-d+c+1)
\end{array}  \to  r_\cA \to 0. 
\]
Thus, \Cref{prop:Betti top from residual} provides a minimal free resolution
\[
0 \rightarrow 
\begin{array}{c}
    S(-d-a) \\
    \oplus \\
    S(-d-b) \\
    \oplus \\
    S(-d-c)
\end{array}
\rightarrow 
\begin{array}{c}
S(-d) \\
\oplus \\
S^3(-d+1) 
\end{array}
\rightarrow \Jac (f_\cA)^{top} \rightarrow 0.
\]

\end{example}


Some of our results in this Section can be summarized as follows. 

\begin{proposition}
   \label{prop:build recursively} 
Let $\cA \subset \PP^n$ be an arrangement consisting of $d$ hyperplanes. Assume that $\cA$ can be built up recursively by adding almost generic hyperplanes, that is, starting with an arrangement  $\cA_2$ consisting of two hyperplanes of $\cA$, produce an arrangement $\cA_3$ by adding to $\cA_2$ a hyperplane $H_2$ of $\cA$ that is almost generic with respect to $\cA_2$, and continue such that 
$\cA_d = \cA$. Then $S/r_\cA$ and $S/\Jac( f_\cA)^{top}$ are Cohen-Macaulay and their 
Hilbert functions 
are uniquely determined and can be computed explictly from the knowledge of the numbers $|\cA_i \cap H_i|$ for $i = 2,3,\ldots,d-1$, where $\cA_i \cap H_i$ denotes the set of codimension two subspaces $H \cap H_i$ with $H \in \cA_i$. 
\end{proposition}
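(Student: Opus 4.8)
The plan is to induct on the number of hyperplanes, exploiting that each step of the recursive construction is a basic double link and that such links are transparent for both Cohen--Macaulayness and Hilbert functions.

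I would first dispose of the base case and the Cohen--Macaulayness of $r_\cA$. For $\cA_2 = \{V(\ell_1), V(\ell_2)\}$ the set $S(\cA_2)$ consists of the single codimension-two subspace $P = V(\ell_1,\ell_2)$ with $t_P = 2$, so \Cref{cor:decomposition of residual} gives $r_{\cA_2} = (\ell_P, I_P^{\,t_P-1}) = (\ell_1,\ell_2)$, a complete intersection of two linear forms; hence $S/r_{\cA_2}$ is Cohen--Macaulay with the evident resolution $0 \to S(-2) \to S(-1)^2 \to r_{\cA_2} \to 0$. For the inductive step I assume $S/r_{\cA_i}$ is Cohen--Macaulay, so that $r_{\cA_i}$ has a length-one minimal free resolution $0 \to F_2 \to F_1 \to r_{\cA_i} \to 0$. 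Because $H_i$ is almost generic with respect to $\cA_i$, \Cref{prop:add almost generic hyperplane} gives $r_{\cA_{i+1}} = x_i\, r_{\cA_i} + (p_i)$ with $\deg p_i = |\cA_i \cap H_i|$ and $x_i, p_i$ a regular sequence. Inspecting the three cases of \Cref{cor:MFR res after adding generic hyperplane} (which rest on \Cref{prop:MFR under BDL}), the mapping-cone resolution of $r_{\cA_{i+1}}$ is again of length one; thus $S/r_{\cA_{i+1}}$ is Cohen--Macaulay, and by induction so is $S/r_\cA$.

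Next I would pin down the Hilbert functions. Here I deliberately avoid the minimal resolution and instead use the basic double link short exact sequence
\[
0 \to S(-|\cA_i \cap H_i|-1) \to S(-|\cA_i \cap H_i|) \oplus r_{\cA_i}(-1) \to r_{\cA_{i+1}} \to 0,
\]
which is exact irrespective of minimality. Passing to Hilbert functions expresses $H_{S/r_{\cA_{i+1}}}$ through $H_{S/r_{\cA_i}}$ and the single integer $|\cA_i \cap H_i|$, the remaining two terms being the known dimensions of graded pieces of $S$. Starting from the known $H_{S/r_{\cA_2}}$, this recursion computes $H_{S/r_\cA}$ explicitly from the numbers $|\cA_i \cap H_i|$, $i = 2,\ldots,d-1$. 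For $\Jac(f_\cA)^{top}$, recall from \Cref{def:residual} and \Cref{prop:Betti top from residual} that $r_\cA$ and $\Jac(f_\cA)^{top}$ are linked by the complete intersection $(f_\cA, \tfrac{\partial f_\cA}{\partial \ell})$ of type $(d-1,d)$ with $d = |\cA|$; since liaison preserves Cohen--Macaulayness in codimension two, $S/\Jac(f_\cA)^{top}$ is Cohen--Macaulay. Its Hilbert function is then forced by $H_{S/r_\cA}$ together with the type $(d-1,d)$: reading off the resolution in \Cref{prop:Betti top from residual} shows that the Hilbert series numerator of $\Jac(f_\cA)^{top}$ equals $t^{d-1} + t^{d} - t^{2d-1}\,N(1/t)$, where $N(t)$ denotes the Hilbert series numerator of $r_\cA$, so it depends only on $H_{S/r_\cA}$ and $d$, both already determined.

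The point requiring care, and the reason the statement asserts only that \emph{Hilbert functions} (not graded Betti numbers) are determined, is the branching in \Cref{cor:MFR res after adding generic hyperplane}: at each added hyperplane the minimal resolution takes one of two shapes according to whether $p_i \in \fm\, r_{\cA_i}$, and which alternative occurs is not detectable from $|\cA_i \cap H_i|$ alone. Routing the computation through the (possibly non-minimal) basic double link sequence sidesteps this entirely, since that sequence, and hence the resulting Hilbert function recursion, sees only the degree $|\cA_i \cap H_i|$ of $p_i$. Making this separation between the robust Hilbert-function data and the fragile Betti-number data explicit is the main conceptual step; the remaining work is the bookkeeping of the binomial coefficients in the recursion.
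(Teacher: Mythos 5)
Your proof is correct and follows essentially the same route as the paper's: induction via the basic double links of \Cref{prop:add almost generic hyperplane}, the (non-minimal) basic double link short exact sequence to drive the Hilbert function recursion, and the liaison by $(f_\cA, \frac{\partial f_\cA}{\partial \ell})$ to transfer both Cohen--Macaulayness and the Hilbert function to $\Jac(f_\cA)^{top}$. Your explicit remark on why only Hilbert functions (not Betti numbers) are determined is a useful clarification that the paper leaves implicit.
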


\begin{proof}
The general residual of $\cA_2$ is a complete intersection, and so we know its Hilbert function. By \Cref{prop:add almost generic hyperplane}, the general residual of a hyperplane arrangement that is obtaining by adding an  almost generic hyperplane to a given arrangement is a specific basic double link of the general residual of the original arrangement. The gives the claimed Cohen-Macaulayness results. For considering Hilbert functions, fixing $i$ and avoiding some indices, one has 
\[
r_{\cA_{i+1}} = x \cdot r_{\cA_i} + (p), 
\]
where $x$ defines $H_i$ and $\deg p = |\cA_i \cap H_i|$. Using the short exact sequence
\[
0 \to S (-1 - \deg p)  \to S (- \deg p) \oplus  r_{\cA_i} (-1)   \to  r_{\cA_{i+1}} \to 0,  
\]
it follows that the Hilbert function of $r_{\cA_{i+1}}$ can be computed from knowing the Hilbert function of $r_{\cA_{i}}$ and $\deg p$. Using this repeatedly, the assertion follows for $r_\cA$. We conclude for the Hilbert function of $\Jac (f_\cA)^{top}$ using a standard fact from liaison theory, see, e.g, \cite[Theorem(iii)]{N-liais-hilb}. 
\end{proof}

\begin{remark} \label{limitations}
\Cref{thm:MFR residual with special assumption} gives an explicit condition on a hyperplane arrangement, which guarantees that it can be built by adding almost generic hyperplanes, and so \Cref{prop:build recursively} applies to it. This follows from the proof of \Cref{thm:MFR residual with special assumption}. However, \Cref{prop:build recursively} is applicable to many arrangements which do not satisfy the assumptions of \Cref{thm:MFR residual with special assumption}. For instance,  this is true for  all the arrangements discussed in  \Cref{exa:three pencils}(ii)-(v). 

Note that a more careful analysis often allows one to determine for an arrangement that can be obtained by adding almost generic hyperplanes not only the Hilbert function but also the graded Betti numbers as illusttrated in  \Cref{exa:three pencils}. 

There are arrangements to which \Cref{prop:build recursively} cannot be applied. One of the first such examples are Fermat arrangements.   In $\PP^2$ these are defined by the polynomial $(x^k-y^k)(y^k-z^k)(z^k-x^k)$. Such an arrangement has multiplicity $k$ at each of the three coordinate points, but also it has triple points at all points of intersection of $(x^k-y^k)$ and $(y^k-z^k)$ since $z^k-x^k$ is in the pencil defined by $(x^k-y^k)$ and $(y^k-z^k)$.  
 Note however, that we can use the tools in this section to analyze many more arrangements than covered by \Cref{prop:build recursively}. For example, using arrangements for which we know the graded Betti numbers of the general residuals as building blocks and using \Cref{prop:liaison add residuals}, one can determine the graded Betti numbers of the general residual of the resulting arrangement. 

For instance, it is well-known that any Fermat arrangement $\cA \subset \PP^2$ is free, and so we know the graded Bettti numbers of $r_\cA$. Add to $\cA$ three generic lines. These three lines meet in three points. Now add almost generic lines passing to one of the three points. 
Denote  by $\cB$ the arrangement formed by the lines added to $\cA$. In \Cref{exa:three pencils}, we determined the graded Betti numbers of $r_\cB$. Hence, \Cref{prop:liaison add residuals} gives the graded Betti numbers of $r_{\cA \cup \cB}$, and so of $\Jac (f_{\cA \cup \cB})$ by \Cref{prop:Betti top from residual}. 
\end{remark}


\section{Resolutions of certain Milnor modules and of Jacobian ideals} 
\label{sec:res Jacobians}

The main result of this section shows that one can read off the graded Betti numbers of the Milnor module and the Jacobian ideal of any line arrangement from the graded Betti numbers of the saturation of the Jacobian ideal. As an application we show that these graded Betti numbers are combinatorially determined for any union of disconnected pencils. 

\begin{theorem}
   \label{thm:Betti from sat}
Let $\cA \subset \PP^2$ be any line arrangement. Write the graded minimal free resolution of $\Jac (f_\cA)^{sat}$ as 
\[
0 \to G \stackrel{\gamma}{\longrightarrow} S^b (-d+1)  \oplus F \to \Jac (f_\cA)^{sat}  \to 0, 
\]
where $d = |\cA|$. If $\cA$ is not free then $b \ge 3$ and the Milnor module  $M$ and the Jacobian ideal have graded minimal free resolutions of the form 
\[
0 \to S^{b-3}(-2d+2)  \oplus F^* (-3d+3)  \to G^* (-3d+3) \to G \to S^{b-3}(-d+1)  \oplus F \to M \to 0 
\]
and 
\[
0 \to S^{b-3}(-2d+2)  \oplus F^* (-3d+3) \to G^* (-3d+3)  \to S^{3}(-d+1) \to \Jac(f_\cA) \to 0. 
\]
\end{theorem}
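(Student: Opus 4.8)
The plan is to deduce both resolutions from the given resolution of $\Jac(f_\cA)^{sat}$ together with the self-duality of the Milnor module, routing everything through the short exact sequence
\[
0\to \Jac(f_\cA)\to \Jac(f_\cA)^{sat}\to M\to 0,
\]
where $M=\Jac(f_\cA)^{sat}/\Jac(f_\cA)$. First I would record the facts I need. Since $\cA$ is a line arrangement, $\Jac(f_\cA)^{sat}$ is the saturated ideal of a zero-dimensional scheme in $\PP^2$, so $S/\Jac(f_\cA)^{sat}$ is Cohen--Macaulay and the displayed resolution of the ideal has length one. By \Cref{thm:initial degree} the three partial derivatives, which minimally generate $\Jac(f_\cA)$, are minimal generators of $\Jac(f_\cA)^{sat}$ of degree $d-1$; since $\cA$ is not free it is in particular not a pencil, so these three forms are $K$-linearly independent and hence $b\ge 3$. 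Finally $M$ is the finite-length module $H^0_{\fm}(S/\Jac(f_\cA))$, so it has projective dimension $3$.

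The heart of the argument is the resolution of $M$. Writing $S^b(-d+1)\oplus F=S^3(-d+1)\oplus\bigl(S^{b-3}(-d+1)\oplus F\bigr)$ with the first summand carrying the three partials, the quotient $M$ is the cokernel of the composite $G\xrightarrow{\gamma}S^b(-d+1)\oplus F\twoheadrightarrow S^{b-3}(-d+1)\oplus F$. A minimal-generator count shows that $P_0:=S^{b-3}(-d+1)\oplus F$ is the free module at the start of the minimal free resolution $0\to P_3\to P_2\to P_1\to P_0\to M\to 0$. Next I would invoke the self-duality of $M$ (\Cref{thm:Milnor duality}): it forces this resolution to be symmetric, $P_{3-i}\cong P_i^{*}(-(3d-3))$. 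In particular
\[
P_3\cong P_0^{*}(-(3d-3))=S^{b-3}(-2d+2)\oplus F^{*}(-3d+3)\qquad\text{and}\qquad P_2\cong P_1^{*}(-(3d-3)),
\]
so the entire resolution is determined once $P_1$ is known.

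To pin down $P_1$ I would prove $P_1\cong G$, i.e.\ that splitting off the three partials causes no cancellation. Concretely, the kernel of $G\to P_0$ is isomorphic to the module of Jacobian syzygies $\Syz(\Jac(f_\cA))$ (those syzygies of $\Jac(f_\cA)^{sat}$ supported only on the partials), and I must show this kernel lies in $\fm\cdot G$; equivalently, no minimal first syzygy of $\Jac(f_\cA)^{sat}$ involves the three partials alone. This is the step I expect to be the main obstacle: it is a purely numerical incompatibility that I would establish by comparing the generator degrees $h_j$ of $G$ (first syzygies of $\Jac(f_\cA)^{sat}$, all of degree $\ge d$ and bounded above through the estimate $\reg r_\cA=d-1$ of \Cref{cor:reg estimate sharp} and the liaison between $\Jac(f_\cA)^{sat}$ and $r_\cA$) with the degrees $3d-3-h_j$ that the established symmetry forces on the minimal Jacobian syzygies, showing that a common minimal element (which would require $h_{j_0}+h_{j_1}=3d-3$) cannot occur. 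Granting $P_1\cong G$, the stated minimal free resolution of $M$ follows, with $P_2\cong G^{*}(-3d+3)$.

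Finally I would obtain the resolution of $\Jac(f_\cA)$ from the same short exact sequence by a mapping-cone / long-exact-$\Tor$ argument. Abbreviating $J=\Jac(f_\cA)$, the long exact sequence $\cdots\to\Tor_{i+1}^S(M,K)\to\Tor_i^S(J,K)\to\Tor_i^S(J^{sat},K)\to\Tor_i^S(M,K)\to\cdots$ together with the fact that $J^{sat}$ has projective dimension one yields $\Tor_2^S(J,K)\cong\Tor_3^S(M,K)$ and $\Tor_0^S(J,K)\cong K^3$ at once, while the connecting map $\Tor_1^S(J^{sat},K)\to\Tor_1^S(M,K)$ is precisely the isomorphism underlying $P_1\cong G$, so that $\Tor_1^S(J,K)\cong\Tor_2^S(M,K)$. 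Reading off the free modules produces
\[
0\to S^{b-3}(-2d+2)\oplus F^{*}(-3d+3)\to G^{*}(-3d+3)\to S^{3}(-d+1)\to \Jac(f_\cA)\to 0,
\]
and this is minimal because the three partials are minimal generators whereas $f_\cA\in\fm\cdot\Jac(f_\cA)$ by the Euler relation and so contributes no extra generator.
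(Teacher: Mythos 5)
Your route differs from the paper's: you try to run everything through the self-duality of the Milnor module $M$ (\Cref{thm:Milnor duality}), whereas the paper never uses that duality as an input. Instead it works with the first syzygy module $E=\Syz(\Jac(f_\cA))$, which is reflexive of rank two, and invokes Hartshorne's $E^*\cong E(3d-3)$; dualizing the four-term sequence $0\to E\to G\to S^{b-3}(-d+1)\oplus F\to M\to 0$ then \emph{identifies} the minimal free resolution of $E$ as $0\to F^*(-3d+3)\oplus S^{b-3}(-2d+2)\to G^*(-3d+3)\to E\to 0$, and the Milnor duality comes out as a corollary. (A side remark if you do want to use \Cref{thm:Milnor duality}: to force the symmetry $P_{3-i}\cong P_i^*(-3d+3)$ via local duality you need $M^\vee\cong M(3d-6)$, i.e.\ the twist $3(d-2)$, so the bookkeeping against the twist $3r$ as printed needs care.)

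The genuine gap is the step you yourself flag: proving $P_1\cong G$, i.e.\ that splicing the resolution of $E$ onto $0\to E\to G\to P_0\to M\to 0$ produces no cancellation (equivalently $E\subseteq\fm G$). Your proposed argument is ``purely numerical'': show that a minimal generator of $E$ and a minimal generator of $G$ can never share a degree, which you translate into the impossibility of $h_{j_0}+h_{j_1}=3d-3$ for generator degrees of $G$. This cannot work, because that coincidence does occur for non-free arrangements where the theorem nevertheless holds. Take $d=9$ lines consisting of a pencil of $5$ concurrent lines together with $4$ further generic lines. By \Cref{cor:MFR with special assumption}, $G=S^4(-9)\oplus S(-12)$, and the (correct) conclusion of \Cref{thm:Betti from sat} gives $P_2=G^*(-24)=S^4(-15)\oplus S(-12)$: both $G$ and the first free module of the resolution of $E$ have a minimal generator in degree $12$, and $12+12=24=3d-3$. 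Consecutive terms of the minimal free resolution of $M$ share a generator degree, yet no splitting occurs. So no degree count can rule out cancellation; one needs a structural argument. The paper supplies it: if the map $P_1\to G$ were non-minimal, a rank-one free summand would split off $E=\operatorname{im}\rho$, its complement would be a rank-one reflexive (hence free) module, so $E$ would be free and $\cA$ free --- contradicting the hypothesis. Without this (or an equivalent) argument your proof of the resolution of $M$, and hence of the resolution of $\Jac(f_\cA)$ which you derive from it, is incomplete.
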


\begin{proof}
If $\Jac(\cA)$ has two minimal generators then it is a complete intersection, and so $\cA$ is a free arrangement; a contradiction to our assumption. Hence, $\Jac(\cA)$ has exactly three minimal generators of degree $d-1$. Thus, \Cref{thm:initial degree} gives $b \ge 3$. 

Define a graded $S$-module $E$ as first syzygy module of $\Jac(\cA)$, and so we have an exact sequence 
\begin{equation}
   \label{eq:def E} 
0 \to E \to S^{3}(-d+1) \to \Jac(\cA) \to 0. 
\end{equation}
Note that $E$ is a reflexive $S$-module of rank two and that the vector bundle $\widetilde{E}$ has first Chern class $-3d+3$. Thus, \cite[Proposition 1.10]{H} implies 
\begin{equation}
   \label{eq:self-dual} 
E^* \cong E (3 d -3). 
\end{equation}

Consider the exact sequence 
\[
0 \to \Jac(f_\cA) \to \Jac (f_\cA)^{sat}  \to M \to 0. 
\]
Taking into account the cancellation due to the fact that the minimal generators of $\Jac(\cA)$ are part of a minimal generating set of 
$\Jac(\cA)^{sat}$, the mapping cone provides the exact sequence
\begin{equation}
    \label{eq:E-type of M}
0 \to E \to  G \stackrel{\delta}{\longrightarrow} S^{b-3}(-d+1)  \oplus F \to M \to 0.  
\end{equation} 
Notice that $\delta$ is a minimal map because $\gamma$ has this property by the assumed minimality of the given resolution of $\Jac(\cA)^{sat}$. 

Consider a graded minimal free resolution of $E$ over $S$, 
\[
0 \to P_2 \to P_1 \to E \to 0. 
\]
Now we dualize the sequence \eqref{eq:E-type of M}. Since $M$ has finite length we obtain the exact sequence 
\[
0 \to F^* \oplus S^{b-3}(d-1) \stackrel{\delta^*}{\longrightarrow} G^* \to E^* \to 0, 
\]
where $\delta^*$ is a minimal map because its dual $\delta$ has this property. Hence, this sequence is a graded minimal free resolution of $E^*$. Using the uniqueness of such a resolution up to isomorphism of complexes and  $E^* \cong E (3 d -3)$, it follows that there are graded isomorphisms
\begin{equation}
  \label{eq:iso}
G^* \cong P_1 (3d-3) \quad \text{ and } \quad F^* \oplus S^{b-3}(d-1) \cong P_2 (3d-3). 
\end{equation}
Splicing the resolution of $E$ and the Sequence \eqref{eq:E-type of M}, we get an exact sequence
\begin{equation}
   \label{eq:free res M} 
0 \to P_2 \to P_1 \stackrel{\rho}{\longrightarrow}  G \stackrel{\delta}{\longrightarrow} S^{b-3}(-d+1)  \oplus F \to M \to 0, 
\end{equation}
which is a graded free resolution of $M$. We claim it is minimal. Otherwise, the map $\rho$ is not minimal, and one can split off a free direct summand, say $S(-t)$. That means, one can write $\rho$ as a direct sum of  a map $\rho' \colon P_1' \to G'$ and an isomorphism 
$\iota \colon S(-t) \to S(-t)$, where $P_1 = P_1' \oplus S(-t)$ and $G = G' \oplus S(-t)$. It follows that 
\[
E = \ker \delta = \im \rho \cong S(-t) \oplus \im \rho'. 
\]
Hence, $\im \rho'$ is a reflexive module of rank one, which implies that it is free, and so $E$ is a free $S$-module as well. This is a contradiction to our assumption that $\cA$ is not a free arrangement. It shows that the Sequence \eqref{eq:free res M} is a graded minimal free resolution of $M$. Taking into account the Isomorphisms \eqref{eq:iso}, our first assertion follows. 

We get the second assertion by splicing the free resolution of $E$ and Sequence \eqref{eq:def E}. 
\end{proof}

\begin{remark}
Note that the isomorphism $E^* \cong E (3 d -3)$ implies an isomorphism $M^{\vee} \cong M (3d-3)$, whose existence is true in greater generality by results of Sernesi \cite{sernesi}, van Straten and Warmt \cite{VsW} and Eisenbud and Ulrich \cite{EU}. 
\end{remark}

We illustrate the above result by a simple example, namely generic line arrangements. 

\begin{example}
    \label{exa:generic line arr}
Let   $\cA \subset \PP^2$ be an arrangements of $d$ lines such that no three lines meet in a point. Then its Jacobian ideal has a graded minimal free resolution of the form 
\[
0 \to S^{d-3} (-2d+2) \to S^{d-1} (-2d+3) \to S^3 (-d+1) \to \Jac(f_\cA) \to 0. 
\]
Indeed, denote by $\ell_1,\ldots,\ell_d$ the linear forms defining the lines so that $f_\cA = \ell_1 \cdots \ell_d$. 
\cite[Corollary 4.2]{MN} gives for the ideal of the saturation  
\[
\Jac(f_\cA)^{sat} =  \left ( \frac{f_\cA}{\ell_1}, \dots, \frac{f_\cA}{\ell_d} \right ).
\]
Its minimal free resolution is of the form 
\[
0 \to S^{d-1} (-d) \to S^d (-d+1) \to \Jac(f_\cA)^{sat} \to 0. 
\]
Thus, the claim follows by \Cref{thm:Betti from sat}. 
\end{example}

Note that for generic arrangements even the maps in the above minimal free resolution have been described independently by Rose and Terao \cite{LT} and Yuzvinski \cite{Y}. 

It is worth noting the following consequence of \Cref{thm:Betti from sat}, where we exclude the easy case of concurrent lines (in which $\Jac (f_\cA)$ is a complete intersection). 

\begin{corollary}
    \label{cor:max reg} 
For any line arrangement $\cA \subset \PP^2$  other than a set of concurrent lines, one has for the Castelnuovo-Mumford regularity 
\[
\reg \Jac (f_\cA) \le 2 |\cA| - 4, 
\]
where equality is true if and only if  one of the following two conditions is satisfied:  
\begin{itemize}

\item[(i)] $\cA$ is free, does not consist of concurrent lines and $\Jac (f_\cA)$ has a linear syzygy. 

\item[(ii)] $\cA$ is not free  and $\Jac (f_\cA)^{sat}$ has at least four independent generators of degree $|\cA| -1$. 
\end{itemize}
\end{corollary}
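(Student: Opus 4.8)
The plan is to compute the Castelnuovo–Mumford regularity directly from an explicit graded minimal free resolution of $\Jac(f_\cA)$, handling the free and the non-free cases separately, since the value $2|\cA|-4$ is attained by genuinely different mechanisms in the two cases. Throughout set $d = |\cA|$, and note that a non-concurrent arrangement with $d \le 3$ is automatically free, so in the non-free case we always have $d \ge 4$.

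First I would dispose of the free case. If $\cA$ is free and not concurrent, then $\Jac(f_\cA) = \Jac(f_\cA)^{sat}$ is Cohen–Macaulay of codimension two with exactly three minimal generators of degree $d-1$ (two generators would force a complete intersection, hence concurrent lines by \Cref{cor:char ci}). Its minimal free resolution therefore has the shape
\[
0 \to S(-d+1-d_1) \oplus S(-d+1-d_2) \to S^3(-d+1) \to \Jac(f_\cA) \to 0,
\]
where $d_1, d_2 \ge 1$ satisfy $d_1 + d_2 = d-1$. Reading off the regularity gives $\reg \Jac(f_\cA) = (d-2) + \max\{d_1,d_2\}$, and since $\min\{d_1,d_2\} \ge 1$ forces $\max\{d_1,d_2\} \le d-2$, the bound $\reg \Jac(f_\cA) \le 2d-4$ follows, with equality exactly when $\min\{d_1,d_2\} = 1$, i.e. when $\Jac(f_\cA)$ has a linear syzygy. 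This is precisely condition~(i).

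For the non-free case I would invoke \Cref{thm:Betti from sat}, which presents the minimal free resolution
\[
0 \to S^{b-3}(-2d+2) \oplus F^*(-3d+3) \to G^*(-3d+3) \to S^3(-d+1) \to \Jac(f_\cA) \to 0,
\]
where $\Jac(f_\cA)^{sat}$ is resolved by $0 \to G \to S^b(-d+1) \oplus F \to \Jac(f_\cA)^{sat} \to 0$ with $F = \bigoplus_k S(-a_k)$, all $a_k \ge d$, and $G = \bigoplus_j S(-c_j)$. Here $b = \dim_K [\Jac(f_\cA)^{sat}]_{d-1}$ is the number of degree $d-1$ generators of the saturation, so condition~(ii) reads $b \ge 4$. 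Computing the regularity term by term: the generators contribute $d-1$; the summand $S^{b-3}(-2d+2)$ contributes $2d-4$ (and is present iff $b \ge 4$); the summand $F^*(-3d+3)$ contributes $3d-5-\min_k a_k \le 2d-5$; and $G^*(-3d+3)$ contributes $3d-4-\min_j c_j$. Since the syzygy degrees satisfy $c_j \ge d$, every term is $\le 2d-4$, giving the bound, and when $b \ge 4$ the term $S^{b-3}(-2d+2)$ makes it sharp.

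The delicate point, and the step I expect to be the main obstacle, is to show that in the non-free case equality can occur \emph{only} through the $S^{b-3}(-2d+2)$ term, i.e. that when $b = 3$ the summand $G^*(-3d+3)$ cannot also reach regularity $2d-4$; this would require a linear syzygy $\min_j c_j = d$ of $\Jac(f_\cA)^{sat}$. I would rule this out by exploiting that $G$ is \emph{simultaneously} the first syzygy module of $\Jac(f_\cA)^{sat}$ and the first syzygy module of the Milnor module $M$ in the minimal resolution of \Cref{thm:Betti from sat}. When $b = 3$ the term $S^{b-3}(-d+1)$ vanishes, so $M$ is minimally generated only in degrees $a_k \ge d$; minimality of the resolution then forces each syzygy degree to exceed the least generator degree of $M$, whence $\min_j c_j \ge d+1$. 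Consequently for $b = 3$ the summand $G^*(-3d+3)$ contributes at most $2d-5$, and together with the other estimates (and $d \ge 4$) we obtain $\reg \Jac(f_\cA) \le 2d-5 < 2d-4$. Combining the two cases then yields the bound $\reg \Jac(f_\cA) \le 2d-4$ together with the equality characterization: in the free case equality is condition~(i), and in the non-free case equality holds iff $b \ge 4$, which is condition~(ii). As a consistency check I would verify the generic arrangement of \Cref{exa:generic line arr}, where $b = d \ge 4$ and the displayed resolution indeed gives $\reg \Jac(f_\cA) = 2d-4$.
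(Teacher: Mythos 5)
Your proposal is correct and takes essentially the same route as the paper: the free case is handled by the standard shape of the length-one resolution with syzygy degrees summing to $2|\cA|-2$, and the non-free case by reading the regularity off the resolution in \Cref{thm:Betti from sat}, where the only point needing care is that $b=3$ forces $a(G)\ge |\cA|+1$. Your justification of that point — using that $G$ sits in homological degree one of the \emph{minimal} resolution of the Milnor module, whose generators all have degree at least $|\cA|$ when the summand $S^{b-3}(-d+1)$ vanishes — is exactly the argument the paper compresses into the phrase ``we may assume $a(F)\ge d$ and so $a(G)\ge d+1$ if $b=3$.''
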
 

\begin{proof}
If $\cA$ is free this is well-known and follows from properties of minimal free resolutions. If $\cA$ is not free the assertion is implied by   \Cref{thm:Betti from sat} by noting, using its notation,  that we may assume $a(F) \ge d$ and so  $a(G) \ge d+1$ if $b = 3$. 
\end{proof}

\begin{remark}
The inequality $\reg \Jac (f_\cA) \le 2 |\cA| - 4$ was originally shown by Schenck using different methods \cite{Schenck}. The characterization of equality is new. 
\end{remark}

Using \Cref{cor:MFR with special assumption} it follows that the above graded Betti numbers are combinatorially determined for any union of disconnected pencils. 
Recall that $S(\cA)$ and $t_P$ were introduced above \Cref{prop:jac = union}.

\begin{theorem}
        \label{thm:Betti disconnected pencils} 
Let $\cA \subset \PP^2$ be an  arrangement of $d$ lines. Denote by 
        \[
    S_0 (\cA) = \{ P \in S(\cA) \; \mid \; t_P \ge 3\}
    \]
the set of points $P \in \PP^2$ that are contained in at least three distinct lines of $\cA$. Assume that no line in $\cA$ contains two or more elements of $S_0(\cA)$. Denote by $s$  the number of lines of $\cA$ that do not contain any $P \in S_0(A)$. Then one has: 
\begin{itemize}

\item[{\rm (a)}] If $\cA$ consists of $d$ concurrent lines then it is free and the graded minimal free resolution of its Jacobian ideal has the form 
\[
0 \to S(-2d+2)  \to S^2 (-d+1) \to \Jac (f_\cA) \to 0. 
\]

\item[{\rm (b)}] If $\cA$ consists of $d-1$ lines through a point $P \in \PP^2$ and one line that does not contain $P$ then it is free and the graded minimal free resolution of $Jac (f_\cA)$ has the form 
\[
0 \to S(-2d+3) \oplus S(-d) \to S^3 (-d+1) \to \Jac (f_\cA) \to 0. 
\]

\item[{\rm (c)}] If $\cA$ does not satisfy the condition in either (a) or (b) then $\cA$ is not  free and the Milnor module  $M$ and the Jacobian ideal have graded minimal free resolutions of the form 
\begin{align*} 
\hspace{13cm}&\hspace{-13cm}
0 \to S^{2 |S_0 (\cA)| + s-3}(-2d+2)   \to 
\begin{array}{c}
S^{ |S_0 (\cA)| + s-1} (-2d + 3) \\
\ \ \ \ \ \oplus \\[3pt]
{\displaystyle \bigoplus_{P \in S_0 (\cA)} S(-2 d+t_P -1) }
\end{array} 
\to  \\
\begin{array}{c}
S^{ |S_0 (\cA)| + s-1} (-d) \\
\ \ \ \ \ \oplus \\
{\displaystyle \bigoplus_{P \in S_0 (\cA)} S(-d-t_P +2) }
\end{array} 
\to S^{2 |S_0 (\cA)| + s-3}(-d+1)   \to M \to 0 
\end{align*}
and 
\begin{align*} 
\hspace{13cm}&\hspace{-13cm}
0 \to S^{2 |S_0 (\cA)| + s-3}(-2d+2)   \to 
\begin{array}{c}
S^{ |S_0 (\cA)| + s-1} (-2d + 3) \\
\ \ \ \ \ \oplus \\[3pt]
{\displaystyle \bigoplus_{P \in S_0 (\cA)} S(-2 d+t_P + 1) }
\end{array}   
\to \\
S^{3}(-d+1) \to \Jac(f_\cA) \to 0. 
\end{align*}
\end{itemize}

\end{theorem}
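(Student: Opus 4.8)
The plan is to reduce the entire statement to two inputs that are already available: the minimal free resolution of $\Jac(f_\cA)^{top}=\Jac(f_\cA)^{sat}$ (these coincide for line arrangements) furnished by \Cref{cor:MFR with special assumption}, and the passage from the resolution of the saturation to those of $M$ and $\Jac(f_\cA)$ provided by \Cref{thm:Betti from sat}. The hypothesis here is exactly the ``union of disconnected pencils'' condition of \Cref{thm:MFR residual with special assumption}, so \Cref{cor:MFR with special assumption} applies and gives
\[
0 \to S^{|S_0(\cA)| + s - 1}(-d) \oplus \bigoplus_{P \in S_0(\cA)} S(-d - t_P + 2) \to S^{b}(-d+1) \to \Jac(f_\cA)^{top} \to 0,
\]
where $b = 2|S_0(\cA)| + s$ is the number of minimal generators, all of which sit in the single degree $d-1$. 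The whole theorem is then organized by the value of $b$: $b=2$ is case (a), $b=3$ is case (b), and $b\ge 4$ is case (c). Since $\Jac(f_\cA)$ has either $2$ or $3$ minimal generators (all of degree $d-1$), this is where the trichotomy comes from.

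The freeness dichotomy is settled by comparing $\Jac(f_\cA)$ and $\Jac(f_\cA)^{top}$ in degree $d-1$. By \Cref{thm:initial degree} the partials of $f_\cA$ are minimal generators of $\Jac(f_\cA)^{top}$, and $\dim_K[\Jac(f_\cA)]_{d-1}=3$ unless the three partials are $K$-linearly dependent, which happens exactly for concurrent lines; that is case (a), where $\Jac(f_\cA)$ is a saturated complete intersection of type $(d-1,d-1)$ by \Cref{cor:char ci}, yielding the Koszul resolution in (a) directly. The resolution above shows $\dim_K[\Jac(f_\cA)^{top}]_{d-1}=b$ (its syzygies start in degree $d$, since $t_P\ge 3$), so $\Jac(f_\cA)=\Jac(f_\cA)^{top}$ precisely when $b=3$: the two ideals then agree in degree $d-1$ and $\Jac(f_\cA)^{top}$ is generated there, forcing equality and hence freeness. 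In that case the displayed resolution specializes (taking $|S_0(\cA)|=1$, $s=1$, $t_P=d-1$, or $|S_0(\cA)|=0$, $s=3$ when $d=3$) to the resolution asserted in (b). When $b\ge 4$ the strict inequality $3<b$ in degree $d-1$ gives $\Jac(f_\cA)\subsetneq\Jac(f_\cA)^{sat}$, so $\cA$ is not free, which is exactly case (c).

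For case (c) I would feed the resolution above into \Cref{thm:Betti from sat}, matching $F=0$, the generator module $S^{b}(-d+1)$, and $G = S^{|S_0(\cA)|+s-1}(-d) \oplus \bigoplus_{P} S(-d - t_P + 2)$. A direct computation of the dual $G^{*}(-3d+3)$, which is the common middle free module of both resolutions produced by \Cref{thm:Betti from sat}, together with the outer twists $S^{b-3}(-2d+2)$, then yields the claimed resolutions of $M$ and of $\Jac(f_\cA)$. The only genuine labor, and the step I would check most carefully, is keeping every grading shift straight while dualizing and twisting $G$; I would guard against sign errors by verifying that the resulting resolution of $M$ is self-dual up to the twist $3d-3$, as demanded by the Milnor duality $M \cong [M(3d-3)]^{\vee}$ of \Cref{thm:Milnor duality}. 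That self-duality check is the natural consistency test for the bookkeeping and is the main obstacle to getting the twists exactly right.
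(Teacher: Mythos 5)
Your proposal is correct and follows essentially the same route as the paper: \Cref{cor:MFR with special assumption} gives the resolution of $\Jac (f_\cA)^{sat}$ with $b = 2|S_0(\cA)|+s$ generators, all of degree $d-1$; the cases $b=2$ and $b=3$ force $\Jac (f_\cA) = \Jac (f_\cA)^{sat}$ and yield (a) and (b), while $b \ge 4$ gives non-freeness and reduces (c) to \Cref{thm:Betti from sat} with $F = 0$. One remark: your planned self-duality consistency check is well chosen, because substituting $G^*(-3d+3)$ yields $\bigoplus_{P} S(-2d+t_P+1)$ in the resolution of $M$ (matching the summand in the Jacobian resolution and the duality $M \cong [M(3d-3)]^\vee$, and consistent with \Cref{exa:res for three pencils}(i)), whereas the printed statement has $\bigoplus_{P} S(-2d+t_P-1)$ there, which fails that duality and appears to be a typo.
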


\begin{proof} 
Note that in Case (a), one has $|S_0 (\cA)| = 1$ and $s=0$. Hence,  \Cref{cor:MFR with special assumption} shows that $\Jac(f_\cA)^{sat}$ is minimally generated by two forms of degree $d-1$, and so it is equal to $\Jac(f_\cA)$. 

In Case (b), one has $|S_0 (\cA)| = 1$ and $s=1$. Thus,  \Cref{cor:MFR with special assumption} shows that $\Jac(f_\cA)^{sat}$ is minimally generated by three forms of degree $d-1$. So, again it equals $\Jac(f_\cA)$ and \Cref{cor:MFR with special assumption} gives its minimal free resolution. 

Note that any arrangement of at most three lines satisfies the condition of case (a) or (b). Assuming that neither of these conditions is satisfied by $\cA$ we get that $2 |S_0 (\cA)|  + s \ge 4$, and so  \Cref{cor:MFR with special assumption} shows that $\Jac(f_\cA)$ is not a  saturated ideal, that is, $\cA$ is not free. Furthermore, \Cref{cor:MFR with special assumption} gives all graded Betti numbers of $\Jac (f_\cA)^{sat}$. 
We now conclude using \Cref{thm:Betti from sat}. 
\end{proof}

We conclude this section by providing resolutions in cases that are mostly not covered by the above result. 

\begin{example}
    \label{exa:res for three pencils} 
Using the notation of \Cref{exa:three pencils}, we consider an arrangement $\cA \subset \PP^2$ consisting of three pencils of lines. 
In each case, we determine the graded Betti numbers of $\Jac( f_\cA)$ and the Milnor module $M_\cA = \Jac( f_\cA)^{sat}/\Jac( f_\cA)$ using the results of  \Cref{exa:three pencils} as well as \Cref{thm:Betti from sat}. 

(i) Assume no two centers are connected. Then \Cref{thm:Betti disconnected pencils}(c) gives 
\[
0 \to S^3 (-2d+2) \to 
\begin{array}{c}
S^2 (-2d+3) \\
\ \ \ \ \ \oplus \\
S(-2d+a +1) \\
\oplus \\
S(-2d+b +1) \\
\oplus \\
S(-2d+c +1)
\end{array} 
\to 
\begin{array}{c}
S^2 (-d) \\
\ \ \ \ \ \oplus \\
S(-d-a +2) \\
\oplus \\
S(-d-b +2) \\
\oplus \\
S(-d-c +2)
\end{array} 
\to S^3(-d+1) \to M_\cA \to 0
\]
and 
\[
0 \to S^3 (-2d+2) \to 
\begin{array}{c}
S^2 (-2d+3) \\
\ \ \ \ \ \oplus \\
S(-2d+a +1) \\
\oplus \\
S(-2d+b +1) \\
\oplus \\
S(-2d+c +1)
\end{array} 
\to 
 S^3(-d+1) \to \Jac (f_\cA) \to 0.
\]

(ii) Assume that exactly two centers are connected. Combining \Cref{exa:three pencils}(ii) and \Cref{thm:Betti from sat}, we get
\[
0 \to S^2 (-2d+2) \to 
\begin{array}{c}
S (-2d+3) \\
\ \ \ \ \ \oplus \\
S(-2d+a +2) \\
\oplus \\
S(-2d+b +2) \\
\oplus \\
S(-2d+c +1)
\end{array} 
\to 
\begin{array}{c}
S (-d) \\
\ \ \ \ \ \oplus \\
S(-d-a +1) \\
\oplus \\
S(-d-b +1) \\
\oplus \\
S(-d-c +2)
\end{array} 
\to S^2(-d+1) \to M_\cA \to 0 
\]
and 
\[
0 \to S^2 (-2d+2) \to 
\begin{array}{c}
S (-2d+3) \\
\ \ \ \ \ \oplus \\
S(-2d+a +2) \\
\oplus \\
S(-2d+b +2) \\
\oplus \\
S(-2d+c +1)
\end{array} 
\to 
 S^3(-d+1) \to \Jac (f_\cA) \to 0.
\]

(iii) Assume the three centers are connected by two distinct lines. In this case,  \Cref{exa:three pencils}(iii) and \Cref{thm:Betti from sat} yield
\[
0 \to S (-2d+2) \to 
\begin{array}{c}
S(-2d+a +2) \\
\oplus \\
S(-2d+b +3) \\
\oplus \\
S(-2d+c +2)
\end{array} 
\to 
\begin{array}{c}
S(-d-a +1) \\
\oplus \\
S(-d-b ) \\
\oplus \\
S(-d-c +1)
\end{array} 
\to S(-d+1) \to M_\cA \to 0 
\]
and 
\[
0 \to S (-2d+2) \to 
\begin{array}{c}
S(-2d+a +2) \\
\oplus \\
S(-2d+b +3) \\
\oplus \\
S(-2d+c +2)
\end{array} 
\to 
 S^3(-d+1) \to \Jac (f_\cA) \to 0.
\]

(iv) Assume the three centers are connected by one line. Then \Cref{exa:three pencils}(iv) and \Cref{thm:Betti from sat} give
\[
0 \to S (-2d+2) \to 
\begin{array}{c}
S(-2d+a +2) \\
\oplus \\
S(-2d+b +2) \\
\oplus \\
S(-2d+c +2)
\end{array} 
\to 
\begin{array}{c}
S(-d-a +1) \\
\oplus \\
S(-d-b+1 ) \\
\oplus \\
S(-d-c +1)
\end{array} 
\to S(-d+1) \to M_\cA \to 0 
\]
and 
\[
0 \to S (-2d+2) \to 
\begin{array}{c}
S(-2d+a +2) \\
\oplus \\
S(-2d+b +2) \\
\oplus \\
S(-2d+c +2)
\end{array} 
\to 
 S^3(-d+1) \to \Jac (f_\cA) \to 0.
\]

(v) Finally, assume the three centers are connected by three distinct lines. Then \Cref{exa:three pencils}(v) and \Cref{thm:Betti from sat} provide
\[
0 \to S (-2d+3) \to 
\begin{array}{c}
S(-2d+a +3) \\
\oplus \\
S(-2d+b +3) \\
\oplus \\
S(-2d+c +3)
\end{array} 
\to 
\begin{array}{c}
S(-d-a) \\
\oplus \\
S(-d-b ) \\
\oplus \\
S(-d-c)
\end{array} 
\to S(-d) \to M_\cA \to 0 
\]
and 
\[
0 \to S (-2d+3) \to 
\begin{array}{c}
S(-2d+a +3) \\
\oplus \\
S(-2d+b +3) \\
\oplus \\
S(-2d+c +3)
\end{array} 
\to 
 S^3(-d+1) \to \Jac (f_\cA) \to 0.
\]

\end{example}


\section{ A freeness criterion} 
\label{sec: freeness criterion}
In this section, we consider only line arrangements. By combining earlier results, we establish a freeness criterion for line arrangements.  In particular, it shows that a line arrangement is free if and only adding a certain point to its general residual gives a zero-dimensional subscheme that is the intersection of two hypersurfaces.


We recall the following result of Kreuzer. 

\begin{theorem}[{\cite[Theorem 1.1]{K}}] 
    \label{thm:kreuzer} 
A $0$-dimensional subscheme $X \subset \PP^n$ is arithmetically Gorenstein if and only if there is some  integer $a_X \ge 0$ such that $h_X (j) = \deg X - h_X (a_X - j)$ for any integer $j$ and, for any subscheme $Y \subset X$ with degree $-1 + \deg X$, one has $h_Y (a_X) = \deg X  - 1$. 
\end{theorem}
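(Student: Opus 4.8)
The plan is to reduce the statement to a property of the Artinian reduction and to read off the Gorenstein condition from its socle. Fix a general linear form $\ell \in [S]_1$; since $X$ is $0$-dimensional, $B = S/I_X$ is a $1$-dimensional Cohen--Macaulay ring and $\ell$ is a nonzerodivisor on $B$, so that $A = B/\ell B$ is a graded Artinian $K$-algebra with Hilbert function $h_A = \Delta h_X$, the first difference of $h_X$. As a $0$-dimensional scheme with saturated ideal is automatically arithmetically Cohen--Macaulay, $X$ is arithmetically Gorenstein if and only if $A$ is Gorenstein, i.e.\ if and only if $\dim_K \Soc(A) = 1$. The whole proof amounts to matching conditions~(1) and~(2) with, respectively, symmetry of $h_A$ and concentration of $\Soc(A)$ in its top degree, which I would prove for both implications simultaneously.

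First I would translate condition~(1). Writing $\delta = \deg X$, the relation $h_X(j) = \delta - h_X(a_X - j)$ forces $h_X(a_X) = \delta - 1$, $h_X(m) = \delta$ for $m > a_X$, and (via $j=1$, using $h_X(1)\ge 1$) $h_X(a_X - 1) < \delta$; hence $\reg B = a_X + 1 =: s$, and a short computation shows that~(1) is equivalent to the symmetry $h_A(i) = h_A(s-i)$. In particular $h_A(s) = \delta - (\delta-1) = 1$, so the top-degree piece $[A]_s$ is a one-dimensional subspace of $\Soc(A)$. Thus condition~(1) is exactly the statement that $A$ has symmetric Hilbert function with one-dimensional top socle, the classical necessary condition for Gorensteinness; what remains is to rule out socle in degrees below $s$, which is the content of condition~(2).

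Next I would identify condition~(2) with the Cayley--Bacharach property. For a subscheme $Y \subset X$ with $\deg Y = \delta - 1$, the $B$-module $I_Y/I_X$ has Hilbert function taking only the values $0$ and $1$ and nondecreasing, jumping from $0$ to $1$ at a single \emph{separator degree} $\sigma_Y$; one always has $\sigma_Y \le s$, since $h_Y(s) \le \delta - 1 < \delta = h_X(s)$ forces $[I_X]_s \subsetneq [I_Y]_s$. Consequently $h_Y(a_X) = \delta - 1$ holds if and only if there is no separator of degree $\le a_X = s-1$, i.e.\ $\sigma_Y = s$. So condition~(2) says precisely that every colength-one subscheme of $X$ has its minimal separator in the top degree $s$ -- the Cayley--Bacharach property. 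The heart of the matter is then a dictionary between separators and socle: I would show that for general $\ell$ a minimal separator of degree $i < s$ descends to a nonzero element of $\Soc(A)$ in degree $i$, and conversely every socle element of $A$ in degree $i < s$ lifts to a separator of some colength-one $Y$. Granting this, $\Soc(A)$ is concentrated in degree $s$ if and only if~(2) holds, so that~(1) and~(2) together are equivalent to $\Soc(A) = [A]_s$ being one-dimensional, i.e.\ to $A$ Gorenstein, i.e.\ to $X$ arithmetically Gorenstein.

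I expect the separator--socle dictionary to be the main obstacle. Making it precise requires: (a) the rank-one analysis of $I_Y/I_X$ and the well-definedness of $\sigma_Y$ for a possibly non-reduced $X$, where "evaluation at the removed point" must be replaced by the colength-one quotient structure; (b) checking that the hyperplane section by $\ell$ neither creates nor destroys the relevant socle degrees, i.e.\ that $x_j f \in I_X + \ell S$ for every variable $x_j$ when $f$ is a minimal separator, and that a socle representative lifts past $\ell$ without lowering its degree; and (c) matching the degree bookkeeping with the canonical module, whose minimal generators compute the Cohen--Macaulay type $\dim_K \Soc(A)$ through local duality ($\operatorname{Ext}^{1}_S$ and $H^{1}_{\fm}$). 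Once the generality of $\ell$ is invoked to guarantee these, the equivalence follows formally.
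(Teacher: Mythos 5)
The paper does not prove this statement; it is quoted verbatim from Kreuzer \cite{K}, so there is no internal proof to compare against. Evaluating your proposal on its own merits: the framework (Artinian reduction $A=B/\ell B$, condition (1) $\Leftrightarrow$ symmetry of $h_A$ with $h_A(s)=1$ where $s=a_X+1$, condition (2) $\Leftrightarrow$ the Cayley--Bacharach property via separator degrees) is sound, and your direction ``minimal separator of degree $i<s$ descends to a nonzero socle element of $A$ in degree $i$'' is correct and does prove the implication (arithmetically Gorenstein $\Rightarrow$ (1) and (2)).

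The gap is in the converse half of your separator--socle dictionary, and it is exactly the half you need for the hard implication. It is \emph{false} that every socle element of $A$ in degree $i<s$ comes from a separator of a colength-one subscheme: take $X$ to be $7$ general points in $\PP^2$, so $h_X=(1,3,6,7,7,\dots)$, $s=3$, $h_A=(1,2,3,1)$. Here $X$ satisfies the Cayley--Bacharach condition (every $6$-point subscheme $Y$ has $h_Y(2)=6$), yet the minimal free resolution is $0\to S(-4)\oplus S(-5)\to S(-3)^3\to I_X\to 0$, so $\Soc(A)$ has a nonzero element in degree $2=s-1$; this element cannot lift to a separator of degree $\le 2$ because no conic passes through $6$ general points. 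Thus ``(2) $\Leftrightarrow$ $\Soc(A)$ concentrated in degree $s$'' fails, and your proof of ((1) and (2) $\Rightarrow$ Gorenstein), which deduces $\Soc(A)=[A]_s$ from (2) alone and then uses (1) only to get $\dim_K[A]_s=1$, collapses. The repair is to make the two hypotheses interact, most naturally through the canonical module $\omega_B$: symmetry of $h_X$ is equivalent to $h_{\omega_B}(j)=h_B(j+s-1)$ for all $j$ and in particular forces $\dim_K[\omega_B]_{1-s}=\deg X-h_X(s-1)=1$; the Cayley--Bacharach condition is then exactly the statement that the distinguished generator $\varphi_0$ of $[\omega_B]_{1-s}$ has trivial annihilator (a zero ``coordinate'' of $\varphi_0$ at a primary component is the same as a separator of degree $\le s-1$ there); hence $\cdot\varphi_0\colon B\to\omega_B(1-s)$ is injective, and equality of Hilbert functions makes it an isomorphism, so $B$ is Gorenstein. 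Without routing the symmetry hypothesis through this step (or an equivalent one), the argument does not close.
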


The conditions on the subschemes $Y$ is the statement is exactly saying that $X$ satisfies the Cayley-Bacharach property.

By \Cref{cor:char ci}, we know that the Jacobian of a line arrangement is a complete intersection if and only if the lines are concurrent. Thus, it is harmless to rule out this case in the following definition and theorem. Recall that an arrangement  $\cA$ is said to be free with \emph{exponents} $e_1$ and $e_2$ if the syzygy module of $\Jac (f_\cA)$ is a free $S$-module with two generators of degree $d - 1 + e_1$ and $d- 1 + e_2$, respectively.

\begin{theorem}
     \label{thm:char freeness lines}
Let $\cA$ be a set of $d$ distinct lines of $\PP^2$. 
Assume the lines in $\cA$ are not concurrent. 
Let $\ell$ be a general linear form and let $r_\cA$ be the general residual with respect to $\ell$.
For each point $P \in S(\cA)$,  consider the ideal 
\[
\fa_P = (\ell_P, I_P^{t_P - 2})  \cap \bigcap_{P' \in S(\cA) - P} (\ell_{P'}, I_{P'}^{t_{P'} - 1} ). 
\]
Define the ideal 
\[
I = r_\cA \cap I_{\ell^\vee}.
\] 
The following conditions are equivalent:
\begin{itemize}

\item[(a)] The line arrangement determined by $\cA$ is free with exponents $a-1$ and $b-1$. 

\item[(b)] The ideal $I$ 
is generated by two forms of degree $a$ and $b$ with $a + b = d+1$ and $a, b \ge 2$. 

\item[(c)] The $h$-vector of $S/I$ is symmetric, i.e., for any integer $j$, the Hilbert function of $S/I$ satisfies 
\[
h_{S/I} (d-2-j)  = \deg I -  h_{S/I} (j),  
\]
and, for any point $P \in S(\cA)$, one has 
\[
h_{S/\fa_P \cap I_{\ell^\vee}}  (d-2) = \deg I - 1.  
\]
\end{itemize}

\end{theorem}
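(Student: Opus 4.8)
The plan is to prove the chain $(a)\Leftrightarrow(b)\Leftrightarrow(c)$, exploiting throughout that for a line arrangement the relevant scheme is zero-dimensional: $\Jac(f_\cA)^{top}=\Jac(f_\cA)^{sat}$ and $S/\Jac(f_\cA)^{top}$ is Cohen--Macaulay, so by the liaison of \Cref{prop:decomposition of residual} the ring $S/r_\cA$ is Cohen--Macaulay too, and by the exact sequence \eqref{eq:SES} so is $S/I$. Thus $r_\cA$ and $I$ are saturated ideals of zero-dimensional schemes, $X(r_\cA)$ is supported on $S(\cA)$, and $X(I)=X(r_\cA)\sqcup\{\ell^\vee\}$ with $\deg I=\deg r_\cA+1$. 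Since $\cA$ is not concurrent, \Cref{cor:char ci} gives that $\Jac(f_\cA)$ has exactly three minimal generators of degree $d-1$, and by \Cref{thm:initial degree} these are among the minimal generators of $\Jac(f_\cA)^{top}$; hence freeness is equivalent to $\nu(\Jac(f_\cA)^{top})=3$. Finally \Cref{prop:Betti top from residual} together with the Hilbert--Burch theorem yields $\nu(\Jac(f_\cA)^{top})=\nu(r_\cA)$, so \emph{$\cA$ is free if and only if $\nu(r_\cA)=3$}.

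For $(a)\Leftrightarrow(b)$ I would argue through $\nu(r_\cA)$. By \Cref{cor:gens of residual}, $r_\cA=(g_1,\dots,g_s,\tfrac{\partial f_\cA}{\partial\ell})$ with $g_i\in I_{\ell^\vee}$ and $I=(g_1,\dots,g_s,\tfrac{\partial f_\cA}{\partial\ell}\cdot I_{\ell^\vee})$, so $\nu(r_\cA)=s+1$. If (b) holds, then $r_\cA=(I,\tfrac{\partial f_\cA}{\partial\ell})$ is generated by at most three forms, whence $\nu(r_\cA)=3$ and $\cA$ is free by the previous paragraph. Conversely, if $\cA$ is free then $\nu(r_\cA)=3$, i.e. $s=2$; feeding the free resolution of $\Jac(f_\cA)=\Jac(f_\cA)^{top}$ (with syzygies in degrees $d+a-2$ and $d+b-2$, forcing $a+b=d+1$, and $a,b\ge 2$ because the exponents of a free arrangement are positive) through the liaison of \Cref{prop:Betti top from residual} shows $\deg g_1=a$ and $\deg g_2=b$. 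A Hilbert--Burch degree count then gives $\deg(g_1,g_2)=ab=\deg r_\cA+1=\deg I$; since $(g_1,g_2)\subseteq I$ are both saturated of codimension two, equality of degrees forces $I=(g_1,g_2)$, which is (b).

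For $(b)\Leftrightarrow(c)$ I would invoke \Cref{thm:kreuzer} together with the classical fact that a codimension-two arithmetically Gorenstein scheme is a complete intersection. Condition (b) says exactly that $X(I)$ is a complete intersection of type $(a,b)$ with $a+b=d+1$, equivalently that $X(I)$ is arithmetically Gorenstein of socle degree $a+b-3=d-2$; the symmetry $h_{S/I}(d-2-j)=\deg I-h_{S/I}(j)$ in (c) is precisely the numerical symmetry of \Cref{thm:kreuzer} with $a_X=d-2$. It then remains to match the Cayley--Bacharach part. The key observation is that, locally at each $P\in S(\cA)$, the component $(\ell_P,I_P^{t_P-1})$ of $r_\cA$ is a curvilinear scheme of length $t_P-1$ lying on the line $V(\ell_P)$, hence has a \emph{unique} colength-one subscheme, cut out by $(\ell_P,I_P^{t_P-2})$. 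Consequently the degree $\deg I-1$ subschemes of $X(I)$ are exactly $X(\fa_P\cap I_{\ell^\vee})$ for $P\in S(\cA)$ (reducing the multiplicity at one $P$) together with $X(r_\cA)$ (deleting $\ell^\vee$). The Cayley--Bacharach condition at $a_X=d-2$ for the first family is the displayed equation $h_{S/\fa_P\cap I_{\ell^\vee}}(d-2)=\deg I-1$, whereas for $X(r_\cA)$ it reads $h_{S/r_\cA}(d-2)=\deg I-1$; evaluating the symmetry at $j=d-2$ and using $h_{S/I}(d-2)=h_{S/r_\cA}(d-2)$ from \Cref{prop:add general point}(b) shows that last condition is automatic once the $h$-vector is symmetric. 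Thus (c) is equivalent to $X(I)$ satisfying both hypotheses of \Cref{thm:kreuzer} with $a_X=d-2$, i.e. to (b). For $(c)\Rightarrow(b)$ one also records $a,b\ge 2$, since a linear generator of $I$ would force $S(\cA)\cup\{\ell^\vee\}$ to be collinear, impossible for general $\ell^\vee$.

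The main obstacle will be the Cayley--Bacharach bookkeeping in $(b)\Leftrightarrow(c)$: one must verify carefully that the local components of $r_\cA$ are curvilinear, so that the colength-one subschemes are precisely enumerated by the ideals $\fa_P$, and that deleting the auxiliary point $\ell^\vee$ contributes no condition independent of the symmetry of the $h$-vector. The degree computation underlying $(a)\Rightarrow(b)$ --- tracking the generator and syzygy degrees of $r_\cA$ through liaison to conclude $\deg(g_1,g_2)=\deg I$ --- is routine but must be carried out consistently with the relation $a+b=d+1$.
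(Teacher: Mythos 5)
Your overall architecture matches the paper's. The $(b)\Leftrightarrow(c)$ step is essentially the argument in the text: apply \Cref{thm:kreuzer}, use that a codimension two arithmetically Gorenstein scheme is a complete intersection, enumerate the colength-one subschemes of $V(I)$ as those cut out by the ideals $\fa_P \cap I_{\ell^\vee}$ and by $r_\cA$, and observe that the Cayley--Bacharach condition for the subscheme $V(r_\cA)$ is absorbed by the symmetry of the $h$-vector via $h_{S/I}(d-2)=h_{S/r_\cA}(d-2)$ from \Cref{prop:add general point}(b). Your explicit curvilinearity remark (each component $(\ell_P, I_P^{t_P-1})$ is a complete intersection of type $(1,t_P-1)$ on the line $V(\ell_P)$, hence has a unique colength-one subscheme) is a welcome justification of a step the paper leaves implicit. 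Your $(b)\Rightarrow(a)$ direction, reducing freeness to $\nu(r_\cA)=3$ via $\nu(\Jac(f_\cA)^{top})=\nu(r_\cA)$ and \Cref{thm:initial degree}, is a legitimate streamlining of the paper's mapping-cone computation, although you should still track the generator degrees through \Cref{prop:Betti top from residual} to recover the specific exponents $a-1$, $b-1$ demanded by the statement.

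The genuine gap is in $(a)\Rightarrow(b)$. You assert that ``a Hilbert--Burch degree count gives $\deg(g_1,g_2)=ab=\deg I$,'' but this presupposes that the two lowest-degree minimal generators $g_1,g_2$ of $I$ form a regular sequence; two of the three maximal minors of a Hilbert--Burch matrix need not do so (already for the three coordinate points, $(x_0x_1,x_0x_2,x_1x_2)$, any two generators share a linear factor). If $g_1$ and $g_2$ have a common divisor, $(g_1,g_2)$ has codimension one and the degree comparison with $I$ collapses. Excluding this is the crux of the paper's proof: a common factor $p$ would produce a syzygy of $r_\cA$ in degree $d+1-\deg p$, forcing $\deg p=1$ since all minimal syzygies of $r_\cA$ live in degree $d$; one then shows that the resulting line would have to support a degree $d$ subscheme of the scheme defined by $r_\cA$, which is impossible by the primary decomposition of \Cref{prop:decomposition of residual} together with the characterization of general residuals among residual-like ideals in \Cref{prop:char residuals}. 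This is a substantive geometric input, not a routine bookkeeping step, and your proposal as written does not supply it.
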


\begin{proof} 
Our assumption that the lines in $\cA$ are not concurrent gives that $d = |\cA| \ge 3$. 

We begin by showing  the equivalence of (a) and (b). First assume (a). Using \Cref{cor:char ci}, we know that $\Jac (f_\cA)$ is  minimally generated by the three partial derivatives of $f_\cA$. It is saturated. by the assumed freeness of $\cA$. Thus, it has a graded minimal free resolution of the form 
\[
0 \to S(-d+1-a) \oplus S(-d+1-b) \to S^3 (-d+1) \to \Jac (f_\cA) \to 0
\]
with positive integers $a, b$ satisfying $a + b = d-1$, and so $1 \le a, b, \le d-2$. Linking by $(f_\cA, \frac{\partial f_\cA}{\partial \ell})$ and taking into account 
 the cancellation due to the fact that $\frac{\partial f_\cA}{\partial \ell}$ is a minimal generator of both $\Jac (f_\cA)$ and the complete intersection, the mapping cone shows that the general residual has a graded minimal free resolution of the form 
 \[
0 \to S^2(-d) \to S (-d+b)  \oplus S(-d+a) \oplus  S(-d+1)  \to r_\cA \to 0.   
\]
Taking into account that $a+b=d-1$ we rewrite this as 
\[
0 \to S^2(-d) \to S (-a-1)  \oplus S(-b-1) \oplus  S(-d+1)  \to r_\cA \to 0. 
\]

By \Cref{{prop:add general point}}(a), we may assume that the minimal generator of $r_\cA$ with degree $d-1$ is $\frac{\partial f_\cA}{\partial \ell}$. Denote by $g$ and  $h$ the minimal generators of $r_\cA$ with degrees $a+1$ and $b+1$, respectively. 
Using the above resolution, a computation shows that
\[
\deg r_\cA = (a+1) (b+1) - 1. 
\]
The Sequence \eqref{eq:SES} (established in the proof of \Cref{prop:add general point}) induces the following diagram of exact sequences
\begin{align}
     \label{commutative diagram_multi factor}
\minCDarrowwidth20pt    
\begin{CD}
@. 0 @.  @. 0 @.   \\
 @. @VVV   @.    @VVV \\
 @. S(-d-1)  @.   @.   S^2(-d) @. \\
@. @VVV   @.    @VVV \\
@.  S^2(-d)  @.   @.   S(-a-1 \oplus S(-b-1) \oplus S(-d+1) @. \\
@. @VVV   @.    @VVV \\
@.  S(-d+1)  @.   @.   S @. \\
@.  @VVV   @.  @VVV    \\
0 @>>>  (S/I_{\ell^\vee})(-d+1)   @>>>  S/I  @>>>   S/r_\cA @>>>  0 \\
@. @VVV @.  @VVV    \\
@. 0 @. @. 0 @.  \\
\end{CD}
\end{align} 
Using the Horseshoe lemma, it implies that a graded minimal free resolution of $I$ has the form 
\[
0 \to S(-d+1) \oplus F \to S (-a-1)  \oplus S(-b-1) \oplus  F  \to I \to 0,  
\]
where $F = S^{\beta} (-d)$ (see also \Cref{cor:gens of residual}). 
In particular, we get $\deg I = 1 + \deg r_\cA = (a+1)(b+1)$. 
If the minimal generators $g, h$ of $I$ form a regular sequence, then comparing degrees we conclude that $I = (g, h)$,  as desired. Otherwise, $g$ and $h$ have a greatest common divisor, $p$, of positive degree. Write $g = p \cdot \tilde{g}$ and $h = p \cdot \tilde{h}$. Then the relation 
\[
\tilde{h} \cdot g - \tilde{g} \cdot h = 0
\]
shows that $I$ as well as $r_\cA$ have a syzygy of degree $a+1 + b+1 - \deg p = d+1 - \deg p$. Since $r_\cA$ has only minimal syzygies of degree $d$ and $a+b=d-1$, we deduce that $p$ must have degree one. Moreover, $g$ and $h$ have exactly one minimal syzygy, which gives $\beta = 1$. Thus, $I$ has exactly one minimal generator of degree $d$. Denote it by $q$. It follows that 
\[
I = (p \cdot \tilde{g}, p \cdot \tilde{h}, q) \subset (\tilde{g}, \tilde{h}) \cap (p, q). 
\]
The right-hand side is an unmixed ideal of degree $a b + d = (a+1) (b+1) = \deg I$, which yields the equality 
\[
I = (\tilde{g}, \tilde{h}) \cap (p, q). 
\]
It shows that the 0-dimensional scheme $Y$ defined by $I$ contains a subscheme of degree $d$ that is supported on the   line $L$ defined by $p$.  Recall that by \Cref{prop:decomposition of residual} we have
\[
r_\cA = \bigcap_{P \in S(\cA)} (\ell_P, I_P^{t_P - 1}).  
\]
By the generality of $\ell$, any line through some point $P \in S(\cA)$ and $\ell^\vee$ contains no other point of $S(\cA)$. Since the lines of $\cA$ are not concurrent, we know $t_P \le d-1$. Hence any such line supports a subscheme of $Y$ whose degree is at most $t_P - 1  + 1 <  d$. Therefore, the line $L$ must contain at least two points of $S(\cA)$.  Moreover, it does not contain 
$\ell^\vee$. Hence, $L$ supports a degree $d$ subscheme of the scheme defined by $r_\cA$. However, such a subscheme does not exist due to \Cref{prop:char residuals}. This contradiction shows that $g$ and $h$ must form a regular sequence and $I = (g,h)$, as desired. 

Assume now that Condition (b) is satisfied. We show that it implies Condition (a). Applying the mapping cone procedure to the exact sequence 
\[
0 \to  (S/I_{\ell^\vee})(-d+1)   \to   S/I  \to  S/r_\cA \to  0 
\]
we deduce that the graded minimal free resolution of $r_\cA$ has the form 
\[
0 \to S^2(-d) \to S (-a-1)  \oplus S(-b-1) \oplus  S(-d+1)  \to r_\cA \to 0. 
\]
Hence, applying \Cref{prop:Betti top from residual}, we get that the following graded minimal free resolution of $\Jac (f_\cA)^{sat}$:  
\[
0 \to S(-d+1-a) \oplus S(-d+1-a) \to S^3 (-d+1) \to \Jac (f_\cA)^{sat} \to 0,  
\]
where we used that $2d-1-a-1 = d-1+b$. In particular, $\Jac (f_\cA)^{sat}$ has a minimal generating set consisting of three forms of degree $d-1$. The same is true for $\Jac (f_\cA)$ because it is not a complete intersection. If follows that $\Jac (f_\cA)$  is equal to $\Jac (f_\cA)^{sat}$, that is, the line arrangement determined by $\cA$ is free, as claimed in (a). 
\smallskip

It remains to show that Conditions (b) and (c) are equivalent. Notice that any codimension two subscheme $X$ is arithmetically Gorenstein if and only if it is a complete intersection. Moreover, any subscheme of the scheme $Y$ defined by $I$ whose degree is equal to $-1 + \deg I$ is defined by one of the ideals $\fa_P$ for some $P \in S (\cA)$ or the general residual $r_\cA$. Hence, \Cref{thm:kreuzer} shows that Condition (b) implies (c). It also yields the reverse implication by taking into account \Cref{prop:add general point}, which gives 
\[
h_{S/I} (d-2) = h_{S/r_\cA} (d-2). 
\]
This completes the argument.  
\end{proof}

\begin{remark}
(i) Notice that the second assumption in Condition (c) of \Cref{thm:char freeness lines} is slightly weaker than assuming the Cayley-Bacharach property for the subscheme defined by $I$ as there is no assumption on the Hilbert function of $S/r_\cA$. 

(ii) All options for the integers $a, b$ in \Cref{thm:char freeness lines}(b) do occur for some line arrangement as the following example shows. 
\end{remark}

\begin{example}
    \label{exa:all options occur}
Fix any integers $a, b \ge 2$ with $a + b = d+1$. Let $\cA$ be an arrangement of two connected pencils, i.e., $\cA$ is   obtained from a union of two disconnect pencils consisting of $a -1$ and $b-1$ lines by adding the line through the centers of the pencils (see Figure~\ref{fig: two pencils, with line}).  In this case, the general residual $r_\cA$ is generated by forms of degrees $a, b$ and $a+b-2$. Hence, the ideal $I = r_\cA \cap I_{\ell^\vee}$ has minimal generators of degrees $a$ and $b$, and $\cA$ is free with exponents $a-1$ and $b-1$. 
\end{example}


\end{document}